\theoremstyle{definition}
\newtheorem{theorem}{Theorem}[section]
\newtheorem{definition}[theorem]{Definition}
\newtheorem{lemma}[theorem]{Lemma}
\newtheorem{corollary}[theorem]{Corollary}
\newtheorem{proposition}[theorem]{Proposition}
\newtheorem{example}[theorem]{Example}
\newtheorem{remark}[theorem]{Remark}
\newtheorem{problem}[theorem]{Problem}
\newtheorem{acknowledgments}[theorem]{Acknowledgments}
\DeclareMathOperator{\Con}{Con}
\DeclareMathOperator{\Aut}{Aut}
\DeclareMathOperator{\Mod}{Mod}
\DeclareMathOperator{\Cg}{Cg}
\DeclareMathOperator{\Sg}{Sg}
\DeclareMathOperator{\Eqv}{Eqv}
\DeclareMathOperator{\id}{id}
\DeclareMathOperator{\Sub}{Sub}
\DeclareMathOperator{\ar}{ar}
\begin{document}

\title{Extensions realizing affine datum : low-dimensional cohomology}
\author{Alexander Wires}
\address{School of Mathematical Sciences, University of Electronic Science and Technology of China, Chengdu, 611731, Sichuan, PRC }
\email{awires81@uestc.edu.cn}
\date{October 20, 2020}

\begin{abstract}
For arbitrary varieties of universal algebras, we develop the theory around the first and second-cohomology groups characterizing extensions realizing affine datum. Restricted to varieties with a weak-difference term, extensions realizing affine datum are exactly extensions with abelian kernels. This recovers many classic examples of extensions with abelian coefficients since varieties with a weak-difference term give a far-reaching generalization of algebras like groups with multiple operators; indeed, any variety of algebras whose congruences form modular lattices. We introduce a notion of action and its model relation with a set of equations. In varieties with a difference term, central extensions are characterized by a property of their actions. Restricting further to a subclass of varieties with a difference term which still includes groups with multiple operators, we recover a special case of the representation of extensions with abelian kernels.
\end{abstract}

\subjclass{Primary 08A05, 08A35, 03C05}

\maketitle


\section{Introduction}\label{sec:1}

Let $\Eqv A$ denote the set of equivalence relations on the set $A$.

\begin{definition}
The algebra $A$ is an \emph{extension} of the equivalence relation $\alpha \in \Eqv A$ by the algebra $Q$ if there is a surjective homomorphism $\pi: A \rightarrow Q$ such that $\ker \pi = \alpha$.
\end{definition}

Here we will be interested in the following general problem.

\begin{problem} (The Extension Problem)
Given the algebra $Q$ in the signature $\tau$ and an equivalence relation $\alpha \in \Eqv A$, classify the extensions of $\alpha$ by $Q$.
\end{problem}

As stated in the problem, the task is to understand the different interpretations of $\tau$ on the set $A$ such that $\alpha$ becomes a congruence $\alpha \in \Con A$ on the algebra $A$ and $A/\alpha \approx Q$. Note that we did not assume any additional structure on the equivalence relation $\alpha$ - this is almost certainly too general to begin with and we will require that $\alpha$ be presented with additional information of some sort; in particular, a partial structure related to abelianess in a given commutator theory. For universal algebras, a commutator can be a complicated object and there are several available. The term-condition commutator (or TC-commutator) \cite[Ch 2.5]{shape} and rectangular commutator \cite[Ch 5]{shape} would be natural choices to develop a cohomology theory since for large classes of varieties their abelian algebras are related to modules and semimodules, respectively. It would be interesting for future study to consider central extensions or extensions with abelian kernels in more general commutators, since it is possible to axiomatize commutator theories even in non-regular categories such as topological spaces and ordered sets \cite{wiresIV}.

In the present manuscript, we consider the deconstruction/reconstruction of extensions of universal algebras realizing affine datum. Instead of fixing a class of algebras with a particular equational theory, we elect to follow the approach of the Riemann integrable: that is, with abelian congruences in varieties with a weak-difference term as a motivating example, we isolate properties which will serve as a definition for a particular type of affine datum and then develop the standard $1^{\mathrm{st}}$ and $2^{\mathrm{nd}}$-cohomology constructions for this abstract definition of datum. Then one would prove that the extensions with abelian kernels in a particular class of algebras under consideration are captured by this notion; analogously, functions with countable jump discontinuities are Riemann integrable, etc. One benefit of this approach is that the equational theory (or varietal membership) of extensions is included in the $2^{\mathrm{nd}}$-cohomology group as a parameter which affords a Galois correspondence between the lattice of subvarieties and the subgroup lattice in cohomology. Formalizing this approach leads us to consider the 2-cocycles parametrizing extensions as a particular type of multisorted structure together with their equations. Future work might explore the general model-theoretic constructions in this situation, but at the moment it suggests to us in the same way that third cohomology for groups describes the realization of outer actions by possible extensions, higher cohomologies of universal algebras should be describing structures interpreting special multisorted expansions of the language of 2-cocycles together with the equations they satisfy.

The genesis for the present work and its sequels Wires \cite{wiresV,wiresVII} is the curious result \cite[Prop 7.1]{commod} and the following corollary \cite[Cor 7.2]{commod} which provides a characterization of 2-step nilpotent algebras in congruence modular varieties. This is extended in Theorem~\ref{thm:abelian1} and Proposition~\ref{prop:5} below to varieties with a difference term. While not stated explicitly, \cite[Prop 7.1]{commod} is essentially the decomposition of a central extension in a manner similar to that found in group theory where a factor set is added to the operations of the direct product of the datum groups. The analogy can be developed further where the unital ring, which comes from the derived module structure of abelian algebras in varieties with a difference term, encodes the action terms which correspond in the case of groups to the trivial action, or $\mathds{Z}$-module structure, on abelian kernels in central extensions. Some classical results concerning central extensions of groups are generalized in \cite{wiresV} to the broader class of varieties with a difference term. The current manuscript is a generalization to arbitrary varieties of the original motivating observations.

The approach follows the constructions and concrete manipulations of functions in what is sometimes referred to as the Schreier cohomology of group extensions found in Schreier \cite{schreier} and Dedecker\cite{dedecker}. We define the notion of affine datum and establish the machinery around the $1^{\mathrm{st}}$ and $2^{\mathrm{nd}}$-cohomology groups characterizing extensions in a variety $\mathcal V$ which realize the datum. The development is satisfyingly replete with the standard toolkit of 2-cocycles, 2-coboundaries, actions, derivations and stabilizing automorphisms which reduce to previous definitions upon specialization for many familiar classes of algebras. We provide a summary of the key points of the development: 
\begin{itemize}

	\item The notion of an action compatible with a set of equations (Definition~\ref{def:5}).

	\item The notion of 2-cocycles as interpretations of a multisorted signature compatible with a set of equations (Definition~\ref{def:2cocyle}).

	\item Definition of affine datum (Definition~\ref{def:datum}). 

	\item Reconstruction of an algebra from affine datum and 2-cocycle compatible with a variety $\mathcal U$ (Theorem~\ref{thm:12}).

	\item For an abelian congruence $\alpha \in \Con A$ in which $\mathcal V(A)$ has a weak-difference term, decomposition of the algebra into the quotient $A/\alpha$, a partial structure $A^{\alpha,\tau}$, and a 2-cocycle $T$ and homomorphic action $A/\alpha \ast A^{\alpha,\tau}$ derived from $A^{\alpha,\tau}$ both compatible with any parameter variety $\mathcal U \geq \mathcal V(A)$ (Theorem~\ref{thm:10}).

	\item The characterization of semidirect products realizing affine datum (Proposition~\ref{prop:6}).
	
	\item Definition of 2-coboundaries associated to affine datum (Definition~\ref{def:2cobound}). We shall see 2-coboundaries are compatible with any variety containing the datum (Lemma~\ref{lem:5}).
	
	\item An equivalence on 2-cocycles (and so extensions) derived from 2-coboundaries which is finer than isomorphism (Theorem~\ref{thm:13}).
	
	\item The abelian $2^{\mathrm{nd}}$-cohomology groups $H^{2}_{\mathcal U}(Q,A^{\alpha,\tau},\ast)$ for affine datum defined from 2-cocycles modulo the 2-coboundaries. The cohomology group accepts as an additional parameter a variety $\mathcal U$ which restricts the class of extensions to membership in $\mathcal U$ (Theorem~\ref{thm:cohomology}).
	
	\item The	$2^{\mathrm{nd}}$-cohomology groups define a Galois connection between the lattice of varieties containing fixed datum and the subgroup lattice of the ableian group generated by 2-cocycles of the datum (Proposition~\ref{prop:14}).
	
	\item The automorphisms stabilizing an extension form an abelian group isomorphic to the 1-cocycles (derivations) of the datum (Theorem~\ref{thm:stabilize}).
	
	\item In varieties with a difference-term, a characterization of central extensions by affine datum with trivial actions (Proposition~\ref{thm:extension} and Proposition~\ref{prop:centralaction}). In such varieties, central extensions of the datum are characterized by the $2^{\mathrm{nd}}$-cohomology group $H^{2}_{\mathcal U}(Q,A^{\alpha,\tau},\ast)$ restricted to trivial actions (Theorem~\ref{thm:centralcohom}).
	
	\item In varieties with a weak-difference term, abelian extension are classified by a subgroup of $2^{\mathrm{nd}}$-cohomology which generalizes the classic group-theoretic case which characterizes abelian extensions by symmetric 2-cocycles (Corollary~\ref{cor:21}).
	
	\item In Section~\ref{sec:4}, the cohomology machinery is specialized in the presence of an idempotent element to a subclass of varieties with a difference term. The representation of extensions with abelian kernels (Theorem~\ref{thm:idemabelian} and Corollary~\ref{cor:AAA}) more closely aligns with the familiar example of groups and can be applied to any variety of groups with multiple operators (see Higgins \cite{higgins}).

\end{itemize}
In the sequel manuscript \cite{wiresV}, we consider central extensions in varieties with a difference term. Such varieties include groups, quasigroups, relative Rota-Baxter groups and braces but also many familiar examples of abelian groups expanded by multilinear operations such as rings, Liebniz algebras, diassociative algebras, Rota-Baxter algebras and conformal algebras. In this very general setting, we establish a low-dimensional Hochschild-Serre exact sequence for central extensions and prove theorems of Schur-type relating relative commutators of free presentations and $2^{\mathrm{nd}}$-cohomology of regular datum, covers and perfect algebras. This generalizes analogous results for many previously established cases, including groups, which can then be derived by specialization to those particular varieties. Highlights include:
\begin{itemize}

	\item A low-dimensional Hochschild-Serre exact sequence (or inflation/restriction sequence) for a central extension with an additional affine datum in varieties with a difference term.
	
	\item Characterizing injectivity and surjectivity of the transgression map and its relation to lifting homomorphisms through central extensions of regular datum.
	
	\item Generalization of Schur's formula relating commutators of presentations with second-cohomology to any variety with a difference term which has an idempotent element (see Schur \cite{schur1} and Karpilovski \cite{karpil}). By specialization this recovers the classical result for groups and more recent work on algebras of Loday-type. 

	\item Discussion of covers and the relationship between cohomology and perfect algebras (see Milnor \cite{milnor}).

\end{itemize}

Consult Bergman \cite{bergman} for fundamentals of universal algebras. For the basic definition and general properties of the term-condition commutator, Kearnes and Kiss \cite[Cha 2.5]{shape} is useful and for the theory of the commutator in congruence modular varieties consult Freese and McKenzie \cite{commod} and McKenzie and Snow \cite{mckenziesnow}. The properties of the term-condition commutator in varieties with a difference term is developed in Kearnes \cite{vardiff}. Any special definitions are introduced in the text as needed.

\vspace{0.3cm}

\section{Varieties with a difference term}\label{sec:2}

Fix a variety $\mathcal V$ in the signature $\tau$, an algebra $A \in \mathcal V$ and $\alpha,\beta \in \Con A$. A \emph{section} of a surjective map $\pi: A \rightarrow Q$ is a map $l: Q \rightarrow A$ such that $\pi \circ l =\id_{Q}$. An $\alpha$-\emph{trace} is a map $r: A \rightarrow A$ such that $r = l \circ \pi$ for a section $l$ of the canonical map $\pi: A \rightarrow A/\alpha$; equivalently, $(r(x),x) \in \alpha$ and $|r(x/\alpha)|=1$ for all $x \in A$. Let us recall the definition and some properties of the congruence $\Delta_{\alpha\beta}$. 
\begin{itemize}

    \item $M(\alpha,\beta)$ is the subalgebra of $A^{4}$ generated by $\left\{ \begin{bmatrix} x & x \\ y & y \end{bmatrix}, \begin{bmatrix} u & v \\ u & v \end{bmatrix}: (x,y) \in \alpha, (u,v) \in \beta \right\}$.

	\item $A(\alpha) = \left\{ (x,y) \in A \times A : (x,y) \in \alpha \right\}$ is the congruence $\alpha$ as a subalgebra of $A \times A$.

	\item $\Delta_{\alpha \beta} = \Cg^{A(\alpha)}\left( \left\{ \left\langle \begin{bmatrix} u \\ u \end{bmatrix} , \begin{bmatrix} v \\ v \end{bmatrix} \right\rangle : (u,v) \in \beta \right\} \right) = \mathrm{Tr} \, M(\alpha, \beta) \in \Con A(\alpha)$ where $\mathrm{Tr}$ denotes the transitive closure of a binary relation.
	
	\item The diagonal homomorphism is the map $\delta: A \rightarrow A(\alpha)/\Delta_{\alpha \alpha}$ given by $\delta(u) = \begin{bmatrix} u \\ u \end{bmatrix}/\Delta_{\alpha \alpha}$.
	
  \item We have the following inclusions
\[
\Delta_{\alpha \beta} \vee \eta_{0} = p_{0}^{-1}(\beta) \quad \quad \text{and} \quad \quad \Delta_{\alpha \beta} \vee \eta_{1} = p_{1}^{-1}(\beta)
\]    
for the projections $p_i: A(\alpha) \rightarrow A$ with $\eta_{i} = \ker p_{i}$.

\end{itemize}

In the case that $\mathcal V$ is a congruence modular variety, $\Delta_{\alpha\beta}$ has stronger properties.

\begin{itemize}

	\item We have the following description of the commutator (\cite[Thm 4.9]{commod}):
	\[
	(x,y) \in [\alpha,\beta] \quad\quad \text{iff} \quad\quad (\exists u \in A) \ \ \begin{bmatrix} u \\ u \end{bmatrix} \Delta_{\alpha \beta} \begin{bmatrix} x \\ y \end{bmatrix} \quad\quad \text{iff} \quad\quad (\exists v \in A) \ \ \begin{bmatrix} v \\ x \end{bmatrix} \Delta_{\alpha \beta} \begin{bmatrix} v \\ y \end{bmatrix}.
	\]

	\item If $\alpha \leq \beta$ and $[\alpha,\beta]=0$, then we have the following description of the congruence (\cite[Thm 5.5, Prop 5.7]{commod}):
	\[
	\begin{bmatrix} x \\ y \end{bmatrix} \Delta_{\alpha \beta} \begin{bmatrix} u \\ v \end{bmatrix} \quad\quad \text{iff} \quad\quad  x \, \alpha \, y \, \beta \, u \ \text{and} \ v=m(y,x,u).
	\]

	\item We have the following inclusions (\cite[Lem 8.6]{mckenziesnow}):
\[
 \Delta_{\alpha \beta} \wedge \eta_{0} \leq p^{-1}_{1}([\alpha,\beta]) \quad \quad \quad \Delta_{\alpha \beta} \wedge \eta_{1} \leq p^{-1}_{0}([\alpha,\beta]) \quad \quad \quad [p_{0}^{-1}(\beta),p_{1}^{-1}(\beta)] \leq \Delta_{\alpha \beta}.
\]

\end{itemize}

If $\alpha$ is an abelian congruence, then we can easily observe some analogues in varieties with a difference term or weak-difference term. Given an equivalence relation $\alpha$ on a set $A$, there is an equivalence relation $\hat{\alpha}$ on the set $A(\alpha) \subseteq A \times A$ defined by $\begin{bmatrix} x \\ y \end{bmatrix} \mathrel{\hat{\alpha}} \begin{bmatrix} u \\ v \end{bmatrix} \Leftrightarrow x \mathrel{\alpha} y \mathrel{\alpha} u \mathrel{\alpha} v$.

\begin{lemma}\label{lem:20}
Let $\mathcal V$ be a variety, $A \in \mathcal V$ and $\alpha, \beta \in \Con A$. 
\end{lemma}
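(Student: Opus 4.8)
The statement collects, for an abelian congruence $\alpha$, the analogues in a variety with a (weak-)difference term $d$ of the congruence-modular facts about $\Delta_{\alpha\beta}$ recalled above, together with the relationship between $\Delta_{\alpha\alpha}$ and the relation $\hat\alpha$. One half needs no hypothesis at all: $\Delta_{\alpha\alpha}\leq\hat\alpha$ because every generator of $M(\alpha,\alpha)$ respects $\hat\alpha$ (type-one generators trivially, type-two generators because $(u,v)\in\alpha$ gives $u\,\alpha\,u\,\alpha\,v\,\alpha\,v$), and $\hat\alpha$ is a congruence on $A(\alpha)$ closed under transitive closure. The content of the rest is the sharp description, which is where the difference term and abelianness enter.

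The plan is to derive the sharp statements from two ingredients: the term-condition description of $[\alpha,\alpha]=0$, and the observation that, once $[\alpha,\alpha]=0$, the defining property of a weak-difference term forces $d(a,a,b)=b$ and $d(a,b,b)=a$ for every $(a,b)\in\alpha$, so that $d$ restricts to a Mal'cev operation on each $\alpha$-block. First I would consider $D=\{(a,b,c,e)\in A^{4}: a\,\alpha\,b,\ a\,\alpha\,c,\ e=d(c,a,b)\}$, which is a subalgebra of $A^{4}$ since $d$ and the coordinate projections are terms and $\alpha$ is a congruence, and which contains both families of generators of $M(\alpha,\alpha)$ precisely because $d(y,x,x)=y$ and $d(u,u,v)=v$ on $\alpha$-blocks; hence $M(\alpha,\alpha)\subseteq D$. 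Conversely, using that $d$ is Mal'cev on $\alpha$-blocks one writes an arbitrary $(a,b,c,d(c,a,b))$ as a term applied to the generators, so $D\subseteq M(\alpha,\alpha)$. The key point is that $D$, being the graph of the Mal'cev-coordinatised relation $e=d(c,a,b)$, is already transitively closed, so $\mathrm{Tr}\,M(\alpha,\alpha)=M(\alpha,\alpha)=D$ and one reads off the normal form
\[ \begin{bmatrix} x \\ y \end{bmatrix}\,\Delta_{\alpha\alpha}\,\begin{bmatrix} u \\ v \end{bmatrix}\iff x\,\alpha\,y\,\alpha\,u \ \text{ and }\ v=d(y,x,u), \]
the exact analogue of the congruence-modular description with the Mal'cev term $m$ replaced by $d$. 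The corresponding description for $\Delta_{\alpha\beta}$ with $\alpha\leq\beta$, if asserted, is obtained the same way after intersecting with $p_{i}^{-1}(\beta)$.

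Everything else reduces to substituting this normal form into the relevant definitions and simplifying with the Mal'cev identities for $d$ on $\alpha$-blocks: the equality of $\Delta_{\alpha\alpha}$ with the sharpened version of $\hat\alpha$ (the $\hat\alpha$-relation pins down $x,y,u,v$ to a single $\alpha$-block and the last coordinate is then forced to be $d(y,x,u)$), the computation of $\ker\delta$, the behaviour of $\Delta_{\alpha\alpha}$ under the diagonal embedding, and the inclusions $\Delta_{\alpha\beta}\wedge\eta_{i}\leq p_{1-i}^{-1}([\alpha,\beta])$ and $[p_{0}^{-1}(\beta),p_{1}^{-1}(\beta)]\leq\Delta_{\alpha\beta}$, which follow by the same generator-chasing applied through $d$.

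The hard part — and the only place where abelianness is genuinely needed rather than just a weak-difference term — is exactly the transitive-closure step. Outside congruence-modular varieties there is no general reason for $M(\alpha,\beta)$ to be transitively closed, and $\Delta_{\alpha\beta}=\mathrm{Tr}\,M(\alpha,\beta)$ can be strictly larger than $M(\alpha,\beta)$; ruling this out under $[\alpha,\alpha]=0$ by showing the relevant part of $M$ is a Mal'cev graph (hence automatically an equivalence relation) is the crux. Once closure is in hand, the remaining identity bookkeeping is mechanical.
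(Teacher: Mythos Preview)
Your normal-form argument for $\Delta_{\alpha\alpha}$ when $\alpha$ is abelian is correct and tidy: the graph $D$ is a subalgebra containing the generators, and transitivity follows from the abelian-group arithmetic on $\alpha$-blocks. This is essentially what underlies part~(3) and the implications $(\mathrm{a})\Rightarrow(\mathrm{b})\Rightarrow(\mathrm{c})$ of part~(4).

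The gap is that you have misread the content of the lemma. The four enumerated items are not the congruence-modular bullet points ``recalled above''; in particular you list the inclusions $\Delta_{\alpha\beta}\wedge\eta_i\leq p_{1-i}^{-1}([\alpha,\beta])$ and $[p_0^{-1}(\beta),p_1^{-1}(\beta)]\leq\Delta_{\alpha\beta}$ as things to prove, but those are background, not the lemma. The actual parts~(1), (2), and (4) concern $\Delta_{\alpha\beta}$ for an \emph{arbitrary} $\beta$, and your graph $D$ does not adapt: for the type-two generator $\begin{bmatrix} u & v\\ u & v\end{bmatrix}$ with $(u,v)\in\beta$ you would need $u\mathrel{\alpha}v$ (to place it in your $D$) and $d(v,u,u)=v$, neither of which is available when $\beta\not\leq\alpha$ or $\beta$ is not abelian. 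Worse, part~(1) does not assume $\alpha$ abelian at all, so your entire machinery is unavailable there. The sentence ``the corresponding description for $\Delta_{\alpha\beta}$ with $\alpha\leq\beta$\ldots\ is obtained the same way after intersecting with $p_i^{-1}(\beta)$'' is therefore not a proof of anything in the lemma.

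The paper instead argues each part by applying the (weak-)difference term $m$ directly to carefully chosen triples of $\Delta_{\alpha\beta}$-related pairs, exploiting only the one-sided identity $m(x,x,y)=y$ together with $m(x,y,y)\equiv x\pmod{[\theta,\theta]}$. For instance, in part~(1) one feeds $m$ the three relations $\begin{bmatrix}a\\b\end{bmatrix}\Delta_{\alpha\beta}\begin{bmatrix}a\\d\end{bmatrix}$, $\begin{bmatrix}a\\b\end{bmatrix}\Delta_{\alpha\beta}\begin{bmatrix}a\\b\end{bmatrix}$, $\begin{bmatrix}b\\b\end{bmatrix}\Delta_{\alpha\beta}\begin{bmatrix}b\\b\end{bmatrix}$ to land on the diagonal and read off $(b,m(d,b,b))\in[\beta,\alpha]$ from $\Delta_{\alpha\beta}=\mathrm{Tr}\,M(\alpha,\beta)$; no abelianness of $\alpha$ is used. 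Parts~(2)--(4) are handled by the same device, with abelianness of $\alpha$ invoked only to make $m$ genuinely Mal'cev on the relevant blocks. Your normal-form idea could replace some of these computations once the $\Delta_{\alpha\alpha}$ case is in hand, but it cannot replace the direct difference-term manipulations needed for general $\beta$.
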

\begin{enumerate}

	\item If $\mathcal V$ has a difference term, then
	\[
	\begin{bmatrix} a \\ b \end{bmatrix} \Delta_{\alpha \beta} \begin{bmatrix} a \\ d \end{bmatrix} \quad \Longrightarrow \quad b \mathrel{[\alpha,\beta]} d .
	\]
	
	\item If $\mathcal V$ has a weak-difference term and $\alpha$ is abelian, then
	\[
	(\exists a \in A) \ \begin{bmatrix} a \\ b \end{bmatrix} \Delta_{\alpha \beta} \begin{bmatrix} a \\ d \end{bmatrix} \quad \Longleftrightarrow \quad b \mathrel{[\beta,\alpha]} d .
	\]

	\item If $\mathcal V$ has a weak-difference term and $\alpha$ is abelian, then $\Delta_{\alpha \alpha} = \Delta_{\alpha \gamma} \wedge \hat{\alpha}$ for any $\alpha \leq \gamma \leq (0:\alpha)$.
	
	\item If $\mathcal V$ has a difference term and $\alpha$ is abelian, the following are equivalent:

		\begin{enumerate}

			\item $\begin{bmatrix} a \\ b \end{bmatrix} \Delta_{\alpha \beta} \begin{bmatrix} c \\ d \end{bmatrix}$;
	
			\item $\begin{bmatrix} a \\ b \end{bmatrix} \Delta_{\alpha \beta} \begin{bmatrix} c \\ m(b,a,c) \end{bmatrix}$ \quad and \quad $d \mathrel{[\alpha, \beta]} m(b,a,c)$;
	
			\item $c \mathrel{\beta} a \mathrel{\alpha} b$ \quad and \quad $d \mathrel{[\alpha, \beta]} m(b,a,c)$.

		\end{enumerate}

	\item Let $m$ be a difference term for $\mathcal V$ and $f$ a function symbol. If $\alpha$ is abelian and $\vec{a},\vec{b},\vec{c} \in A^{\ar f}$ are pair-wise $\alpha$-related $\vec{a} \mathrel{\alpha} \vec{b} \mathrel{\alpha} \vec{c}$, then $m(f(\vec{a}),f(\vec{a}),f(\vec{a})) = f(m(a_{1},b_{1},c_{1}),\ldots,m(a_{1},b_{1},c_{1}))$.

\end{enumerate}
\begin{proof}
(1) Note $\begin{bmatrix} a \\ b \end{bmatrix} \Delta_{\alpha \beta} \begin{bmatrix} a \\ d \end{bmatrix}$ implies $(b,d) \in \alpha \wedge \beta$, and so $d \mathrel{[\alpha,\beta]} m(d,b,b)$ since $m$ is a difference term. Applying the difference term to the sequence 
\[
\begin{bmatrix} a  \\ b \end{bmatrix} \Delta_{\alpha \beta} \begin{bmatrix} a  \\ d \end{bmatrix} \ , \ \begin{bmatrix} a  \\ b \end{bmatrix} \Delta_{\alpha \beta} \begin{bmatrix} a  \\ b \end{bmatrix} \ , \ \begin{bmatrix} b  \\ b \end{bmatrix} \Delta_{\alpha \beta} \begin{bmatrix} b  \\ b \end{bmatrix} \quad \Longrightarrow \quad \begin{bmatrix} b  \\ b \end{bmatrix} = \begin{bmatrix} m(a,a,b)  \\ m(b,b,b) \end{bmatrix} \Delta_{\alpha \beta} \begin{bmatrix} m(a,a,b)  \\ m(d,b,b) \end{bmatrix} = \begin{bmatrix} b  \\ m(d,b,b) \end{bmatrix}.
\]
Since $\Delta_{\alpha \beta} = \mathrm{Tr} \, M(\alpha, \beta)$, we see that $(b,m(d,b,b)) \in [\beta,\alpha] = [\alpha,\beta]$. Then $b \mathrel{[\alpha,\beta]} m(d,b,b) \mathrel{[\alpha,\beta]} d$.

(2) For necessity, this is the same calculation as part (1) except $m$ is the weak-difference term. We use $m(a,a,b)=b$ because $(a,b) \in \alpha$ is abelian, and $(b,d) \in \alpha \wedge \beta \leq \alpha$ implies $d=m(d,b,b)$, as well. For sufficiency, use the recursive generation of the TC-commutator by $M(\alpha,\beta)$ matrices starting from the equality relation. The thing to note is that the elements in the matrices in $M(\alpha,\beta)$ which witness any inclusion $(a,b) \in [\alpha,\beta]$ will all be contained in a single $\alpha$-class on which the weak-difference term behaves as a Mal'cev term.

(3) Note $\Delta_{\alpha \alpha} \leq \Delta_{\alpha \gamma} \wedge \hat{\alpha}$. Conversely, suppose $\begin{bmatrix} a \\ b \end{bmatrix} \Delta_{\alpha \gamma} \wedge \hat{\alpha} \begin{bmatrix} c \\ d\end{bmatrix}$. Then the coordinates are all contained in a single $\alpha$-class. Apply the weak-difference term to the generators
\[
\begin{bmatrix} a  \\ b \end{bmatrix} \Delta_{\alpha \alpha} \begin{bmatrix} a  \\ b \end{bmatrix} \ , \ \begin{bmatrix} a  \\ a \end{bmatrix} \Delta_{\alpha \alpha} \begin{bmatrix} a  \\ a \end{bmatrix} \ , \ \begin{bmatrix} a  \\ a \end{bmatrix} \Delta_{\alpha \alpha} \begin{bmatrix} c  \\ c \end{bmatrix} \quad \Rightarrow \quad \begin{bmatrix} a  \\ b \end{bmatrix} = \begin{bmatrix} m(a,a,a)  \\ m(b,a,a) \end{bmatrix} \Delta_{\alpha \alpha} \begin{bmatrix} m(a,a,c)  \\ m(b,a,c) \end{bmatrix} = \begin{bmatrix} c  \\ m(b,a,c) \end{bmatrix}
\]
using that $\alpha$ is abelian. Then $\Delta_{\alpha \alpha}\leq \Delta_{\alpha \gamma}$ implies $\begin{bmatrix} c \\ d \end{bmatrix} \Delta_{\alpha \gamma} \begin{bmatrix} a \\ b \end{bmatrix} \Delta_{\alpha \gamma}  \begin{bmatrix} c  \\ m(b,a,c) \end{bmatrix}$, and part (2) yields $(d,m(b,a,c)) \in [\gamma,\alpha]=0$.

(4) Assuming (a), we have $(a,b) \in \alpha$ and $(a,c) \in \beta$. Apply the difference term to the sequence
\[
\begin{bmatrix} a  \\ b \end{bmatrix} \Delta_{\alpha \beta} \begin{bmatrix} a  \\ b \end{bmatrix} \ , \ \begin{bmatrix} a  \\ a \end{bmatrix} \Delta_{\alpha \beta} \begin{bmatrix} a  \\ a \end{bmatrix} \ , \ \begin{bmatrix} a  \\ a \end{bmatrix} \Delta_{\alpha \beta} \begin{bmatrix} c  \\ c \end{bmatrix} \quad \Rightarrow \quad \begin{bmatrix} a  \\ b \end{bmatrix} = \begin{bmatrix} m(a,a,a)  \\ m(b,a,a) \end{bmatrix} \Delta_{\alpha \beta} \begin{bmatrix} m(a,a,c)  \\ m(b,a,c) \end{bmatrix} = \begin{bmatrix} c  \\ m(b,a,c) \end{bmatrix}.
\]
Then applying part (1) to $\begin{bmatrix} c \\ d \end{bmatrix} \Delta_{\alpha \beta} \begin{bmatrix} a \\ b \end{bmatrix} \Delta_{\alpha \beta}  \begin{bmatrix} c  \\ m(b,a,c) \end{bmatrix}$ produces $(d,m(b,a,c)) \in [\alpha,\beta]$. From (b), (c) follows immediately. 

Now assume (c). By (2) above, there is $x \in A$ such that $\begin{bmatrix} x \\ d \end{bmatrix} \Delta_{\alpha \beta} \begin{bmatrix} x \\ m(b,a,c) \end{bmatrix}$. We also have $m(b,a,c) \mathrel{\alpha} m(a,a,c)=c$ and $d \mathrel{\alpha \wedge \beta} m(b,a,c) \mathrel{\beta} p(b,a,a) = b$. The condition $c \mathrel{\beta} a \mathrel{\alpha} b$ produces from the generators
\[
\begin{bmatrix} a  \\ b \end{bmatrix} \Delta_{\alpha \beta} \begin{bmatrix} a  \\ b \end{bmatrix} \ , \ \begin{bmatrix} a  \\ a \end{bmatrix} \Delta_{\alpha \beta} \begin{bmatrix} a  \\ a \end{bmatrix} \ , \ \begin{bmatrix} a  \\ a \end{bmatrix} \Delta_{\alpha \beta} \begin{bmatrix} c  \\ c \end{bmatrix} \quad \Rightarrow \quad \begin{bmatrix} a  \\ b \end{bmatrix} \Delta_{\alpha \beta}  \begin{bmatrix} c  \\ m(b,a,c) \end{bmatrix}.
\]
We then apply the difference term to the sequence
\[
\begin{bmatrix} x  \\ d \end{bmatrix} \Delta_{\alpha \beta} \begin{bmatrix} x  \\ d \end{bmatrix} \ , \ \begin{bmatrix} x  \\ d \end{bmatrix} \Delta_{\alpha \beta} \begin{bmatrix} x  \\ m(b,a,c) \end{bmatrix} \ , \ \begin{bmatrix} b  \\ b \end{bmatrix} \Delta_{\alpha \beta} \begin{bmatrix} m(b,a,c)  \\ m(b,a,c) \end{bmatrix} \quad \Rightarrow \quad \begin{bmatrix} b  \\ b \end{bmatrix} \Delta_{\alpha \beta}  \begin{bmatrix} m(b,a,c)  \\ d \end{bmatrix}.
\]
We then use these last two relations and apply the difference term to derive 
\[
\begin{bmatrix} a  \\ b \end{bmatrix} \Delta_{\alpha \beta} \begin{bmatrix} c  \\ m(b,a,c) \end{bmatrix} \ , \ \begin{bmatrix} b  \\ b \end{bmatrix} \Delta_{\alpha \beta} \begin{bmatrix} m(b,a,c)  \\ m(b,a,c) \end{bmatrix} \ , \ \begin{bmatrix} b  \\ b \end{bmatrix} \Delta_{\alpha \beta} \begin{bmatrix} m(b,a,c)  \\ d \end{bmatrix} \quad \Rightarrow \quad \begin{bmatrix} a  \\ b \end{bmatrix} \Delta_{\alpha \beta}  \begin{bmatrix} c  \\ d \end{bmatrix}.
\]

(5) Applying the difference term to the generators of $\Delta_{\alpha \alpha}$ yields 
\begin{align*}
\begin{bmatrix} b_{i} \\ a_{i} \end{bmatrix} = \begin{bmatrix} m(a_{i},a_{i},b_{i}) \\ m(a_{i},b_{i},b_{i}) \end{bmatrix} \Delta_{\alpha \alpha} \begin{bmatrix} m(a_{i},a_{i},c_{i}) \\ m(a_{i},b_{i},c_{i}) \end{bmatrix} = \begin{bmatrix} c_{i} \\ m(a_{i},b_{i},c_{i}) \end{bmatrix}
\end{align*}
for each $i$. Then applying $f$ to the above pairs we have 
\begin{align*}
\begin{bmatrix} f(\vec{b}) \\ f(\vec{a}) \end{bmatrix} \Delta_{\alpha \alpha} \begin{bmatrix} f(\vec{c}) \\ f(m(a_{i},b_{i},c_{i}),\ldots,m(a_{i},b_{i},c_{i})) \end{bmatrix}.
\end{align*}
The result now follows from (4b) above since $\alpha$ is abelian. 
\end{proof}

\begin{lemma}\label{lem:usefulbit1}
Let $\mathcal V$ be a variety with a difference term $m$ and $A \in \mathcal V$ with abelian congruence $\alpha \in \Con A$. If $a \mathrel{\alpha} b \mathrel{\alpha} c$, then for all $x \in A$ we have
\begin{itemize}

	\item $m(m(a,b,c),c,x) = m(a,b,x) = m(m(a,c,x),x,m(c,b,x))$, and 
	
	\item $m(m(a,c,b),c,x) = m(m(a,c,x),x,m(b,c,x))$.

\end{itemize}
\end{lemma}
\begin{proof}
For the first set of equations, observe that
\begin{align*}
\begin{bmatrix} x & x \\ m(m(a,b,c),c,x) &  m(a,b,x) \end{bmatrix} = \begin{bmatrix} m(m(a,a,c),c,x) & m(m(a,a,b),b,x) \\ m(m(a,b,c),c,x) &  m(m(a,b,b),b,x) \end{bmatrix} \in M(\alpha,\alpha)
\end{align*}
which implies $m(m(a,b,c),c,x) \mathrel{[\alpha,\alpha]} m(a,b,x)$. Equality follows since $\alpha$ is abelian. Using Lemma~\ref{lem:20}(5) we see that
\begin{align*}
m(a,b,x) = m(m(a,c,c),m(c,c,b),m(x,x,x)) &= m(m(a,c,x),m(c,c,x),m(c,b,x)) \\
&= m(m(a,c,x),x,m(c,b,x)) .
\end{align*}
The second equation also follows by Lemma~\ref{lem:20}(5) since
\begin{align*}
m(m(a,c,b),c,x) = m(m(a,c,b),m(c,c,c),m(x,x,x)) &= m(m(a,c,x),m(c,c,x),m(b,c,x)) \\
&= m(m(a,c,x),x,m(b,c,x)) .
\end{align*} 
\end{proof}

\begin{lemma}\label{lem:1}
Let $\mathcal V$ be a variety, $A \in \mathcal V$ and $\alpha,\beta,\sigma \in \Con A$. Let $r: A \rightarrow A$ be a $\sigma$-trace. 
\begin{enumerate}

	\item If $\mathcal V$ has a difference term, then
	
		\begin{enumerate}
		
			\item $A(\alpha)/\Delta_{\alpha 1}$ is abelian;
			
			\item the set map \ $\psi: A/[\alpha,\beta] \longrightarrow A(\alpha)/\Delta_{\alpha \beta} \, \times \, A/\sigma$ \ which is defined by
	\[
	\psi(x/[\alpha,\beta]) = \left( \left\langle r(x),x\right\rangle/\Delta_{\alpha \beta}, x/\sigma \right)
	\]
	is injective.
		
		\end{enumerate}
		
	\item If $\mathcal V$ is congruence modular, then
	
		\begin{enumerate}
		
			\item for all $(a,b) \in \alpha$ and $u \in A$, $\begin{bmatrix} u \\ u \end{bmatrix}/\Delta_{\alpha 1} = \begin{bmatrix} a \\ d(a,b,b) \end{bmatrix}/\Delta_{\alpha 1}$ where $d$ is the difference term for $\mathcal V$;
			
			\item $A/[\alpha,1](\alpha/[\alpha,1])/\Delta_{\alpha/[\alpha,1] 1} \approx A(\alpha)/\Delta_{\alpha 1}$;
		
		\end{enumerate}

\end{enumerate}
\end{lemma}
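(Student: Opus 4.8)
The plan is to handle the four assertions in turn. Parts (1)(a) and (1)(b) use only the congruence lattice of $A(\alpha)$ together with Lemma~\ref{lem:20}, whereas (2)(a) and (2)(b) invoke the congruence-modular structure of $\Delta_{\alpha\beta}$ recalled above. For (1)(a) I would argue inside $\Con A(\alpha)$: the text records $\Delta_{\alpha 1}\vee\eta_0 = p_0^{-1}(1) = 1 = p_1^{-1}(1) = \Delta_{\alpha 1}\vee\eta_1$, and $\eta_0\wedge\eta_1 = 0$ since $A(\alpha)\leq A^2$; as $\mathcal V$ has a difference term, so does $A(\alpha)$ and its term-condition commutator is additive, so $[1,1]_{A(\alpha)} = [\Delta_{\alpha 1}\vee\eta_0,\Delta_{\alpha 1}\vee\eta_1]$ expands as $[\eta_0,\eta_1]\vee[\eta_0,\Delta_{\alpha 1}]\vee[\Delta_{\alpha 1},\eta_1]\vee[\Delta_{\alpha 1},\Delta_{\alpha 1}]$; since $[\eta_0,\eta_1]\leq\eta_0\wedge\eta_1 = 0$ and each remaining joinand is $\leq\Delta_{\alpha 1}$, we get $[1,1]_{A(\alpha)}\leq\Delta_{\alpha 1}$, i.e.\ $A(\alpha)/\Delta_{\alpha 1}$ is abelian. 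For (1)(b), suppose $\psi(x/[\alpha,\beta]) = \psi(y/[\alpha,\beta])$: the second coordinate gives $x\mathrel\sigma y$, hence $r(x) = r(y) =: v$ because $r = l\circ\pi$ for a section $l$ of $A\to A/\sigma$; the first coordinate then reads $\langle v,x\rangle\Delta_{\alpha\beta}\langle v,y\rangle$, and Lemma~\ref{lem:20}(1) yields $x\mathrel{[\alpha,\beta]}y$, which is injectivity. (The statement presupposes $\psi$ well defined, the companion direction; I read $\sigma$ as satisfying $[\alpha,\beta]\leq\sigma\leq\alpha$, the inequality $\sigma\leq\alpha$ being what puts $\langle r(x),x\rangle$ in $A(\alpha)$.)

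For (2)(a) I would use the commutator description of $\Delta_{\alpha\beta}$ valid in congruence-modular varieties. By the difference-term property $d(a,b,b)\mathrel{[\alpha,\alpha]}a$ for $(a,b)\in\alpha$, so $(a,d(a,b,b))\in[\alpha,\alpha]\leq[\alpha,1]$, and the first bulleted description furnishes some $u_0\in A$ with $\langle u_0,u_0\rangle\Delta_{\alpha 1}\langle a,d(a,b,b)\rangle$. Since $\Delta_{\alpha 1}$ is by definition generated by the pairs $(\langle u,u\rangle,\langle v,v\rangle)$ with $(u,v)\in 1$, every diagonal element is $\Delta_{\alpha 1}$-equivalent to $\langle u_0,u_0\rangle$, hence to $\langle a,d(a,b,b)\rangle$; this is the claim.

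For (2)(b), put $B = A/[\alpha,1]$ and $\bar\alpha = \alpha/[\alpha,1]\in\Con B$; by the congruence-modular commutator/quotient formula $[\bar\alpha,1_B] = [\alpha,1]/[\alpha,1] = 0$, so $\Delta_{\bar\alpha 1}$ admits the explicit $m(y,x,u)$-description. The quotient $A\to B$ induces a surjection $h\colon A(\alpha)\to B(\bar\alpha)$ carrying the generating pairs of $\Delta_{\alpha 1}$ onto those of $\Delta_{\bar\alpha 1}$, so $h(\Delta_{\alpha 1}) = \Delta_{\bar\alpha 1}$ and $h$ induces a surjection $\Phi\colon A(\alpha)/\Delta_{\alpha 1}\twoheadrightarrow B(\bar\alpha)/\Delta_{\bar\alpha 1}$, with $\Phi$ injective precisely when $\ker h\leq\Delta_{\alpha 1}$ (the kernel of $A(\alpha)\to B(\bar\alpha)/\Delta_{\bar\alpha 1}$ being $\Delta_{\alpha 1}\vee\ker h$). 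Thus everything reduces to $\ker h\leq\Delta_{\alpha 1}$: given $x\mathrel{[\alpha,1]}x'$, $y\mathrel{[\alpha,1]}y'$ with $x\mathrel\alpha y$ and $x'\mathrel\alpha y'$, show $\langle x,y\rangle\Delta_{\alpha 1}\langle x',y'\rangle$. Interpolating through $\langle x',y\rangle$ reduces this to moving a single coordinate within an $[\alpha,1]$-class, for which the inputs are the first bulleted description (which identifies the $\Delta_{\alpha 1}$-class of the diagonal with the $[\alpha,1]$-center of $\alpha$), part (2)(a), and the difference-term identities; if needed I would first pass to $A/[\alpha,\alpha]$ to reduce to $\alpha$ abelian, where Lemma~\ref{lem:20}(4) makes $\Delta_{\alpha 1}$ fully explicit. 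Given $\ker h\leq\Delta_{\alpha 1}$, $\Phi$ is the asserted isomorphism.

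I expect the main obstacle to be precisely this inclusion $\ker h\leq\Delta_{\alpha 1}$ in (2)(b) — that $\Delta_{\alpha 1}$ already collapses any two pairs of $A(\alpha)$ agreeing modulo $[\alpha,1]$ coordinatewise. Parts (1)(a), (1)(b), (2)(a) are short once Lemma~\ref{lem:20} and the recalled properties of $\Delta_{\alpha\beta}$ are available; the recurring subtlety is tracking which modularity facts are in force, since the explicit $\Delta_{\alpha\beta}$-description needs $[\alpha,\beta]=0$, which is exactly why (2)(b) is organised as a passage to the quotient by $[\alpha,1]$.
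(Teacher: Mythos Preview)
Your arguments for (1)(a), (1)(b) and (2)(a) are correct and essentially identical to the paper's. For (2)(b) your setup is also the paper's: define the natural surjection $\phi\colon A(\alpha)\to B(\bar\alpha)/\Delta_{\bar\alpha 1}$ (your composite through $h$), observe $\Delta_{\alpha 1}\subseteq\ker\phi$, and then prove the reverse inclusion. The identity $\ker\phi=\Delta_{\alpha 1}\vee\ker h$ you state is indeed correct by the correspondence theorem together with the fact that $h$ carries the generators of $\Delta_{\alpha 1}$ onto those of $\Delta_{\bar\alpha 1}$.

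Where your proposal stops short is the inclusion $\ker\phi\subseteq\Delta_{\alpha 1}$, which you flag as the obstacle but do not actually establish; the ``interpolate through $\langle x',y\rangle$ and move one coordinate'' idea does not obviously go through, because from $\langle y,y\rangle\,\Delta_{\alpha 1}\,\langle x,x'\rangle$ one cannot simply swap rows to obtain $\langle y,x\rangle\,\Delta_{\alpha 1}\,\langle y,x'\rangle$, and the difference-term identities alone leave residues in $[\alpha,\alpha]$. The paper's device is instead to exploit part (1)(a): since $A(\alpha)/\Delta_{\alpha 1}$ is abelian, $x+_0 y:=d(x,0,y)$ with $0$ the diagonal class is an abelian group operation. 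Given $\langle a,b\rangle,\langle c,e\rangle$ in $\ker\phi$, the explicit description in $B(\bar\alpha)$ yields $(e,d(b,a,c))\in[\alpha,1]$, so $\langle e,d(b,a,c)\rangle/\Delta_{\alpha 1}=0$ by the modular commutator characterisation. Then two direct computations in the abelian group,
\[
\langle a,b\rangle/\Delta_{\alpha 1}+\langle e,c\rangle/\Delta_{\alpha 1}=\langle d(a,a,e),d(b,a,c)\rangle/\Delta_{\alpha 1}=\langle e,d(b,a,c)\rangle/\Delta_{\alpha 1}=0
\]
and
\[
\langle c,e\rangle/\Delta_{\alpha 1}+\langle e,c\rangle/\Delta_{\alpha 1}=\langle d(c,c,e),d(e,c,c)\rangle/\Delta_{\alpha 1}=\langle e,d(e,c,c)\rangle/\Delta_{\alpha 1}=0
\]
(the last by (2)(a)), show that $\langle a,b\rangle/\Delta_{\alpha 1}$ and $\langle c,e\rangle/\Delta_{\alpha 1}$ are both the additive inverse of $\langle e,c\rangle/\Delta_{\alpha 1}$, hence equal. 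So the missing ingredient in your sketch is precisely to bring in the abelian group structure you already proved exists in (1)(a).
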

\begin{proof}

(1a) Since $\mathcal V$ has a difference term, the TC-commutator is join-additive in both coordinates \cite{vardiff}; therefore, 
\begin{align*}
[1_{A(\alpha)},1_{A(\alpha)}] = [p_{0}^{-1}(1_{A}),p_{1}^{-1}(1_{A})] &= [\eta_{1} \vee \Delta_{\alpha 1}, \eta_{0} \vee \Delta_{\alpha 1} ] \\
&= [\eta_{1},\eta_{0}] \vee [\eta_{1},\Delta_{\alpha 1}] \vee [\Delta_{\alpha 1},\eta_{0}] \vee [\Delta_{\alpha 1},\Delta_{\alpha 1}]   \\
&= [\eta_{1},\Delta_{\alpha 1}] \vee [\Delta_{\alpha 1},\eta_{0}] \vee [\Delta_{\alpha 1},\Delta_{\alpha 1}]   \\
&\leq \Delta_{\alpha 1}.
\end{align*}

(1b) Since $r: A \rightarrow A$ is a $\sigma$-trace, we have $r(x) \mathrel{\sigma} x$ and $r(x) = r(y)$ iff $(x,y) \in \sigma$. Suppose $\psi(x/[\alpha,\beta]) = \psi(y/[\alpha,\beta])$. Then $\begin{bmatrix} r(x) \\ x \end{bmatrix}/\Delta_{\alpha \beta}  = \begin{bmatrix} r(y) \\ y \end{bmatrix}/\Delta_{\alpha \beta}$ and $x/\sigma = y/\sigma$. So we have $(x,y) \in \sigma$ and $\begin{bmatrix} r(x) \\ x \end{bmatrix} \Delta_{\alpha \beta} \begin{bmatrix} r(y) \\ y \end{bmatrix}$. Then $r(x)=r(y)$ implies $(x,y) \in [\alpha,\beta]$ by Lemma~\ref{lem:20}(1), and so $x/[\alpha,\beta] = y/[\alpha,\beta]$.

(2a) For $(a,b) \in \alpha$, $a \mathrel{[\alpha,\alpha]} d(a,b,b)$ since $d$ is a difference term. Since $[\alpha,\alpha] \leq [\alpha,1]$, by the remarks before Lemma~\ref{lem:20} there is $v \in A$ such that $\begin{bmatrix} a \\ d(a,b,b) \end{bmatrix} \Delta_{\alpha 1} \begin{bmatrix} v \\ v \end{bmatrix} \Delta_{\alpha \beta} \begin{bmatrix} u \\ u \end{bmatrix}$ using the generators of $\Delta_{\alpha 1}$ for the second step.

(2b) Define $\phi : A(\alpha) \rightarrow A/[\alpha,1](\alpha/[\alpha,1])/\Delta_{\alpha/[\alpha,1] 1}$ by $\phi \left( \begin{bmatrix} a \\ b \end{bmatrix} \right) := \begin{bmatrix} a/[\alpha,1] \\ b/[\alpha,1] \end{bmatrix}/\Delta_{\alpha/[\alpha,1] 1}$. It is easy to see that $\phi$ is a surjective homomorphism. We now calculate the kernel. We have $\Delta_{\alpha 1} \subseteq \ker \phi$ since $\phi$ identifies the generators of $\Delta_{\alpha 1}$. Now assume $\left( \begin{bmatrix} a \\ b \end{bmatrix} , \begin{bmatrix} c \\ e \end{bmatrix} \right) \in \ker \phi$. Then $(a,b), (c,e) \in \alpha$ and
\[
\begin{bmatrix} a/[\alpha,1] \\ b/[\alpha,1] \end{bmatrix} \Delta_{\alpha/[\alpha,1] 1} \begin{bmatrix} c/[\alpha,1] \\ e/[\alpha,1] \end{bmatrix}.
\]
Since $\alpha/[\alpha,1]$ is central in $A/[\alpha,1]$, we have $d(b,a,c)/[\alpha,1] = e/[\alpha,1]$ and so $(e,d(b,a,c)) \in [\alpha,1]$. Then by congruence modularity, $\begin{bmatrix} u \\ u \end{bmatrix} \Delta_{\alpha \beta} \begin{bmatrix} e \\ d(b,a,c) \end{bmatrix}$ for some $u \in A$. Define $x +_{0} y : = d(x,0,y)$ where $0:= \begin{bmatrix} u \\ u \end{bmatrix}/\Delta_{\alpha 1}$ is the $\Delta_{\alpha 1}$-class containing the diagonal. Since $A(\alpha)/\Delta_{\alpha 1}$ is abelian by (1a), $x +_{0} y$ is the operation of an abelian group in which $0=\begin{bmatrix} e \\ d(b,a,c) \end{bmatrix}/\Delta_{\alpha 1}$ is the identity.

Now observe
\begin{align*}
\begin{bmatrix} a \\ b \end{bmatrix}/\Delta_{\alpha 1} + \begin{bmatrix} e \\ c \end{bmatrix}/\Delta_{\alpha 1} = \begin{bmatrix} a \\ b \end{bmatrix}/\Delta_{\alpha 1} + \begin{bmatrix} a \\ a \end{bmatrix}/\Delta_{\alpha 1}  + \begin{bmatrix} e \\ c \end{bmatrix}/\Delta_{\alpha 1}   &= \begin{bmatrix} d(a,a,e) \\ d(b,a,c) \end{bmatrix}/\Delta_{\alpha 1} \\
&= \begin{bmatrix} e \\ d(b,a,c) \end{bmatrix}/\Delta_{\alpha 1} = 0
\end{align*}
implies $\begin{bmatrix} e \\ c \end{bmatrix}/\Delta_{\alpha 1} = - \begin{bmatrix} a \\ b \end{bmatrix}/\Delta_{\alpha 1}$. Then
\begin{align*}
\begin{bmatrix} c \\ e \end{bmatrix}/\Delta_{\alpha 1} - \begin{bmatrix} a \\ b \end{bmatrix}/\Delta_{\alpha 1} = \begin{bmatrix} c \\ e \end{bmatrix}/\Delta_{\alpha 1} + \begin{bmatrix} e \\ c \end{bmatrix}/\Delta_{\alpha 1} &= \begin{bmatrix} d(c,c,e) \\ d(e,c,c) \end{bmatrix}/\Delta_{\alpha 1} \\
&= \begin{bmatrix} e \\ d(e,c,c) \end{bmatrix}/\Delta_{\alpha 1} = 0. \\
\end{align*}
This yields $\ker \phi \subseteq \Delta_{\alpha 1}$.
\end{proof}

\begin{remark}
There is freedom in the choice of $\sigma$ in Lemma~\ref{lem:1}(4). Injectivity is immediate if we choose $\sigma=0_{A}$ since the second coordinates will always be distinct. If we take $\sigma = 1_{A}$, then $A/[\alpha,\beta]$ is injective with $A(\alpha)/\Delta_{\alpha \beta}$. The question becomes for which choices of $\sigma$ do we have that $\pi$ is surjective and a homomorphism ?
\end{remark}

Suppose we have two algebras $B$ and $Q$ in the same signature $\tau$ and a binary operation on $B$ denoted by $x + y$. Suppose further that for every operation symbol $f \in \tau$ we have an operation $T_{f} : Q^{\ar f} \rightarrow B$ we shall call the transfer of $f$. We define a new algebra $B \otimes^{T} Q$ over the universe of the direct product $B \times Q$ where each operation symbol $f \in \tau$ is interpreted by the rule
\[
F_{f} \left( (b_1,q_1),\ldots, (b_n,q_n) \right) := \left(  f(b_1,\ldots,b_n) + T_{f}(q_1,\ldots,q_n) ,f(q_1,\ldots,q_n) \right).
\]
In order to prove Theorem~\ref{thm:abelian1}, we will first establish a special case in the next proposition which extends \cite[Prop 7.1]{commod} by considering an abelian congruence instead of the center. We observe the proof is almost the same.

\begin{proposition}\label{thm:extension}
Let $\mathcal V$ be a variety with a difference term and $A \in \mathcal V$. If $\alpha \in \Con A$ is abelian, then
\[
A/ [\alpha,1_{A}] \approx A(\alpha)/\Delta_{\alpha 1} \otimes^{T} A/\alpha.
\]
\end{proposition}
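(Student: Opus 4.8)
The plan is to construct an explicit isomorphism between $A/[\alpha,1_A]$ and the algebra $B \otimes^T Q$, where $B = A(\alpha)/\Delta_{\alpha 1}$, $Q = A/\alpha$, the binary operation $+$ on $B$ is the abelian group operation $x +_0 y = d(x, 0, y)$ (with $0$ the $\Delta_{\alpha 1}$-class of the diagonal) furnished by Lemma~\ref{lem:1}(1a), and the transfer maps $T_f$ are suitably chosen. First I would fix an $\alpha$-trace $r : A \to A$ (a section $l$ of $\pi : A \to A/\alpha$ composed with $\pi$). The candidate map is
\[
\Phi : A/[\alpha, 1_A] \longrightarrow A(\alpha)/\Delta_{\alpha 1} \otimes^T A/\alpha, \qquad \Phi\bigl( x/[\alpha,1_A] \bigr) = \left( \begin{bmatrix} r(x) \\ x \end{bmatrix} \big/ \Delta_{\alpha 1} ,\ x/\alpha \right),
\]
which is exactly the map $\psi$ of Lemma~\ref{lem:1}(1b) with $\sigma = \alpha$ and $\beta = 1_A$; that lemma already gives injectivity, and surjectivity is clear since $\langle r(x), y\rangle \in A(\alpha)$ whenever $y \mathrel{\alpha} x$ and every element of $A(\alpha)$ is $\Delta_{\alpha 1}$-equivalent to one of this form (adjust the first coordinate along the diagonal using the generators of $\Delta_{\alpha 1}$). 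So the content is to pin down $T_f$ and verify $\Phi$ is a homomorphism.

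To find $T_f$, I would compute $\Phi\bigl(f(x_1,\dots,x_n)/[\alpha,1_A]\bigr) = \bigl( \langle r(f(\bar x)), f(\bar x)\rangle/\Delta_{\alpha 1},\ f(\bar x)/\alpha \bigr)$ and compare it with the operation $F_f$ applied to the $\Phi(x_i/[\alpha,1_A])$. Since the second coordinate is just $f$ applied to the $x_i/\alpha$, that matches automatically. For the first coordinate, the idea (exactly as in \cite[Prop 7.1]{commod}) is that $\langle f(r(x_1),\dots,r(x_n)), f(x_1,\dots,x_n)\rangle \in A(\alpha)$ — because $r(x_i) \mathrel{\alpha} x_i$ — and I claim its $\Delta_{\alpha 1}$-class equals $\sum_{i}^{0} \langle r(x_i), x_i\rangle/\Delta_{\alpha 1}$ computed in the abelian group $(B, +_0)$. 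This is the key computation: one builds the needed $\Delta_{\alpha 1}$-relations by applying the difference term $d$ to the defining generators $\begin{bmatrix} x & x \\ y & y\end{bmatrix}$ and $\begin{bmatrix} u & v \\ u & v\end{bmatrix}$ of $M(\alpha, 1_A)$ coordinatewise through $f$, the point being that $+_0$ is defined from $d$ and $[\alpha,\alpha] \le [\alpha, 1_A]$ so everything that is a difference-term-commutator of $\alpha$ with anything collapses. Then $T_f(q_1,\dots,q_n)$ is forced to be the ``defect'' class
\[
T_f(q_1,\dots,q_n) := \begin{bmatrix} r(f(l(q_1),\dots,l(q_n))) \\ f(l(q_1),\dots,l(q_n)) \end{bmatrix} \big/ \Delta_{\alpha 1},
\]
i.e. the discrepancy between $r \circ f \circ l$ and $f \circ l$; one checks this is well-defined independently of further choices because changing representatives within an $\alpha$-class moves things only by elements of $[\alpha, 1_A]$, which are killed in $B$.

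The main obstacle, and the step deserving the most care, is verifying that the first coordinate of $F_f$ applied to the $\Phi(x_i/[\alpha,1_A])$ really equals $\langle r(f(\bar x)), f(\bar x)\rangle/\Delta_{\alpha 1}$ — that is, that $f(r(x_1),\dots,r(x_n)) + T_f(\bar x/\alpha) \equiv \langle r(f(\bar x)), f(\bar x)\rangle$ modulo $\Delta_{\alpha 1}$, where the first summand is read as the diagonal-adjusted class $\langle f(r(\bar x)), f(\bar x)\rangle/\Delta_{\alpha 1}$ and $+$ is $+_0$. This requires carefully tracking how $f$ interacts with $+_0 = d(\cdot, 0, \cdot)$, using that $A(\alpha)/\Delta_{\alpha 1}$ is an abelian group, that $\langle v, v\rangle/\Delta_{\alpha 1} = 0$ for all $v$, and Lemma~\ref{lem:1}(2a) to normalize diagonal classes; the difference-term identities $d(x,x,y) \mathrel{[\alpha,\alpha]} y$ and $d(x,y,y) \mathrel{[\alpha,\alpha]} x$ let one slide between $r(f(\bar x))$, $f(r(\bar x))$, and $f(l(\bar q))$ as first coordinates since these agree modulo $[\alpha, 1_A]$. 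Because the whole computation already appears in the central case in \cite[Prop 7.1]{commod} and the only change is replacing $1_A$-centrality of $\alpha$ by abelianness of $\alpha$ together with the inclusion $[\alpha,\alpha] \le [\alpha,1_A]$ (so that one works modulo $[\alpha,1_A]$ throughout rather than on the nose), I expect it to go through with essentially the same manipulations; I would present the transfer $T_f$ explicitly and then record the homomorphism check as a coordinatewise verification, relegating the difference-term bookkeeping to the style already used in the proof of Lemma~\ref{lem:20}.
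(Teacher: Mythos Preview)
Your plan is the paper's plan: same map $\Phi$, same transfer $T_f(\bar q)=\begin{bmatrix} r(f(l(\bar q))) \\ f(l(\bar q)) \end{bmatrix}/\Delta_{\alpha 1}=\begin{bmatrix} r(f(\bar x)) \\ f(r(\bar x)) \end{bmatrix}/\Delta_{\alpha 1}$, same abelian-group addition $+_0$, injectivity from Lemma~\ref{lem:1}(1b), and the homomorphism check reduces to the identity $\langle f(r(\bar x)),f(\bar x)\rangle/\Delta_{\alpha 1} +_0 T_f = \langle r(f(\bar x)),f(\bar x)\rangle/\Delta_{\alpha 1}$, which the paper verifies by inserting the zero $\langle f(r(\bar x)),f(r(\bar x))\rangle/\Delta_{\alpha 1}$ between the two summands and using that $d$ is Mal'cev on the $\alpha$-class of $f(\bar x)$.

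Three missteps to clean up. First, the displayed claim that $\langle f(r(\bar x)),f(\bar x)\rangle/\Delta_{\alpha 1}$ equals $\sum_i \langle r(x_i),x_i\rangle/\Delta_{\alpha 1}$ is false: the left side is $f^{B}(b_1,\ldots,b_n)$, not a group sum of the $b_i$; in an abelian algebra $f^{B}$ is affine but certainly not the identity sum. Fortunately you never use this claim downstream, so just delete it. Second, your surjectivity sketch only shows every $\Delta_{\alpha 1}$-class contains a pair $\langle r(y),y\rangle$; it does not explain why $y$ can be chosen in a \emph{prescribed} $\alpha$-class $x/\alpha$. The paper does this explicitly: given $(\langle a,b\rangle/\Delta_{\alpha 1},\,x/\alpha)$ one takes $y=d(b,a,r(x))$ and checks $\langle a,b\rangle\mathrel{\Delta_{\alpha 1}}\langle r(x),y\rangle$ and $y\mathrel{\alpha}x$ directly from the generators and the difference-term identity $d(a,a,r(x))=r(x)$. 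Third, do not invoke Lemma~\ref{lem:1}(2a): that part is stated for congruence modular varieties, whereas here you only have a difference term. You do not need it; the fact that $\langle v,v\rangle/\Delta_{\alpha 1}=0$ for every $v$ follows immediately from the generators of $\Delta_{\alpha 1}$.
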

\begin{proof}
We argue by two claims. That $A(\alpha)/\Delta_{\alpha 1}$ is abelian is Lemma~\ref{lem:1}(1). Fix an $\alpha$-trace $r: A \rightarrow A$. Define the map $\psi: A/[\alpha,1] \rightarrow A(\alpha)/\Delta_{\alpha 1} \otimes^{T} A/\alpha$ by
\[
\psi(x/[\alpha,1]):= \left( \begin{bmatrix} r(x) \\ x \end{bmatrix} /\Delta_{\alpha 1} \ , \ x/\alpha \right)
\]
\vspace{0.1cm}

\textbf{Claim}: $\psi$ is bijective.
\begin{proof}
Injectivity of $\psi$ is Lemma~\ref{lem:1}(1b) where we take $\beta=1_{A}$ and $\sigma=\alpha$. To show surjectivity, take $\left( \begin{bmatrix} a \\ b \end{bmatrix}/\Delta_{\alpha 1} \ , \ x/\alpha \right) \in A(\alpha)/\Delta_{\alpha 1} \times A/\alpha$. Let $d$ be a difference term for $\mathcal V$. Then applying the difference term to the sequence
\[
\begin{bmatrix} a  \\ b \end{bmatrix} \Delta_{\alpha 1} \begin{bmatrix} a  \\ b \end{bmatrix} \ , \ \begin{bmatrix} a  \\ a \end{bmatrix} \Delta_{\alpha 1} \begin{bmatrix} a  \\ a \end{bmatrix} \ , \ \begin{bmatrix} a  \\ a \end{bmatrix} \Delta_{\alpha 1} \begin{bmatrix} r(x)  \\ r(x) \end{bmatrix}
\]
produces
\begin{align}\label{Eq:10}
\begin{bmatrix} a  \\ b \end{bmatrix} = \begin{bmatrix} d(a,a,a)  \\ d(b,a,a) \end{bmatrix} \Delta_{\alpha 1} \begin{bmatrix} d(a,a,r(x))  \\ d(b,a,r(x)) \end{bmatrix} &=  \begin{bmatrix} r(x)  \\ d(b,a,r(x)) \end{bmatrix}.
\end{align}
Then $(a,b) \in \alpha$ implies $r(a)=r(b)$ and so $d(b,a,r(x)) \mathrel{\alpha} d(r(b),r(a),r(x))=r(x)$; thus, $r(d(b,a,r(x)))= r(r(x))=r(x)$ and so $d(b,a,r(x))/\alpha = x/\alpha$. Altogether we have
\[
\psi \left(d(b,a,r(x))/[\alpha,1] \right) = \left( \begin{bmatrix} r(x)  \\ d(b,a,r(x)) \end{bmatrix}/\Delta_{\alpha 1}  \ , \ d(b,a,r(x))/\alpha \right) = \left( \begin{bmatrix} a \\ b \end{bmatrix}/\Delta_{\alpha 1} \ , \ x/\alpha \right).
\]
\end{proof}
\vspace{0.1cm}

We now define the transfer functions. For any basic operation $f$ with $\ar f = n$, define $T_{f}: A/\alpha \rightarrow A(\alpha)/\Delta_{\alpha 1}$ by
\[
T(x_{1}/\alpha,\ldots,x_{n}/\alpha) := \begin{bmatrix} r(f(x_1,\ldots,x_n) \\ f(r(x_1),\ldots,r(x_n)) \end{bmatrix} /\Delta_{\alpha 1}.
\]
For the binary operation on $A(\alpha)/\Delta_{\alpha 1}$, we take $x +_{0} y : = d(x,0,y)$ where $0:= \begin{bmatrix} u \\ u \end{bmatrix}/\Delta_{\alpha 1}$ is the $\Delta_{\alpha 1}$-class containing the diagonal. Since $A(\alpha)/\Delta_{\alpha 1}$ is abelian by Lemma~\ref{lem:1}(1a), $x +_{0} y$ is the operation of an abelian group in which $0$ is the identity. Since $\alpha$ is an abelian congruence, the difference term evaluates as a Mal'cev operation on $\alpha$-classes.
\vspace{0.1cm}

\textbf{Claim}: $\psi$ is a homomorphism.
\begin{proof}
Take $\bar{x}=(x_1,\ldots,x_n) \in A$ and write $r(\bar{x})= (r(x_1),\ldots,r(x_{n}))$. We calculate
\begin{align*}
F_{f}(\psi(\bar{x})) &= \left( \begin{bmatrix} f(r(\bar{x})) \\ f(\bar{x}) \end{bmatrix}/\Delta_{\alpha 1}  + \begin{bmatrix} r(f(\bar{x})) \\ f(r(\bar{x})) \end{bmatrix} /\Delta_{\alpha 1} \ , \ f(\bar{x})/\alpha \right) \\
&= \left( \begin{bmatrix} f(r(\bar{x})) \\ f(\bar{x}) \end{bmatrix}/\Delta_{\alpha 1}  + \begin{bmatrix} f(r(\bar{x})) \\ f(r(\bar{x})) \end{bmatrix}/\Delta_{\alpha 1} + \begin{bmatrix} r(f(\bar{x})) \\ f(r(\bar{x})) \end{bmatrix} /\Delta_{\alpha 1} \ , \ f(\bar{x})/\alpha \right) \\
&= \left( \begin{bmatrix} d \left( f(r(\bar{x})), f(r(\bar{x})), r(f(\bar{x})) \right) \\ d \left( f(\bar{x}), f(r(\bar{x})), f(r(\bar{x})) \right) \end{bmatrix} \ , \ f(\bar{x})/\alpha \right) \\
&= \left( \begin{bmatrix} r(f(\bar{x})) \\ f(\bar{x}) \end{bmatrix}  \ , \ f(\bar{x})/\alpha \right) \\
&= \psi (f(\bar{x}))
\end{align*}
because $f(r(\bar{x})) \mathrel{\alpha} f(\bar{x}) \ \Rightarrow \ d \left( f(\bar{x}), f(r(\bar{x})), f(r(\bar{x})) \right) = f(\bar{x})$.
\end{proof}
\vspace{0.1cm}
The theorem is established.
\end{proof}

\begin{remark}
The proof of Proposition~\ref{thm:extension} can show the following: Let $\mathcal V$ be a variety with a difference term and $A \in \mathcal V$. If $\alpha, \beta \in \Con A$ such that $\alpha \leq \beta$ with $\alpha$ abelian, then
\[
A/ [\alpha,\beta] \approx \Sg \left( \left\{ \left( \begin{bmatrix} a \\ b  \end{bmatrix}/\Delta_{\alpha \beta}, c/\alpha \right) : (a,c) \in \beta  \right\} \right) \leq A(\alpha)/\Delta_{\alpha 1} \otimes^{T} A/\alpha.
\]
\end{remark}

We can recover \cite[Prop 7.1]{commod} from the next corollary by taking $\alpha$ to be the center and the variety to be congruence modular.

\begin{corollary}\label{cor:1}(see \cite[Prop 7.1]{commod})
Let $\mathcal V$ be a variety with a difference term, $A \in \mathcal V$ and $\alpha \in \Con A$ central. Then
\[
A \approx A(\alpha)/\Delta_{\alpha 1} \otimes^{T} A/\alpha.
\]
\end{corollary}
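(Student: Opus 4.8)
The plan is to read this off directly from Proposition~\ref{thm:extension}. Recall that $\alpha \in \Con A$ being \emph{central} means exactly $[\alpha, 1_A] = 0_A$; in particular $[\alpha,\alpha] \leq [\alpha, 1_A] = 0_A$, so $\alpha$ is abelian and the hypotheses of Proposition~\ref{thm:extension} are met. That proposition then gives
\[
A/[\alpha, 1_A] \approx A(\alpha)/\Delta_{\alpha 1} \otimes^T A/\alpha,
\]
where the transfer operations $T_f$ and the abelian group operation $x +_0 y = d(x,0,y)$ defining the right-hand algebra are precisely those constructed in its proof (with $0$ the $\Delta_{\alpha 1}$-class of the diagonal). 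Since $[\alpha, 1_A] = 0_A$, the quotient $A/[\alpha,1_A]$ is the quotient of $A$ by the trivial congruence, hence canonically isomorphic to $A$ via $x \mapsto x/0_A$. Composing the two isomorphisms yields $A \approx A(\alpha)/\Delta_{\alpha 1} \otimes^T A/\alpha$, as claimed.

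There is essentially no obstacle here: the entire content sits in Proposition~\ref{thm:extension}, and the corollary is just the observation that the hypothesis ``$\alpha$ central'' collapses its left-hand side to $A$ itself. The only items worth a remark are (i) confirming the convention that centrality is $[\alpha,1_A]=0$ rather than $[1_A,\alpha]=0$ --- these agree in varieties with a difference term, where the commutator is symmetric on such pairs --- and (ii) noting that the symbol $\otimes^T$ in the corollary's statement denotes the very same construction as in the proposition, so that no additional choice of $\alpha$-trace $r$ or difference term $d$ intervenes beyond those already fixed there.

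Finally, for the parenthetical claim that \cite[Prop 7.1]{commod} is recovered: every congruence modular variety has a difference term, so one may take $\mathcal V$ congruence modular and $\alpha = \zeta_A$ the center of $A$ --- the largest congruence with $[\zeta_A, 1_A] = 0$ --- and the corollary specializes verbatim to the statement in \cite{commod}.
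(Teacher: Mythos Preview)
Your proposal is correct and matches the paper's intent exactly: the corollary is stated without proof in the paper because it is the immediate specialization of Proposition~\ref{thm:extension} to the case $[\alpha,1_A]=0$, which is precisely what you spell out. Your side remarks on the symmetry of the commutator in difference-term varieties and on recovering \cite[Prop~7.1]{commod} are also accurate and in line with the surrounding text.
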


Projection onto the second factor $p_{2}: A(\alpha)/\Delta_{\alpha 1} \otimes^{T} \rightarrow A/\alpha$ is a surjective homomorphism. If $\psi$ effects the isomorphism in Corollary~\ref{cor:1}, then $\ker p_{2} = \psi(\alpha)$.

\begin{theorem}\label{thm:abelian1}
Let $\mathcal V$ be a congruence modular variety, $A \in \mathcal V$ and $\alpha \in \Con A$. Then
\[
A/ [\alpha,1_{A}] \approx A(\alpha)/\Delta_{\alpha 1} \otimes^{T} A/\alpha.
\]
\end{theorem}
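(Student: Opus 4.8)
The plan is to reduce the congruence-modular case to the difference-term case already handled in Proposition~\ref{thm:extension}. The only thing preventing a direct application is that $\alpha$ need not be abelian; but $\alpha/[\alpha,\alpha]$ (indeed $\alpha/[\alpha,1_A]$) is abelian in the quotient algebra, and congruence modularity gives us enough compatibility between the $\Delta$ construction and quotients to transport the decomposition. So first I would pass to $\bar A := A/[\alpha,1_A]$ with its congruence $\bar\alpha := \alpha/[\alpha,1_A]$. By the standard commutator identity in modular varieties, $[\bar\alpha, 1_{\bar A}] = ([\alpha,1_A] \vee [\alpha,1_A])/[\alpha,1_A] = 0_{\bar A}$, so $\bar\alpha$ is an abelian (indeed central) congruence of $\bar A$. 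Since congruence-modular varieties have a difference term, Proposition~\ref{thm:extension} applies to $\bar A$ and $\bar\alpha$, yielding
\[
\bar A = \bar A/[\bar\alpha, 1_{\bar A}] \approx \bar A(\bar\alpha)/\Delta_{\bar\alpha\, 1} \otimes^{T} \bar A/\bar\alpha.
\]
(Here I am using Corollary~\ref{cor:1} applied inside $\bar A$, since $\bar\alpha$ is central there, so no further quotient by a commutator is needed on the left.)

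Next I would identify the two factors on the right with the intended ones. The second factor is immediate: $\bar A/\bar\alpha = (A/[\alpha,1_A])/(\alpha/[\alpha,1_A]) \approx A/\alpha$ by the third isomorphism theorem (using $[\alpha,1_A]\le\alpha$). For the first factor I would invoke Lemma~\ref{lem:1}(2b), which is exactly the statement
\[
\bar A(\bar\alpha)/\Delta_{\bar\alpha\, 1} = A/[\alpha,1](\alpha/[\alpha,1])/\Delta_{\alpha/[\alpha,1]\; 1} \approx A(\alpha)/\Delta_{\alpha 1},
\]
proved there for arbitrary congruence-modular $\mathcal V$. Combining these two identifications with the displayed isomorphism for $\bar A$ gives
\[
A/[\alpha,1_A] \approx A(\alpha)/\Delta_{\alpha 1} \otimes^{T} A/\alpha,
\]
which is the assertion.

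The one point requiring care — and the main obstacle — is that the transfer functions $T_f$ and the abelian group operation $+$ defining $\otimes^{T}$ must be shown to match up under the two identifications; i.e. the isomorphism $\bar A(\bar\alpha)/\Delta_{\bar\alpha 1}\approx A(\alpha)/\Delta_{\alpha 1}$ of Lemma~\ref{lem:1}(2b) is not merely an abstract isomorphism of algebras but is compatible with the specific difference-term-induced group structure $x+_0 y = d(x,0,y)$ and with the cocycles $T_f(\bar x) = \begin{bmatrix} r(f(\bar x)) \\ f(r(\bar x)) \end{bmatrix}/\Delta$ used to build the $\otimes^{T}$ product. Since $d$ is a difference term for $\mathcal V$ (hence also for $\bar A$) and the $\Delta$-isomorphism $\phi$ of Lemma~\ref{lem:1}(2b) sends the diagonal class to the diagonal class and commutes with all term operations, this compatibility is routine to check, but it is the step one should not skip: one verifies that $\phi$ carries $0$ to $0$, hence $x +_0 y$ to $x +_0 y$, and carries an $\alpha$-trace $r$ on $A$ to a $\bar\alpha$-trace on $\bar A$ (composing $r$ with the quotient map), so that $\phi\circ T_f^{A} = T_f^{\bar A}$ up to the identification. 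Once this bookkeeping is in place the two claims of Proposition~\ref{thm:extension} — bijectivity and the homomorphism property of $\psi$ — transfer verbatim, and the theorem follows. Alternatively, and perhaps more cleanly, one can simply rerun the proof of Proposition~\ref{thm:extension} from scratch with $[\alpha,1_A]$ in place of $[\alpha,\alpha]$, replacing each appeal to "$\alpha$ abelian so $d$ is Mal'cev on $\alpha$-classes" by an appeal to the fact that $d$ is Mal'cev modulo $[\alpha,1_A]$ on $\alpha$-classes, which is precisely Lemma~\ref{lem:1}(2a); I would present the argument this second way if the compatibility bookkeeping of the first approach proves tedious to state precisely.
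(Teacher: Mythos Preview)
Your proposal is correct and follows essentially the same route as the paper: pass to $\bar A = A/[\alpha,1_A]$ where $\bar\alpha = \alpha/[\alpha,1_A]$ is central, apply Corollary~\ref{cor:1}, then identify the two factors via the second isomorphism theorem and Lemma~\ref{lem:1}(2b). Your concern about the transfer and group operation matching up under the identification is exactly what the paper addresses in its final paragraph, where it records the general observation that an isomorphism $\psi\colon B\to C$ induces $B\otimes^{T}Q \approx C\otimes^{T'}Q$ with $T'_f = \psi\circ T_f$ and the transported binary polynomial; so your ``bookkeeping'' step is handled there in one stroke rather than by rerunning the Proposition~\ref{thm:extension} argument.
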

\begin{proof}
Apply Corollary~\ref{cor:1} to the algebra $A/[\alpha,1]$ and the central congruence $\alpha/[\alpha,1]$ to conclude
\[
A/[\alpha,1] \approx A/[\alpha,1](\alpha/[\alpha,1])/\Delta_{\alpha/[\alpha,1] 1} \otimes^{T} (A/[\alpha,1])/(\alpha/[\alpha,1]).
\]
For the second factor, the $2^{\mathrm{nd}}$-isomorphism theorem yields $(A/[\alpha,1])/(\alpha/[\alpha,1]) \approx A/\alpha$. For the first factor, Lemma~\ref{lem:1}(2b) gives the isomorphism $A/[\alpha,1](\alpha/[\alpha,1])/\Delta_{\alpha/[\alpha,1] 1} \approx A(\alpha)/\Delta_{\alpha 1}$.

To finish, we make the following observation. Suppose $\psi : B \rightarrow C$ is an isomorphism and $B \otimes^{T} Q$ is defined using the binary polynomial $x + y:= t(x,y,b_1,\ldots,b_k)$ for some term $t(x_1,\ldots,x_{k+2})$. Then $B \otimes^{T} Q \approx C \otimes^{T'} Q$ where $C \otimes^{T'} Q$ is defined using the binary polynomial $x \oplus y := t(x,y,\psi(b_{1}),\ldots,\psi(b_{k}))$ and transfer $T_{f}':=\psi(T_{f}): Q \rightarrow C$ for each fundamental operation $f$.
\end{proof}

The following proposition is an extension of \cite[Cor 7.2]{commod} to nilpotent algebras. We need to understand the evaluation of a term $t$ in the algebra $B \otimes^{T} Q$. Since the Mal'cev term is compatible with the operations of the abelian algebra $B$, by recursive evaluation along the composition tree of the term $t$, we see that the interpretation of $t$ in $B \otimes^{T} Q$ is given by 
\begin{align*}
F_{t} &\left( \left\langle \vec{a}, \vec{x} \right\rangle \right) \\
&= \left\langle t^{B}(\vec{a}) + \sum f^{B} \left( T_{g_{1}} \big( h^{Q}_{11}(\vec{y}_{11}),\ldots,h^{Q}_{1n_{1}}(\vec{y}_{1n_1}) \big) , \ldots,T_{g_{m}} \big( h^{Q}_{m1}(\vec{y}_{m1}),\ldots,h^{Q}_{mn_{m}}(\vec{y}_{mn_{m}}) \big) \right), t^{Q}(\vec{x}) \right\rangle \\
&= \left\langle t^{B}(\vec{a}) + s(\vec{x}), t^{Q}(\vec{x}) \right\rangle
\end{align*}
where the sum is taken over all $f,g_i,h_{ij}$ such that $f$ and $h_{ij}$ are subterms of $t$, $g_i \in \tau$ are operation symbols or variables and from the composition tree of $t$ we have $t = f \left( g_{1}(h_{11},\ldots,h_{1n_1}),\ldots,g_{m}(h_{m1},\ldots,h_{mn_{m}}) \right)$. The coordinates of the tuples $\vec{y}_{ij}$ all belong to $\vec{x}$. In the last line, we have written $s(\vec{x})$ for the above sum and we note that it is an operation that depends only on the tuples $\vec{x} \in Q^{\ar t}$. This suffices for the calculation in Lemma~\ref{lem:3}.

Define $[\alpha]_{1}:=[\alpha,\alpha]$ and recursively, $[\alpha]_{n+1}:=[\alpha,[\alpha]_{n}]$ for any $\alpha \in \Con A$, $A \in \mathcal V$. A congruence $\alpha$ is $n$-step nilpotent if $[\alpha]_{n} = 0$.

\begin{lemma}\label{lem:3}
Let $B$ and $Q$ be algebras in the same signature $\tau$ and suppose $B$ abelian with a Mal'cev term m(x,y,z). If the binary term $m(x,0,y):= x+y$ for some $0 \in B$ is used to define $\otimes^{T}$, then $[1,\ker q]=0$ where $q: B \otimes^{T} Q \rightarrow Q$ is projection.
\end{lemma}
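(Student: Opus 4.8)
The plan is to exhibit, for each pair of elements of $B \otimes^{T} Q$ lying in the same $\ker q$-class, an explicit Mal'cev-type witness showing the term condition $C(1,\ker q;0)$ holds. Recall that $\ker q$ consists of pairs $\langle (b,x),(b',x) \rangle$ with the same $Q$-coordinate; so I want to show that for every term $t$, every choice of matrix entries, and every substitution differing only in $\ker q$-related coordinates in the "parameter" slots, changing those parameters has no effect once the two distinguished rows agree on one pair of polynomial values. Concretely, the term-condition states: for a $(\ar t)$-ary term $F_t$ of $B\otimes^T Q$, a partition of its variables into $\vec u$ and $\vec v$, elements $\vec a \mathrel{1} \vec a'$ (any elements, since the congruence is $1$) and $\vec c \mathrel{\ker q} \vec c'$, if $F_t(\vec a,\vec c) = F_t(\vec a',\vec c)$ then $F_t(\vec a,\vec c') = F_t(\vec a',\vec c')$.

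First I would invoke the formula for $F_t$ established in the paragraph preceding the lemma: evaluation of a term $t$ in $B \otimes^{T} Q$ on $\langle \vec a,\vec x\rangle$ has the shape $\langle t^{B}(\vec a) + s(\vec x),\, t^{Q}(\vec x)\rangle$, where $s(\vec x)$ depends \emph{only} on the $Q$-coordinates of the input. Since $\vec c$ and $\vec c'$ are $\ker q$-related, they have identical $Q$-coordinates, hence the same image under $t^Q$ and, crucially, the same value of $s$. Write $\vec c = (\vec b, \vec x)$-entries and $\vec c' = (\vec b', \vec x)$-entries with matching $\vec x$; likewise split $\vec a$ into its $B$- and $Q$-parts $(\vec p, \vec y)$ and $\vec a'$ into $(\vec p', \vec y)$ with the \emph{same} $\vec y$, since $\vec a \mathrel{1} \vec a'$ only forces a constraint that is vacuous on $Q$-coordinates — wait, this needs care: $1$ is the full congruence, so $\vec a$ and $\vec a'$ may have different $Q$-coordinates.

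So the heart of the argument is to track the two coordinates separately. On the $Q$-coordinate, $F_t$ acts as $t^Q$ applied to the $Q$-coordinates alone, and $\vec c,\vec c'$ do not change those; thus the $Q$-coordinate of $F_t(\vec a,\vec c)$ equals that of $F_t(\vec a,\vec c')$, and similarly with $\vec a'$. So the hypothesis $F_t(\vec a,\vec c)=F_t(\vec a',\vec c)$ already forces the $Q$-coordinates of $\vec a$-side and $\vec a'$-side to agree after applying $t^Q$, and this equality is unaffected by swapping $\vec c\to\vec c'$. On the $B$-coordinate, $F_t(\vec a,\vec c)$ has the form $t^B(\text{$B$-parts of }\vec a,\vec c) + s(\text{$Q$-parts})$; the summand $s$ is common to all four evaluations because all four share the same $Q$-data in the relevant slots (the $Q$-parts of $\vec c$ and $\vec c'$ coincide, and the $Q$-parts of $\vec a$ resp.\ $\vec a'$ are held fixed between the $\vec c$ and $\vec c'$ evaluations). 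Therefore the $B$-coordinate reduces to $t^B$ evaluated in the abelian algebra $B$, and there the term condition $C(1_B,1_B;0_B)$ holds automatically since $B$ is abelian. Applying abelianness of $B$ to $t^B$ (with the $\vec a$-vs-$\vec a'$ entries as one block and the $\vec c$-vs-$\vec c'$ entries as the other) transports the equality of $B$-coordinates from the $\vec c$-column to the $\vec c'$-column. Combining the two coordinates gives $F_t(\vec a,\vec c')=F_t(\vec a',\vec c')$, which is exactly $C(1,\ker q;0)$, i.e.\ $[1,\ker q]=0$.

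The main obstacle is bookkeeping: one must be scrupulous that $s(\vec x)$ genuinely depends only on $Q$-coordinates (this is precisely what the displayed formula before the lemma guarantees, using that $m$ distributes over the operations of the abelian $B$), and that the variables of $t$ are consistently partitioned so that the $\vec c\leftrightarrow\vec c'$ swap never touches a $Q$-coordinate that feeds into $s$ or into $t^Q$ on the $\vec a$-side. Once the two coordinates are decoupled this way, the $Q$-coordinate statement is trivial and the $B$-coordinate statement is just the definition of $B$ being abelian; there is nothing deeper to do.
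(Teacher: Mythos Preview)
Your overall strategy matches the paper's: use the displayed formula $F_t(\langle\vec a,\vec x\rangle)=\langle t^{B}(\vec a)+s(\vec x),\,t^{Q}(\vec x)\rangle$ to decouple the $B$- and $Q$-coordinates, observe that the $Q$-side is unaffected by $\ker q$-changes, cancel the $s$-term, and reduce the $B$-side to the term condition in the abelian algebra $B$. However, you have transposed the two slots in the term condition, and this breaks the cancellation step.

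The condition $C(1,\ker q;0)$ reads: for $\bar a\mathrel{1}\bar b$ and $\bar c\mathrel{\ker q}\bar d$, from $F_t(\bar a,\bar c)=F_t(\bar a,\bar d)$ deduce $F_t(\bar b,\bar c)=F_t(\bar b,\bar d)$. You instead state the implication as $F_t(\vec a,\vec c)=F_t(\vec a',\vec c)\Rightarrow F_t(\vec a,\vec c')=F_t(\vec a',\vec c')$, which is $C(\ker q,1;0)$. In the paper's (correct) orientation the hypothesis compares $s(\bar a^{2},\bar c^{2})$ with $s(\bar a^{2},\bar d^{2})$; since $\bar c^{2}=\bar d^{2}$ these are equal, the $s$-terms cancel outright, and one is left with $t^{B}(\bar a^{1},\bar c^{1})=t^{B}(\bar a^{1},\bar d^{1})$, to which abelianness of $B$ applies directly. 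In your orientation the hypothesis compares $s(\vec y_a,\vec x_c)$ with $s(\vec y_{a'},\vec x_c)$; since $\vec y_a$ and $\vec y_{a'}$ may differ, your claim that ``the summand $s$ is common to all four evaluations'' is false, and you never obtain a clean equality $t^{B}(\cdot)=t^{B}(\cdot)$ to feed into the abelian term condition. The argument in your orientation can be rescued by invoking that $t^{B}$ is affine (available because $B$ is abelian with a Mal'cev term), which forces $t^{B}(\vec p_a,\vec b_c)-t^{B}(\vec p_{a'},\vec b_c)=t^{B}(\vec p_a,\vec b_{c'})-t^{B}(\vec p_{a'},\vec b_{c'})$; but this is a stronger fact than the bare term condition you appeal to. Simply swapping the roles so that the $\ker q$-related pair varies within the hypothesis (as the paper does) makes the $s$-cancellation honest and the rest of your write-up goes through verbatim.
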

\begin{proof}
Since $B$ is abelian, the $m(x,0,y)$ defines an abelian group operation. We verify the centrality condition. Fix a term $F_{t}(\bar{x},\bar{y})$ and tuples $\bar{a}=(\bar{a}^{1},\bar{a}^{2}),\bar{b}=(\bar{b}^{1},\bar{b}^{2}), (\bar{c}^{1},\bar{c}^{2})=\bar{c} \mathrel{\ker q} \bar{d}=(\bar{d}^{1},\bar{d}^{2})$ such that $F_{t}(\bar{a},\bar{c}) = F_{t}(\bar{a},\bar{d})$. This means
\[
\left\langle t^{B}(\bar{a}^{1},\bar{c}^{1}) + s(\bar{a}^{2},\bar{c}^{2}), t^{Q}(\bar{a}^{2},\bar{c}^{2}) \right\rangle = \left\langle t^{B}(\bar{a}^{1},\bar{d}^{1}) + s(\bar{a}^{2},\bar{d}^{2}), t^{Q}(\bar{a}^{2},\bar{d}^{2}) \right\rangle.
\]
Since $\bar{c} \mathrel{\ker q} \bar{d}$, we have $\bar{c}^{2}=\bar{d}^{2}$. This yields the equalities
\[
s(\bar{a}^{2},\bar{c}^{2})= s(\bar{a}^{2},\bar{d}^{2}) \quad \quad s(\bar{b}^{2},\bar{c}^{2}) = s(\bar{b}^{2},\bar{d}^{2}) \quad \quad t^{B}(\bar{b}^{2},\bar{c}^{2})=t^{B}(\bar{b}^{2},\bar{d}^{2}).
\]
Then from $t^{B}(\bar{a}^{1},\bar{c}^{1}) + s(\bar{a}^{2},\bar{c}^{2}) = t^{B}(\bar{a}^{1},\bar{d}^{1}) + s(\bar{a}^{2},\bar{d}^{2})$ we conclude $t^{B}(\bar{a}^{1},\bar{c}^{1}) = t^{B}(\bar{a}^{1},\bar{d}^{1})$. Since $B$ is abelian, we then have $t^{B}(\bar{b}^{1},\bar{c}^{1}) = t^{B}(\bar{b}^{1},\bar{d}^{1})$. Together with the above equalities we conclude $F_{t}(\bar{b},\bar{c}) = F_{t}(\bar{b},\bar{d})$.
\end{proof}

\begin{proposition}\label{prop:5}( see \cite[Cor 7.2]{commod})
Let $\mathcal V$ be a variety with a difference term. An algebra $A \in \mathcal V$ is n-step nilpotent if and only if it can be represented as a right-associated product
\[
A \approx Q_n \otimes^{T_{n-1}} Q_{n-1} \otimes^{T_{n-2}} \cdots \otimes^{T_{1}} Q_1
\]
for abelian algebras $Q_1,\ldots,Q_n \in \mathcal V$.
\end{proposition}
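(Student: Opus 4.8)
The plan is to prove both directions by induction on $n$, using the already-established structure theorems as the engine. For the base case $n=1$: a congruence $\alpha$ with $[\alpha]_1 = [\alpha,\alpha] = 0$ applied to $\alpha = 1_A$ just says $A$ is abelian, and an abelian algebra is (trivially) a one-term product $Q_1$ with $Q_1 = A$; conversely a single factor $Q_1$ is abelian by hypothesis. (One should be slightly careful: $n$-step nilpotent for the algebra $A$ should mean $[1_A]_n = 0$, so I will state the induction in terms of $[1_A]_n$.)

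For the forward direction, suppose $A$ is $n$-step nilpotent, i.e. $[1_A]_n = 0$. Put $\alpha = [1_A]_{n-1}$, which is an abelian congruence since $[\alpha, \alpha] \le [1_A,[1_A]_{n-1}] = [1_A]_n = 0$. Apply Theorem~\ref{thm:abelian1} — wait, that needs congruence modularity; instead apply Proposition~\ref{thm:extension} (difference term, $\alpha$ abelian) to get
\[
A/[\alpha,1_A] \approx A(\alpha)/\Delta_{\alpha 1} \otimes^{T} A/\alpha.
\]
But $[\alpha, 1_A] = [[1_A]_{n-1}, 1_A] = [1_A]_n = 0$, so the left side is $A$ itself. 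Set $Q_n := A(\alpha)/\Delta_{\alpha 1}$, which is abelian by Lemma~\ref{lem:1}(1a), and note $A/\alpha = A/[1_A]_{n-1}$ is $(n-1)$-step nilpotent because $[1_{A/\alpha}]_{n-1} = [1_A]_{n-1}/\alpha = 0$ (using that the commutator of quotients is the quotient of commutators, a standard fact in difference-term varieties via join-additivity, or simply the correspondence $[\beta/\alpha, \gamma/\alpha] = ([\beta,\gamma] \vee \alpha)/\alpha$ when $\alpha \le \beta,\gamma$). By the induction hypothesis $A/\alpha \approx Q_{n-1} \otimes^{T_{n-2}} \cdots \otimes^{T_1} Q_1$ for abelian $Q_i$, and splicing this in gives the right-associated product; the one bookkeeping point is that the isomorphism $A/\alpha \approx Q_{n-1} \otimes^{T_{n-2}}\cdots$ must be transported through the $\otimes^T$ construction, which is exactly the observation recorded at the end of the proof of Theorem~\ref{thm:abelian1} (replacing the transfer functions $T_f$ and the group polynomial by their images under the isomorphism).

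For the converse, suppose $A \approx Q_n \otimes^{T_{n-1}} Q_{n-1} \otimes^{T_{n-2}} \cdots \otimes^{T_1} Q_1$. Write $B := Q_n$ and $Q := Q_{n-1} \otimes \cdots \otimes Q_1$, so $A \approx B \otimes^{T_{n-1}} Q$ where $B$ is abelian with Mal'cev term $m$ (its own difference term restricted to the single block, since $B$ is abelian) and the group operation used in the $\otimes^{T}$ is $m(x,0,y)$ — this is the hypothesis format of Lemma~\ref{lem:3}. Hence $[1_A, \ker q] = 0$ where $q: A \to Q$ is the projection. Now $\ker q = \psi(1_B \times 0_Q)$ corresponds to the "$B$-part", and $A/\ker q \approx Q$, which by induction is $(n-1)$-step nilpotent, i.e. $[1_{A/\ker q}]_{n-1} = 0$, equivalently $[1_A]_{n-1} \le \ker q$. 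Combining: $[1_A]_n = [1_A, [1_A]_{n-1}] \le [1_A, \ker q] = 0$, so $A$ is $n$-step nilpotent. The modest technical point here is justifying $[1_A]_{n-1} \le \ker q$ from $[1_{A/\ker q}]_{n-1}=0$, which again is the quotient-commutator correspondence, valid since the TC-commutator behaves well under quotients in difference-term varieties.

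The main obstacle — really the only nonroutine part — is the forward direction's transport of the inductive isomorphism through $\otimes^{T}$, together with checking that the binary polynomial defining $\otimes^{T}$ at each stage is of the required form $m(x,0,y)$ for a Mal'cev term on the abelian factor so that Lemma~\ref{lem:3} applies in the converse. In Proposition~\ref{thm:extension} the group operation on $A(\alpha)/\Delta_{\alpha 1}$ is precisely $d(x,0,y)$ for the difference term $d$, which on the abelian algebra $A(\alpha)/\Delta_{\alpha 1}$ is a Mal'cev term; so the output of the forward construction is automatically in the shape Lemma~\ref{lem:3} wants, and the two directions match up cleanly. I expect the write-up to be short: set up the induction, quote Proposition~\ref{thm:extension} and Lemma~\ref{lem:3}, and handle the quotient-commutator identities by citing join-additivity of the commutator in difference-term varieties (\cite{vardiff}).
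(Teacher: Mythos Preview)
Your proposal is correct and follows essentially the same approach as the paper: both directions proceed by induction, using Proposition~\ref{thm:extension} (or its Corollary~\ref{cor:1}) to peel off one abelian factor in the forward direction, and Lemma~\ref{lem:3} together with the quotient-commutator identity $([\beta,\gamma]\vee\alpha)/\alpha = [\beta/\alpha,\gamma/\alpha]$ for the converse. Your write-up is in fact slightly cleaner than the paper's, since you correctly invoke Proposition~\ref{thm:extension} (difference term, $\alpha$ abelian) where the paper cites Theorem~\ref{thm:abelian1} (which as stated needs congruence modularity), and you make the transport-of-isomorphism step explicit.
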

\begin{proof}
Assume $A$ is n-step nilpotent. Then $0=[1]_{n}=[1,[1]_{n-1}] < [1,[1]_{n-2}]$. Set $Q_{1} = A/[1,1]$ and for $2 \leq k \leq n$, define $Q_{k} = A/\Delta_{[1]_{k-1} 1}$ which are abelian by Lemma~\ref{lem:1}(1a). By Theorem~\ref{thm:abelian1}, we have
\[
A/[1]_{k} \approx Q_{k} \otimes^{T} A/[1]_{k-1}
\]
for $2 \leq k \leq n$. The associated product now follows since $A/[1,1]$ is abelian.

Now, suppose we have
\[
A \approx Q_n \otimes^{T_{n-1}} Q_{n-1} \otimes^{T_{n-2}} \cdots \otimes^{T_{1}} Q_1
\]
a right-associated product for abelian algebras $Q_1,\ldots,Q_n \in \mathcal V$. Set $B_{k} = Q_{k} \otimes^{T_{k-1}} \cdots \otimes^{T_{1}} Q_{1}$ for the right-associated product. Note we have surjective homomorphisms $q_{k+1}: B_{k+1} \rightarrow B_{k}$ given by right-projections This implies each $B_{k} \in \mathcal V$ for $1 \leq k \leq n$. The argument is by induction on $k$. Assume $B_{k}$ is $k$-step nilpotent and let $\alpha=\ker q_{k+1}$. By recursive application of the homomorphism property in varieties with a difference term,
\begin{align*}
\left(\left[1_{B_{k+1}}\right]_{k} \vee \alpha \right)/\alpha &= \left[\left(1_{B_{k+1}} \vee \alpha \right)/\alpha, \left(\left[1_{B_{k+1}}\right]_{k-1} \vee \alpha\right)/\alpha \right] \\
&= \left[ 1_{B_{k}}, \left[\left( 1_{B_{k+1}} \vee \alpha \right)/\alpha, \left(\left[1_{B_{k+1}}\right]_{k-2} \vee \alpha\right)/\alpha \right] \right] \\
&= \left[ 1_{B_{k}}, \left[ 1_{B_{k}}, \left[ \left( 1_{B_{k+1}} \vee \alpha \right)/\alpha , \left(\left[1_{B_{k+1}}\right]_{k-3} \vee \alpha\right)/\alpha \right] \right] \right] \\
&\vdots \\
&= [1_{B_{k}}]_{k} = 0;
\end{align*}
thus, $\alpha \geq [1]_{k}$. The hypothesis of Lemma~\ref{lem:3} is satisfied by Lemma~\ref{lem:1}(1a) and so $0=[1,\alpha] \leq [1,[1]_{k}]$; therefore, $B_{k+1}$ is k+1-step nilpotent. The argument is concluded since $A \approx B_{n}$.
\end{proof}

It is useful to consider the previous development in the case of groups; in particular, the $\otimes^{T}$ construction recovers the reconstruction of central extensions where the transfer corresponds to the addition of a 2-cocycle. For a normal subgroup $K \triangleleft G$, let $\alpha_{K} = \{(x,y) \in G^{2} : xy^{-1} \in K \}$ denote the corresponding congruence. Recall, given a homomorphism $\phi: Q \rightarrow \Aut K$ and group 2-cocycle $f: Q \times Q \rightarrow K$ the group $K \rtimes_{\phi,f} Q$ is defined over the set $K \times Q$ with operation
\[
(a,x) \cdot (b,y) = (a \cdot \phi(x)(b) \cdot f(x,y), x \cdot y ). 
\]

\begin{lemma}\label{lem:grp}
Let $G$ be a group and $K, H \triangleleft G$ with $K \leq H$.
\begin{enumerate}

	\item $G(\alpha_{K})/\Delta_{\alpha_{K} \alpha_{H}} \approx K/[K,H] \rtimes_{\phi} G/H$ for a homomorphism $\phi: G/H \rightarrow \Aut K/[K,H]$.
		
  \item $G(\alpha_{K})/\Delta_{\alpha_{K} 1} \approx K/[K,G]$.
	
	\item $G/[K,G] \approx K/[K,G] \ \otimes^{T} G/K$ for some transfer $T$.
	
	\item For a central extensions $\pi \colon G \rightarrow Q$ with $K=\ker \pi$, the transfers $T_{\sigma}: Q^{\ar(\sigma)} \rightarrow G(\alpha_{K})/\Delta_{\alpha_{K}1}$ and their image under the isomorphism from (2) are given by
	
		\begin{itemize}
			
			\item $T_{\times}(x,y)= \begin{bmatrix} l(xy) \\ l(x)l(x) \end{bmatrix}/\Delta_{\alpha_{K}1} \longmapsto l(x)l(y)l(xy)^{-1}$

			\item $T_{-1}(x) = \begin{bmatrix} l(x^{-1}) \\ l(x)^{-1} \end{bmatrix}/\Delta_{\alpha_{K}1} \longmapsto l(x)^{-1}l(x^{-1})^{-1}$
			
			\item $T_{1}(x) = \begin{bmatrix} 0 \\ 0 \end{bmatrix}/\Delta_{\alpha_{K}1} \longmapsto 0$
			
		\end{itemize}
; therefore, $G \approx K \otimes^{T} Q \approx K \rtimes_{0,f} Q$ for the 2-cocycle $f(x,y) =  l(x)l(y)l(xy)^{-1}$.
\end{enumerate}
\end{lemma}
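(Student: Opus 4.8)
The plan is to verify the four assertions by specializing the general commutator-theoretic results already established to the case of groups, where the term-condition commutator $[\cdot,\cdot]$ agrees with the usual commutator of normal subgroups and the difference term is the Mal'cev term $m(x,y,z)=xy^{-1}z$. For part (1), I would compute $G(\alpha_K)/\Delta_{\alpha_K\alpha_H}$ directly. Elements of $G(\alpha_K)$ are pairs $(a,b)$ with $ab^{-1}\in K$; using the diagonal as a basepoint and the group structure, every such class has a canonical representative $(k,1)$ (multiply by the diagonal element $(b^{-1},b^{-1})$ on the right), so the universe is in bijection with $K$. The generators of $\Delta_{\alpha_K\alpha_H}$ are the pairs $\langle(u,u),(v,v)\rangle$ with $uv^{-1}\in H$; pushing these through the bijection and taking the congruence they generate inside the group $K$ (acted on by $G$ through conjugation via representatives of $G/H$) collapses exactly $[K,H]$ and twists the product of the $G/H$ part into a semidirect product $K/[K,H]\rtimes_\phi G/H$, where $\phi$ is the conjugation action, well-defined modulo $[K,H]$ because $[K,H]$ is normal in $G$ and $H$ acts trivially on $K/[K,H]$. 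Part (2) is then the special case $H=G$: conjugation by $G$ on $K/[K,G]$ is trivial by definition of $[K,G]$, so the semidirect product degenerates and the $G/G$ factor is trivial, giving $G(\alpha_K)/\Delta_{\alpha_K1}\approx K/[K,G]$; alternatively this follows from Lemma~\ref{lem:1}(1a) together with the identification of the diagonal class as the group identity.

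For part (3), I would invoke Theorem~\ref{thm:abelian1} (or Proposition~\ref{thm:extension}) with $\alpha=\alpha_K$, noting that $\alpha_K$ is abelian in the quotient sense precisely when we pass to $G/[K,G]$, and that $[\alpha_K,1_G]$ corresponds to the normal subgroup $[K,G]$. Theorem~\ref{thm:abelian1} gives $G/[K,G]\approx G(\alpha_K)/\Delta_{\alpha_K1}\otimes^T G/\alpha_K$, and then part (2) rewrites the first factor as $K/[K,G]$ and the obvious identification rewrites $G/\alpha_K$ as $G/K$. For part (4), I would take a central extension $\pi\colon G\to Q$ with kernel $K$ (so $[K,G]=1$, $K$ central, and $K/[K,G]=K$), fix a section $l\colon Q\to G$, and let $r=l\circ\pi$ be the associated $\alpha_K$-trace. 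The transfer functions are by definition (from the proof of Proposition~\ref{thm:extension}) $T_f(\bar x)=\bigl[\begin{smallmatrix} r(f^G(\bar x))\\ f^G(\bar r(\bar x))\end{smallmatrix}\bigr]/\Delta_{\alpha_K1}$; I specialize $f$ to the three group operations $\times$, ${}^{-1}$, and the nullary $1$, and then push each resulting $\Delta_{\alpha_K1}$-class through the bijection of part (2), which sends $[\begin{smallmatrix}a\\ b\end{smallmatrix}]/\Delta_{\alpha_K1}$ to $a^{-1}b$ (the canonical $K$-representative, using centrality so that the choice of representative is unambiguous). This yields $T_\times(x,y)\mapsto l(xy)^{-1}l(x)l(y)$, and since $K$ is central this equals the classical $2$-cocycle $f(x,y)=l(x)l(y)l(xy)^{-1}$ up to the usual normalization; similarly for $T_{-1}$ and $T_1$. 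Finally, recognizing that $\otimes^T$ with transfer supported on the multiplication and trivial action is exactly the classical twisted product $K\rtimes_{0,f}Q$ identifies $G\approx K\otimes^TQ\approx K\rtimes_{0,f}Q$.

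The main obstacle I anticipate is bookkeeping in part (1): making the identification of $G(\alpha_K)/\Delta_{\alpha_K\alpha_H}$ with $K/[K,H]\rtimes_\phi G/H$ genuinely precise requires checking that the semidirect-product multiplication induced on canonical representatives matches the group operation on $G(\alpha_K)$ modulo $\Delta_{\alpha_K\alpha_H}$, and in particular that the twisting cocycle on the $G/H$-side vanishes after the quotient (i.e.\ that one really gets a split semidirect product and not a more general extension). This amounts to verifying that the representative-valued function $Q\times Q\to K$ measuring the failure of $l$ to be a homomorphism lands in $[K,H]$ when the section is chosen compatibly, or equivalently that the relevant $M(\alpha_K,\alpha_H)$-matrices generate exactly the congruence whose classes on $K$ are the cosets of $[K,H]$; the description of $\Delta_{\alpha\beta}$ via $\mathrm{Tr}\,M(\alpha,\beta)$ recorded before Lemma~\ref{lem:20} is the tool for this, and in the group case one checks directly that the generating matrices $\langle(u,u),(v,v)\rangle$ with $uv^{-1}\in H$, translated to $K$, generate the subgroup-congruence $[K,H]$ after closing under the group operations and conjugation. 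Parts (2)--(4) are then essentially immediate specializations and careful substitution, with the only subtlety being consistent tracking of inverses and the order of multiplication in the passage to canonical $K$-representatives.
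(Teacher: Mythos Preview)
Your overall strategy matches the paper's: for (1) identify $G(\alpha_K)/\Delta_{\alpha_K\alpha_H}$ with the semidirect product by a direct group-theoretic computation using the conjugation action, derive (2) as the special case $H=G$, cite Proposition~\ref{thm:extension} for (3), and compute the transfers by hand for (4).

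There is one slip in your sketch of (1). You write that every $\Delta_{\alpha_K\alpha_H}$-class has a canonical representative $(k,1)$ obtained by right-multiplying $(a,b)$ by the diagonal $(b^{-1},b^{-1})$, so that the quotient is in bijection with $K$. This is only correct when $H=G$: the generating pair $\langle (b,b),(1,1)\rangle$ lies in $\Delta_{\alpha_K\alpha_H}$ precisely when $b\in H$, so for general $H$ the classes of $(a,b)$ and $(ab^{-1},1)$ differ. The correct parametrization is $G(\alpha_K)\ni(kb,b)\leftrightarrow(k,b)\in K\times G$, after which the normal subgroup corresponding to $\Delta_{\alpha_K\alpha_H}$ works out to $\{(ch,h):c\in[K,H],\,h\in H\}$, and the quotient is $K/[K,H]\rtimes_\phi G/H$ as required. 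Your subsequent remarks about the $G/H$-factor show you already have this picture, so the fix is cosmetic. The paper organizes the step slightly differently: it defines an explicit map $\psi:K\rtimes_\phi G/H\to G(\alpha_K)/\Delta_{\alpha_K\alpha_H}$ by $\psi(k,q)=\begin{bmatrix}l(q)\\kl(q)\end{bmatrix}/\Delta_{\alpha_K\alpha_H}$ and checks directly that it is a surjective homomorphism with kernel $\alpha_{[K,H]}\times 0_{G/H}$. This sidesteps your ``main obstacle'' about splitting, since the section $q\mapsto\psi(1,q)$ is built into $\psi$ and one only has to verify the homomorphism identity, a short computation in which the cocycle $l(q_1)l(q_2)l(q_1q_2)^{-1}\in K$ cancels against itself modulo $\Delta_{\alpha_K\alpha_H}$.
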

\begin{proof}

(1) Let $l:G/H \rightarrow G$ be a lifting associated to $\alpha_{H}$. Define $\phi: G/H \rightarrow \Aut K/[K,H]$ by $\phi(xH)(y[K,H]) := l(xH)yl(xH)^{-1}[K,H] = y^{l(xH)}[K,H]$. Define $\psi: K \rtimes_{\phi} G/H \rightarrow G(\alpha_{K})/\Delta_{\alpha_{K} \alpha_{H}}$ by $\psi(k,q) := \begin{bmatrix} l(q) \\ kl(q) \end{bmatrix}/\Delta_{\alpha_{K} \alpha_{H}}$. From the generators of $\Delta_{\alpha_{K} \alpha_{H}}$, we see that $\begin{bmatrix} a \\ b \end{bmatrix} \Delta_{\alpha_{K} \alpha_{H}} \begin{bmatrix} c \\ d \end{bmatrix}$ implies $(a,b),(c,d) \in \alpha_{K}$ and $(a,c),(b,d) \in \alpha_{H}$. Note $\psi(k_1,q_1)=\psi(k_2,q_2) \Rightarrow \begin{bmatrix} l(q_{1}) \\ k_{1}l(q_{1}) \end{bmatrix}  \Delta_{\alpha_{K} \alpha_{K}} \begin{bmatrix} l(q_{2}) \\ k_{2}l(q_{2}) \end{bmatrix} \Rightarrow (l(q_1),l(q_2)) \in \alpha_{H} \Rightarrow q_1=q_2$ since $l$ is a lifting for $\pi: G \rightarrow G/H$. Then we must have $(k_1,k_{2}) \in [K,H]$; thus, $\ker \psi \leq \alpha_{[K,H]} \times 0_{G/H}$. Conversely, for $(k_1,k_2) \in \alpha_{[K,G]}$ and $q \in G/H$, we have $\begin{bmatrix} q \\ k_1q \end{bmatrix} \Delta_{\alpha_{K} \alpha_{H}} \begin{bmatrix} q \\ k_2q \end{bmatrix}$ which shows $\alpha_{[K,H]} \times 0_{G/H} \leq \ker \psi$.

For surjectivity, take $\begin{bmatrix} a \\ b \end{bmatrix}/\Delta_{\alpha_{K} \alpha_{H}} \in G(\alpha_{K})/\Delta_{\alpha_{K} \alpha_{H}}$. Then $a$ and $b$ are in the same coset of $K \leq H$ and so there is $q \in G/H$, $h_1,h_2 \in H$ such that $\begin{bmatrix} a \\ b \end{bmatrix}/\Delta_{\alpha_{K} \alpha_{K}} = \begin{bmatrix} h_{1}l(q) \\ h_{2}l(q) \end{bmatrix}/\Delta_{\alpha_{K} \alpha_{H}}$. Since $(a,b) \in \alpha_{K}$, we have $h_{2}h^{-1}_{1} \in K$ and so can write $h_2 = kh_1$ for some $k \in K$. Note $(l(q),h_{1}^{-1}l(q)) \in \alpha_{H}$ which implies $\begin{bmatrix} l(q) \\ l(q) \end{bmatrix} \Delta_{\alpha_{K} \alpha_{H}} \begin{bmatrix} h_{1}^{-1}l(q) \\ h_{1}^{-1}l(q) \end{bmatrix}$ as a generator of the congruence. Then
\[
 \begin{bmatrix} a \\ b \end{bmatrix} = \begin{bmatrix} h_{1}l(q) \\ h_{2}l(q) \end{bmatrix} = \begin{bmatrix} h_{1} \\ h_{2} \end{bmatrix} \cdot \begin{bmatrix} l(q) \\ l(q) \end{bmatrix} \Delta_{\alpha_{K} \alpha_{H}} \begin{bmatrix} h_{1} \\ h_{2} \end{bmatrix} \cdot \begin{bmatrix} h_{1}^{-1}l(q) \\ h_{1}^{-1}l(q) \end{bmatrix} = \begin{bmatrix} l(q) \\ h_{2}h_{1}^{-1}l(q) \end{bmatrix} = \begin{bmatrix} l(q) \\ kl(q) \end{bmatrix}.
\]
Then $\psi((k,q)) = \begin{bmatrix} a \\ b \end{bmatrix}/\Delta_{\alpha_{K} \alpha_{H}}$.

We check $\psi$ is a homomorphism:
\begin{align*}
\psi\left( k_{1},q_{1} \right) \cdot \psi( k_{2},q_{2} ) &= \begin{bmatrix} l(q_{1}) \\ k_{1}l(q_{1}) \end{bmatrix}/\Delta_{\alpha_{K} \alpha_{H}} \cdot \begin{bmatrix} l(q_{2}) \\ k_{2}l(q_{2}) \end{bmatrix}/\Delta_{\alpha_{K} \alpha_{H}} \\
&= \begin{bmatrix} l(q_{1})l(q_{2}) \\ k_{1}l(q_{1})k_{2}l(q_2) \end{bmatrix}/\Delta_{\alpha_{K} \alpha_{H}} \\
&= \begin{bmatrix} f(q_{1},q_{2})l(q_{1}q_{2}) \\ k_{1}k_{2}^{l(q_{1})} f(q_{1},q_{2}) l(q_{1}q_2)) \end{bmatrix}/\Delta_{\alpha_{K} \alpha_{H}} \\
&= \begin{bmatrix} l(q_{1}q_{2}) \\ k_{1}k_{2}^{l(q_{1})} l(q_{1}q_2)) \end{bmatrix}/\Delta_{\alpha_{K} \alpha_{H}} \\
&= \psi\left(  k_{1}k_{2}^{l(q_{1})} ,q_1 q_2 \right) = \psi \left( ( k_1, q_1 )  \cdot (k_2,q_2) \right).
\end{align*}

(2) This follows from (1) since $G(\alpha_{K})/\Delta_{\alpha_{K} 1} \approx K/[K,G] \rtimes_{\phi} G/G \approx K/[K,G]$.

(3) This is just Theorem~\ref{thm:extension}.

(4) Use (1) and (2) and the fact that $K$ is central.
\end{proof}

\begin{remark}
From Lemma~\ref{lem:grp}(1), for $K$ abelian we see that $G(\alpha_{K})/\Delta_{\alpha_{K} \alpha_{K}}$ is isomorphic to the semidirect product induced by the extension $\pi: G \rightarrow G/K$. The inverse to the map $\psi$ is given by $\sigma: \begin{bmatrix} x \\ y \end{bmatrix}/\Delta_{\alpha_{K} \alpha_{K}} \longmapsto (yx^{-1} , \pi(x))$.
\end{remark}

At this point, it is possible to use Corollary~\ref{cor:1} and the group example in Lemma~\ref{lem:grp} to define analogues of the machinery for abelian $2^{\mathrm{nd}}$-cohomology groups in varieties with a difference term. In \cite{wiresV}, we will observe how to derive the central extension theory as a specialization to those varieties from the more general extension theory of affine datum developed in the next section.

\vspace{0.3cm}

\section{Deconstruction/reconstruction with affine datum}\label{sec:3}

We start be defining the notion of action. For $0 < n \in \mathds{N}$, write $[n] = \{1,\ldots,n\}$ for the initial segment of positive integers and $[n]^{\ast} = \{ s \subseteq \{ 1,\ldots,n \}: 0 < |s| < n \}$ for the non-empty proper subsets of $[n]$. Given sets $Q$ and $A$ and $s \in [n]^{\ast}$, we define the set of coordinates $[Q,A]^{s} = \{ \vec{x} \in Q \cup A : \vec{x}(i) \in A \text{ for } i \in s, \vec{x}(i) \in Q \text{ for } i \not\in s \}$.

\begin{definition}\label{def:pairing}
Let $Q,A$ be sets and $f$ an n-ary operation on $Q$ with $n \geq 2$. A \emph{pairing} of $Q$ on $A$ with respect to $f$ is a choice of subsets $\sigma(f) \subseteq [n]^{\ast}$ and a sequence of functions $a(f,s) : [Q,A]^{s} \rightarrow A$ with $s \in \sigma(f)$.

Let $Q$ be an algebra in the signature $\tau$. For a set $A$, an \emph{action} $Q \ast A$ is a sequence of pairings $\{a(f,s) : f \in \tau, \ar f \geq 2, s \in \sigma(f) \subseteq [\ar f]^{\ast} \}$.
\end{definition}

In the following definition, we demonstrate how an action can be associated to a variety (or equational theory). Given a fixed action $Q \ast A$, for any term $t$ in the signature $\tau$, there are no operations on $A$ available in which to define an interpretation of $t$ as an operation $t^{A}: A^{\ar t} \rightarrow A$; however, via the pairings $a(f,s)$ associated to each $f \in \tau$ with $\ar f \geq 2$, it is sometimes possible to define tuples $\bar{c} \in (Q \cup A)^{\ar t}$ on which to define a value $t(\bar{c}) \in (Q \cup A)$ using compositions of the available operations on $Q$ and pairings. If we consider the composition tree of the term with variables on the leaves, the idea is to choose values in $Q \cup A$ for the variables so that when propagated through the nodes of the tree yields a meaningful evaluation using either the function symbols $f \in \tau$ for the algebra $Q$ or the pairing functions $a(f,s)$.

\begin{definition}\label{def:5}
Let $Q$ be an algebra in the signature $\tau$ and $A$ a set. Let $Q \ast A$ be an action. For each term $f$, we will define a subset of sequences $\bar{a} \in C_{f} \subseteq (Q \cup A)^{\ar f}$ \emph{compatible} with $f$ and an operation $f^{\ast} : C_{f} \rightarrow Q \cup A$. If $f=p_{i}$ is the n-ary i-th projection, then $C_{f} := (Q \cup A)^{n}$ and $f^{\ast}:=p_i$. If $f \in \tau$ with $\ar f < 1$, then define $C_{f} := Q$ and $f^{\ast}(a) := f^{Q}(a)$. If $f \in \tau$ with $\ar f \geq 2$, then define $C_{f} :=  Q^{\ar f} \cup \bigcup_{s \in \sigma(f)} [Q,A]^{s}$ and
\[
f^{\ast}(\bar{a}) := \begin{cases}
   f^{Q}(\bar{a}) & , \bar{a} \in Q^{\ar f} \\
   a(f,s)(\bar{a}) & , \bar{a} \in [Q,A]^{s}
  \end{cases}
\]
for $\bar{a} \in C_{f}$. Let $f(x_1,\ldots,x_m)=h(g_1(x_{11},\ldots,x_{1n_1}),\ldots,g_{m}(x_{m1},\ldots,x_{mn_{m}}))$ where $h \in \tau$, $g_i$ are terms or projections and $C_{g_i}$ are the compatible sequences for $g_{i}$. Then take $\bar{a} \in C_{f}$ provided $\bar{a}_{i} = (a_{i1},\ldots,a_{in_i}) \in C_{g_i}$ for $i=1,\ldots,m$ and $(g^{\ast}_1(\bar{a}_1),\ldots,g^{\ast}_{m}(\bar{a}_m)) \in C_{h}$. Then for $\bar{a} \in C_{f}$, define $f^{\ast}(\bar{a}):= h^{Q}(g^{\ast}_1(\bar{a}_1),\ldots,g^{\ast}_{m}(\bar{a}_m))$ in the case $(g^{\ast}_1(\bar{a}_1),\ldots,g^{\ast}_{m}(\bar{a}_m)) \in Q^{m}$ and $f^{\ast}(\bar{a}):= a(h,s)(g^{\ast}_1(\bar{a}_1),\ldots,g^{\ast}_{m}(\bar{a}_m))$ in the case $(g^{\ast}_1(\bar{a}_1),\ldots,g^{\ast}_{m}(\bar{a}_m)) \in [Q,A]^{s}$ for some $s \in [\ar h]^{\ast}$.

Let $\mathrm{var} \, t$ be the set of variables of the term $t$. For an equation $f(\vec{x})=g(\vec{y})$, a pair $(\vec{a},\vec{b}) \in C_{f} \times C_{g}$ is \emph{appropriate} if there is an assignment $\epsilon : \mathrm{var} \, f \cup \mathrm{var} \, g \rightarrow Q \cup A$ such that $(\epsilon(\vec{x}), \epsilon(\vec{y}) )=(\vec{a},\vec{b})$ and $f^{\ast}(\vec{a}) \in A \Leftrightarrow g^{\ast}(\vec{b}) \in A$.

Let $\Sigma$ be a set of equation in the signature $\tau$. An action $Q \ast A$ is \emph{compatible} with $\Sigma$ if for any equation $f(\bar{x}) \approx g(\bar{y}) \in \Sigma$ and appropriate pair $(\vec{a},\vec{b}) \in C_{f} \times C_{g}$ we have $f^{\ast}(\bar{a}) = g^{\ast}(\bar{b})$. We write $Q \ast A \vDash_{\ast} \Sigma$ if the action $Q \ast A$ is compatible with $\Sigma$.

If $\mathcal V$ is a variety in the same signature $\tau$, then an action $Q \ast A$ is \emph{compatible} with $\mathcal V$ if $Q \ast A \vDash_{\ast} \mathrm{Id} \, \mathcal V$.
\end{definition}

We emphasize that for an action $Q \ast A$ with an algebra $Q$ in the signature $\tau$, the compatible sequences for a term are determined according to the composition tree in the fundamental operations given by $\tau$. Note from Definition~\ref{def:5}, $Q^{\ar f} \subseteq C_{f}$ for a term $f$ and so an action $Q \ast A$ compatible with $\mathcal V$ implies $Q \in \mathcal V$. An action $Q \ast A$ is a \emph{full action} if $\sigma(f) = [\ar f]^{\ast}$ for each $f \in \tau$ with $\ar f \geq 2$ and a \emph{singleton action} if $\sigma(f)$ consists of all singleton sets for each $f \in \tau$.

\begin{example}
Let $G \in \mathcal G$ the variety of groups and $X$ a set. Asserting an action $G \ast X$ is compatible with $\mathcal G$ recovers the classic definition of group-action. For the binary group operation $f$, we write different notation for the actions by $a(f,1)(x,g) = x \circ g$ and $a(f,2)(g,x) = g \cdot x$. For the terms $f(x,f(y,z))$ and $f(f(x,y),z)$, the compatible sequences can be seen to be 
\[
C_{f(f(x,y),z)} = C_{f(x,f(y,z))} = \{(g_{1},g_{2},g_{3}), (g_{1},g_{2},x),(x,g_{1},g_{2}),(q_{1},x,q_{2}) : g_{1},g_{2},g_{3} \in G, x \in X \}.
\]
Then enforcing compatibility with associativity we calculate
\begin{align*}
g_1 \cdot (g_2 \cdot x) = a(f,2)(g_1,a(f,2)(g_2,x)) &= f(x,f(y,z))^{\ast}(g_1,g_2,x) \\
&= f(f(x,y),z)^{\ast}(g_1,g_2,x) = a(f,2)(f^{G}(g_1,g_2),x) = (g_1g_2) \cdot x.
\end{align*}
Similarly, the consequences for compatibility of a full action with the associative equation of $\mathcal G$ yields

\begin{enumerate}

	\item $(g_{1}g_{2})g_{3} = g_{1}(g_{2}g_{3})$ \quad \quad \quad \quad \quad \quad \quad \quad \quad \quad (associativity in the group $G \in \mathcal G$)

	\item $g_1 \cdot (g_2 \cdot x) = (g_1g_2) \cdot x$ \quad \quad \quad \quad \quad \quad \quad \quad \ \  (left-action of $G$ on $X$)
	
	\item $(x \circ g_{1}) \circ g_{2} = x \circ (g_{1}g_{2})$ \quad \quad \quad \quad \quad \quad \quad \quad (right-action of $G$ on $X$)
	
	\item $(g_{1} \cdot x) \circ g_{2} = g_{1} \cdot (x \circ g_{2})$

\end{enumerate}
By choosing an action with $\sigma(f) = \{2\}$ we would recover only consequences (1) and (2) above, or an action with $\sigma(f) = \{1\}$ would recover only (1) and (3) above.
\end{example}

\begin{example}
It is possible for an action to be vacuously compatible with a set of equations in the following sense. Take $\tau = \{f\}$ a single binary symbol and $\Sigma = \{f(x,y) = f(y,x)\}$. For an action $Q \ast A$ such that $\sigma(f) = \{2\}$, we have $C_{f(x,y)} \cap C_{f(y,x)} = Q^2$. If $Q \vDash f(x,y) = f(y,x)$, then the action is compatible with $\Sigma$ independently with how $a(f,1): Q \times A \rightarrow A$ is defined.
\end{example}

In the case of an action $Q \ast A$ on a partial-algebra $A$, there is a modification in the definition of a compatible sequence to reflect the fact that we have partial access to operations on the universe $A$. For $f \in \tau$, the corresponding partial-operation on $A$ may only be defined on a subset $\mathrm{dom} \, f \subseteq A^{\ar f}$; consequently, we define $C_{f} := \mathrm{dom} \, f \cup Q^{\ar f} \cup \bigcup_{s \in \sigma(f)} [Q,A]^{s}$ and
\[
f^{\ast}(\bar{a}) := \begin{cases}
   f^{Q}(\bar{a}) & , \bar{a} \in Q^{\ar f} \\
   a(f,s)(\bar{a}) & , \bar{a} \in [Q,A]^{s} \\
	 f^{A}(\bar{a}) & , \bar{a} \in \mathrm{dom} \, f
  \end{cases}
\]
for $\bar{a} \in C_{f}$. The rest of Definition~\ref{def:5} follows in a similar manner with appropriate allowances made for the partial-operations over $A$.

The notion of action is explicitly concerned with the operations of the signature which are binary or of higher arity. The nullary and unary operations will be encoded into a new structure comprising datum which will play the r\^{o}le of the normal subgroup associated to the kernel of the extension. The information in the new structure will have to compensate for the fact that congruences in general are not determined by a privileged class as is the case of normal subgroups and kernels of group homomorphisms.

\begin{definition}
Fix signature $\tau$ and ternary term symbol $m$. Define $A^{\alpha,\tau} = \left\langle  A,\alpha, \{f^{\Delta} : f \in \tau\} \right\rangle$ where
\begin{itemize}

	\item $A = \left\langle A, m \right\rangle$ is an algebra in the single operation symbol $m$;
	
	\item $\alpha \in \Con A$;
	
	\item $\{f^{\Delta} : f \in \tau\}$ is a sequence of operations $f^{\Delta} : A(\alpha)/\Delta_{\alpha \alpha} \times \delta(A)^{\ar f - 1} \rightarrow A(\alpha)/\Delta_{\alpha \alpha}$ where $\delta: A \rightarrow  A(\alpha)/\Delta_{\alpha \alpha}$ is the diagonal map.

\end{itemize}
\end{definition}

In order to make the treatment of the operations in reconstructed extensions more uniform, the partial operations $f^{\Delta}$ in the above definition are taken over the full signature $\tau$; however, we shall see that we are really concerned with only the nullary and unary operations of $\tau$. For higher arity operation symbols $f \in \tau$, we shall declare $f^{\Delta}$ to agree with action terms $a(f,1)$ and for unary and nullary operation symbols we shall take $f^{\Delta}$ to be independent of all but the first coordinate. This will be made formal in the definition of datum (Definition~\ref{def:datum}).

For a surjective map $\pi : A \rightarrow Q$ with $\alpha = \ker \pi$, there is a surjective map $\rho: A(\alpha) \rightarrow Q$ with $\hat{\alpha} = \ker \rho$. Note that a lifting $l: Q \rightarrow A$ for $\pi$ has the property that $(x,l(q)) \in \alpha \Leftrightarrow \rho\left(\begin{bmatrix} l(q) \\ x \end{bmatrix} \right)=q$. This differs with the notion from group theory in that $l$ is no longer taken as a right-inverse for $\rho$ as a set map, but that $\rho \circ (\delta \circ l) = \id$. The $\alpha$-trace $r: A \rightarrow A$ associated to the lifting is defined by $r = l \circ \rho$.

In analogy with the group case, an important distinction arises when an action $Q \ast K$ operates by an automorphism; that is, there is an induced homomorphism $\gamma: Q \rightarrow \Aut K$ such that $q \ast k = \gamma(q)(k)$ for all $q \in Q, k \in G$. This is what Definition~\ref{def:homact} addresses.

\begin{definition}\label{def:homact}
Fix $A^{\alpha,\tau}$, a ternary term $m$, an algebra $Q$ in the signature $\tau$ and surjective map $\rho : A(\alpha) \rightarrow Q$ with $\hat{\alpha} = \ker \rho$. For any $u \in A$, define the polynomial operation $x +_{u} y := m \left(x, \delta(u), y \right)$. An action $Q \ast A(\alpha)/\Delta_{\alpha \alpha}$ is \emph{homomorphic} with respect to $m$ if the following holds:
\begin{itemize}

	\item for all $f \in \tau$ with $\ar f \geq 2$ and $s \in \sigma(f)$, all $\vec{u} \in A^{s}$ and all $\vec{a},\vec{b} \in \left( A (\alpha)/\Delta_{\alpha \alpha} \right)^{s}$ such that $a_i \mathrel{\hat{\alpha}/\Delta_{\alpha \alpha}} \delta(u_i) \mathrel{\hat{\alpha}/\Delta_{\alpha \alpha}} b_i$ for $i \in s$,
	
	\item if we let $\vec{x},\vec{y},\vec{z} \in [Q, A (\alpha)/\Delta_{\alpha \alpha} ]^{s}$ such that $x_{i} = y_{i} = z_{i} \in Q$ for $i \not\in s$ and $x_{i} = a_{i}$, $y_{i} = b_{i}$ and $z_{i}= a_{i} +_{u_{i}} b_{i}$ for $i \in s$,

\end{itemize}
then we have 
\[
a(f,s)(\vec{z}) = a(f,s)(\vec{x}) +_{v} a(f,s)(\vec{y})
\] 
where $v=l(f^{A/\alpha}(q_1,\ldots,q_{i},u_1,\ldots,u_k))$ for any lifting $l:Q \rightarrow A$ associated to $\rho$.
\end{definition}

An action $Q \ast A$ is \emph{unary} if $\sigma(f) \subseteq [\ar f]^{\ast}$ consists of exactly all the singleton subsets. In the case of unary actions, the homomorphic property is more directly written as 
\[
a(f,i)(q_{1},\ldots,a +_{u} b,\ldots,q_{n}) = a(f,i)(q_{1},\ldots,a,\ldots,q_{n}) +_{v} a(f,i)(q_{1},\ldots,b,\ldots,q_{n})
\]
for $\ar f = n$ and $v = l(f^{A/\alpha}(q_{1},\ldots,u,\ldots,q_{n}))$.

\begin{definition}\label{def:homdatum}
The structure $A^{\alpha,\tau} = \left\langle  A,\alpha, \{f^{\Delta} : f \in \tau\} \right\rangle$ is \emph{homomorphic} with respect to $m$ if for each $f \in \tau$ and $a \mathrel{\hat{\alpha}/\Delta_{\alpha \alpha}} \delta(u) \mathrel{\hat{\alpha}/\Delta_{\alpha \alpha}} b$, $\vec{x} \in A^{\ar f - 1}$ we have 
\[
f^{\Delta} \left( a +_{u} b, \delta(\vec{x}) \right) = f^{\Delta} \left( a , \delta(\vec{x}) \right) +_{v} f^{\Delta} \left( b, \delta(\vec{x}) \right)
\]
where $v=l(f^{A/\alpha}(u, \vec{x}))$ for any lifting $l: A/\alpha \rightarrow A$ associated to $\rho : A(\alpha) \rightarrow Q$.
\end{definition}

\begin{remark}\label{remark: liftings}
Note Definitions~\ref{def:homact} and \ref{def:homdatum} do not depend on the lifting. If $l,l':Q \rightarrow A$ are liftings associated to $\rho$, then $(l(q),l'(q)) \in \alpha$ for all $q \in Q$ which implies $\delta(l(q)) = \delta(l'(q))$ since $\left( \begin{bmatrix} l(q) \\ l(q) \end{bmatrix}, \begin{bmatrix} l'(q) \\ l'(q) \end{bmatrix} \right)$ is a generator of $\Delta_{\alpha \alpha}$.
\end{remark}

When referring to homomorphic actions the choice of the operation $m$ will consistently be chosen to be the same operation given by $A^{\alpha,\tau}$. A ternary operation $m: A^{3} \rightarrow A$ is a \emph{ternary abelian group operation} if it is a Mal'cev operation which is \emph{self-commuting}; that is, 
\begin{align*}
A &\vDash m(x,y,y)=x \mathrel{\wedge} m(y,y,x)=x \\
A &\vDash m(m(x_{1},x_{2},x_{3}),m(y_{1},y_{2},y_{3}),m(z_{1},z_{2},z_{3})) = m(m(x_{1},y_{1},z_{1}),m(x_{2},y_{2},z_{2}),m(x_{3},y_{3},z_{3})).
\end{align*}
A ternary abelian group operation $m$ may also be referred to as an \emph{affine operation} since for any choice of $a \in A$, the definitions $x + y := m(x,a,y)$ and $-x := m(a,x,a)$ yields an abelian group $\left\langle A, +, -, a \right\rangle$ in which $m(x,u,z) = x - y + z$ \cite[Thm 7.34]{bergman}. We now arrive at the definition of datum and what it means for an extension to realize the datum.

\begin{definition}\label{def:datum}
Fix a signature $\tau$. A triple $\left( Q, A^{\alpha,\tau},\ast \right)$ is \emph{datum} if the following holds:
\begin{enumerate}
		
		\item[(D1)] $A^{\alpha,\tau}$ is homomorphic;
		
		\item[(D2)] $Q$ is an algebra in the signature $\tau$;
		
		\item[(D3)] The ternary operation symbol $m$ referenced in $A^{\alpha,\tau}$ also has an interpretation in $Q$ which is idempotent and there is a surjective homomorphism $\rho: A(\alpha) \rightarrow \left\langle Q, m \right\rangle$ such that $\hat{\alpha} = \ker \rho$;
		
		\item[(D4)] The action $Q \ast A(\alpha)/\Delta_{\alpha \alpha}$ is homomorphic with the following property: for any $f \in \tau$ with $n=\ar f \geq 2$ and $\rho \left(\begin{bmatrix} y_i \\ x_i \end{bmatrix} \right) =q_i$ for $i=1,\ldots,n$, we have
		\[
		f^{\Delta}\left(\begin{bmatrix} y_1 \\ x_1 \end{bmatrix}/\Delta_{\alpha \alpha},\delta(x_2),\ldots,\delta(x_n) \right) \mathrel{\hat{\alpha}/\Delta_{\alpha \alpha}} a(f,s)\left( \vec{z} \right)
		\]
for all $s \in \sigma(f)$ and $\vec{z} \in [Q, A(\alpha)/\Delta_{\alpha \alpha}]^{s}$ such that $z_{i} = q_{i}$ for $i \not\in s$ and $z_{i} = \begin{bmatrix} y_i \\ x_i \end{bmatrix}/\Delta_{\alpha \alpha}$ for $i \in s$.

\end{enumerate}
We say $(Q,A^{\alpha,\tau},\ast)$ is \emph{affine datum} if in addition
\begin{enumerate}

	\item[(AD1)] $\alpha \in \Con \left\langle A, m \right\rangle$ is an abelian congruence and $m$ is a ternary abelian group operation when restricted on each $\alpha$-class;
	
	\item[(AD2)] the action is unary and 
		\[
		f^{\Delta}\left(\begin{bmatrix} y_1 \\ x_1 \end{bmatrix}/\Delta_{\alpha \alpha},\delta(x_2),\ldots,\delta(x_n) \right) = a(f,1) \left( \begin{bmatrix} y_1 \\ x_1 \end{bmatrix}/\Delta_{\alpha \alpha}, q_{2},\ldots,q_{n} \right)
		\]
for any $f \in \tau$ with $n = \ar f \geq 2$.

\end{enumerate}
\end{definition}

\begin{remark}
The notion of datum requires comment. In the case of groups, from datum the $(\textbf{K},Q,\phi)$ one can always construct two extensions of $K$ by $Q$: the direct product $p_{2} : K \times Q \rightarrow Q$ with $\alpha_{K} = \ker p_{2}$ and the semidirect product $\pi : K \rtimes_{\phi} Q \rightarrow Q$ with $\alpha_{K} = \ker \pi$. In this way, we can treat the direct product and semidirect product as already explicitly provided by the datum; as a consequence, the datum provides a universe on which to define the algebras for possible extensions. The partial structure $A^{\alpha,\tau}$ includes the underlying set $A$ from which we can calculate the set $A(\alpha)/\Delta_{\alpha \alpha}$ which will serve as the universe of the reconstructed extensions.

It is not required that there is an interpretation of the signature $\tau$ on the set $A$ - only that there is a ternary operation $m$ which is defined on $A$ so that $A(\alpha)/\Delta_{\alpha \alpha}$ is calculated explicitly from the congruence $\alpha$ of the algebra $\left\langle A, m \right\rangle$. The membership in the congruence $\Delta_{\alpha \alpha}$ is then only determined by $m$. The idea is to preserve this particular aspect from the example where $\alpha$ is an abelian congruence of an algebra $A \in \mathcal V$ in a variety with weak-difference term; in this case, the universe $A(\alpha)/\Delta_{\alpha \alpha}$ can be reconstructed from the resulting datum by the rule $\begin{bmatrix} a \\ b \end{bmatrix} \Delta_{\alpha \alpha} \begin{bmatrix} c \\ d \end{bmatrix} \Leftrightarrow d=m(b,a,c)$ since $\Delta_{\alpha \alpha} \leq \hat{\alpha}$. This is data which resides in $A^{\alpha,\tau}$.
\end{remark}

\begin{definition}\label{def:realizes}
Let $(Q,A^{\alpha,\tau},\ast)$ be affine datum. An algebra $B$ \emph{realizes} the datum if there is an extension $\pi: B \rightarrow Q$ with $\beta = \ker \pi$, a bijection $i : B(\beta)/\Delta_{\beta \beta} \rightarrow A(\alpha)/\Delta_{\alpha \alpha}$ and a lifting $l: Q \rightarrow B$ such that for all $f \in \tau$:
\begin{enumerate}

	\item[(R1)] if $n = \ar f \geq 2$, $q_{1},\ldots,q_n \in Q$ and $x_{1},\ldots,x_{n} \in B(\beta)/\Delta_{\beta \beta}$, then
\[
a(f,i)\left(q_1,\ldots, i \circ x_{i},\ldots, q_{n} \right) = i \circ f^{B(\beta )/\Delta_{\beta \beta}} \left( \delta \circ l(q_1) ,\ldots, x_{i},\ldots,\delta \circ l(q_{n}) \right);
\]

	\item[(R2)] if $\ar f \leq 1$ and $x \in A(\alpha)/\Delta_{\alpha \alpha}$, then $f^{\Delta}(i ( x ) ) = i \circ f^{B(\beta )/\Delta_{\beta \beta}}(x)$.

\end{enumerate}
\end{definition}

For special choices of the operations in $T$, the following algebras will serve as our reconstruction of extensions which realize affine datum. This will be shown in Theorem~\ref{thm:12}

\begin{definition}\label{def:9}
Let $(Q,A^{\alpha,\tau},\ast)$ be affine datum in the signature $\tau$. Let $T= \{T_{f}: f \in \tau\}$ a sequence of functions such that $T_{f}: (Q)^{\ar f} \rightarrow A(\alpha)/\Delta_{\alpha \alpha}$. Fix a lifting $l:Q \rightarrow A$ associated to $\rho: A(\alpha) \rightarrow Q$. Define an algebra $A_{T}(Q,A^{\alpha,\tau},\ast)$ on the universe $A(\alpha)/\Delta_{\alpha \alpha}$ of the datum with operations
\begin{align*}
F_{f} &\left(\begin{bmatrix} a_1 \\ b_1 \end{bmatrix}/\Delta_{\alpha \alpha}, \ldots, \begin{bmatrix} a_n \\ b_n \end{bmatrix}/\Delta_{\alpha \alpha} \right) := f^{\Delta}\left(\begin{bmatrix} a_1 \\ b_1 \end{bmatrix}/\Delta_{\alpha \alpha},\delta(b_2),\ldots, \delta(b_n) \right)\\
&+_{u} \ \sum_{i=2}^{n} a(f,i)\left(q_1,\ldots,q_{i}, \begin{bmatrix} a_{i+1} \\ b_{i+1} \end{bmatrix}/\Delta_{\alpha \alpha}, \delta(b_{i+2}),\ldots,\delta(b_n) \right) \\
&+_{u} \ T_{f}(q_1,\ldots,q_n) \hspace{8.5cm} (f \in \tau).
\end{align*}
a left-associated composition where $u= l(f^{Q}(q_1,\ldots,q_n))$ and $\rho \left(\begin{bmatrix} a_i \\ b_i \end{bmatrix} \right)=q_i$.
\end{definition}

Definition~\ref{def:5}, Definition~\ref{def:tranequ} and Definition~\ref{def:2cocyle} will constitute our scheme which given membership $Q \in \mathcal U$ in a variety will guarantee the extension $A_{T}(Q,A^{\alpha,\tau},\ast) \in \mathcal U$:
\begin{align*}
\text{ identities in } \mathcal U &\Rightarrow \text{ expressions in } \{T_{f}: f \in \tau\} \text{ and action terms } Q \ast A(\alpha)/\Delta_{\alpha \alpha} \\
&\Rightarrow \text{ identities satisfied by } A_{T}(Q,A^{\alpha,\tau},\ast).
\end{align*}
Given a term $t(\bar{x})$ in the signature $\tau$, we will be interested in its interpretation in the algebra $A_{T}(Q,A^{\alpha,\tau},\ast)$. By using the homomorphism property of the action terms and compatibility of the ternary operation $m$ over the blocks of $\hat{\alpha}/\Delta_{\alpha \alpha}$, we can distribute the sum in Definition~\ref{def:9} at every point in the composition tree of $t^{A_{T}(Q,A^{\alpha,\tau},\ast)}$. The end result is a large sum in mixed action terms and functions in $T$ indexed by operation symbols of $\tau$. The next definition will describe a way to separate the terms in the sum which use some operations from $T$.

\begin{definition}\label{def:tranequ}
Let $(Q,A^{\alpha,\tau},\ast)$ be affine datum in the signature $\tau$ and $T= \{T_{f}: f \in \tau\}$ a sequence of functions $T_{f}: (Q)^{\ar f} \rightarrow A(\alpha)/\Delta_{\alpha \alpha}$. Let $m$ be the ternary operation on $A$ included in the datum and $x +_{u} y = m(x,\delta(u),y)$ the binary operation defined on $A(\alpha)/\Delta_{\alpha \alpha}$ for $u \in A$.

For each term $t$ in the signature $\tau$, we inductively define along the composition tree of the term a set $E_{t}$ of operations of the form $\nu : Q^{\ar t} \rightarrow A(\alpha)/\Delta_{\alpha \alpha}$. Then we define an operation $t^{\partial,T}: Q^{\ar t} \rightarrow A(\alpha)/\Delta_{\alpha \alpha}$ given by 
\begin{align}\label{eq:cocycleident}
t^{\partial,T}(\vec{q}) = \sum_{\nu \in E_{t}} \nu (\vec{q})
\end{align}
where the sum is the left-associated composition over $+_{u}$ where $u = l \left( t^{Q}(\vec{q}) \right)$ for any lifting $l:Q \rightarrow A$ associated to $\rho: A(\alpha) \rightarrow Q$. By Remark~\ref{remark: liftings}, the definition does not depend on the choice of the lifting $l$.

If $t=f \in \tau$, define $E_{t} = \{ T_{f} \}$. Suppose $t(\vec{y}) = f(\sigma(\vec{x}))$ is derived from a fundamental operation $f \in \tau$ by identifying or permuting variables according to the surjective map $\sigma : \mathrm{var} \ f \rightarrow \mathrm{var} \ t $. Define $E_{t} = \{T_{f} \circ \sigma \}$ where $T_{f} \circ \sigma : Q^{\ar t} \rightarrow A(\alpha)/\Delta_{\alpha \alpha}$ denotes the corresponding operation evaluated according to $\sigma$; that is, for each evaluation $\epsilon : \mathrm{var} \ t \rightarrow Q$ there is an evaluation $\epsilon' : \mathrm{var} \ f \rightarrow Q$ such that $\epsilon'(\vec{x}) = \epsilon \circ \sigma (\vec{x})$ and $(T_{f} \circ \sigma )(\epsilon'(\vec{y})) = T_{f}(\epsilon(\vec{x}))$.

Suppose $t(y_1,\ldots,y_n)=f(g_{1}(\bar{z}_{1}),\ldots,g_{m}(\bar{z}_{m}))$ where $f \in \tau$ with $\ar f=m$ and each $g_i(\bar{z}_i)$ is a term or variable. The set $E_{t}$ will consist of three different types of operations. For the first type, we take 
\[
\nu (\vec{q}) : = T_{f} \left( g^{Q}_{1}(\bar{q}_{1}),\ldots,g^{Q}_{m}(\bar{q}_{m}) \right)
\] 
where $\bar{q}_{i} \in Q^{\ar g_{i}}$ is the restriction of $\vec{q}$ to the variables of $g_{i}$. For the second type, it may be that $g_{1}$ is not a variable. We take operations of the form
\[
\nu (\vec{q}) := f^{\Delta} \left( \mu (\bar{q}_{1}),\delta \circ l (g^{Q}_{2}(\bar{q}_{2})),\ldots,\delta \circ l(g^{Q}_{m}(\bar{q}_{m})) \right).
\]
where $\mu \in E_{g_{1}}$ and $\vec{q}_{i}$ is the restriction of $\vec{q}$ to the variables of $g_{i}$. For the third type, for any $2 \leq k \leq m$ such that $g_{k}$ is not a variable, we take operations of the form
\[
\nu(\vec{q}) := a(f,k) \left(  g^{Q}_{1}(\bar{q}_1),\ldots, g^{Q}_{k-1}(\bar{q}_{k-1}), \mu(\vec{q}_{k}), g^{Q}_{k+1}(\delta(\bar{a}_{k+1})),\ldots, g^{Q}_{m}(\delta(\bar{a}_{m}))  \right)
\]
where $\mu \in E_{g_{k}}$ and $\vec{q}_{i}$ is the restriction of $\vec{q}$ to the variables of $g_{i}$.
\end{definition}

\begin{definition}\label{def:2cocyle}
Let $(Q,A^{\alpha,\tau},\ast)$ be affine datum and $\Sigma$ a set of equations in the same signature $\tau$. A sequence of operations $T=\{T_{f}: f \in \tau\}$ where $T_{f}: Q^{\ar f} \rightarrow A(\alpha)/\Delta_{\alpha \alpha}$ is a \emph{2-cocycle compatible with} $\Sigma$ if
\begin{enumerate}

	\item[(C1)] for all $f \in\tau$, $T_{f}(q_1,\ldots,q_{\ar f}) \mathrel{\hat{\alpha}/\Delta_{\alpha \alpha}} f^{\Delta} \left( \begin{bmatrix} a \\ b \end{bmatrix}/\Delta_{\alpha \alpha}, \delta \circ l (q_2),\ldots, \delta \circ l(q_{\ar f}) \right)$ where $\rho \left( \begin{bmatrix} a \\ b \end{bmatrix} \right) = q_{1}$;
	
	\item[(C2)] for all $t(\vec{x})=g(\vec{y}) \in \Sigma$ and evaluations $\epsilon : \mathrm{var} \, t \cup \mathrm{var} \, g \rightarrow Q$ we have 
	\[
	t^{\partial,T}(\epsilon(\vec{x})) = g^{\partial,T}(\epsilon(\vec{y})).
	\]

\end{enumerate}
\end{definition}

It follows immediately from the definition, that if $T$ is a 2-cocycle compatible with $\mathrm{Id} \, \mathcal V$, then $T$ is compatible for any variety $\mathcal U \geq \mathcal V$.

\begin{example}
Let $(Q,A^{\alpha,\tau},\ast)$ be affine datum in the signature $\tau$ and $l: Q \rightarrow A$ a lifting associated to $\rho: A(\alpha) \rightarrow Q$. For a single binary operation symbol $f$, enforcing compatibility with associativity yields
\begin{align}\label{eq:10}
f^{\Delta} \left(T_{f}(q_1,q_2), \delta \circ l(q_3) \right) +_{u} T_{f}(f^{Q}(q_1,q_2),q_3) &= a(f,1)\left( q_1,T_{f}(q_2,q_3) \right) +_{v} T_{f}(q_1,f^{Q}(q_2,q_3))
\end{align}
where $u = l \left( f(f(q_1,q_2),q_3) \right)$ and $v= l \left( f(q_1,f(q_2,q_3) \right)$.

In the case of an abelian normal subgroup $K \triangleleft G$ with $Q = G/K$, this specializes to the classic 2-cocycle identity in the following manner. By Lemma~\ref{lem:grp}, there is the isomorphism $\psi: G(\alpha_{K})/\Delta_{\alpha_{K} \alpha_{K}} \ni \begin{bmatrix} b \\ a \end{bmatrix}/\Delta_{\alpha_{K} \alpha_{K}} \longmapsto \left\langle  ab^{-1}, \pi(b) \right\rangle \in K \rtimes_{\phi} Q$. If we fix a lifting $l: Q \rightarrow G$ for the canonical surjection $\pi: G \rightarrow Q$, then this means each $\Delta_{\alpha_{K} \alpha_{K}}$-class is uniquely represented as $\begin{bmatrix} l(x) \\ kl(x) \end{bmatrix}/\Delta_{\alpha_{K} \alpha_{K}}$ for some $k \in K$, $x \in Q$. For the binary operation we set $T(x,y):=\begin{bmatrix} l(xy) \\ l(x)l(y) \end{bmatrix}/\Delta_{\alpha_{K} \alpha_{K}}$ and define action terms by 
\begin{align*}
a(\cdot,1) \left( \begin{bmatrix} l(y) \\ kl(y) \end{bmatrix}/\Delta_{\alpha_{K} \alpha_{K}},x \right) &:= \begin{bmatrix} l(y) \\ kl(y) \end{bmatrix}/\Delta_{\alpha_{K} \alpha_{K}} \cdot \delta \circ l(x) = \begin{bmatrix} l(y)l(x) \\ kl(y)l(x) \end{bmatrix}/\Delta_{\alpha_{K} \alpha_{K}} \longmapsto \left\langle  k, l(x)l(y) \right\rangle \\
a(\cdot,2) \left( x,\begin{bmatrix} l(y) \\ kl(y) \end{bmatrix}/\Delta_{\alpha_{K} \alpha_{K}} \right) &:= \delta \circ l(x) \cdot \begin{bmatrix} l(y) \\ kl(y) \end{bmatrix}/\Delta_{\alpha_{K} \alpha_{K}} = \begin{bmatrix} l(x)l(y) \\ l(x)kl(y) \end{bmatrix}/\Delta_{\alpha_{K} \alpha_{K}} \longmapsto \left\langle x \ast k, l(x)l(y) \right\rangle
\end{align*}
where $x \ast k = l(x)kl(x)^{-1}$ is the conjugation action. Recall, the group 2-cocyle is defined by $l(x)l(y)=f(x,y)l(xy)$.

Then using $v=l(xyz)=u$ and $m(x,y,z)=xy^{-1}z$ we apply $\psi$ to calculate
\begin{align*}
T(x,y) \cdot \delta(l(z)) +_{u} T(xy,z) &=  \left(\begin{bmatrix} l(xy) \\ l(x)l(y) \end{bmatrix} \begin{bmatrix} l(z) \\ l(z) \end{bmatrix} \right)/\Delta_{\alpha_{K} \alpha_{K}} +_{u}  \begin{bmatrix} l(xyz) \\ l(xy)l(z) \end{bmatrix}/\Delta_{\alpha_{K} \alpha_{K}}  \\
&= m \left(\begin{bmatrix} l(xy)l(z) \\ l(x)l(y)l(z) \end{bmatrix}, \begin{bmatrix} l(xyz) \\ l(xyz) \end{bmatrix} , \begin{bmatrix} l(xyz) \\ l(xy)l(z) \end{bmatrix} \right)/\Delta_{\alpha_{K} \alpha_{K}} \\
&= m \left(\begin{bmatrix} l(xy)l(z) \\ f(x,y)l(xy)l(z) \end{bmatrix}, \begin{bmatrix} l(xyz) \\ l(xyz) \end{bmatrix} , \begin{bmatrix} l(xyz) \\ f(xy,z)l(xyz) \end{bmatrix} \right)/\Delta_{\alpha_{K} \alpha_{K}} \\
&\longmapsto m \left( \left\langle  f(x,y),l(xyz) \right\rangle, \left\langle 0,l(xyz) \right\rangle, \left\langle f(xy,z),l(xyz) \right\rangle \right) \\
&= \left\langle  f(x,y),l(xyz) \right\rangle \cdot \left\langle  0,l(xyz) \right\rangle^{-1} \cdot \left\langle  f(xy,z),l(xyz) \right\rangle \\
&= \left\langle  f(x,y) + f(xy,z), l(xyz) \right\rangle
\end{align*}
and
\begin{align*}
a(\cdot,1)(x,T(y,z)) +_{v} T(x,yz) &= \begin{bmatrix} l(x)l(yz) \\ l(x)l(y)l(z) \end{bmatrix}/\Delta_{\alpha_{K} \alpha_{K}} +_{v} \begin{bmatrix} l(xyz) \\ l(x)l(yz) \end{bmatrix}/\Delta_{\alpha_{K} \alpha_{K}} \\
&= m \left(\begin{bmatrix} l(x)l(yz) \\ l(x)f(y,z)l(yz) \end{bmatrix}, \begin{bmatrix} l(xyz) \\ l(xyz) \end{bmatrix} , \begin{bmatrix} l(xyz) \\ f(x,yz)l(xyz) \end{bmatrix} \right)/\Delta_{\alpha_{K} \alpha_{K}} \\
&\longmapsto m \left( \left\langle  x \ast f(y,z),l(xyz) \right\rangle,\left\langle  0,l(xyz) \right\rangle, \left\langle f(x,yz),l(xyz) \right\rangle \right) \\
&= \left\langle  x \ast f(y,z),l(xyz) \right\rangle \cdot \left\langle  0,l(xyz) \right\rangle^{-1} \cdot \left\langle  f(x,yz),l(xyz) \right\rangle \\
&= \left\langle  x \ast f(y,z) + f(x,yz),l(xyz) \right\rangle.
\end{align*}
The equality in Eq.(\ref{eq:10}) and the above calculations yields
\[
f(x,y) + f(xy,z) = x \ast f(y,z) + f(x,yz)
\]
which is the group theoretic 2-cocycle identity.
\end{example}

We would like to consider the interpretation of a term in an algebra $A_{T}(Q,A^{\alpha,\tau},\ast)$ defined from affine datum. Inductively along the composition tree of the term, we can use the homomorphism property of the action and datum operations to distribute across the operations in Definition~\ref{def:9} for each fundamental symbol of the signature. While a term interpreted in the algebra $A_{T}(Q,A^{\alpha,\tau},\ast)$ with domain $A(\alpha)/\Delta_{\alpha \alpha}$ may have identified variables in different operations symbols in its composition tree, the interpretation of the operation symbols in Definition~\ref{def:9} depends on all the coordinates and have domains over $Q \cup \delta(A) \cup A(\alpha)/\Delta_{\alpha \alpha}$. So while the repeated variable $x$ in the term has a fixed evaluation $x \mapsto \begin{bmatrix} a \\ b \end{bmatrix}/\Delta_{\alpha \alpha}$, the interpretation of the term is expanded into sums of operations in which x has evaluations among the values $\left\{ \rho \left(  \begin{bmatrix} a \\ b \end{bmatrix} \right) ,\delta \circ l \circ \rho \left(  \begin{bmatrix} a \\ b \end{bmatrix} \right), \begin{bmatrix} a \\ b \end{bmatrix}/\Delta_{\alpha \alpha} \right\}$ depending in which coordinates the variable $x$ appears.

The next definitions relate the two different domains together. The first step is given a term, we produce a corresponding term with the same composition tree except for no repeated variables.

\begin{definition}\label{def:sigmaterms}
Let $f(\vec{x})$ be a term in the signature $\tau$. Let $f^{\sigma}$ be a term in the signature $\tau$ and variables $X_{\omega} = \{x_0,x_1,\ldots \}$ which has the same composition tree as $f$ except the leaves have no repeated variables; in addition, reading from left-to-right the variables of $f^{\sigma}$ are an initial segment of $X_{\omega}$. There is a surjective map $\sigma: \mathrm{var} \, f^{\sigma} \rightarrow \mathrm{var} \, f$ such that $f^{\sigma}(\sigma(\mathrm{var} \, f^{\sigma})) = f(\bar{x})$.
\end{definition}

Note for any evaluation $\epsilon : \mathrm{var} \, f \rightarrow A$, there is a corresponding evaluation $\epsilon^{\sigma} : \mathrm{var} \, f^{\sigma} \rightarrow A$ such that $\epsilon \circ \sigma = \epsilon^{\sigma}$.

Fix $\rho : A(\alpha) \rightarrow Q$ associated with affine datum $(Q,A^{\alpha,\tau},\ast)$ and a lifting $l: Q \rightarrow A$. Let $t(\vec{x})$ be a term in the same signature with $n=| \mathrm{var} \, f^{\sigma} |$ and $\epsilon : \mathrm{var} \, f \rightarrow A(\alpha)/\Delta_{\alpha \alpha}$ an evaluation. An evaluation $\mu : \mathrm{var} \, f^{\sigma} \rightarrow  Q \cup \delta(A) \cup A(\alpha)/\Delta_{\alpha \alpha}$ is \emph{consistent} with $\epsilon (\vec{x})$ if $\nu(x_{i}) \in \left\{ \rho \circ \epsilon^{\sigma}(x_{i}) ,\delta \circ l \circ \rho \circ \epsilon^{\sigma}(x_{i}), \epsilon^{\sigma}(x_{i}) \right\}$ for each $x_{i} \in \mathrm{var} \, f^{\sigma}$. Define $L(f,\epsilon(\vec{x})) = \{\mu ( \mathrm{var} \, f^{\sigma} ) \in C_{t^{\sigma}}: \mu \text{ is consistent with } \epsilon(\vec{x}) \}$. This is the set of evaluations which will allow us to describe equations in semidirect products realizing affine datum.

\begin{definition}
Let $(Q,A^{\alpha,\tau},\ast)$ be affine datum and $\Sigma$ a set of identities in the same signature. The action in the datum is \emph{weakly compatible} with $\Sigma$ if for all $f = g \in \Sigma$, 
\[
 \sum_{\mu \in L(f,\epsilon(\vec{x}))} (f^{\sigma})^{\ast}(\mu ( \mathrm{var} \, f^{\sigma} )) = \sum_{\mu \in L(g,\epsilon(\vec{x}))} (g^{\sigma})^{\ast}(\mu ( \mathrm{var} \, g^{\sigma} )). 
\]
The action is \emph{weakly compatible} with a variety $\mathcal V$ if it is weakly compatible with $\mathrm{Id} \, \mathcal V$.
\end{definition}

\begin{lemma}\label{lem:repterm}
Let $(Q,A^{\alpha,\tau},\ast)$ be affine datum in the signature $\tau$ and $t(\vec{x})$ a term. For any evaluation $\epsilon (\vec{x}) = \left( \begin{bmatrix} a_{1} \\ b_{1} \end{bmatrix}/\Delta_{\alpha \alpha}, \ldots, \begin{bmatrix} a_{n} \\ b_{n} \end{bmatrix}/\Delta_{\alpha \alpha} \right)$, the interpretation of the term $t$ in the algebra $A_{T}(Q,A^{\alpha,\tau},\ast)$ is represented by
\begin{align}
F_{t} \left( \begin{bmatrix} a_{1} \\ b_{1} \end{bmatrix}/\Delta_{\alpha \alpha}, \ldots, \begin{bmatrix} a_{n} \\ b_{n} \end{bmatrix}/\Delta_{\alpha \alpha} \right) =\sum_{\mu \in L(t,\epsilon(\vec{x}))} (t^{\sigma})^{\ast}(\mu ( \mathrm{var} \, t^{\sigma} ))  \ +_{u} \ t^{\partial, T}\left( \rho \circ \epsilon (\vec{x}) \right).
\end{align}
where $u = l \left( t^{Q}(\rho \circ \epsilon (\vec{x})) \right)$ for any lifting $l$ associated to the datum.
\end{lemma}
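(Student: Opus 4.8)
\textbf{Proof strategy for Lemma~\ref{lem:repterm}.}
The plan is to prove this by induction on the structure of the term $t$ along its composition tree, unwinding the recursive definitions of $F_{t}$ (Definition~\ref{def:9}), $t^{\partial,T}$ (Definition~\ref{def:tranequ}), and the starred term evaluation $(t^{\sigma})^{\ast}$ (Definition~\ref{def:5}) in parallel. The base case is when $t = f$ is a fundamental operation symbol: here $t^{\sigma}=f$ has no repeated variables, $L(f,\epsilon(\vec{x}))$ indexes exactly the tuples in which each coordinate is chosen from one of the three representatives $\{\rho\circ\epsilon^{\sigma}(x_i),\,\delta\circ l\circ\rho\circ\epsilon^{\sigma}(x_i),\,\epsilon^{\sigma}(x_i)\}$, and the defining formula for $F_{f}$ in Definition~\ref{def:9} is literally the sum of $f^{\Delta}$ (the ``first coordinate in $A(\alpha)/\Delta_{\alpha\alpha}$'' term), the action terms $a(f,i)$ (the ``$i$-th coordinate in $A(\alpha)/\Delta_{\alpha\alpha}$'' terms), and $T_{f}(q_1,\dots,q_n)$. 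Matching the $f^{\Delta}$ and $a(f,i)$ summands against $\sum_{\mu\in L(f,\epsilon(\vec x))}(f^{\sigma})^{\ast}(\mu)$, and identifying $E_{f}=\{T_{f}\}$ so that $f^{\partial,T}(\vec q)=T_{f}(\vec q)$, gives the base case; here one also uses (AD2) to see that $f^{\Delta}$ restricted as in Definition~\ref{def:9} is exactly the action term $a(f,1)$.

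For the inductive step, write $t(\vec y)=h(g_1(\bar z_1),\dots,g_m(\bar z_m))$ and assume the formula holds for each $g_j$. First evaluate $F_{t}$ by applying Definition~\ref{def:9} at the top node $h$, with inner arguments $F_{g_j}(\dots)$; each such inner argument is, by induction, a sum $\bigl(\sum_{\mu\in L(g_j,\cdot)}(g_j^{\sigma})^{\ast}(\mu)\bigr)+_{u_j}g_j^{\partial,T}(\rho\circ\epsilon(\bar z_j))$. The key algebraic move is to distribute $h^{\Delta}$ and the action terms $a(h,k)$ over these inner sums using the homomorphic property of $A^{\alpha,\tau}$ (Definition~\ref{def:homdatum}) and of the action $Q\ast A(\alpha)/\Delta_{\alpha\alpha}$ (Definition~\ref{def:homact}), together with self-commutativity of $m$ on the blocks of $\hat\alpha/\Delta_{\alpha\alpha}$ so that the nested $+_{u}$'s collapse into a single left-associated sum with base point $u=l(t^{Q}(\rho\circ\epsilon(\vec x)))$ — this is the manipulation sketched in the paragraph preceding Definition~\ref{def:sigmaterms}. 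One must check that the homomorphic-distribution hypotheses apply: the coordinates being split all lie in a common $\hat\alpha/\Delta_{\alpha\alpha}$-block because each $g_j^{\partial,T}(\cdot)$ is $\hat\alpha/\Delta_{\alpha\alpha}$-related to the corresponding ``pure'' value by (C1) and (D4)/(AD2), and $+_{u}$ acts affinely there by (AD1).

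After distributing, one sorts the resulting big sum into two groups: the summands that contain \emph{no} occurrence of any $T_{f}$ (these assemble into $\sum_{\mu\in L(t,\epsilon(\vec x))}(t^{\sigma})^{\ast}(\mu(\operatorname{var} t^{\sigma}))$ by the recursive definition of $C_{t^{\sigma}}$, $(t^{\sigma})^{\ast}$ and of $L(t,\cdot)$ via consistent evaluations), and the summands that contain \emph{exactly one} $T_{f}$ coming from some node of the tree (these assemble precisely into $t^{\partial,T}(\rho\circ\epsilon(\vec x))$, matching the three types — first, second, third — of operations put into $E_{t}$ in Definition~\ref{def:tranequ}: type-one is the top $T_{h}$ applied to $g_j^{Q}$ values, type-two is $h^{\Delta}$ with a $T$ coming from $g_1$, type-three is $a(h,k)$ with a $T$ coming from $g_k$). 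The main obstacle is purely bookkeeping: verifying that \emph{every} summand produced by the distribution lands in exactly one of these two bins and that the correspondence with $L(t,\epsilon(\vec x))$ and with $E_{t}$ is a bijection, with the base points all reconciling to the single $u=l(t^{Q}(\rho\circ\epsilon(\vec x)))$. One subtlety worth flagging is that a variable repeated in $t$ may appear in coordinates of different ``types'' (as an element of $Q$, of $\delta(A)$, or of $A(\alpha)/\Delta_{\alpha\alpha}$), which is exactly why one passes to the repetition-free term $t^{\sigma}$ with its surjection $\sigma$ and works with consistent evaluations $\mu$; checking that the passage to $t^{\sigma}$ does not disturb the $T$-free/$T$-containing dichotomy is the crux of the combinatorial argument.
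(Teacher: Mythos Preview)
Your proposal is correct and follows the same approach as the paper: an induction along the composition tree of $t$ that unwinds Definitions~\ref{def:5}, \ref{def:tranequ}, and \ref{def:sigmaterms} in parallel, using the homomorphic properties (D1), (D4) and the affineness (AD1) to distribute and regroup the sums. The paper's own proof is in fact just the one-line remark that this induction is ``precisely what'' those definitions accomplish, so your write-up is a faithful (and considerably more detailed) elaboration of the intended argument.
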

\begin{proof}
By induction on the composition tree of $t$. This is precisely what Definition~\ref{def:5}, Definition~\ref{def:tranequ} and Deifnition~\ref{def:sigmaterms} accomplish.
\end{proof}

The next theorem guarantees abelian congruences in varieties with a weak-difference term are sources of affine datum by decomposing an extension with an abelian kernel into appropriate datum. It will be convenient to work in terms of $\alpha$-traces rather than liftings associated to datum.

\begin{theorem}\label{thm:10}
Let $\mathcal V$ be a variety with a weak-difference term in the signature $\tau$. Let $A \in \mathcal V$ and surjective $\pi : A \rightarrow Q$ with $\alpha = \ker \pi \in \Con A$ abelian. Then there exists homomorphic $A^{\alpha,\tau}$, a 2-cocycle $T= \{T_{f}: f \in \tau\}$ compatible with $\mathrm{Id} \, \mathcal V$ and homomorphic action $Q \ast A(\alpha)/\Delta_{\alpha \alpha}$ compatible with $\mathrm{Id} \, \mathcal V$ constituting affine datum such that $A \approx A_{T}(Q,A^{\alpha,\tau},\ast)$.
\end{theorem}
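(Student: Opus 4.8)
The plan is to manufacture the datum from the given extension $\pi: A \to Q$ by mimicking the construction in Proposition~\ref{thm:extension} and Lemma~\ref{lem:grp}, but now allowing an arbitrary abelian congruence in a variety with only a weak-difference term. First I would fix an $\alpha$-trace $r: A \to A$ coming from a lifting $l: Q \to A$ of $\pi$, and observe that since $\alpha$ is abelian and $\mathcal V$ has a weak-difference term $m$, the congruence $\Delta_{\alpha\alpha}$ is governed by the rule $\begin{bmatrix} a \\ b \end{bmatrix} \Delta_{\alpha\alpha} \begin{bmatrix} c \\ d \end{bmatrix} \Leftrightarrow d = m(b,a,c)$ whenever the coordinates lie in a common $\alpha$-class (this is the abelian specialization of the description recalled before Lemma~\ref{lem:20}, together with Lemma~\ref{lem:20}(3)), so that each $\Delta_{\alpha\alpha}$-class in $A(\alpha)/\Delta_{\alpha\alpha}$ is uniquely represented as $\begin{bmatrix} r(x) \\ c \end{bmatrix}/\Delta_{\alpha\alpha}$ with $(c,x)\in\alpha$. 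I would then set $m^{A^{\alpha,\tau}}$ to be the weak-difference term of $\mathcal V$ restricted to $A$, which is a ternary abelian group operation on each $\alpha$-class, giving (AD1).

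Next I would define the three pieces of datum. For each nullary or unary $f\in\tau$, set $f^\Delta(\begin{bmatrix} a \\ b \end{bmatrix}/\Delta_{\alpha\alpha}) := \begin{bmatrix} f^A(a) \\ f^A(b) \end{bmatrix}/\Delta_{\alpha\alpha}$ (extended trivially in the extra coordinates for $\ar f \geq 2$), and for each $f$ with $\ar f = n \geq 2$ and $s$ a singleton $\{i\}$, define the action $a(f,i)(q_1,\dots, \begin{bmatrix} a \\ b \end{bmatrix}/\Delta_{\alpha\alpha},\dots,q_n) := \begin{bmatrix} f^A(l(q_1),\dots,a,\dots,l(q_n)) \\ f^A(l(q_1),\dots,b,\dots,l(q_n)) \end{bmatrix}/\Delta_{\alpha\alpha}$, declaring $f^\Delta$ to agree with $a(f,1)$ as required by (AD2). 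Finally, for each $f\in\tau$ define the 2-cocycle $T_f(q_1,\dots,q_n) := \begin{bmatrix} r(f^A(l(q_1),\dots,l(q_n))) \\ f^A(l(q_1),\dots,l(q_n)) \end{bmatrix}/\Delta_{\alpha\alpha}$ — the "factor set" measuring the failure of $l$ to be a homomorphism. One must check these are well-defined modulo $\Delta_{\alpha\alpha}$ and independent of choices where claimed (Remark~\ref{remark: liftings}); this is routine using the abelian specialization of $\Delta_{\alpha\alpha}$ and the fact that $(l(q),l'(q))\in\alpha$ forces equality of diagonal classes.

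The substantive verifications are then: (i) the action and $A^{\alpha,\tau}$ are homomorphic with respect to $m$ — this follows because the weak-difference term acts as a Mal'cev operation on each $\alpha$-class and $\alpha$ is abelian, so $f^A$ respects $+_u$ on $\alpha$-classes by the term condition $C(\alpha,\alpha;0)$; (ii) the action $Q\ast A(\alpha)/\Delta_{\alpha\alpha}$ and the cocycle $T$ are compatible with $\mathrm{Id}\,\mathcal V$, which is where the bulk of the work lies: one takes an identity $t(\vec x)=s(\vec y)$ of $\mathcal V$, evaluates both sides in $A$ under a lifted assignment, and compares with the expansion given by Lemma~\ref{lem:repterm}, reading off that the "$A$-part" of the evaluation produces exactly $\sum_{\mu\in L(t,\epsilon)}(t^\sigma)^{\ast}(\mu)$ and the "factor-set part" produces $t^{\partial,T}$. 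Since $A\models t\approx s$, equating the two expansions gives both (C2) and weak compatibility; (iii) the map $\psi: A/[\alpha,1_A]\to A_T(Q,A^{\alpha,\tau},\ast)$... wait — since $\alpha$ is abelian, $[\alpha,1_A]$ need not vanish, so in fact the correct target is the bijection $\psi: A \to A_T(Q,A^{\alpha,\tau},\ast)$ given by $\psi(x) := \begin{bmatrix} r(x) \\ x \end{bmatrix}/\Delta_{\alpha\alpha}$, with inverse reading off the representative, and one checks $\psi$ is a homomorphism by the same Mal'cev-on-$\alpha$-classes distribution computation as in the second Claim of Proposition~\ref{thm:extension}.

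The main obstacle I anticipate is (ii): carefully matching the combinatorial expansion of $t^{A_T(\cdots)}$ from Definition~\ref{def:9} and Lemma~\ref{lem:repterm} against the genuine evaluation $t^A(l(q_1),\dots,l(q_n))$ term-by-term along the composition tree, keeping track of which coordinates carry $Q$-values, which carry diagonal values $\delta\circ l$, and which carry the "true" $A(\alpha)/\Delta_{\alpha\alpha}$-value — and verifying that the weak-difference term's Mal'cev behavior on $\alpha$-classes is exactly what licenses pulling all the $+_u$ sums out to the top. Once the bookkeeping is set up (which Definitions~\ref{def:5}, \ref{def:tranequ}, \ref{def:sigmaterms} are designed to handle), the argument is a formal induction, and the compatibility assertions and the homomorphism property of $\psi$ fall out simultaneously.
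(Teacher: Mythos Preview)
Your proposal is correct and follows essentially the same approach as the paper: fix a lifting $l$ and trace $r$, define the action terms, $f^{\Delta}$, and $T_f$ exactly as you do (these coincide with the paper's definitions once one notes $r\circ l = l$ and $\delta(a)=\delta(l(\pi(a)))$), establish the bijection $\psi(x)=\begin{bmatrix} r(x) \\ x \end{bmatrix}/\Delta_{\alpha\alpha}$ via the Mal'cev description of $\Delta_{\alpha\alpha}$ on $\alpha$-classes, verify $\psi$ is a homomorphism by the telescoping $+_u$ expansion, and then read off compatibility of both the action and $T$ with $\mathrm{Id}\,\mathcal V$ by comparing the Lemma~\ref{lem:repterm} expansion against the genuine evaluation in $A(\alpha)/\Delta_{\alpha\alpha}$. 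The only quibble is that the precise reference for the rule $d=m(b,a,c)$ is Lemma~\ref{lem:20}(2) (with $\beta=\alpha$ and $[\alpha,\alpha]=0$) rather than part~(3).
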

\begin{proof}
We have $Q \approx A/\alpha$. Fix a lifting $l: Q \rightarrow A$ and associated $\alpha$-trace $r: A \rightarrow A$ so that $r = l \circ \pi$ and $\pi \circ l = \id$. We also see that $\rho: A(\alpha) \rightarrow Q$ defined by $\rho: \begin{bmatrix} a \\ b \end{bmatrix} \mapsto \pi(a)$ is a surjective homomorphism such that $\hat{\alpha} = \ker \rho$.

Define $\phi: A \rightarrow A_{T}(Q,A^{\alpha,\tau},\ast)$ by $\phi(x)= \begin{bmatrix} r(x) \\ x \end{bmatrix}/\Delta_{\alpha \alpha}$. Let $m$ be a weak-difference term of $\mathcal V$. Since $\alpha$ is abelian, $m$ is affine on $\alpha$-blocks. Note $\Delta_{\alpha \alpha} \leq \hat{\alpha}$. By Lemma~\ref{lem:20}(2), we see that $\begin{bmatrix} a \\ b \end{bmatrix} \Delta_{\alpha \alpha} \begin{bmatrix} c \\ d \end{bmatrix} \Leftrightarrow d=m(b,a,c)$; therefore, the universe of the algebra $A(\alpha)/\Delta_{\alpha \alpha}$ is uniquely reconstructed from $m$. It also follows that $\begin{bmatrix} a \\ b \end{bmatrix} \Delta_{\alpha \alpha} \begin{bmatrix} a \\ d \end{bmatrix} \Rightarrow b=d$. These facts guarantee that $\phi$ is bijective.

For each $f \in \tau$, define $T_{f}: (Q)^{\ar f} \rightarrow A(\alpha)/\Delta_{\alpha \alpha}$ by
\begin{align}
T_{f}(y_1,\ldots,y_{n}):= \begin{bmatrix} r(f(x_1,\ldots,x_n)) \\ f(r(x_1),\ldots,r(x_{n})) \end{bmatrix}/\Delta_{\alpha \alpha}
\end{align}
for any choice of $x_i \in A$ such that $\pi(x_i)=y_i \in Q$. Since $r(x)=r(y)$ if and only if $(x,y) \in \alpha$, $T_{f}$ is well-defined; in this way, we can also treat $T$ has a function with domain $A$ which depends only on the $\alpha$-classes. For each $f \in \tau$ and $1 \leq i \leq n=\ar f$, define the action $Q \ast A(\alpha)/\Delta_{\alpha \alpha}$ according to the rule
\begin{align}\label{eq:actiondef}
a(f,i)(q_{1},\ldots,q_{i-1},x,q_{i+1},\ldots,q_{n}) := f \left( \delta \circ l(q_1),\ldots,\delta \circ l(q_{i-1}),x,\delta \circ l(q_{i+1}),\ldots,\delta \circ l(q_{n}) \right).
\end{align}
for all $x \in A(\alpha)/\Delta_{\alpha \alpha}$, $q_1,\ldots,q_n \in Q$ and define
\begin{align}
f^{\Delta}(x,\delta(a_2),\ldots,\delta(a_{n})):= f(x_1,\delta(a_2),\ldots,\delta(a_{n}))
\end{align}
for all $x \in A(\alpha)/\Delta_{\alpha \alpha}$, $a_2,\ldots,a_n \in A$. It follows that the action and $A^{\alpha,\tau}$ are homomorphic since $\alpha$ is abelian and $m$ is Mal'cev on $\alpha$-blocks. The definitions also show that (AD2) from Definition~\ref{def:datum} is satisfied.

The fact that $\phi$ is a homomorphism is a result of the following expansion: if we first set $u_i = f(r(x_1),\ldots,r(x_i),x_{i+1},\ldots,x_n)$ we have
\begin{align*}
\begin{bmatrix} r(f(x_1,\ldots,x_n)) \\ f(x_1,\ldots,x_n) \end{bmatrix}/\Delta_{\alpha \alpha} &= \begin{bmatrix} m \big( f(r(x_1),\ldots,r(x_n)),f(r(x_1),\ldots,r(x_n)), r(f(x_1,\ldots,x_n)) \big) \\ m \big( f(x_1,\ldots,x_n), f(r(x_1),\ldots,r(x_n)),f(r(x_1),\ldots,r(x_n)) \big) \end{bmatrix}/\Delta_{\alpha \alpha} \\
&= \begin{bmatrix} f(r(x_1),\ldots,r(x_n)) \\ f(x_1,\ldots,x_n) \end{bmatrix}/\Delta_{\alpha \alpha} +_{u_n} \begin{bmatrix} r(f(x_1,\ldots,x_n)) \\ f(r(x_1),\ldots,r(x_n)) \end{bmatrix}/\Delta_{\alpha \alpha} \displaybreak[0]\\
&= \begin{bmatrix} m \big( f(r(x_1),x_2,\ldots,x_n), f(r(x_1),x_2,\ldots,x_n), f(r(x_1),\ldots,r(x_n)) \big) \\ m \big( f(x_1,\ldots,x_n), f(r(x_1),x_2,\ldots,x_n), f(r(x_1),x_2,\ldots,x_n) \big)  \end{bmatrix}/\Delta_{\alpha \alpha}  \displaybreak[0]\\
& \quad +_{u_n} T_{f}(\pi(x_1),\ldots,\pi(x_n)) \displaybreak[0]\\
&= \begin{bmatrix} f(r(x_1),x_2,\ldots,x_n) \\ f(x_1,x_2,\ldots,x_n) \end{bmatrix}/\Delta_{\alpha \alpha} +_{u_1} \begin{bmatrix} f(r(x_1),r(x_2),\ldots,r(x_n)) \\ f(r(x_1),x_2,\ldots,x_n) \end{bmatrix}/\Delta_{\alpha \alpha}  \displaybreak[0]\\
& \quad +_{u_n} T_{f}(\pi(x_1),\ldots,\pi(x_n)) \displaybreak[0]\\
&= f\left( \begin{bmatrix} r(x_1) \\ x_1 \end{bmatrix}/\Delta_{\alpha \alpha}, \delta(x_2),\ldots,\delta(x_n) \right) +_{u_1} \begin{bmatrix} f(r(x_1),r(x_2),x_3,\ldots,x_n) \\ f(r(x_1),x_2,x_3,\ldots,x_n) \end{bmatrix}/\Delta_{\alpha \alpha}  \displaybreak[0]\\
& \quad +_{u_2} \begin{bmatrix} f(r(x_1),r(x_2),r(x_3),\ldots,r(x_n)) \\ f(r(x_1),r(x_2),x_3,\ldots,x_n) \end{bmatrix}/\Delta_{\alpha \alpha} +_{u_n} T_{f}(\pi(x_1),\ldots,\pi(x_n)) \displaybreak[0]\\
&= f\left( \begin{bmatrix} r(x_1) \\ x_1 \end{bmatrix}/\Delta_{\alpha \alpha}, \delta(x_2),\ldots,\delta(x_n) \right) \displaybreak[0]\\
& \quad +_{u_1} a(f,1)\left( \pi(x_1),\begin{bmatrix} r(x_2) \\ x_2 \end{bmatrix}/\Delta_{\alpha \alpha}, \delta(x_3),\ldots,\delta(x_n) \right) \displaybreak[0]\\
& \quad +_{u_2} \begin{bmatrix} f(r(x_1),r(x_2),r(x_3),\ldots,r(x_n)) \\ f(r(x_1),r(x_2),x_3,\ldots,x_n) \end{bmatrix}/\Delta_{\alpha \alpha} +_{u_n} T_{f}(\pi(x_1),\ldots,\pi(x_n)) \displaybreak[0]\\
&\vdots \displaybreak[0]\\
&= f^{\Delta}\left( \begin{bmatrix} r(x_1) \\ x_1 \end{bmatrix}/\Delta_{\alpha \alpha}, \delta(x_2),\ldots,\delta(x_n) \right) +_{u_n} \displaybreak[0]\\
& \quad \sum_{i=2}^{n} a(f,i)\left(\pi(x_1),\ldots,\pi(x_i),\begin{bmatrix} r(x_i) \\ x_i \end{bmatrix}/\Delta_{\alpha \alpha},\delta(x_{i+2}),\ldots,\delta(x_n) \right) \displaybreak[0]\\
& \quad +_{u_n} T_{f}(\pi(x_1),\ldots,\pi(x_n)) \displaybreak[0]\\
&= F_{f}\left( \begin{bmatrix} r(x_1) \\ x_1 \end{bmatrix}, \ldots, \begin{bmatrix} r(x_n) \\ x_n \end{bmatrix} \right)
\end{align*}
since $\delta(u_i)=\delta(u_j)=\delta(l(f(\pi(x_1),\ldots,\pi(x_n))))$ for $i \neq j$.

We now show both the action and 2-cocycle $T$ are compatible with $\mathrm{Id} \, \mathcal V$. Note that in the expansion for $f \in \tau$ previously calculated, 
\begin{align}\label{eq:13}
F_{f}\left( \begin{bmatrix} r(x_1) \\ x_1 \end{bmatrix}/\Delta_{\alpha \alpha}, \ldots, \begin{bmatrix} r(x_n) \\ x_n \end{bmatrix}/\Delta_{\alpha \alpha} \right) &= \begin{bmatrix} r(f(x_1,\ldots,x_n)) \\ f(x_1,\ldots,x_n) \end{bmatrix}/\Delta_{\alpha \alpha}  \\
&= \begin{bmatrix} f(r(x_1),\ldots,r(x_n)) \\ f(x_1,\ldots,x_n) \end{bmatrix}/\Delta_{\alpha \alpha} +_{u} \ T_{f}(\pi(x_1),\ldots,\pi(x_n)) \\
&= f\left( \begin{bmatrix} r(x_1) \\ x_1 \end{bmatrix},\ldots, \begin{bmatrix} r(x_n) \\ x_n \end{bmatrix} \right)/\Delta_{\alpha \alpha}  +_{u} \ T_{f}(\pi(x_1),\ldots,\pi(x_n)) \label{eq:14}
\end{align}
in the last line Eq.(\ref{eq:14}) the $\Delta_{\alpha \alpha}$-class of the first term is expanded using the action. This is a reflection of the fact that in the algebra $A(\alpha)/\Delta_{\alpha \alpha}$ the action represents a ``twisting'' of the product structure.

Take $t(\bar{x}) = g(\bar{y}) \in \mathrm{Id} \, \mathcal V$. Fix an assignment $\epsilon: \mathrm{var} \ t \cup \mathrm{var} \ g \rightarrow A$. By the isomorphism $\phi$ we have $t^{A_{T}(Q,A^{\alpha,\tau},\ast)}(\phi \circ \epsilon (\bar{x})) = g^{A_{T}(Q,A^{\alpha,\tau},\ast)}(\phi \circ \epsilon (\bar{y}))$ since $A \in \mathcal V$. We can use Eq.(\ref{eq:13}), Eq.(\ref{eq:14}) and the homomorphism property of the action to recursively expand the interpretation of the term $t$ in the algebra $A_{T}(Q,A^{\alpha,\tau},\ast)$ in order to write
\begin{align}
t^{A_{T}(Q,A^{\alpha,\tau},\ast)}(\phi \circ \epsilon (\vec{x})) &= t^{A(\alpha)/\Delta_{\alpha \alpha}}\left(\phi \circ \epsilon (\vec{x}) \right) \ +_{u} \ t^{\partial,T}(\pi \circ \epsilon (\bar{x})) \\
&= t^{A(\alpha)/\Delta_{\alpha \alpha}}\left( \begin{bmatrix} r(a_1) \\ a_1 \end{bmatrix}, \ldots, \begin{bmatrix} r(a_n) \\ a_{n} \end{bmatrix} \right)/\Delta_{\alpha \alpha} \ +_{u} \ t^{\partial,T}(\pi \circ \epsilon (\vec{x})) \label{eq:15}
\end{align}
where $u=l(t^{Q}(\pi \circ \epsilon(\vec{x})))$ and $\left( \begin{bmatrix} r(a_1) \\ a_1 \end{bmatrix}/\Delta_{\alpha \alpha}, \ldots, \begin{bmatrix} r(a_n) \\ a_{n} \end{bmatrix}/\Delta_{\alpha \alpha} \right) = \phi \circ \epsilon (\vec{x})$. The second term in Eq.(\ref{eq:15}) incorporates all the appearances of the transfers $T_{f}$. By comparison with Lemma~\ref{lem:repterm}, we see that
\begin{align*}
 t^{A(\alpha)/\Delta_{\alpha \alpha}}\left( \begin{bmatrix} r(a_1) \\ a_1 \end{bmatrix}, \ldots, \begin{bmatrix} r(a_n) \\ a_{n} \end{bmatrix} \right)/\Delta_{\alpha \alpha}  = \sum_{\mu \in L(t,\epsilon(\vec{x}))} (t^{\sigma})^{\ast}( \mu ( \mathrm{var} \, t^{\sigma}) ).
\end{align*}
A similar calculation produces
\begin{align*}
g^{A_{T}(Q,A^{\alpha,\tau},\ast)}(\phi \circ \epsilon (\bar{y})) &= g^{A(\alpha)/\Delta_{\alpha \alpha}}\left( \begin{bmatrix} r(b_1) \\ b_1 \end{bmatrix}, \ldots, \begin{bmatrix} r(b_m) \\ b_{m} \end{bmatrix} \right)/\Delta_{\alpha \alpha} \ +_{v} \ g^{\partial,T}(\epsilon (\pi \circ \vec{y})) \\
&= \sum_{\nu \in L(g,\epsilon(\vec{x}))} (g^{\sigma})^{\ast}( \nu ( \mathrm{var} \, g^{\sigma}) ) \ + \ g^{\partial,T}(\epsilon (\pi \circ \vec{y})).
\end{align*}
where $v = l(g^{Q}(\pi \circ \epsilon(\bar{y})))$ and $\left( \begin{bmatrix} r(b_1) \\ b_1 \end{bmatrix}/\Delta_{\alpha \alpha}, \ldots, \begin{bmatrix} r(b_m) \\ b_{m} \end{bmatrix}/\Delta_{\alpha \alpha} \right) = \phi \circ \epsilon (\vec{y})$.

Since $A(\alpha) \in \mathcal V$, we conclude that
\begin{align}\label{eq:16}
\sum_{\mu \in L(t,\epsilon(\vec{x}))} (t^{\sigma})^{\ast}( \mu ( \mathrm{var} \, t^{\sigma}) )  = t^{\sigma}\left( \begin{bmatrix} r(a_1) \\ a_1 \end{bmatrix}, \ldots, \begin{bmatrix} r(a_n) \\ a_{n} \end{bmatrix} \right)/\Delta_{\alpha \alpha} &= g^{\sigma'}\left( \begin{bmatrix} r(b_1) \\ b_1 \end{bmatrix}, \ldots, \begin{bmatrix} r(b_m) \\ b_{m} \end{bmatrix} \right)/\Delta_{\alpha \alpha} \\
&= \sum_{\nu \in L(g,\epsilon(\vec{x}))} (g^{\sigma})^{\ast}( \nu ( \mathrm{var} \, g^{\sigma}) ).
\end{align}
This shows the action is compatible with $\mathrm{Id} \, \mathcal V$. Since $Q \in \mathcal V$, we have $t^{Q}(\pi \circ \epsilon(\bar{x})) = g^{Q}(\pi \circ \epsilon(\bar{y}))$ which implies $u=v$. Using this we then have 
\begin{align*}
t^{\partial,T}(\pi \circ \epsilon (\vec{x})) &= t^{A_{T}(Q,A^{\alpha,\tau},\ast)}(\phi \circ \epsilon (\vec{x})) \ -_{u} \ t^{A(\alpha)/\Delta_{\alpha \alpha}}\left(\phi \circ \epsilon (\vec{x}) \right)\\
&= g^{A_{T}(Q,A^{\alpha,\tau},\ast)}(\phi \circ \epsilon (\vec{y})) \ -_{v} \ g^{A(\alpha)/\Delta_{\alpha \alpha}}\left(\phi \circ \epsilon (\vec{y}) \right) = g^{\partial,T}(\epsilon (\pi \circ \vec{y}))
\end{align*}
which shows that $T$ is a 2-cocycle compatible with $\mathrm{Id} \, \mathcal V$ by Definition~\ref{def:2cocyle}.
\end{proof}

\begin{remark}
If $l$ is lifting associated to $\rho: A(\alpha) \rightarrow Q$ from affine datum, then it useful to observe
\[
\delta(l(f^{Q}(\bar{x}))) = f^{A(\alpha)/\Delta_{\alpha \alpha}}(\delta(\bar{x}))
\]
which follows from the property $(a,l(q)) \in \alpha \Longleftrightarrow \rho \left( \begin{bmatrix} a \\ l(q)\end{bmatrix} \right)=q$.
\end{remark}

The next theorem is complimentary to Theorem~\ref{thm:10} in that it starts with affine datum and membership $Q \in \mathcal U$ for a variety with compatible equational theory and reconstructs an extension in $\mathcal U$ which determines the given datum.

\begin{theorem}\label{thm:12}
Let $(Q,A^{\alpha,\tau},\ast)$ be affine datum in the signature $\tau$. Assume the action is weakly compatible with $\mathrm{Id} \, \mathcal U$ and $T= \{T_{f}: f \in \tau\}$ is a 2-cocycle compatible $\mathrm{Id} \, \mathcal U$. Then there is an extension $\pi : A_{T}(Q,A^{\alpha,\tau},\ast) \rightarrow Q$ which realizes the datum with $A_{T}(Q,A^{\alpha,\tau},\ast) \in \mathcal U$.
\end{theorem}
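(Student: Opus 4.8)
The plan is to verify two things: first, that the projection $\pi : A_{T}(Q,A^{\alpha,\tau},\ast) \rightarrow Q$ onto the $Q$-coordinate is a surjective homomorphism realizing the datum; second, that $A_{T}(Q,A^{\alpha,\tau},\ast) \in \mathcal U$. The map $\pi$ is defined by $\pi \left( \begin{bmatrix} a \\ b \end{bmatrix}/\Delta_{\alpha \alpha} \right) = \rho \left( \begin{bmatrix} a \\ b \end{bmatrix} \right)$, which is well-defined since $\Delta_{\alpha \alpha} \leq \hat\alpha = \ker \rho$. That $\pi$ is a homomorphism is immediate from the shape of the operations $F_{f}$ in Definition~\ref{def:9}: the first two summands land in a single $\hat\alpha/\Delta_{\alpha\alpha}$-block by (D4) and (C1), and $T_{f}(\vec q)$ lies in the same block by (C1), so that the $+_u$ composition does not disturb the $\rho$-image, while the $Q$-coordinate of $F_f$ is exactly $f^Q(q_1,\dots,q_n)$. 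Surjectivity of $\pi$ follows from surjectivity of $\rho$. Writing $\beta = \ker \pi$, one checks $\beta = \hat\alpha/\Delta_{\alpha\alpha}$ as congruences on $A(\alpha)/\Delta_{\alpha\alpha}$; then the identity bijection $i : A_{T}(Q,A^{\alpha,\tau},\ast)(\beta)/\Delta_{\beta\beta} \rightarrow A(\alpha)/\Delta_{\alpha\alpha}$ together with the lifting $l$ associated to $\rho$ verifies (R1) and (R2): (R1) is precisely the definition of the action term $a(f,i)$ acting via the $F_f$-structure restricted to diagonal arguments together with (AD2), and (R2) for nullary and unary $f$ holds because on these arities $F_f$ reduces to $f^\Delta$ (the action sum is empty and $T_f$ is a nullary/unary constant absorbed into $f^\Delta$ by (C1)).

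The substantive part is membership $A_{T}(Q,A^{\alpha,\tau},\ast) \in \mathcal U$, i.e.\ verifying every identity $t(\vec x) \approx g(\vec y) \in \mathrm{Id}\,\mathcal U$. The key tool is Lemma~\ref{lem:repterm}, which gives, for any evaluation $\epsilon(\vec x)$ into $A(\alpha)/\Delta_{\alpha\alpha}$,
\[
F_{t}(\epsilon(\vec x)) = \sum_{\mu \in L(t,\epsilon(\vec x))} (t^{\sigma})^{\ast}(\mu(\mathrm{var}\,t^{\sigma})) \ +_{u} \ t^{\partial,T}(\rho \circ \epsilon(\vec x)),
\]
with $u = l(t^{Q}(\rho\circ\epsilon(\vec x)))$, and similarly for $g$. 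Given an identity $t \approx g$ of $\mathcal U$ and an evaluation $\epsilon$ consistent on the shared variables, one has $t^{Q}(\rho\circ\epsilon(\vec x)) = g^{Q}(\rho\circ\epsilon(\vec y))$ because $Q \in \mathcal U$ (compatibility of the action with $\mathrm{Id}\,\mathcal U$ forces $Q \in \mathcal U$), hence the two $+_u$ compositions use the same base point $u$. The first summand on each side agrees by weak compatibility of the action with $\mathrm{Id}\,\mathcal U$; the second summand on each side agrees because $T$ is a 2-cocycle compatible with $\mathrm{Id}\,\mathcal U$, which is exactly condition (C2): $t^{\partial,T}(\rho\circ\epsilon(\vec x)) = g^{\partial,T}(\rho\circ\epsilon(\vec y))$. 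Since addition over $+_u$ is a group operation with identity the diagonal class, adding equal values to equal values at the same base point yields $F_{t}(\epsilon(\vec x)) = F_{g}(\epsilon(\vec y))$, so the identity holds in $A_{T}(Q,A^{\alpha,\tau},\ast)$.

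The main obstacle is bookkeeping rather than conceptual: one must be careful that the decomposition in Lemma~\ref{lem:repterm} genuinely separates into the ``action part'' $\sum_{\mu} (t^{\sigma})^{\ast}(\cdots)$ and the ``transfer part'' $t^{\partial,T}$ in a way that is stable under the identification of variables encoded by the surjection $\sigma : \mathrm{var}\,t^{\sigma} \rightarrow \mathrm{var}\,t$, and that the distributivity used to push the $+_u$ sums to the leaves of the composition tree is legitimate — this relies on $m$ being an abelian group operation on $\hat\alpha/\Delta_{\alpha\alpha}$-blocks (from (AD1) via Lemma~\ref{lem:20}), on the homomorphic property of both the action (D4) and the structure $A^{\alpha,\tau}$ (D1), and on the fact that all the base points $\delta(u_i)$ appearing in intermediate steps collapse to the single class $\delta(l(f^Q(\vec q)))$, as noted after Remark~\ref{remark: liftings}. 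Once one accepts Lemma~\ref{lem:repterm} and the observation that $L(t,\epsilon(\vec x))$ and $t^{\partial,T}$ depend only on $\epsilon$ through the shared-variable consistency condition, the proof reduces to matching the three pieces — $Q$-part, action-part, transfer-part — one per hypothesis, and concluding.
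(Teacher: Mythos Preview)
Your argument for membership $A_T(Q,A^{\alpha,\tau},\ast) \in \mathcal U$ is essentially the paper's: both invoke Lemma~\ref{lem:repterm} to split each term evaluation into an action part and a transfer part, then match the two sides via weak compatibility and condition (C2), using $Q \in \mathcal U$ to equate the base points $u$.

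The realization argument is where you diverge from the paper, and your version has a gap. You appeal to an ``identity bijection'' $i : B(\beta)/\Delta_{\beta\beta} \to A(\alpha)/\Delta_{\alpha\alpha}$, but since $B$ already has universe $A(\alpha)/\Delta_{\alpha\alpha}$ and $\beta = \hat\alpha/\Delta_{\alpha\alpha}$, the set $B(\beta)/\Delta_{\beta\beta}$ consists of $\Delta_{\beta\beta}$-classes of \emph{pairs} of $\Delta_{\alpha\alpha}$-classes; there is no literal identity, and constructing the correct bijection and verifying (R1)--(R2) for it is real work. More concretely, your claim that for unary or nullary $f$ the term $T_f$ is ``absorbed into $f^\Delta$ by (C1)'' misidentifies the mechanism: (C1) only places $T_f(\vec q)$ in the correct $\hat\alpha/\Delta_{\alpha\alpha}$-block. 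The actual reason $T_f$ disappears in (R1) and (R2) is that it depends only on the $Q$-coordinates, hence contributes identically to both rows of an element of $B(\beta)$ and is cancelled when one passes to the $\Delta_{\beta\beta}$-quotient. The same cancellation is what makes (R1) come out to $a(f,i)$ rather than $a(f,i) + T_f$.

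The paper sidesteps this by pulling the $A_T$-structure back along the bijection $\phi : A \to A(\alpha)/\Delta_{\alpha\alpha}$, $\phi(a) = \begin{bmatrix} r(a) \\ a \end{bmatrix}/\Delta_{\alpha\alpha}$, to obtain an isomorphic algebra $\check A$ on the original universe $A$ with kernel literally $\alpha$. It then carries out an explicit calculation in $\check A(\alpha)/\Delta_{\alpha\alpha}$: evaluate $F_f$ on the tuple with $\phi(r(x_j))$ in every slot versus the tuple with $\phi(x_i)$ in slot $i$ and $\phi(r(x_j))$ elsewhere, observe that all the $f^\Delta$ and $a(f,j)$ terms with $j \neq i$ collapse to $\delta(u)$ by the homomorphic property, and track how the common $T_f$ summand cancels between the two rows so that what remains is exactly $a(f,i)$. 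This explicit cancellation is the bookkeeping your sketch is missing.
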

\begin{proof}
Since the action is weakly compatible with $\mathrm{Id} \, \mathcal U$, we have $Q \in \mathcal U$. Define $\pi:A_{T}(Q,A^{\alpha,\tau},\ast) \rightarrow Q$ by $\pi \left( \begin{bmatrix} a \\ b \end{bmatrix}/\Delta_{\alpha \alpha} \right) := \rho \left( \begin{bmatrix} a \\ b \end{bmatrix} \right)$. By Definition~\ref{def:2cocyle}.(C1) and Definition~\ref{def:datum}.(D3)-(D4) we see that $\pi$ is a surjective homomorphism and $\ker \pi = \hat{\alpha}/\Delta_{\alpha \alpha}$. Fix a lifting $l: Q \rightarrow A$ for $\rho$ and attendant $\alpha$-trace $r:A \rightarrow A$.

We show $A_{T}(Q,A^{\alpha,\tau},\ast) \in \mathcal U$. Take $t(\bar{x}) = g(\bar{y}) \in \mathrm{Id} \ \mathcal U$. Let $\epsilon: \mathrm{var} \ t \cup \mathrm{var} g \rightarrow A(\alpha)/\Delta_{\alpha \alpha}$ be an assignment. If we set $u=l(t^{Q}(\pi \circ \epsilon(\bar{x})))$ and $v=l(g^{Q}(\pi \circ \epsilon(\bar{y})))$, then $Q \in \mathcal V$ implies $u=v$. Then because the action and $T$ are both separately $\mathcal U$-compatible, by Lemma~\ref{lem:repterm} we have
\begin{align*}
t^{A_{T}(Q,A^{\alpha,\tau},\ast)}(\epsilon(\bar{x})) &= \sum_{\mu \in L(t,\epsilon(\vec{x}))} (t^{\sigma})^{\ast}( \mu ( \mathrm{var} \, t^{\sigma}) )  \ +_{u} \ t^{\partial, T}\left( \pi \circ \epsilon (\vec{x}) \right) \\
&= \sum_{\nu \in L(g,\epsilon(\vec{x}))} (g^{\sigma})^{\ast}( \nu ( \mathrm{var} \, g^{\sigma}) )  \ +_{u} \ g^{\partial, T}\left( \pi \circ \epsilon (\vec{y}) \right) = g^{A_{T}(Q,A^{\alpha,\tau},\ast)}(\epsilon(\bar{y})).
\end{align*}
This shows the algebra $A_{T}(Q,A^{\alpha,\tau},\ast)$ satisfies $\mathrm{Id} \ \mathcal U$ and so $A_{T}(Q,A^{\alpha,\tau},\ast) \in \mathcal U$.

We will put an isomorphic algebraic structure on the set $A$. Define the bijection $\phi:A \rightarrow A_{T}(Q,A^{\alpha,\tau},\ast)$ by $\phi(a):=\begin{bmatrix} r(a) \\ a \end{bmatrix}/\Delta_{\alpha \alpha}$ and a new algebra $\check{A} = \left\langle A, \{\check{f}: f \in \tau \} \right\rangle$ with operations $\check{f}(a_1,\ldots,a_n):=\phi^{-1}\left(F_{f}\left(\phi(a_1),\ldots,\phi(a_n) \right) \right)$ for $f \in \tau$. It is immediate that $A \approx A_{T}(Q,A^{\alpha,\tau},\ast)$ and $\ker (\pi \circ \phi) = \alpha$. We now show $A_{T}(Q,A^{\alpha,\tau},\ast)$ realizes the datum. In order to verify Definition~\ref{def:realizes}, we take $1 \leq i \leq n=\ar f$ and evaluate
\begin{align}
\check{f} & \left( \delta(r(x_{1})),\ldots,\delta(r(x_{i-1})),\begin{bmatrix} r(x_{i}) \\ x_{i} \end{bmatrix}/\Delta_{\alpha \alpha},\delta(r(x_{i+1})),\ldots,\delta(r(x_{n})) \right) \\
&= \begin{bmatrix} \check{f}(r(x_1),\ldots,r(x_{i-1}),r(x_{i}),r(x_{i+1}),\ldots,r(x_n)) \\  \check{f}(r(x_1),\ldots,r(x_{i-1}),x_{i},r(x_{i+1}),\ldots,r(x_n)) \end{bmatrix}/\Delta_{\alpha \alpha} \\ \label{eq:20}
&= \begin{bmatrix} \phi^{-1} \left( F_{f} \left( \phi(r(x_1)),\ldots,\phi(r(x_{i-1})),\phi(r(x_{i})),\phi(r(x_{i+1})),\ldots,\phi(r(x_n)) \right) \right) \\ \phi^{-1}\left( F_{f} \left( \phi(r(x_1)),\ldots,\phi(r(x_{i-1})),\phi(x_{i}),\phi(r(x_{i+1})),\ldots,\phi(r(x_n)) \right) \right)  \end{bmatrix}/\Delta_{\alpha \alpha}.
\end{align}
We will evaluate the operations $F_{f}$ on the above tuples. Let us write $u=l(f^{Q}(q_1,\ldots,q_n))$ where $q_i = \pi \left(\begin{bmatrix} r(x_i) \\ x_i \end{bmatrix}/\Delta_{\alpha \alpha}\right)$ and note $q_i = \pi \left(\delta(r(x_i)) \right) = \pi \left( \delta(x_i) \right)$. First, for any $1 \leq j < n=\ar f$, by the homomorphism property of the action we always have
\begin{align*}
a(f,j) \left( q_1,\ldots,q_{j-1},\delta(x_{j}),q_{j+1},\ldots,q_{n} \right) &= a(f,j) \left( q_1,\ldots,q_{j-1},\delta(x_{j}),q_{j+1},\ldots,q_{n} \right) \\
& \quad \quad +_{u} \ a(f,j) \left( q_1,\ldots,q_{j-1},\delta(x_{j}),q_{j+1},\ldots,q_{n} \right)
\end{align*}
which implies $a(f,j)(q_1,\ldots,q_j,\delta(x_{j+1}),\ldots,\delta(x_{n}) = \delta(u)$; similarly, $f^{\Delta} \left( \delta(x_{1}),\delta(x_{2}),\ldots,\delta(x_n) \right) = \delta(u)$. Second, note by Definition~\ref{def:datum}.D3 it follows that we can write
\[
a(f,j) \left( q_1,\ldots,q_{j-1},\begin{bmatrix} r(x_{j}) \\ x_{j} \end{bmatrix}/\Delta_{\alpha \alpha},q_{j+1},\ldots,q_{n} \right) = \begin{bmatrix} u \\ a_{j} \end{bmatrix}/\Delta_{\alpha \alpha}
\]
for some $a_{j} \in A$; similarly,  $T_{f}(q_1,\ldots,q_n) = \begin{bmatrix} u \\ b \end{bmatrix}/\Delta_{\alpha \alpha}$ for some $b \in A$. Then if we recall $r \circ r = r$ for an $\alpha$-trace, we have
\begin{align*}
F_{f} & \left( \phi(r(x_1)),\ldots,\phi(r(x_{i-1})),\phi(x_{i}),\phi(r(x_{i+1})),\ldots,\phi(r(x_n)) \right)  \\
&=  f^{\Delta} \left( \begin{bmatrix} r(x_1) \\ r(x_1) \end{bmatrix}/\Delta_{\alpha \alpha}, \delta(r(x_2)),\ldots,\delta(r(x_{i})),\delta(x_{i+1}),\ldots,\delta(x_n) \right) \\
& \quad +_{u} \  \sum_{j=2}^{i-1} a(f,j)\left(q_1,\ldots,q_{j-1},\delta(r(x_{j})),q_{j+1},\ldots,q_{i},\ldots,q_{n} \right)  \\
& \quad +_{u} \ a(f,i)\left(q_1,\ldots,q_{i-1},\begin{bmatrix} r(x_{i}) \\ x_{i} \end{bmatrix}/\Delta_{\alpha \alpha},q_{i+1},\ldots,q_{n} \right)  \\
& \quad +_{u} \sum_{j=i+1}^{n} a(f,j)\left(q_1,\ldots,q_{j-1},\delta(r(x_{j})),q_{j+1},\ldots,q_{n} \right)   \ +_{u} \ T_{f}(q_1,\ldots,q_n) \\
&= a(f,i)\left(q_1,\ldots,q_{i-1},\begin{bmatrix} r(x_{i}) \\ x_{i} \end{bmatrix}/\Delta_{\alpha \alpha},q_{i+1},\ldots,q_{n} \right) \ +_{u} \ T_{f}(q_1,\ldots,q_n) \\
&= \begin{bmatrix} u \\ a_{i} \ \hat{+}_{u} \  b \end{bmatrix}/\Delta_{\alpha \alpha}
\end{align*}
where we have written $x \ \hat{+}_{u} \ y = m(x,u,y)$ for the induced sum on $A$. Note $\hat{\alpha}/\Delta_{\alpha \alpha}$ abelian implies the sum $a_{i+2} \ \hat{+}_{u} \cdots \hat{+}_{u} \ a_n \ \hat{+}_{u} \ b$ is unique up to association. In the same manner we have,
\begin{align*}
F_{f} \left( \phi(r(x_1)),\ldots,\phi(r(x_{i})),\phi(x_{i+1}),\phi(x_{i+2}),\ldots,\phi(x_n) \right) = T_{f}(q_1,\ldots,q_n) = \begin{bmatrix} u \\ b \end{bmatrix}/\Delta_{\alpha \alpha}.
\end{align*}
Putting the above together with Eq.(\ref{eq:20}) we see that
\begin{align*}
\check{f} & \left( \delta(r(x_{1})),\ldots,\delta(r(x_{i-1})),\begin{bmatrix} r(x_{i}) \\ x_{i} \end{bmatrix}/\Delta_{\alpha \alpha},\delta(r(x_{i+1})),\ldots,\delta(r(x_{n})) \right) \displaybreak[0]\\
&= \begin{bmatrix} \phi^{-1}\left( \begin{bmatrix} u \\ b \end{bmatrix}/\Delta_{\alpha \alpha} \right) \\ \phi^{-1}\left( \begin{bmatrix} u \\ a_{i} \ \hat{+}_{u} \ b \end{bmatrix}/\Delta_{\alpha \alpha} \right)  \end{bmatrix}/\Delta_{\alpha \alpha} \displaybreak[0]\\
&= \begin{bmatrix} b \\ a_{i} \ \hat{+}_{u} \ b  \end{bmatrix}/\Delta_{\alpha \alpha} \displaybreak[0]\\
&= \begin{bmatrix}  b \\  m(a_{i},u, b) \end{bmatrix}/\Delta_{\alpha \alpha} \displaybreak[0]\\
&= \begin{bmatrix} u \\ a_{i} \end{bmatrix}/\Delta_{\alpha \alpha} = a(f,i)(q_1,\ldots,q_{i-1},\begin{bmatrix} r(x_{i}) \\ x_{i} \end{bmatrix}/\Delta_{\alpha \alpha},q_{i+1},\ldots,q_n)
\end{align*}
which shows $\check{A} \approx A_{T}(Q,A^{\alpha,\tau},\ast)$ realizes the datum.
\end{proof}

We say the variety $\mathcal U$ \emph{contains} the datum $(Q,A^{\alpha,\tau},\ast)$ if the action $\ast$ is weakly compatible with $\mathrm{Id} \, \mathcal U$. The following is a characterization of the internal semidirect product by retractions for algebras which realize affine datum; in particular, this holds for any algebra with an abelian congruence in a variety with a weak-difference term. Note a retraction $r: A \rightarrow A$ is always a $\ker r$-trace.

\begin{proposition}\label{prop:6}
Let $A$ be an algebra which realizes affine datum $(A^{\alpha,\tau}, Q, \ast)$. Let $\pi: A \rightarrow A/\alpha$ be the canonical homomorphism for $\alpha$. The following are equivalent:
\begin{enumerate}

	\item $A \approx A(\alpha)/\Delta_{\alpha \alpha};$
	
	\item there is a homomorphism $l: A/\alpha \rightarrow A$ such that $\pi \circ l = \id_{A/\alpha}$;
	
	\item there is a retraction $r: A \rightarrow A$ with $\ker r = \alpha$.

\end{enumerate}	
\end{proposition}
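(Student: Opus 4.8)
The plan is to prove the cycle of implications $(3)\Rightarrow(2)\Rightarrow(1)\Rightarrow(3)$, exploiting the machinery of Definition~\ref{def:9} and Theorem~\ref{thm:12}. For the implication $(3)\Rightarrow(2)$: given a retraction $r\colon A\to A$ with $\ker r=\alpha$, its image $A_{0}=r(A)$ is a subalgebra of $A$, and $\pi\restriction_{A_{0}}\colon A_{0}\to A/\alpha$ is a bijective homomorphism (injective because each $\alpha$-class meets $A_{0}$ in the single point $r(x)$, surjective because $\pi$ is). Define $l\colon A/\alpha\to A$ to be $(\pi\restriction_{A_{0}})^{-1}$ followed by the inclusion; then $l$ is a homomorphism and $\pi\circ l=\id_{A/\alpha}$. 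For $(2)\Rightarrow(3)$ (useful for the return leg): given such an $l$, set $r=l\circ\pi$; this is idempotent since $\pi\circ l=\id$, is a homomorphism as a composite of two homomorphisms, and has kernel $\alpha$.

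For $(2)\Rightarrow(1)$: since $A$ realizes the datum, fix the extension $\pi\colon A\to Q$ with $\ker\pi=\alpha$, the bijection $i\colon A(\alpha)/\Delta_{\alpha\alpha}\to A(\alpha)/\Delta_{\alpha\alpha}$ (here $\beta=\alpha$) and the associated lifting from Definition~\ref{def:realizes}; but now, crucially, by hypothesis (2) we may choose the lifting to be a \emph{homomorphism} $l\colon A/\alpha\to A$. One then revisits the construction of $A_{T}(Q,A^{\alpha,\tau},\ast)$ in Definition~\ref{def:9}: when the lifting $l$ is a homomorphism, the transfer functions
$T_{f}(q_1,\dots,q_n)=\begin{bmatrix} r(f(x_1,\dots,x_n))\\ f(r(x_1),\dots,r(x_n))\end{bmatrix}/\Delta_{\alpha\alpha}$
(as computed in the proof of Theorem~\ref{thm:10}, using $r=l\circ\pi$) are identically the diagonal class $\delta(u)$, because $l$ homomorphic gives $f(r(x_1),\dots,r(x_n))=f(l(q_1),\dots,l(q_n))=l(f^{Q}(q_1,\dots,q_n))=r(f(x_1,\dots,x_n))$, so both coordinates coincide and $T_{f}$ is the zero 2-cocycle. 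Then $A_{T}(Q,A^{\alpha,\tau},\ast)$ with $T\equiv 0$ is precisely the algebra $A(\alpha)/\Delta_{\alpha\alpha}$ equipped with the operations coming from $f^{\Delta}$ and the action terms — that is, the semidirect product with trivial 2-cocycle — and the isomorphism $\phi(a)=\begin{bmatrix} r(a)\\ a\end{bmatrix}/\Delta_{\alpha\alpha}$ established in Theorems~\ref{thm:10} and~\ref{thm:12} gives $A\approx A(\alpha)/\Delta_{\alpha\alpha}$.

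For $(1)\Rightarrow(3)$: suppose $\phi\colon A\to A(\alpha)/\Delta_{\alpha\alpha}$ is an isomorphism. The algebra $A(\alpha)/\Delta_{\alpha\alpha}$ carries the diagonal homomorphism $\delta\colon A\to A(\alpha)/\Delta_{\alpha\alpha}$, $\delta(u)=\begin{bmatrix} u\\ u\end{bmatrix}/\Delta_{\alpha\alpha}$, which is a homomorphism of $\langle A,m\rangle$ into the reduct; one checks it is a homomorphism for the full $\tau$-structure on $A(\alpha)/\Delta_{\alpha\alpha}$ (the image $\delta(A)$ is exactly the block $\hat\alpha/\Delta_{\alpha\alpha}$-representatives of the diagonal, on which $f^{\Delta}$ and the action terms reduce to the $Q$-operations via $\rho$), and that $\rho\circ\delta$ recovers $\pi$ up to the identification $Q\approx A/\alpha$. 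Composing, $s:=\phi^{-1}\circ\delta\circ(\text{a section of }\pi\text{ as a set})$ will not immediately be idempotent; instead, the cleanest route is to produce the homomorphism of (2) directly: the map $\delta\colon A\to A(\alpha)/\Delta_{\alpha\alpha}$ together with a set-theoretic lifting gives, after transport by $\phi^{-1}$, a homomorphic section $l\colon A/\alpha\to A$ because $\delta$ factors the quotient, and then invoke $(2)\Rightarrow(3)$ already proved. The main obstacle is the bookkeeping in $(1)\Rightarrow(3)$: one must verify carefully that under the isomorphism $\phi$ the diagonal map $\delta$ really does supply a homomorphic splitting of $\pi$ — i.e. that $\delta$ is a $\tau$-homomorphism into $A_{T}$ with $T\equiv 0$ and that $\rho\circ\delta$ matches the canonical projection — which is exactly the content that Definitions~\ref{def:datum} and~\ref{def:9} were set up to guarantee, so it is routine but notation-heavy rather than conceptually hard.
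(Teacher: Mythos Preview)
Your implications $(2)\Leftrightarrow(3)$ and $(2)\Rightarrow(1)$ coincide with the paper's argument: the paper proves $(2)\Leftrightarrow(3)$ and $(3)\Rightarrow(1)$, and your $(2)\Rightarrow(1)$ is the paper's $(3)\Rightarrow(1)$ filtered through the equivalence just established. In particular the key computation --- that a homomorphic lifting forces each $T_f$ to be the diagonal class, whence $A_T(Q,A^{\alpha,\tau},\ast)$ coincides with the natural quotient algebra $A(\alpha)/\Delta_{\alpha\alpha}$ --- is exactly what the paper does.

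Where you go beyond the paper is in attempting $(1)\Rightarrow(3)$: the paper's own proof simply omits this direction. Your sketch here has a genuine gap, and you half-acknowledge it. The diagonal $\delta\colon A\to A(\alpha)/\Delta_{\alpha\alpha}$ is indeed a $\tau$-homomorphism with kernel $\alpha$, so it factors through a homomorphic section $\bar\delta\colon Q\to A(\alpha)/\Delta_{\alpha\alpha}$ of $\rho$. But to transport this back to a section of $\pi\colon A\to Q$ via $\phi^{-1}$, you need $\rho\circ\phi=\pi$, i.e.\ that $\phi$ is an isomorphism \emph{of extensions over $Q$}, not merely an abstract isomorphism of $\tau$-algebras. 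Statement~(1) as written does not assert this. Without it, $\phi^{-1}\circ\bar\delta$ is a homomorphic section for the congruence $\phi^{-1}(\hat\alpha/\Delta_{\alpha\alpha})$, which need not equal $\alpha$. Either~(1) must be read as an isomorphism over $Q$ --- which is the natural reading in context, and is the form of the isomorphism the paper actually constructs in $(3)\Rightarrow(1)$ --- or an additional argument is required; ``routine but notation-heavy'' does not cover it.
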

\begin{proof}
$(2)\Leftrightarrow (3)$: Suppose $l: A/\alpha \rightarrow A$ is a homomorphism such that $\pi \circ l = \id_{A/\alpha}$. Define $r=l \circ \pi$. Then $r$ is a homomorphism and $r^{2} = l \circ \pi \circ l \circ \pi = l \circ \id_{A/\alpha} \circ \pi = l \circ \pi = r$; thus, $r$ is a retraction.

If we assume $r: A \rightarrow A$ is a retraction, then $r^{2}(x)=r(x)$ implies $(x,r(x)) \in \ker r = \alpha$; thus, $r$ is a $\alpha$-trace. Define $l: A/\alpha \rightarrow A$ by $l(q) = r(x)$ for any $x \in A$ such that $\pi(x)=x$. Since $r$ is an $\alpha$-trace, $l$ is well-defined. Take $q_i \in A/\alpha$ and $x_{i} \in A$ such that $\pi(x_i) = q_i$ for $i=1,\ldots,n=\ar f$. Then $\pi \circ r(f(x_1,\ldots,x_n)) = f(\pi \circ r(x_1),\ldots,\pi \circ r(x_n)) = f(\pi(x_1),\ldots,\pi(x_n))=f(q_1,\ldots,q_n)$. By definition we have $l(f(q_1,\ldots,q_n))=r(f(x_1,\ldots,x_n)) = f(r(x_1),\ldots,r(x_n))=f(l(q_1),\ldots,l(q_n))$ which shows $l$ is a homomorphism.

$(3)\Rightarrow (1)$: Assume $r: A \rightarrow A$ is a retraction. Notice $r$ is also an $\alpha$-trace. By Theorem~\ref{thm:12}, we have $A \approx A_{T}(Q,A^{\alpha,\tau},\ast)$ defined using an $\alpha$-trace $r$ and associated lifting $l$. By the construction, we have $T_{f}(\pi(x_1),\ldots,\pi(x_n)) = \begin{bmatrix} l(f(\pi(x_1),\ldots,\pi(x_n))) \\ f(l \circ \pi(x_1),\ldots,l \circ \pi(x_n)) \end{bmatrix}/\Delta_{\alpha \alpha} = \delta(f(r(x_1),\ldots,r(x_n)))$ since $l$ is a homomorphism. Then taking $u = l(f(\pi(x_1),\ldots,\pi(x_n)) = f(r(x_1),\ldots,r(x_n))$ we have in $A_{T}(Q,A^{\alpha,\tau},\ast)$
\begin{align*}
F_{f}\left( \begin{bmatrix} r(x_1) \\ x_1 \end{bmatrix}/\Delta_{\alpha \alpha},\ldots, \begin{bmatrix} r(x_n) \\ x_n \end{bmatrix}/\Delta_{\alpha \alpha} \right) &= \begin{bmatrix} r(f(x_1,\ldots,x_n)) \\ f(r(x_1),\ldots,r(x_n)) \end{bmatrix}/\Delta_{\alpha \alpha} \displaybreak[0]\\
&= \begin{bmatrix} f(r(x_1),\ldots,r(x_n)) \\ f(x_1,\ldots,x_n) \end{bmatrix}/\Delta_{\alpha \alpha} +_{u} \begin{bmatrix} r(f(x_1,\ldots,x_n)) \\ f(r(x_1),\ldots,r(x_n)) \end{bmatrix}/\Delta_{\alpha \alpha} \displaybreak[0]\\
&= \begin{bmatrix} f(r(x_1),\ldots,r(x_n)) \\ f(x_1,\ldots,x_n) \end{bmatrix}/\Delta_{\alpha \alpha} +_{u} T_{f}(\pi(x_1),\ldots,\pi(x_n)) \displaybreak[0]\\
&= f\left(\begin{bmatrix} r(x_1) \\ x_1 \end{bmatrix},\ldots,\begin{bmatrix} r(x_n) \\ x_n \end{bmatrix} \right)/\Delta_{\alpha \alpha}
\end{align*}
which shows $A \approx A_{T}(Q,A^{\alpha,\tau},\ast) \approx A(\alpha)/\Delta_{\alpha \alpha}$.
\end{proof}

The constructions have required a fixed lifting to define, but up to isomorphism do not depend on the particular choice. Making different choices for the liftings leads to an equivalence on extensions realizing datum which is defined by a combinatorial condition on 2-cocycles.

\begin{proposition}\label{prop:10}
Suppose $\pi : A \rightarrow Q$ with $\alpha = \ker \pi$ is an extension realizing affine datum. If $T$ is a 2-cocycle defined by the lifting $l$ and $T'$ is a 2-cocycle defined by the lifting $r'$, then there exists a map $h: Q \rightarrow A(\alpha)/\Delta_{\alpha \alpha}$ such that
\begin{align*}
T'_{f}(\bar{x}) -_{u} T_{f}(\bar{x}) &= f^{\Delta}(h(x_1),\delta(l(x_2)),\ldots,\delta(l(x_n))) -_{u} h(f^{Q}(x_1,\ldots,x_n)) \\
&+_{u} \sum_{i=2}^{n} a(f,i)\left(x_1,\ldots,x_{i-1},h(x_{i}),x_{i+1},\ldots,x_n \right) \quad \quad \quad \quad \quad \quad  (f \in \tau)
\end{align*}
where $u=l(f^{Q}(x_1,\ldots,x_n))$.
\end{proposition}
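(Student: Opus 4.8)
The plan is to read $h$ off directly from the two liftings and then verify the displayed identity by a coordinatewise telescoping computation carried out inside a single block, exactly as in the expansions in the proofs of Theorem~\ref{thm:10} and Theorem~\ref{thm:12}. First I would recall that, since $A$ realizes the affine datum, the $2$-cocycle attached to a lifting $l$ of $\pi$ is (after identifying the universe of the realization with $A(\alpha)/\Delta_{\alpha\alpha}$ via the bijection $i$ of Definition~\ref{def:realizes}) given by
\[
T_f(x_1,\ldots,x_n) = \begin{bmatrix} l(f^{Q}(x_1,\ldots,x_n)) \\ f^{A}(l(x_1),\ldots,l(x_n)) \end{bmatrix}\Big/\Delta_{\alpha\alpha},
\]
and likewise for $T'$ with $l':=r'$; here one also uses that $\alpha$ is an abelian congruence of $A$, which holds because $A$ realizes affine datum (the operations of $A$ restricted to a $\pi$-fibre are, by (R1), (R2), (AD2) and homomorphicity of $\ast$ and $A^{\alpha,\tau}$, sums of additive maps and constants, cf. the argument of Lemma~\ref{lem:3}). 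I would then set
\[
h(x) := \begin{bmatrix} l(x) \\ l'(x) \end{bmatrix}\Big/\Delta_{\alpha\alpha},
\]
which is well defined because $\pi l = \pi l' = \id_{Q}$ forces $(l(x),l'(x))\in\alpha$, and record $h(x)\mathrel{\hat{\alpha}/\Delta_{\alpha\alpha}}\delta(l(x))$. Note also that $\delta\circ l$ is lifting-independent by Remark~\ref{remark: liftings}, so the realization identities (R1), (R2) hold for the lifting $l$.

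Next, fix $\bar{x} = (x_1,\ldots,x_n)\in Q^{n}$ and put $u = l(f^{Q}(\bar{x}))$. Each of $T_f(\bar{x})$, $T'_f(\bar{x})$, $h(f^{Q}(\bar{x}))$, $f^{\Delta}(h(x_1),\delta(l(x_2)),\ldots,\delta(l(x_n)))$ and $a(f,i)(x_1,\ldots,x_{i-1},h(x_i),x_{i+1},\ldots,x_n)$ lies in the $\hat{\alpha}/\Delta_{\alpha\alpha}$-block of $\delta(u)$, that is, the set of $\begin{bmatrix} c\\d\end{bmatrix}/\Delta_{\alpha\alpha}$ with $c,d\in u/\alpha$ (each coordinate of each of these elements maps to $f^{Q}(\bar x)$ under $\pi$). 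On this block $+_{u}$ is an abelian group operation with identity $\delta(u)$, since $m$ is a ternary abelian group operation on $u/\alpha$ by (AD1), and $m$ acts affinely there; moreover the map $\begin{bmatrix} c\\d\end{bmatrix}/\Delta_{\alpha\alpha}\mapsto d-c$, with the difference taken in the abelian group on $u/\alpha$ with zero $u$, is a group isomorphism onto $\langle u/\alpha,+\rangle$ (using the description $\begin{bmatrix} c\\d\end{bmatrix}\Delta_{\alpha\alpha}\begin{bmatrix} c'\\d'\end{bmatrix}\Leftrightarrow d-c=d'-c'$, valid since $\Delta_{\alpha\alpha}\le\hat{\alpha}$ and $m$ is Mal'cev on $\alpha$-blocks). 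So it suffices to check the identity after applying this isomorphism. Using (AD2), (R1), (R2) and the fact that $f^{A(\alpha)}$ is computed coordinatewise, I would evaluate: $f^{\Delta}(h(x_1),\delta(l(x_2)),\ldots,\delta(l(x_n)))$ corresponds to $f^{A}(l'(x_1),l(x_2),\ldots,l(x_n)) - f^{A}(l(\bar x))$; each $a(f,i)(x_1,\ldots,h(x_i),\ldots,x_n)$ corresponds to $f^{A}(l(x_1),\ldots,l'(x_i),\ldots,l(x_n)) - f^{A}(l(\bar x))$; $h(f^{Q}(\bar x))$ corresponds to $l'(f^{Q}(\bar x)) - u$; and $T'_f(\bar x)-_{u}T_f(\bar x)$ corresponds to $\bigl(f^{A}(l'(\bar x)) - l'(f^{Q}(\bar x))\bigr) - \bigl(f^{A}(l(\bar x)) - u\bigr)$.

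Substituting these and cancelling (recall $u$ is the zero of $u/\alpha$), the claimed identity reduces to
\[
f^{A}(l'(x_1),\ldots,l'(x_n)) - f^{A}(l(x_1),\ldots,l(x_n)) = \sum_{i=1}^{n}\Bigl( f^{A}(l(x_1),\ldots,l'(x_i),\ldots,l(x_n)) - f^{A}(l(x_1),\ldots,l(x_n)) \Bigr)
\]
in the abelian group on $u/\alpha$, with the $i=1$ summand supplied by the $f^{\Delta}$-term and the $i\ge 2$ summands by the action terms. This is exactly the telescoping expansion available because $\alpha$ is abelian in $A$: passing from $l$-values to $l'$-values one coordinate at a time, the change of $f^{A}$ caused by replacing the $i$-th argument $l(x_i)$ by $l'(x_i)$ is independent of whether the other arguments are the $l$- or $l'$-values, by the term condition. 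Assembling the pieces gives the stated formula. The main obstacle is the bookkeeping in the previous paragraph: one must justify carefully, from Definition~\ref{def:realizes} together with the homomorphicity of the action and of $A^{\alpha,\tau}$, that each abstract operation on the right-hand side equals the claimed coordinatewise difference, and confirm that all the terms really lie in the single block over $u/\alpha$ so that the abelian-group manipulations and the term-condition telescoping are legitimate; once this is in place the computation is routine.
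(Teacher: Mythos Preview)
Your proposal is correct and follows essentially the same route as the paper: define $h(x)=\begin{bmatrix} l(x)\\ l'(x)\end{bmatrix}/\Delta_{\alpha\alpha}$, use realization to rewrite $f^{\Delta}$ and the action terms as coordinatewise applications of $f^{A}$, and then telescope using abelianness of $\alpha$. The one organizational difference is that the paper carries out all manipulations directly with $\Delta_{\alpha\alpha}$-classes (first establishing $T'_f-_u T_f = f^{A(\alpha)/\Delta_{\alpha\alpha}}(h(\bar x))-_u h(f^{Q}(\bar x))$ and then expanding), whereas you transport the whole computation through the group isomorphism $\begin{bmatrix} c\\ d\end{bmatrix}/\Delta_{\alpha\alpha}\mapsto d-c$ onto the block $u/\alpha$; this is a clean shortcut that makes the telescoping identity and the appeal to the term condition more transparent, at the cost of having to justify once that the isomorphism is valid on the relevant block.
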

\begin{proof}
The action is defined as in Eq.(\ref{eq:actiondef}) from Theorem~\ref{thm:10}. Define $h(x) := \begin{bmatrix} l(x) \\ l'(x) \end{bmatrix}/\Delta_{\alpha \alpha}$. We first show
\begin{align}\label{eq:17}
T'_{f}(\bar{x}) -_{u} T_{f}(\bar{x}) = f^{A(\alpha)/\Delta_{\alpha \alpha}} \left( h(\bar{x}) \right) -_{u} h(f^{Q}(\bar{x})) \quad \quad \quad \quad \quad (f \in \tau).
\end{align}
If $l,l': Q \rightarrow A$ are liftings associated with $\pi$ such that $r=l \circ \pi$ and $r'=l' \circ \pi$, then recall the 2-cocycles are defined by
$T'_{f}(\bar{x}) = \begin{bmatrix} l'(f(\bar{x})) \\ f(l'(\bar{x})) \end{bmatrix}/\Delta_{\alpha \alpha}$ and $T_{f}(\bar{x}) = \begin{bmatrix} l(f(\bar{x})) \\ f(l(\bar{x})) \end{bmatrix}\Delta_{\alpha \alpha}$. If we set $v=f(l(x_1),\ldots,l(x_n))$, then note $\begin{bmatrix} u \\ u \end{bmatrix} \Delta_{\alpha \alpha} \begin{bmatrix} v \\ v \end{bmatrix}$. Then we can expand
\begin{align*}
T_{f}'(\bar{x}) -_{u} T_{f}(\bar{x}) &= \begin{bmatrix} l'(f(\bar{x})) \\ f(l'(\bar{x})) \end{bmatrix}/\Delta_{\alpha \alpha} -_{u} \begin{bmatrix} l(f(\bar{x})) \\ f(l(\bar{x})) \end{bmatrix}/\Delta_{\alpha \alpha}  \displaybreak[0]\\
&= \begin{bmatrix} l'(f(\bar{x})) \\ f(l'(\bar{x})) \end{bmatrix}/\Delta_{\alpha \alpha} +_{u} \begin{bmatrix} f(l(\bar{x})) \\ l(f(\bar{x})) \end{bmatrix}/\Delta_{\alpha \alpha} \displaybreak[0]\\
&= m\left( \begin{bmatrix} f(l(\bar{x})) \\ f(l'(\bar{x})) \end{bmatrix}, \begin{bmatrix} f(l(\bar{x})) \\ f(l(\bar{x})) \end{bmatrix}, \begin{bmatrix} l'f((\bar{x})) \\ f(l(\bar{x})) \end{bmatrix} \right)/\Delta_{\alpha \alpha} +_{u} \begin{bmatrix} f(l(\bar{x})) \\ l(f(\bar{x})) \end{bmatrix}/\Delta_{\alpha \alpha} \displaybreak[0]\\
&= \begin{bmatrix} f(l(\bar{x})) \\ f(l'(\bar{x})) \end{bmatrix}/\Delta_{\alpha \alpha} +_{v} \begin{bmatrix} l'(f(\bar{x})) \\ f(l(\bar{x})) \end{bmatrix}/\Delta_{\alpha \alpha}  +_{u} \begin{bmatrix} f(l(\bar{x})) \\ l(f(\bar{x})) \end{bmatrix}/\Delta_{\alpha \alpha} \displaybreak[0]\\
&= \begin{bmatrix} f(l(\bar{x})) \\ f(l'(\bar{x})) \end{bmatrix}/\Delta_{\alpha \alpha} +_{v} m \left(\begin{bmatrix} l'(f(\bar{x})) \\ f(l(\bar{x})) \end{bmatrix} , \begin{bmatrix} l(f(\bar{x})) \\ l(f(\bar{x})) \end{bmatrix}, \begin{bmatrix} f(l(\bar{x})) \\ l(f(\bar{x})) \end{bmatrix}\right)/\Delta_{\alpha \alpha} \displaybreak[0]\\
&= \begin{bmatrix} f(l(\bar{x})) \\ f(l'(\bar{x})) \end{bmatrix}/\Delta_{\alpha \alpha} +_{u} \begin{bmatrix} l'(f(\bar{x})) \\ l(f(\bar{x})) \end{bmatrix}/\Delta_{\alpha \alpha} \displaybreak[0]\\
&= f^{A(\alpha)/\Delta_{\alpha \alpha}} \left(\begin{bmatrix} l(\bar{x}) \\ l'(\bar{x}) \end{bmatrix} \right)/\Delta_{\alpha \alpha} -_{u} \begin{bmatrix} l(f(\bar{x})) \\ l'(f(\bar{x})) \end{bmatrix}/\Delta_{\alpha \alpha} = f^{A(\alpha)/\Delta_{\alpha \alpha}}(h(\bar{x})) -_{u} h(f^{Q}(\bar{x})).
\end{align*}
In a similar manner, if we set $u_i = f(l'(x_1),\ldots,l'(x_i),l(x_{i+1}),\ldots,l(x_n))$, then using realization we can expand
\begin{align*}
T_{f}'(\bar{x}) -_{u} T_{f}(\bar{x}) &= f(h(\bar{x})) -_{u} h(f(\bar{x})) \displaybreak[0]\\
&= f\left( \begin{bmatrix} l(x_1) \\ l'(x_1) \end{bmatrix}, \ldots,\begin{bmatrix} l(x_n) \\ l'(x_n) \end{bmatrix} \right)/\Delta_{\alpha \alpha} -_{u} h(f(\bar{x})) \displaybreak[0]\\
&= m\left( \begin{bmatrix} f(l(x_1),l(x_2),\ldots,l(x_n)) \\ f(l'(x_1),l(x_2),\ldots,l(x_n)) \end{bmatrix}, \begin{bmatrix} u_1 \\ u_1 \end{bmatrix}, \begin{bmatrix} f(l'(x_1),l(x_2),\ldots,l(x_n)) \\ f(l'(x_1),l'(x_2),\ldots,l'(x_n)) \end{bmatrix} \right)/\Delta_{\alpha \alpha} \\
& \quad -_{u} h(f(\bar{x})) \displaybreak[0]\\
&= f^{\Delta} \left(h(x_1),\delta(l(x_2)),\ldots,\delta(l(x_n)) \right) +_{u_1} f\left( \begin{bmatrix} l'(x_1) \\ l'(x_1) \end{bmatrix}/\Delta_{\alpha \alpha},h(x_2),\ldots, h(x_n) \right) -_{u} h(f(\bar{x})) \displaybreak[0]\\
&= f^{\Delta} \left(h(x_1),\delta(l(x_2)),\ldots,\delta(l(x_n)) \right) +_{u_1}  f\left( \begin{bmatrix} l'(x_1) \\ l'(x_1) \end{bmatrix}/\Delta_{\alpha \alpha},h(x_2),\delta(l(x_3)),\ldots, \delta(l(x_n)) \right) \displaybreak[0]\\
& \quad +_{u_2} f\left( \begin{bmatrix} l'(x_1) \\ l'(x_1) \end{bmatrix}/\Delta_{\alpha \alpha},\begin{bmatrix} l'(x_2) \\ l'(x_2) \end{bmatrix}/\Delta_{\alpha \alpha},h(x_3),\ldots, h(x_n) \right) -_{u} h(f(\bar{x})) \displaybreak[0]\\
&= f^{\Delta} \left(h(x_1),\delta(l(x_2)),\ldots,\delta(l(x_n)) \right) +_{u}  a(f,1)(x_1,h(x_2),x_3,\ldots,x_n ) \\
& \quad +_{u} f\left( \begin{bmatrix} l'(x_1) \\ l'(x_1) \end{bmatrix}/\Delta_{\alpha \alpha},\begin{bmatrix} l'(x_2) \\ l'(x_2) \end{bmatrix}/\Delta_{\alpha \alpha},h(x_3),\ldots, h(x_n) \right) -_{u} h(f(\bar{x})) \displaybreak[0]\\
&\vdots \displaybreak[0]\\
&= f^{\Delta}(h(x_1),\delta(l(x_2)),\ldots,\delta(l(x_n))) -_{u} h(f(\bar{x})) \displaybreak[0]\\
&\quad +_{u} \ \sum_{i=2}^{n} a(f,i)\left(x_1,\ldots,x_{i-1},h(x_{i}), x_{i+1},\ldots, x_n \right)
\end{align*}
since each $\delta(u_i)=\delta(u)$ and $\delta(l(x)) = \delta(l'(x))$.
\end{proof}

\begin{definition}\label{def:2cobound}
Let $(Q,A^{\alpha,\tau},\ast)$ be affine datum in the signature $\tau$. A sequence of operations $G=\{G_{f}: f \in \tau\}$  where $G_{f}: Q^{\ar f} \rightarrow A(\alpha)/\Delta_{\alpha \alpha}$ is a \emph{2-coboundary} of the datum if there is a function $h: Q \rightarrow A(\alpha)/\Delta_{\alpha \alpha}$ such that for any lifting $l:Q \rightarrow A$ associated to the datum
\begin{enumerate}

	\item[(B1)] $h(x) \mathrel{\hat{\alpha}/\Delta_{\alpha \alpha}} \delta \circ l(x)$;

	\item[(B2)] for each $f \in \tau$,
\begin{align*}
G_{f}(x_1,\ldots,x_n) &= f^{\Delta}(h(x_1),\delta(l(x_2)),\ldots,\delta(l(x_n))) -_{u} h(f^{Q}(x_1,\ldots,x_n)) \\
&+_{u} \sum_{i=2}^{n} a(f,i) ( x_1,\ldots,x_{i-1},h(x_{i}),x_{i+1},\ldots,x_n )
\end{align*}

\end{enumerate}
where $u = l(f(x_1,\ldots,x_n))$.
\end{definition}

The function $h : Q \rightarrow A(\alpha)/\Delta_{\alpha \alpha}$ referenced in the above definition is said to witness the 2-coboundary.

\begin{definition}
Let $(Q,A^{\alpha,\tau},\ast)$ be affine datum and $\mathcal U$ a variety in the same signature which contains the datum. The set of 2-cocycles compatible with $\mathcal U$ is denoted by $Z^{2}_{\mathcal U}(Q,A^{\alpha,\tau},\ast)$. The set of 2-coboundaries of the datum is denoted by $B^{2}(Q,A^{\alpha,\tau},\ast)$.
\end{definition}

Notice the notation for the class of 2-coboundaries omits a subscript for the variety $\mathcal U$. We shall see in the next lemma that 2-coboundaries are compatible with an variety containing the datum.

\begin{lemma}\label{lem:5}
Let $(Q,A^{\alpha,\tau},\ast)$ be affine datum and $\mathcal U$ a variety in the same signature which contains the datum. For any lifting $l:Q \rightarrow A$ associated to the datum, the operations
\begin{align*}
(T_{f} + T'_{f})(\bar{x}) &:= T_{f}(\bar{x}) +_{l(f(\bar{x}))} T'_{f}(\bar{x})  \quad \quad \quad \quad \quad (f \in \tau, \bar{x} \in Q^{\ar f}) \\
(h + h')(x) &:= h(x) +_{l(x)} h'(x)  \quad \quad \quad \quad \quad \quad \quad (x \in Q)
\end{align*}
makes $Z^{2}_{\mathcal U}(Q,A^{\alpha,\tau},\ast)$ into an abelian group with subgroup $B^{2}(Q,A^{\alpha,\tau},\ast) \leq Z^{2}_{\mathcal U}(Q,A^{\alpha,\tau},\ast)$.
\end{lemma}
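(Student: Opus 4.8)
The plan is to verify the abelian group axioms for $Z^{2}_{\mathcal U}(Q,A^{\alpha,\tau},\ast)$ under the pointwise addition $+$, then show $B^{2}$ is closed under the operations and contains inverses, hence is a subgroup. The ground observation underlying everything is that on each block of $\hat{\alpha}/\Delta_{\alpha\alpha}$ the operation $m$ is an abelian group operation (this is part (AD1) of the affine datum, together with the fact, recorded in the proof of Theorem~\ref{thm:10}, that $\Delta_{\alpha\alpha}\leq\hat\alpha$ and $m$ restricts to a Mal'cev operation on $\alpha$-blocks); concretely, for a fixed $u\in A$ the operation $x+_{u}y=m(x,\delta(u),y)$ makes the $\hat{\alpha}/\Delta_{\alpha\alpha}$-class of $\delta(u)$ into an abelian group with identity $\delta(u)$, and the difference-of-classes formula is independent of the choice of basepoint within the class. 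Condition (C1) guarantees $T_{f}(\bar x)$ lies in the block of $\delta(l(f^{Q}(\bar x)))$, so the sum $(T_{f}+T'_{f})(\bar x)=T_{f}(\bar x)+_{l(f(\bar x))}T'_{f}(\bar x)$ lands in that same block; thus $T+T'$ again satisfies (C1), and by Remark~\ref{remark: liftings} the value does not depend on the chosen lifting $l$. Associativity, commutativity, existence of the zero cocycle $Z_{f}(\bar x):=\delta(l(f^{Q}(\bar x)))$ and of additive inverses $(-T)_{f}(\bar x):=m(\delta(l(f(\bar x))),T_{f}(\bar x),\delta(l(f(\bar x))))$ are all inherited pointwise from the block-wise abelian group structure.

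The one substantive point is that $Z^{2}_{\mathcal U}$ is closed under $+$ as far as condition (C2) is concerned: if $T$ and $T'$ each satisfy $t^{\partial,T}(\epsilon(\vec x))=g^{\partial,T}(\epsilon(\vec y))$ and the analogue for $T'$, one must show $t^{\partial,T+T'}=g^{\partial,T+T'}$. Here I would argue that the assignment $T\mapsto t^{\partial,T}$ is ``additive'' in a suitable sense: inspecting Definition~\ref{def:tranequ}, the operation $t^{\partial,T}(\vec q)=\sum_{\nu\in E_{t}}\nu(\vec q)$ is a left-associated $+_{u}$-sum (with $u=l(t^{Q}(\vec q))$) of terms each of which is built from a \emph{single} occurrence of some $T_{f}$ (or $T_{f}\circ\sigma$) wrapped inside datum operations $f^{\Delta}$ and action operations $a(f,k)$; since $A^{\alpha,\tau}$ and the action are homomorphic (Definitions~\ref{def:homdatum} and \ref{def:homact}), each such $\nu$ is additive in its $T$-argument, i.e. $\nu^{T+T'}=\nu^{T}+_{u}\nu^{T'}$, and a left-associated sum of sums in a block-wise abelian group reorganizes (using commutativity and the basepoint-independence of $+_{u}$ on the common block, which holds because all summands and partial sums lie in the block of $\delta(u)$) into $t^{\partial,T}(\vec q)+_{u}t^{\partial,T'}(\vec q)$. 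Granting this, $t^{\partial,T+T'}(\epsilon(\vec x))=t^{\partial,T}(\epsilon(\vec x))+_{u}t^{\partial,T'}(\epsilon(\vec x))=g^{\partial,T}(\epsilon(\vec y))+_{u}g^{\partial,T'}(\epsilon(\vec y))=g^{\partial,T+T'}(\epsilon(\vec y))$, using $u=l(t^{Q}(\epsilon(\vec x)))=l(g^{Q}(\epsilon(\vec y)))$ because $Q\in\mathcal U$. The same additivity, applied with $T'$ the zero cocycle and then with $T'=-T$, gives closure under inverses. I expect this additivity-of-$t^{\partial,-}$ bookkeeping to be the main obstacle, not because it is deep but because it requires carefully tracking that every intermediate value in the recursion stays inside one $\hat{\alpha}/\Delta_{\alpha\alpha}$-block so that the abelian-group manipulations are legitimate.

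For the subgroup claim, I would show $B^{2}(Q,A^{\alpha,\tau},\ast)\leq Z^{2}_{\mathcal U}(Q,A^{\alpha,\tau},\ast)$ in two stages. First, $B^{2}\subseteq Z^{2}_{\mathcal U}$: a 2-coboundary $G$ witnessed by $h$ is, by Proposition~\ref{prop:10}, exactly the difference $T'-T$ of the two 2-cocycles obtained from the extension $\pi:A_{T}(Q,A^{\alpha,\tau},\ast)\to Q$ (which realizes the datum by Theorem~\ref{thm:12} and lies in $\mathcal U$) using the two liftings $l$ and $l'$ determined by $h$ via $h(x)=\begin{bmatrix} l(x)\\ l'(x)\end{bmatrix}/\Delta_{\alpha\alpha}$; since $T$ and $T'$ are both in $Z^{2}_{\mathcal U}$ and we have already shown $Z^{2}_{\mathcal U}$ is a group, $G=T'-T\in Z^{2}_{\mathcal U}$. (One should note that an arbitrary witness $h$ satisfying only (B1) does split into such a pair $l,l'$, since (B1) says $h(x)$ lies in the block of $\delta(l(x))$, hence is of the form $\begin{bmatrix} l(x)\\ l'(x)\end{bmatrix}/\Delta_{\alpha\alpha}$ for some lift $l'(x)$ of $x$.) Second, $B^{2}$ is closed under $+$ and $-$: if $G$ is witnessed by $h$ and $G'$ by $h'$, then the defining formula (B2) is, again by homomorphicity of $A^{\alpha,\tau}$ and the action, additive in the witness, so $(h+h')$ witnesses $G+G'$, and $-h$ (the block-wise inverse, $(-h)(x):=m(\delta(l(x)),h(x),\delta(l(x)))$) witnesses $-G$; the zero coboundary is witnessed by $h(x):=\delta(l(x))$, which makes every summand in (B2) equal to $\delta(u)$. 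This completes the verification. The only care needed throughout is the repeated appeal to Remark~\ref{remark: liftings} so that none of these constructions depends on the arbitrary lifting, and to (C1)/(B1) so that all the $+_{u}$ arithmetic takes place inside a single abelian block.
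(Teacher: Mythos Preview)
Your treatment of the abelian group structure on $Z^{2}_{\mathcal U}$ and closure of $B^{2}$ under the operations is essentially the paper's argument: both rely on the block-wise affine structure from (AD1) and the homomorphism property of the action and $f^{\Delta}$ to get additivity of $T\mapsto t^{\partial,T}$ (the paper compresses this into one sentence).

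Where you diverge is in showing $B^{2}\subseteq Z^{2}_{\mathcal U}$. You argue that a coboundary $G$ witnessed by $h$ equals $T'-T$, where $T'$ is the 2-cocycle of the extension $A_{T}(Q,A^{\alpha,\tau},\ast)$ computed from the alternate lifting $l'$ extracted from $h$, and then conclude $G\in Z^{2}_{\mathcal U}$ because ``$T$ and $T'$ are both in $Z^{2}_{\mathcal U}$.'' The assertion for $T'$ is the gap: Proposition~\ref{prop:10} only gives the formula $G=T'-T$, not compatibility of $T'$ with $\mathcal U$, and Theorem~\ref{thm:10} (which would give compatibility) is stated under a weak-difference-term hypothesis on the ambient variety that $\mathcal U$ need not satisfy. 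The gap is fillable---the compatibility portion of Theorem~\ref{thm:10}'s proof uses only the affine structure on $\alpha$-blocks and membership of the extension in the variety, both of which hold here---but as written your argument is circular: showing $T'=T+G\in Z^{2}_{\mathcal U}$ given $T\in Z^{2}_{\mathcal U}$ is exactly equivalent, via the closure you already established, to showing $G\in Z^{2}_{\mathcal U}$.

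The paper avoids this detour entirely. It rewrites (B2) as $G_{f}(\bar x)=F^{A_{T}}_{f}(h(\bar x))-_{u}T_{f}(\bar x)-_{u}h(f^{Q}(\bar x))$ and inducts on the composition tree to obtain
\[
t^{\partial,G}(\vec q)=t^{A_{T}(Q,A^{\alpha,\tau},\ast)}(h(\vec q))\,-_{v}\,t^{\partial,T}(\vec q)\,-_{v}\,h(t^{Q}(\vec q)),
\]
from which compatibility of $G$ follows directly from $A_{T}\in\mathcal U$, $T\in Z^{2}_{\mathcal U}$, and $Q\in\mathcal U$. This is the same computation your $T'$ would unwind to, but it never needs to name $T'$ or assert its compatibility, so the argument closes without the extra justification.
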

\begin{proof}
From previous remarks, note the definition of the operation is not dependent on the lifting. The fact that the operation defines an abelian group follows from noting that the ternary operation $m$ of the datum is affine on the congruence blocks of $\hat{\alpha}/\Delta_{\alpha \alpha}$. The sum of 2-cocycles compatible with $\mathrm{Id} \, \mathcal U$ is again a 2-cocycle compatible with $\mathrm{Id} \, \mathcal U$ follows from the homomorphism property of the action which distributes the action terms over the witnesses of the respective 2-cocycles. These facts also show properties $(B1)$ and $(B2)$ are preserved by the sum and so 2-coboundaries form an abelian group.

What requires a little explanation is why a 2-coboundary is a 2-cocycle compatible with $\mathcal U$. Let $G = \{G_{f}: f \in \tau \}$ be a 2-coboundary. For any choice of 2-cocycle $\{T_{f}: f \in \tau \}$ compatible with $\mathcal U$, Theorem~\ref{thm:12} provides an algebra $\mathcal U \ni A_{T}(Q,A^{\alpha,\tau},\ast) \stackrel{\pi}{\rightarrow} Q$ on the universe $A(\alpha)/\Delta_{\alpha \alpha}$ which realizes the datum. Note there is at least one such 2-cocyle which is provided by the datum. The condition $(B2)$ now takes the form
\begin{align}\label{eq:21}
G_{f}(x_1,\ldots,x_n) = F^{A_{T}(Q,A^{\alpha,\tau},\ast)}_{f}(h(x_1),\ldots,h(x_n)) \, -_{u} \, T_{f}(x_1,\ldots,x_n) \, -_{u} \, h(f^{Q}(x_1,\ldots,x_n))
\end{align}
where $u=l(f(x_1,\ldots,x_n))$. We have preserved the superscripts here to indicate the realization. Consider a term $t(\bar{x})$ and assignment $\epsilon: \mathrm{var} \ t \rightarrow A(\alpha)/\Delta_{\alpha \alpha}$. By induction on the composition tree of a term $t(\bar{x})$ with Eq.(\ref{eq:21}) as the base case, we can evaluate
\begin{align}\label{eq:22}
t^{\partial,G}(\pi \circ \epsilon(\bar{x})) = t^{A_{T}(Q,A^{\alpha,\tau},\ast)}(h \circ \pi \circ \epsilon(\bar{x})) \, -_{v} \, t^{\partial,T}(\pi \circ \epsilon(\bar{x})) \, -_{v} \, h(t^{Q}(\pi \circ \epsilon (\bar{x})))
\end{align}
where $v=l(t^{Q}(\pi \circ \epsilon (\bar{x})))$. Then using Eq.(\ref{eq:22}), it follows from the fact that $A_{T}(Q,A^{\alpha,\tau},\ast) \in \mathcal U$, $T$ is 2-cocycle compatible with $\mathcal U$ and $Q \in \mathcal U$, that $G$ is also a 2-cocycle compatible with $\mathcal U$.
\end{proof}

\begin{definition}
Let $(Q,A^{\alpha,\tau},\ast)$ be affine datum and $\mathcal U$ a variety in the same signature which contains the datum. The \emph{second cohomology group relative to} $\mathcal U$ is the quotient group
\[
H^{2}_{\mathcal U}(Q,A^{\alpha,\tau},\ast) := Z^{2}_{\mathcal U}(Q,A^{\alpha,\tau},\ast)/B^{2}(Q,A^{\alpha,\tau},\ast).
\]
\end{definition}

\begin{definition}\label{def:4}
Let $(Q,A^{\alpha,\tau},\ast)$ be affine datum. Two extensions $A$ and $A'$ realizing the datum are \emph{equivalent} if there is a 2-cocycle $T$ associated to $A$ and 2-cocycle $T'$ associated to $A'$ such that $T' - T \in B^{2}(Q,A^{\alpha,\tau},\ast)$.
\end{definition}

If $A$ realizes the datum, we write $[A]$ for the equivalence class determined by Definition~\ref{def:4}. The \emph{trivial 2-cocycle} for datum $(Q,A^{\alpha,\tau},\ast)$ has operations $T_{f}(x_{1},\ldots,x_{\ar f}) = \delta \circ l (f^{Q}(x_{1},\ldots,x_{ar f}))$ for $f \in \tau$ and any lifting $l: Q \rightarrow A$ associated to the datum. When the context is clear, we denote the trivial 2-cocycle by $T=0$.

\begin{theorem}\label{thm:cohomology}
Let $(Q,A^{\alpha,\tau},\ast)$ be affine datum and $\mathcal U$ a variety in the same signature which contains the datum. The set of equivalence classes of extensions in $\mathcal U$ realizing the datum with the operation $[A_{T}(Q,A^{\alpha,\tau},\ast)] + [A_{T'}(Q,A^{\alpha,\tau},\ast)]:= [A_{T + T'}(Q,A^{\alpha,\tau},\ast)]$ is an abelian group isomorphic with $H^{2}_{\mathcal U}(Q,A^{\alpha,\tau},\ast)$. The zero of the abelian group is the class of the semidirect product $[A_{0}(Q,A^{\alpha,\tau},\ast)]$.   
\end{theorem}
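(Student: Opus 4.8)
The plan is to build the isomorphism on the chain of constructions already available: Theorem~\ref{thm:12} furnishes, for every 2-cocycle $T \in Z^{2}_{\mathcal U}(Q,A^{\alpha,\tau},\ast)$, an extension $A_{T}(Q,A^{\alpha,\tau},\ast) \in \mathcal U$ realizing the datum, while Proposition~\ref{prop:10} says that any two 2-cocycles arising from the same extension (via different liftings) differ by a 2-coboundary, and Definition~\ref{def:2cobound}/Lemma~\ref{lem:5} say conversely that adding a 2-coboundary keeps us inside $Z^{2}_{\mathcal U}$. So I would define the map $\Phi: H^{2}_{\mathcal U}(Q,A^{\alpha,\tau},\ast) \to \{\text{equivalence classes}\}$ by $\Phi(T + B^{2}) := [A_{T}(Q,A^{\alpha,\tau},\ast)]$ and show it is a well-defined group isomorphism. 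Well-definedness is exactly Definition~\ref{def:4}: if $T - T' \in B^{2}$ then $A_{T}$ and $A_{T'}$ are equivalent extensions by definition, so the class does not depend on the representative of the coset.

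First I would verify \emph{surjectivity}: every extension $B \in \mathcal U$ realizing the datum is (up to the isomorphism $\check{A} \approx A_{T}(Q,A^{\alpha,\tau},\ast)$ constructed in the proof of Theorem~\ref{thm:12}, or more directly via an argument parallel to Theorem~\ref{thm:10}) isomorphic to some $A_{T}(Q,A^{\alpha,\tau},\ast)$ with $T$ a 2-cocycle compatible with $\mathrm{Id}\,\mathcal U$; hence $[B] = \Phi(T + B^{2})$. Here the point is that choosing a lifting $l:Q \to B$ and the bijection $i$ from Definition~\ref{def:realizes} lets one read off the transfer functions $T_{f}$ by the formula of Definition~\ref{def:9}, and the realization conditions (R1),(R2) together with $B \in \mathcal U$ force $T$ to satisfy (C1),(C2) — this is the same bookkeeping as in Theorem~\ref{thm:10}, now with a general kernel $\beta$ in place of $\alpha$, transported along $i$. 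Next, \emph{injectivity}: if $A_{T}$ and $A_{T'}$ are equivalent extensions realizing the datum, then by Definition~\ref{def:4} there are 2-cocycles $S$ (for $A_{T}$) and $S'$ (for $A_{T'}$) with $S - S' \in B^{2}$; but $S$ and $T$ both arise from the same extension $A_{T}$ (from possibly different liftings), so by Proposition~\ref{prop:10} $S - T \in B^{2}$, and likewise $S' - T' \in B^{2}$, whence $T - T' \in B^{2}$, i.e.\ the cosets coincide. Then \emph{homomorphism}: the group operation on equivalence classes is declared to be $[A_{T}] + [A_{T'}] := [A_{T+T'}]$, so I must check this is well-defined (independence of representatives, which follows from Lemma~\ref{lem:5} since $B^{2}$ is a subgroup of $Z^{2}_{\mathcal U}$ under the pointwise $+$ operation) and that it matches addition in $H^{2}_{\mathcal U}$; this is immediate from the definition of $\Phi$. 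Finally the zero element: the trivial 2-cocycle $T=0$ has $T_{f}(\bar{x}) = \delta\circ l(f^{Q}(\bar{x}))$, and substituting this into Definition~\ref{def:9} collapses the $T_{f}$ summand into $\delta(u)$, so $F_{f}$ becomes exactly the operation of the semidirect-product-type algebra; by Proposition~\ref{prop:6} this algebra admits a retraction $r$ with $\ker r = \alpha$, i.e.\ it is (the realization of) the semidirect product $A_{0}(Q,A^{\alpha,\tau},\ast)$, and its class is the identity of $H^{2}_{\mathcal U}$ because the zero 2-cocycle is the identity of the abelian group $Z^{2}_{\mathcal U}$.

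The main obstacle I expect is surjectivity — more precisely, showing that an \emph{arbitrary} extension $B \in \mathcal U$ realizing the datum is isomorphic \emph{as an extension realizing the datum} to one of the canonical algebras $A_{T}(Q,A^{\alpha,\tau},\ast)$, with the transfer $T$ genuinely a 2-cocycle compatible with $\mathrm{Id}\,\mathcal U$. One has to transport the congruence $\beta = \ker(B \to Q)$ and the quotient $B(\beta)/\Delta_{\beta\beta}$ along the bijection $i$ of Definition~\ref{def:realizes} onto $A(\alpha)/\Delta_{\alpha\alpha}$, and then check that the reconstructed operations $F_{f}$ on $A(\alpha)/\Delta_{\alpha\alpha}$ computed from the lifting agree with $i$-transported operations of $B$ — this is the analogue of the long displayed expansion in the proof of Theorem~\ref{thm:10}, but carried out abstractly using only (AD1),(AD2),(R1),(R2) rather than a concrete weak-difference term, so some care is needed to see that the affineness of $m$ on $\alpha$-blocks and the homomorphic property of the action suffice to run the same telescoping computation. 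Once that identification is in hand, compatibility of $T$ with $\mathrm{Id}\,\mathcal U$ follows from $B \in \mathcal U$ exactly as in the last paragraph of the proof of Theorem~\ref{thm:10}.
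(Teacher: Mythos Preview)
The paper states this theorem without proof, treating it as an immediate corollary of the machinery assembled in Definitions~\ref{def:2cocyle}, \ref{def:2cobound}, \ref{def:4}, Lemma~\ref{lem:5}, Proposition~\ref{prop:10}, and Theorems~\ref{thm:12} and~\ref{thm:13}. Your outline correctly reassembles these pieces into an explicit argument, and the map $\Phi$ you define is exactly the intended correspondence.

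You are right to flag surjectivity as the only step that is not purely formal. The paper's implicit stance is that once an extension $B$ realizes the datum in the sense of Definition~\ref{def:realizes}, the bijection $i:B(\beta)/\Delta_{\beta\beta}\to A(\alpha)/\Delta_{\alpha\alpha}$ together with a choice of lifting lets one read off operations $T_f$ by the same telescoping expansion used in Theorem~\ref{thm:10}; conditions (R1)--(R2) and (AD1)--(AD2) are precisely what is needed to make that expansion go through abstractly, and compatibility of the resulting $T$ with $\mathrm{Id}\,\mathcal U$ then follows from $B\in\mathcal U$ exactly as you say. The paper does not spell this out (indeed, in the proof of Theorem~\ref{thm:13} it simply invokes Theorem~\ref{thm:12} to assert $A\approx A_T(Q,A^{\alpha,\tau},\ast)$ for an arbitrary extension realizing the datum, taking this identification for granted), so your careful discussion of this point is more explicit than anything in the text. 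In short: your proposal is correct and is the natural proof; there is no alternative route in the paper to compare it against.
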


For affine datum $(Q,A^{\alpha,\tau},\ast)$, let $\mathcal L(Q,A^{\alpha,\tau},\ast)$ be the set of varieties in the signature $\tau$ which contain the datum ordered by inclusion. It is a complete sublattice of the lattice of varieties in the signature $\tau$. Let $Z^{2}(Q,A^{\alpha,\tau},\ast)$ denote the abelian group generated by 2-cocycles compatible with some $\mathcal U \in \mathcal L(Q,A^{\alpha,\tau},\ast)$. Since $B^{2}(Q,A^{\alpha,\tau},\ast) \leq Z^{2}_{\mathcal U}(Q,A^{\alpha,\tau},\ast)$ for all $\mathcal U \in L(Q,A^{\alpha,\tau},\ast)$ by Lemma~\ref{lem:5}, we can define the second cohomology group of the datum
\[
H^{2}(Q,A^{\alpha,\tau},\ast) := Z^{2}(Q,A^{\alpha,\tau},\ast)/B^{2}(Q,A^{\alpha,\tau},\ast).
\]
For any algebra $A$, $\Sub A$ denotes the lattice of subalgebras ordered by inclusion.

\begin{proposition}\label{prop:14}
Let $(Q,A^{\alpha,\tau},\ast)$ be affine datum in the signature $\tau$. The map
\[
\Psi: \mathcal L(Q,A^{\alpha,\tau},\ast) \rightarrow \Sub H^{2}(Q,A^{\alpha,\tau},\ast)
\]
defined by $\Psi(\mathcal U) := H^{2}_{\mathcal U}(Q,A^{\alpha,\tau},\ast)$ is a meet-homomorphism which is an upper adjoint for a Galois connection $(-,\Psi)$ between $\Sub H^{2}(Q,A^{\alpha,\tau},\ast)$ and $\mathcal L(Q,A^{\alpha,\tau},\ast)$. The subset of varieties generated by their algebras realizing the datum is join-complete. 
\end{proposition}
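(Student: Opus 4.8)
The heart of the matter is a single equivalence, which I would establish first: for $\mathcal U\in\mathcal L(Q,A^{\alpha,\tau},\ast)$ and a 2-cocycle $T$ of the datum, $T\in Z^{2}_{\mathcal U}(Q,A^{\alpha,\tau},\ast)$ holds if and only if $A_{T}(Q,A^{\alpha,\tau},\ast)\in\mathcal U$. The forward implication is precisely Theorem~\ref{thm:12}. For the converse, take an identity $t(\vec{x})=g(\vec{y})\in\mathrm{Id}\,\mathcal U$ and an assignment $\epsilon$ into $A(\alpha)/\Delta_{\alpha\alpha}$, and expand both sides of $t^{A_{T}(Q,A^{\alpha,\tau},\ast)}(\epsilon(\vec{x}))=g^{A_{T}(Q,A^{\alpha,\tau},\ast)}(\epsilon(\vec{y}))$ by Lemma~\ref{lem:repterm}. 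Weak compatibility of the action with $\mathrm{Id}\,\mathcal U$ makes the summands $\sum_{\mu}(t^{\sigma})^{\ast}(\mu(\mathrm{var}\,t^{\sigma}))$ and $\sum_{\nu}(g^{\sigma})^{\ast}(\nu(\mathrm{var}\,g^{\sigma}))$ equal, while $Q\in\mathcal U$ forces the base points $l(t^{Q}(\rho\circ\epsilon(\vec{x})))$ and $l(g^{Q}(\rho\circ\epsilon(\vec{y})))$ to coincide; cancelling in the abelian group that $m$ induces on the pertinent $\hat{\alpha}/\Delta_{\alpha\alpha}$-block leaves $t^{\partial,T}(\rho\circ\epsilon(\vec{x}))=g^{\partial,T}(\rho\circ\epsilon(\vec{y}))$, and since $\rho$ is onto $Q$ this is condition (C2) for $\mathrm{Id}\,\mathcal U$, i.e. $T\in Z^{2}_{\mathcal U}(Q,A^{\alpha,\tau},\ast)$.

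Granting this, $\Psi$ is monotone by the remark following Definition~\ref{def:2cocyle}, and I claim it preserves arbitrary meets. For $\{\mathcal U_{i}\}_{i\in I}\subseteq\mathcal L$ put $\mathcal U=\bigwedge_{i}\mathcal U_{i}=\bigcap_{i}\mathcal U_{i}$, which lies in $\mathcal L$ because $\mathcal L$ is a complete sublattice of the lattice of varieties. Monotonicity gives $Z^{2}_{\mathcal U}\subseteq\bigcap_{i}Z^{2}_{\mathcal U_{i}}$; conversely, a $T$ in every $Z^{2}_{\mathcal U_{i}}$ has $A_{T}\in\mathcal U_{i}$ for all $i$ by the equivalence, so $A_{T}\in\mathcal U$, so $T\in Z^{2}_{\mathcal U}$ again by the equivalence. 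Hence $Z^{2}_{\mathcal U}=\bigcap_{i}Z^{2}_{\mathcal U_{i}}$, and dividing out $B^{2}(Q,A^{\alpha,\tau},\ast)$ — a subgroup of each $Z^{2}_{\mathcal U_{i}}$ by Lemma~\ref{lem:5} — shows $\Psi(\bigwedge_{i}\mathcal U_{i})=\bigwedge_{i}\Psi(\mathcal U_{i})$. A monotone, arbitrary-meet-preserving map between complete lattices is an upper adjoint: with $\Phi(S):=\bigwedge\{\mathcal U\in\mathcal L:S\leq\Psi(\mathcal U)\}$ for $S\in\Sub H^{2}(Q,A^{\alpha,\tau},\ast)$, one checks $\Phi(S)\leq\mathcal U\Leftrightarrow S\leq\Psi(\mathcal U)$ directly, the forward direction by applying $\Psi$ and using meet-preservation to get $S\leq\Psi(\Phi(S))$. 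This is the asserted Galois connection $(\Phi,\Psi)$.

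Then $\Phi\Psi$ is a kernel operator on $\mathcal L$, so its fixed points coincide with its image and are closed under arbitrary joins formed in $\mathcal L$ — and these agree with the joins of the variety lattice since $\mathcal L$ is a complete sublattice. To finish I would show $\Phi\Psi(\mathcal U)$ is the variety $\mathcal W:=\mathcal V(\{A_{T}(Q,A^{\alpha,\tau},\ast):T\in Z^{2}_{\mathcal U}(Q,A^{\alpha,\tau},\ast)\})$ generated by the datum-realizing algebras of $\mathcal U$. By the equivalence, $Z^{2}_{\mathcal U}\subseteq Z^{2}_{\mathcal V}$ iff $\{A_{T}:T\in Z^{2}_{\mathcal U}\}\subseteq\mathcal V$, so $\Phi\Psi(\mathcal U)=\bigwedge\{\mathcal V\in\mathcal L:Z^{2}_{\mathcal U}\subseteq Z^{2}_{\mathcal V}\}=\mathcal W$, provided $\mathcal W\in\mathcal L$. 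That holds: $\mathcal W$ is generated by datum-realizing algebras (Theorem~\ref{thm:12}) and contains the semidirect product $A_{0}(Q,A^{\alpha,\tau},\ast)$ (Theorem~\ref{thm:cohomology}); since in that semidirect product the trivial cocycle contributes nothing along the composition tree, a term evaluates there to $\sum_{\mu}(t^{\sigma})^{\ast}(\mu(\mathrm{var}\,t^{\sigma}))$ (Lemma~\ref{lem:repterm}), so $A_{0}(Q,A^{\alpha,\tau},\ast)$ satisfies precisely the equations with which the action is weakly compatible, and $\mathrm{Id}\,\mathcal W\subseteq\mathrm{Id}\,A_{0}(Q,A^{\alpha,\tau},\ast)$ then yields $\mathcal W\in\mathcal L$. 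Finally every datum-realizing algebra of $\mathcal U$ is isomorphic to some $A_{T}$ with $T\in Z^{2}_{\mathcal U}$, by rerunning the construction of Theorem~\ref{thm:10} with the bijection and lifting that Definition~\ref{def:realizes} supplies in place of what the weak-difference term produced there. Thus the fixed points of $\Phi\Psi$ are exactly the varieties in $\mathcal L$ equal to the variety generated by their datum-realizing algebras, and this set is join-complete.

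The one non-formal ingredient is the converse half of the first equivalence together with the claim $\mathcal W\in\mathcal L$; both come down to showing that weak compatibility of the action with a set of equations $\Sigma$ is the same as validity of $\Sigma$ in the fixed semidirect product $A_{0}(Q,A^{\alpha,\tau},\ast)$, so that weak compatibility — and likewise 2-cocycle compatibility — is unchanged on passing to the deductive closure of $\Sigma$. Everything afterwards is routine bookkeeping with poset adjunctions and kernel operators.
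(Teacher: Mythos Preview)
Your argument is correct and follows the same strategy as the paper's proof: both rest on the correspondence between $T\in Z^{2}_{\mathcal U}(Q,A^{\alpha,\tau},\ast)$ and $A_{T}(Q,A^{\alpha,\tau},\ast)\in\mathcal U$, then use this to show $\Psi$ preserves meets and to identify the lower adjoint with $\mathcal U\mapsto\bigvee_{[T]}\mathcal V(A_{T})$.

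The difference is one of care. The paper obtains the converse direction of the equivalence by citing Theorem~\ref{thm:10}, whose hypotheses include a weak-difference term that is not assumed in Proposition~\ref{prop:14}; you instead derive it directly from Lemma~\ref{lem:repterm} and the definition of weak compatibility, which is cleaner and self-contained. You also verify two points the paper leaves implicit: that the image of the lower adjoint actually lands in $\mathcal L(Q,A^{\alpha,\tau},\ast)$, via the observation that the semidirect product $A_{0}(Q,A^{\alpha,\tau},\ast)$ models exactly the weakly compatible equations (so any variety containing it lies in $\mathcal L$); and that $\Psi$ preserves \emph{arbitrary} meets, which is what makes the abstract existence of a lower adjoint automatic. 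The paper's concrete definition of the lower adjoint and your abstract one via $\Phi(S)=\bigwedge\{\mathcal U:S\leq\Psi(\mathcal U)\}$ coincide once you identify $\Phi\Psi(\mathcal U)$ with the variety generated by the datum-realizing algebras, which you do.
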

\begin{proof}
It is easy to see that $\psi$ is monotone which yields the immediate inclusions
\begin{align*}
\psi(\mathcal U_1 \wedge \mathcal U_{2}) \leq \psi(\mathcal U_1) \wedge \psi(\mathcal U_2) \quad \quad \text{and} \quad \quad \psi(\mathcal U_1) \vee \psi(\mathcal U_2) \leq \psi(\mathcal U_1 \vee \mathcal U_{2}).
\end{align*}
For the reverse inclusion for the meet operation, we use the description of the operation in terms of the class operators $\mathcal U_{1} \wedge \mathcal U_{2} = \Mod \left( \mathrm{Id} \, \mathcal U_{1} \cup \mathrm{Id} \, \mathcal U_{2} \right)$. Any $[T] \in \psi(\mathcal U_1) \wedge \psi(\mathcal U_2)$, is both $\mathcal U_{1}$-compatible and $\mathcal U_{2}$-compatible; according to Theorem~\ref{thm:12}, the algebra $A_{T}(Q,A^{\alpha,\tau},\ast) \in \mathcal U_1 \wedge \mathcal U_{2}$. Then by Theorem~\ref{thm:10} we have $[T] \in H^{2}_{\mathcal U_1 \wedge \mathcal U_{2}}(Q,A^{\alpha,\tau},\ast) = \psi(\mathcal U_1 \wedge \mathcal U_{2})$. We conclude $\psi$ is a meet-homomorphism.

The lower adjoint to $\Psi$ is the monotone map $\theta: \Sub H^{2}(Q,A^{\alpha,\tau},\ast) \longrightarrow \mathcal L(Q,A^{\alpha,\tau},\ast)$ defined by $\theta(E) := \bigvee_{[T] \in E} \mathcal V(A_{T})$. It is not to hard to see that $\theta \circ \Psi (\mathcal U) \leq \mathcal U$ and $\Psi \circ \theta (E) \geq E$. It follows that $\Psi \circ \theta$ is a closure operator and the closed sets are precisely the cohomology groups corresponding to varieties; that is, of the form $\Psi(\mathcal U)$. 
\end{proof}

The combinatorial equivalence on extensions defined on their associated 2-cocycles can also be given by special isomorphisms between the extensions

\begin{theorem}\label{thm:13}
Let $(A(\alpha),Q,\ast)$ be affine datum in the signature $\tau$. Let $\pi: A \rightarrow Q$ and $\pi' : A' \rightarrow Q$ be extensions realizing the datum. Then $A$ and $A'$ are equivalent if and only if there is an isomorphism $\gamma: A \rightarrow A'$ such that
\begin{enumerate}

	\item $\pi' \circ \gamma = \pi$, and
	
	\item $\gamma = m(\gamma \circ r, r , \id)$ for all $\alpha$-traces $r: A \rightarrow A$.

\end{enumerate}
\end{theorem}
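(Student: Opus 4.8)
The plan is to run the argument through the reconstruction of Theorem~\ref{thm:12}, which lets me put both extensions on the common universe $U:=A(\alpha)/\Delta_{\alpha\alpha}$ of the datum. First I would reduce to the case $A=A_{T}(Q,A^{\alpha,\tau},\ast)$ and $A'=A_{T'}(Q,A^{\alpha,\tau},\ast)$, where $T,T'$ are the 2-cocycles associated to $A,A'$ through a fixed lifting $l:Q\to A$, where $\pi$ and $\pi'$ are both the canonical map $z\mapsto\rho(z)$ with kernel $\hat\alpha/\Delta_{\alpha\alpha}$, and where $m$ denotes the datum's ternary operation, which restricts on every $\hat\alpha/\Delta_{\alpha\alpha}$-block to a ternary abelian group operation; the $\alpha$-traces of the statement then correspond exactly to the maps $r=s\circ\pi$ for sections $s$ of $\pi$. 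The technical core I would isolate is the following: for any $h\colon Q\to U$ with $h(q)\mathrel{\hat\alpha/\Delta_{\alpha\alpha}}\delta\circ l(q)$, the ``translation'' $\gamma_{h}(z):=m\!\left(z,\delta\circ l(\rho(z)),h(\rho(z))\right)=z+_{l(\rho(z))}h(\rho(z))$ is a well-defined bijection of $U$, independent of the choice of $l$ by Remark~\ref{remark: liftings} and the affineness of $m$ on $\hat\alpha/\Delta_{\alpha\alpha}$-blocks, with $\pi'\circ\gamma_{h}=\pi$; moreover $\gamma_{h}$ is a $\tau$-isomorphism $A_{T}\to A_{T'}$ \emph{if and only if} $T'-T$ is the 2-coboundary witnessed by $h$ in the sense of Definition~\ref{def:2cobound}.

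For the forward implication, assume $A$ and $A'$ are equivalent, so $T'-T$ is a 2-coboundary witnessed by some $h$ (property (B1) of Definition~\ref{def:2cobound} is exactly $h(q)\mathrel{\hat\alpha/\Delta_{\alpha\alpha}}\delta\circ l(q)$); put $\gamma:=\gamma_{h}$. The technical core gives that $\gamma$ is an isomorphism and that $\pi'\circ\gamma=\pi$, which is (1). For (2), let $r=s\circ\pi$ be any $\alpha$-trace and fix $z\in U$ with $q:=\pi(z)$; since $\pi(s(q))=q$ we get $\gamma(r(z))=\gamma_{h}(s(q))=m(s(q),\delta\circ l(q),h(q))$, and then, working inside the abelian group on the $\hat\alpha/\Delta_{\alpha\alpha}$-block over $q$ with zero $\delta\circ l(q)$, where $m(a,b,c)=a-b+c$, one computes $m(\gamma(r(z)),r(z),z)=m\!\left(m(s(q),\delta l(q),h(q)),s(q),z\right)=h(q)+z=m(z,\delta l(q),h(q))=\gamma(z)$. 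For the converse, suppose $\gamma\colon A\to A'$ is a $\tau$-isomorphism with (1) and (2). Apply (2) to the particular trace $r=\delta\circ l\circ\pi$: for all $z\in U$, $\gamma(z)=m\!\left(\gamma(\delta l(\pi(z))),\delta l(\pi(z)),z\right)$. Setting $h(q):=\gamma(\delta\circ l(q))$, condition (1) yields $\pi'(h(q))=\pi(\delta l(q))=q$, hence $h(q)\mathrel{\hat\alpha/\Delta_{\alpha\alpha}}\delta\circ l(q)$, so (B1) holds; rearranging via the abelian structure of $m$ on the block gives $\gamma=\gamma_{h}$. Since $\gamma$ is an isomorphism $A_{T}\to A_{T'}$ by hypothesis, the ``only if'' half of the technical core forces $T'-T$ to be the 2-coboundary witnessed by $h$, i.e.\ $[T']=[T]$ and $A$ and $A'$ are equivalent.

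The one substantial point is the technical core, specifically its ``only if'' half: that $\gamma_{h}$ being a homomorphism forces, for each $f\in\tau$, the identity $T'_{f}(\bar x)-_{u}T_{f}(\bar x)=f^{\Delta}(h(x_{1}),\delta l(x_{2}),\dots,\delta l(x_{n}))-_{u}h(f^{Q}(\bar x))+_{u}\sum_{i=2}^{n}a(f,i)(x_{1},\dots,x_{i-1},h(x_{i}),x_{i+1},\dots,x_{n})$ with $u=l(f^{Q}(\bar x))$. This is obtained by expanding $F_{f}^{A_{T'}}(\gamma_{h}(z_{1}),\dots,\gamma_{h}(z_{n}))$ through Definition~\ref{def:9}, distributing the $+_{u}$-sum across every operation symbol by the homomorphic property of the action and of $A^{\alpha,\tau}$ (Definitions~\ref{def:homact} and \ref{def:homdatum}) and freely reassociating by affineness of $m$ on $\hat\alpha/\Delta_{\alpha\alpha}$-blocks, and then comparing with $\gamma_{h}(F_{f}^{A_{T}}(z_{1},\dots,z_{n}))$: the $h$-contributions collect precisely into the displayed right-hand side. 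This is the same manipulation already performed in the proof of Proposition~\ref{prop:10}, so it can be adapted from there rather than redone in full, and the ``if'' half is that computation read backwards. Everything else is formal: bijectivity of $\gamma_{h}$ (with inverse $w\mapsto w-_{l(\rho(w))}h(\rho(w))$), lifting-independence, and the abelian-group identities for $m$ on a block.
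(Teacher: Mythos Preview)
Your proposal is correct and follows essentially the same approach as the paper: both reduce to the common universe $A(\alpha)/\Delta_{\alpha\alpha}$, define the candidate isomorphism as the translation $\gamma_{h}(z)=z+_{l(\rho(z))}h(\rho(z))$, and verify the homomorphism property by the same expansion already carried out in Proposition~\ref{prop:10}. Your converse is organized a bit more directly---you apply condition~(2) to the single trace $r=\delta\circ l\circ\pi$ to read off $h=\gamma\circ\delta\circ l$ and recognize $\gamma=\gamma_{h}$, whereas the paper instead defines an auxiliary cocycle $T^{\gamma}$ via the lifting $\gamma\circ l$ and uses condition~(2) to show $T^{\gamma}=T$---but the substance is the same.
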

\begin{proof}
First, assume $A$ and $A'$ are equivalent extensions realizing the datum. We make take $\mathcal U = \mathcal V(A,A')$ and note $Q \in \mathcal U$ and $\mathcal U$ contains the datum since it contains algebras which realize the datum. By Theorem~\ref{thm:12}, we have isomorphisms $\phi : A \rightarrow A_{T}(Q,A^{\alpha,\tau},\ast)$ and $\phi' : A' \rightarrow A(\alpha,\ast,T')$ by the explicit construction detailed in that theorem. By Definition~\ref{def:4}, there is a 2-coboundary defined by $h: Q \rightarrow A(\alpha)/\Delta_{\alpha \alpha}$ such that
\begin{align*}
T_{f}(\bar{x}) - T'_{f}(\bar{x}) &= f^{\Delta} \left( h(x_1),\delta(\bar{l}(x_2)),\ldots,\delta(\bar{l}(x_n)) \right) -_{u} h(f^{Q}(x_1,\ldots,x_n)) \\
&+_{u} \sum_{i=2}^{n} a(f,i)(x_1,\ldots,x_{i},h(x_{i}),x_{i+1},\ldots,x_n ) \quad \quad (f \in \tau, n=\ar f)
\end{align*}
where $u = f(x_1,\ldots,x_n)$ and $\bar{l}:Q \rightarrow A$ is a lifting for the datum. Let $r:A \rightarrow A$ and $r':A' \rightarrow A'$ be $\alpha$-traces used to define the extensions $A$ and $A'$, respectively. Let $l: Q \rightarrow A$ and $l:Q \rightarrow A'$ be the associated liftings for $r$ and $r'$, respectively.

Note every element in $A_{T}(Q,A^{\alpha,\tau},\ast)$ has a unique representation of the form $\begin{bmatrix} r(x) \\ x \end{bmatrix}/\Delta_{\alpha \alpha}$ with $x \in A$. Define $\bar{\gamma}: A_{T}(Q,A^{\alpha,\tau},\ast) \rightarrow A'(\alpha,\ast,T')$ by $\bar{\gamma}: \begin{bmatrix} r(x) \\ x \end{bmatrix}/\Delta_{\alpha \alpha} \longmapsto \begin{bmatrix} r(x) \\ x \end{bmatrix}/\Delta_{\alpha \alpha} +_{r(x)} h \circ \rho \left(\begin{bmatrix} r(x) \\ x \end{bmatrix}\right)$ and $\gamma = \phi'^{-1} \circ \bar{\gamma} \circ \phi : A \rightarrow A'$. Since $\phi(\ker \pi) = \phi'(\ker \pi') = \hat{\alpha}/\Delta_{\alpha \alpha}$, we see that
\begin{align*}
\pi' \circ \phi'^{-1} \circ \bar{\gamma} \left( \begin{bmatrix} r(x) \\ x \end{bmatrix}/\Delta_{\alpha \alpha} \right) &= \pi'\left( \begin{bmatrix} r(x) \\ x \end{bmatrix}/\Delta_{\alpha \alpha} +_{r(x)} h \circ \rho \left(\begin{bmatrix} r(x) \\ x \end{bmatrix}\right) \right) \\
&= m \left( \rho \left(\begin{bmatrix} r(x) \\ x \end{bmatrix}\right) , \rho \left(\begin{bmatrix} r(x) \\ r(x) \end{bmatrix}\right), \rho \left(\begin{bmatrix} r(x) \\ x \end{bmatrix}\right) \right) = \rho \left( \begin{bmatrix} r(x) \\ x \end{bmatrix} \right) = \pi (x)
\end{align*}
by idempotence in $Q$. This verifies $\pi = \pi' \circ \gamma$ and surjectivity follows readily. We check $\bar{\gamma}$ is a homomorphism. For simplicity, set $q_i = \rho \left(\begin{bmatrix} r(x_i) \\ x_i \end{bmatrix}\right)$. Calculating for $f \in \tau$ and $\bar{x} \in A^{\ar f}$,
\begin{align*}
\bar{\gamma} \left( F_{f}\left( \begin{bmatrix} r(\bar{x}) \\ \bar{x} \end{bmatrix}/\Delta_{\alpha \alpha} \right) \right) &= \bar{\gamma} \left( \begin{bmatrix} r(f(\bar{x})) \\ f(\bar{x}) \end{bmatrix}/\Delta_{\alpha \alpha} \right) \displaybreak[0]\\
&= \begin{bmatrix} r(f(\bar{x})) \\ f(\bar{x}) \end{bmatrix}/\Delta_{\alpha \alpha} +_{r(f(\bar{x}))} h \circ \rho \left( \begin{bmatrix} r(f(\bar{x})) \\ f(\bar{x}) \end{bmatrix} \right) \displaybreak[0]\\
&= \begin{bmatrix} r(f(\bar{x})) \\ f(\bar{x}) \end{bmatrix}/\Delta_{\alpha \alpha} +_{r(f(\bar{x}))} h(f^{Q}(q_1,\ldots,q_n)) \displaybreak[0]\\
&= f^{\Delta} \left( \begin{bmatrix} r(x_1) \\ x_1 \end{bmatrix}\Delta_{\alpha \alpha},\delta(x_2),\ldots,\delta(x_n) \right) \displaybreak[0]\\
& \quad +_{r(f(\bar{x}))} \sum_{i=2}^{n} a(f,i)\left(q_1,\ldots,q_{i-1},\begin{bmatrix} r(x_{i}) \\ x_{i} \end{bmatrix}/\Delta_{\alpha \alpha},q_{i+1},\ldots,q_{n}) \right) \displaybreak[0]\\
& \quad +_{r(f(\bar{x}))} T_{f}(\bar{q}) +_{r(f(\bar{x}))} h(f^{Q}(q_1,\ldots,q_n)) \displaybreak[0]\\
&= F_{f}\left( \begin{bmatrix} r(\bar{x}) \\ \bar{x} \end{bmatrix}/\Delta_{\alpha \alpha} \right) +_{r(f(\bar{x}))} h(f^{Q}(q_1,\ldots,q_n)).
\end{align*}
Since $\left( h \circ \rho \left(\begin{bmatrix} r(x) \\ x \end{bmatrix}\right) , \begin{bmatrix} r(x) \\ x \end{bmatrix}/\Delta_{\alpha \alpha} \right) \in \hat{\alpha}/\Delta_{\alpha \alpha}$, we can write each
\begin{align}\label{eqn:777}
h \circ \rho \left(\begin{bmatrix} r(x_i) \\ x_i \end{bmatrix}\right) = \begin{bmatrix} r(x_i) \\ a_i  \end{bmatrix}/\Delta_{\alpha \alpha} = \begin{bmatrix} r'(x_i) \\ m(a_i,r(x_i),r'(x_i)) \end{bmatrix}/\Delta_{\alpha \alpha} = \begin{bmatrix} r'(x_i) \\ w_i  \end{bmatrix}/\Delta_{\alpha \alpha}
\end{align}
for some $a_i \in A$ and $w_i = m(a_i,r(x_i),r'(x_i))$. Then $\gamma \left( \begin{bmatrix} r(x_i) \\ x_i \end{bmatrix}/\Delta_{\alpha \alpha} \right) = \begin{bmatrix} r'(x_i) \\ z_i \end{bmatrix}/\Delta_{\alpha \alpha}$ where $z_i = m(x_i,r(x_i),w_i)$. Then 
\begin{align*}
F_{f} \left( \bar{\gamma} \left( \begin{bmatrix} r(\bar{x}) \\ \bar{x}  \end{bmatrix}/\Delta_{\alpha \alpha} \right) \right) &= F_{f}\left( \begin{bmatrix} r'(x_1) \\ z_1 \end{bmatrix}/\Delta_{\alpha \alpha}, \ldots, \begin{bmatrix} r'(x_n) \\ z_n \end{bmatrix}/\Delta_{\alpha \alpha} \right) \displaybreak[0]\\
&= f^{\Delta}\left( \begin{bmatrix} r'(x_1) \\ z_1 \end{bmatrix}/\Delta_{\alpha \alpha}, \delta(z_2),\ldots,\delta(z_n) \right) +_{r(f(\bar{z}))}  \displaybreak[0]\\
& \quad \sum_{i=2}^{n} a(f,i) \left( q_1,\ldots,q_{i-1}, \begin{bmatrix} r'(x_{i}) \\ z_{i} \end{bmatrix}/\Delta_{\alpha \alpha} , q_{i+1},\ldots,q_{n}  \right) \displaybreak[0]\\
& \quad +_{r(f(\bar{z}))} T'_{f}(q_1,\ldots,q_n) \displaybreak[0]\\
&= f^{\Delta} \left( \begin{bmatrix} r(x_1) \\ x_1 \end{bmatrix}/\Delta_{\alpha \alpha}, \delta(z_2),\ldots,\delta(z_n) \right) +_{r(f(\bar{x}))} f^{\Delta} \left( h(q_1), \delta(z_2),\ldots,\delta(z_n) \right) \displaybreak[0]\\
& \quad +_{r(f(\bar{z}))} \sum_{i=2}^{n} a(f,i)\left(q_1,\ldots,q_{i-1}, \begin{bmatrix} r(x_{i}) \\ x_{i} \end{bmatrix}/\Delta_{\alpha \alpha} , q_{i+1},\ldots,q_n \right) +_{r(f(\bar{x}))} \displaybreak[0]\\
& \quad \sum_{i=2}^{n} a(f,i)\left(q_1,\ldots,q_{i-1}, h(q_{i}),q_{i+1},\ldots,q_n \right) +_{r(f(\bar{z}))} T'_{f}(q_1,\ldots,q_n) \displaybreak[0]\\
&= f^{\Delta} \left( \begin{bmatrix} r(x_1) \\ x_1 \end{bmatrix}/\Delta_{\alpha \alpha}, \delta(z_2),\ldots,\delta(z_n) \right) +_{r(f(\bar{z}))} \displaybreak[0]\\
&\quad \sum_{i=2}^{n} a(f,i)\left(q_1,\ldots,q_{i-1}, \begin{bmatrix} r(x_{i}) \\ x_{i} \end{bmatrix}/\Delta_{\alpha \alpha} , q_{i+1},\ldots, q_n \right) +_{r(f(\bar{x}))} \displaybreak[0]\\
& \quad  T_{f}(q_1,\ldots,q_n) +_{u} h(f^{Q}(q_1,\ldots,q_n)) \displaybreak[0]\\
&= F_{f}\left( \begin{bmatrix} r(\bar{x}) \\ \bar{x} \end{bmatrix}/\Delta_{\alpha \alpha} \right) +_{r(f(\bar{x}))} h(f^{Q}(q_1,\ldots,q_n)).
\end{align*}
Since $\delta(z_i) = \delta(x_i)$ and $\delta(r(f(\bar{x}))) = \delta(r(f(\bar{z})))=\delta(u)$, we conclude that $\bar{\gamma}$, and thus $\gamma$, is a homomorphism.

Now assume $\left( \begin{bmatrix} r(x) \\ x \end{bmatrix}/\Delta_{\alpha \alpha}, \begin{bmatrix} r(y) \\ y \end{bmatrix}/\Delta_{\alpha \alpha} \right) \in \ker \bar{\gamma}$. Note we have $h \circ \rho \left(\begin{bmatrix} r(x) \\ x \end{bmatrix}\right) = \begin{bmatrix} r(x) \\ a  \end{bmatrix}/\Delta_{\alpha \alpha}$ and $h \circ \rho \left(\begin{bmatrix} r(y) \\ y \end{bmatrix}\right) = \begin{bmatrix} r(y) \\ b  \end{bmatrix}/\Delta_{\alpha \alpha}$ for some $a,b \in A$. Then
\begin{align}\label{eq:6}
\begin{bmatrix} r(x) \\ m(x,r(x),a) \end{bmatrix}/\Delta_{\alpha \alpha} = \bar{\gamma} \left( \begin{bmatrix} r(x) \\ x \end{bmatrix}/\Delta_{\alpha \alpha} \right) = \bar{\gamma} \left( \begin{bmatrix} r(y) \\ y \end{bmatrix}/\Delta_{\alpha \alpha} \right) =  \begin{bmatrix} r(y) \\ m(y,r(y),b) \end{bmatrix}/\Delta_{\alpha \alpha}.
\end{align}
Then $\pi = \pi' \circ \gamma$ yields $\left( \begin{bmatrix} r(x) \\ x \end{bmatrix}/\Delta_{\alpha \alpha}, \begin{bmatrix} r(y) \\ y \end{bmatrix}/\Delta_{\alpha \alpha} \right) \in \ker \pi = \hat{\alpha}/\Delta_{\alpha \alpha}$ which implies $r(x) = r(y)$, and so $a=b$ since the 2-coboundary $h$ only depends on $Q$. Then Eq (\ref{eq:6}) implies $m(x,r(x),a) = m(y,r(y),b)=m(y,r(x),a)$. Since $m$ is affine on $\alpha$-blocks, we must have $x=y$; thus, $\bar{\gamma}$ is injective.

For the second condition on the isomorphism $\gamma$, note $\rho \left( \begin{bmatrix} r(x) \\ r(x) \end{bmatrix} \right) = \rho \left( \begin{bmatrix} r(x) \\ x \end{bmatrix} \right)$ implies $h \circ \rho \left( \begin{bmatrix} r(x) \\ r(x) \end{bmatrix} \right) = h \circ \rho \left( \begin{bmatrix} r(x) \\ x \end{bmatrix} \right) = \begin{bmatrix} r(x) \\ a \end{bmatrix}/\Delta_{\alpha \alpha}$ for some $a \in A$ with $(a,r(x)) \in \alpha$. Then
\begin{align*}
\begin{bmatrix} r(x) \\ \gamma(r(x)) \end{bmatrix}/\Delta_{\alpha \alpha} &= \begin{bmatrix} r(x) \\ \phi'^{-1} \left( \begin{bmatrix} r(x) \\ r(x) \end{bmatrix}/\Delta_{\alpha \alpha} +_{r(x)} h \circ \rho \left( \begin{bmatrix} r(x) \\ r(x) \end{bmatrix}\right) \right) \end{bmatrix}/\Delta_{\alpha \alpha} \displaybreak[0]\\
&= \begin{bmatrix} r(x) \\ \phi'^{-1} \left( h \circ \rho \left( \begin{bmatrix} r(x) \\ r(x) \end{bmatrix}\right) \right) \end{bmatrix}/\Delta_{\alpha \alpha} \displaybreak[0]\\
&= \begin{bmatrix} r(x) \\ \phi'^{-1} \left( \begin{bmatrix} r(x) \\ a \end{bmatrix}/\Delta_{\alpha \alpha} \right) \end{bmatrix}/\Delta_{\alpha \alpha} \displaybreak[0]\\
&= \begin{bmatrix} r(x) \\ m(a,r(x),r'(x)) \end{bmatrix}/\Delta_{\alpha \alpha} \displaybreak[0]\\
&= \begin{bmatrix} x \\ m( m(a,r(x),r'(x)),r(x),x ) \end{bmatrix}/\Delta_{\alpha \alpha} \displaybreak[0]\\
&= \begin{bmatrix} x \\ m( m(x,r(x),a),r(x),r'(x) ) \end{bmatrix}/\Delta_{\alpha \alpha} \displaybreak[0]\\
&= \begin{bmatrix} x \\ \phi'^{-1} \left( \begin{bmatrix} r(x) \\ m(x,r(x),a) \end{bmatrix}/\Delta_{\alpha \alpha} \right) \end{bmatrix}/\Delta_{\alpha \alpha} \displaybreak[0]\\
&= \begin{bmatrix} x \\ \phi'^{-1} \left( \begin{bmatrix} r(x) \\ x \end{bmatrix}/\Delta_{\alpha \alpha} +_{r(x)} h \circ \rho \left( \begin{bmatrix} r(x) \\ x \end{bmatrix} \right) \right) \end{bmatrix}/\Delta_{\alpha \alpha} = \begin{bmatrix} x \\ \gamma(x) \end{bmatrix}/\Delta_{\alpha \alpha}. 
\end{align*}

Conversely, assume there is a homomorphism $\gamma: A \rightarrow A'$ which satisfies the conditions. The condition $\pi=\pi' \circ \gamma$ implies $\gamma \circ l : Q \rightarrow A'$ is a lifting for $\pi': A' \rightarrow Q$. Condition (2) implies that for any $f \in \tau$ and $\bar{x} \in A^{\ar f}$, if we apply the operation
\[
 \begin{bmatrix} f(\bar{x}) \\ \gamma(f(\bar{x})) \end{bmatrix} = \begin{bmatrix} f(\bar{x}) \\ f(\gamma(\bar{x})) \end{bmatrix} \mathrel{\Delta_{\alpha \alpha}} \begin{bmatrix} f(l(\bar{x})) \\ f(\gamma(l(\bar{x}))) \end{bmatrix}  = \begin{bmatrix} f(l(\bar{x})) \\ \gamma(f(l(\bar{x}))) \end{bmatrix}
\]
and then also by substitution
\[
\begin{bmatrix} f(\bar{x}) \\ \gamma(f(\bar{x})) \end{bmatrix} \mathrel{\Delta_{\alpha \alpha}} \begin{bmatrix} l(f(\bar{x})) \\ \gamma(l(f(\bar{x}))) \end{bmatrix}.
\]
From the above we conclude
\[
\begin{bmatrix} \gamma \circ l (f(\bar{x})) \\ l \circ f(\bar{x})  \end{bmatrix} \mathrel{\Delta_{\alpha \alpha}} \begin{bmatrix} \gamma(f(l(\bar{x}))) \\ f(l(\bar{x})) \end{bmatrix},
\]
and since $\Delta_{\alpha \alpha} \leq \hat{\alpha}$ we have
\begin{align}
 \begin{bmatrix} \gamma \circ l (f(\bar{x})) \\ f(\gamma \circ l(\bar{x})) \end{bmatrix} = \begin{bmatrix} \gamma \circ l (f(\bar{x})) \\ \gamma(f(l(\bar{x}))) \end{bmatrix} \mathrel{\Delta_{\alpha \alpha}} \begin{bmatrix} l \circ f(\bar{x}) \\ f(l(\bar{x})) \end{bmatrix}.
\end{align}
Now define $T^{\gamma}_{f}(\bar{x}) := \begin{bmatrix} \gamma \circ l(f(\bar{x})) \\ f(\gamma \circ l (\bar{x})) \end{bmatrix}$ for $f \in \tau$. It follows that $T^{\gamma}$ is 2-cocycle for $A'$. By a similar argument as in Proposition~\ref{prop:10}, there is $h$ such that
\[
T'_{f}(\bar{x}) - T^{\gamma}_{f}(\bar{x}) = f(h(\bar{x}))  - h(f(\bar{x})) \quad \quad \quad \quad \quad \quad \quad (f \in \tau)
\]
which can be expanded out to represent a 2-coboundary. The second condition guarantees that $T^{\gamma} = T$ and so $A$ and $A'$ are equivalent.
\end{proof}

\begin{definition}
Let $(A^{\alpha, \tau},Q,\ast)$ be affine datum. A function $d: Q \rightarrow A(\alpha)/\Delta_{\alpha \alpha}$ is a 1-\emph{cocycle} if for any $f \in \tau, \ar f = n$,
\[
d(f^{Q}(x_1,\ldots,x_n)) = f^{\Delta}(d(x_1),\delta(l(x_2)),\ldots,\delta(l(x_n))) +_{u} \sum_{i=2}^{n} a(f,i) ( x_1,\ldots,x_{i-1},d(x_{i}),x_{i+1},\ldots,x_n )
\]
where $l: Q \rightarrow A$ is any lifting associated to the datum. The set of 1-cocycles is denoted by $Z^{1}(A^{\alpha, \tau},Q,\ast)$
\end{definition}

We will also refer to 1-cocycles as \emph{derivations} of the datum.

\begin{lemma}
Let $(A^{\alpha, \tau},Q,\ast)$ be affine datum. For any lifting $l:Q \rightarrow A$ associated to the datum, the operation $(d+d')(x):= d(x) +_{l(x)} d'(x)$ makes $Z^{1}(A^{\alpha, \tau}Q,\ast)$ into an abelian group.
\end{lemma}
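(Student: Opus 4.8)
The plan is to reduce every group axiom to one structural fact from (AD1): since $\Delta_{\alpha\alpha}\le\hat\alpha$ and $\alpha$ is abelian with $m$ a ternary abelian group operation on each $\alpha$-class, for every $u\in A$ the operation $x+_{u}y=m(x,\delta(u),y)$ restricts to each block of $\hat\alpha/\Delta_{\alpha\alpha}$ as the addition of an abelian group with zero $\delta(u)$. Closure of $Z^{1}$ under $+$ is instead harvested from the homomorphic properties of $A^{\alpha,\tau}$ and of the action, exactly as in the proof of Lemma~\ref{lem:5}. Two preliminaries are required. First, the operation $(d+d')(x)=d(x)+_{l(x)}d'(x)$ is independent of the lifting: by Remark~\ref{remark: liftings} any two liftings $l,l'$ satisfy $\delta\circ l=\delta\circ l'$, so $+_{l(x)}=+_{l'(x)}$ on $A(\alpha)/\Delta_{\alpha\alpha}$. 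Second, every $1$-cocycle $d$ satisfies $d(x)\mathrel{\hat\alpha/\Delta_{\alpha\alpha}}\delta\circ l(x)$ --- equivalently, $d$ is a section of $\pi\colon A_{0}(Q,A^{\alpha,\tau},\ast)\to Q$ onto the semidirect product --- because (D3)--(D4) and (AD2) force the right-hand side of the cocycle identity into the block over $f^{Q}(\bar x)$. Granting this, $d(x)$, $\delta\circ l(x)$ and $d'(x)$ lie in a common block, whence $(d+d')(x)=m(d(x),\delta\circ l(x),d'(x))$ lies in the same block and again $\mathrel{\hat\alpha/\Delta_{\alpha\alpha}}\delta\circ l(x)$.

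Next I would show $d+d'$ satisfies the cocycle identity. Fix $f\in\tau$, $n=\ar f$, and expand $(d+d')(f^{Q}(\bar x))$ by the cocycle identities for $d$ and for $d'$; then use the homomorphic property of $A^{\alpha,\tau}$ (Definition~\ref{def:homdatum}) to split $f^{\Delta}\big((d+d')(x_{1}),\delta(l(x_{2})),\dots,\delta(l(x_{n}))\big)$ into $f^{\Delta}(d(x_{1}),\dots)+f^{\Delta}(d'(x_{1}),\dots)$, and the homomorphic property of the unary action to split each summand $a(f,i)\big(x_{1},\dots,(d+d')(x_{i}),\dots,x_{n}\big)$ likewise. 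The bookkeeping point is that every subscript generated in these splittings, namely $l(f^{A/\alpha}(\dots))$, together with the subscripts appearing in the two cocycle identities, collapses to the single value $u=l(f^{Q}(\bar x))$ once read in $A(\alpha)/\Delta_{\alpha\alpha}$, because all relevant first arguments are $\alpha$-related to $l(\bar x)$ and $\delta$ identifies each $\alpha$-class. Hence the entire computation takes place in the one abelian group $\big(\,\text{block over }f^{Q}(\bar x),\ +_{u}\,\big)$, where commutativity and associativity collect the $d$-contributions into $d(f^{Q}(\bar x))$ and the $d'$-contributions into $d'(f^{Q}(\bar x))$, yielding $(d+d')(f^{Q}(\bar x))=d(f^{Q}(\bar x))+_{u}d'(f^{Q}(\bar x))$ as required.

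Finally I would identify the unit and inverses. The trivial derivation $d_{0}(x):=\delta\circ l(x)$ is a $1$-cocycle: substituting $a=b=\delta(u)$ into the homomorphic identities and using idempotence of $m$ (the ``doubling'' argument already exploited in the proof of Theorem~\ref{thm:12}) gives $f^{\Delta}(\delta(l(x_{1})),\delta(l(x_{2})),\dots)=\delta(u)$ and $a(f,i)(\dots,\delta(l(x_{i})),\dots)=\delta(u)$ for $u=l(f^{Q}(\bar x))$, so the right-hand side of the cocycle identity for $d_{0}$ collapses to $\delta(u)=d_{0}(f^{Q}(\bar x))$; moreover $(d+d_{0})(x)=m(d(x),\delta(l(x)),\delta(l(x)))=d(x)$ because $\delta(l(x))$ is the zero of the block containing $d(x)$. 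For inverses put $(-d)(x):=m(\delta(l(x)),d(x),\delta(l(x)))$, the negation of $d(x)$ in the block over $x$; this is a $1$-cocycle because the maps $f^{\Delta}(-,\delta(l(\bar x)))$ and $a(f,i)(\dots,-,\dots)$ are additive between the pertinent blocks (this is precisely what ``homomorphic'' asserts) and hence commute with block-negation, so applying negation to the cocycle identity for $d$ yields the one for $-d$; and $(d+(-d))(x)=d_{0}(x)$ in the block over $x$. Commutativity and associativity of $+$ hold pointwise since each $+_{l(x)}$ is an abelian group operation. I expect the closure step to be the main obstacle: verifying that all the $+$-subscripts produced by distributing across $f^{\Delta}$ and the action terms do collapse to the common $u=l(f^{Q}(\bar x))$, so that homomorphic distributivity applies and the $d$- and $d'$-parts genuinely separate; once this is in hand, the remaining axioms are immediate pointwise consequences of (AD1).
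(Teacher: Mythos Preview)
Your proposal is correct and follows essentially the same approach as the paper, which simply refers back to the first paragraph of the proof of Lemma~\ref{lem:5}: independence of the lifting, the abelian group structure coming from $m$ being affine on blocks of $\hat\alpha/\Delta_{\alpha\alpha}$, and closure under $+$ from the homomorphic property of the datum and action distributing over the sum. Your write-up is considerably more detailed than the paper's one-line cross-reference---in particular you spell out the subscript-collapse that makes the closure computation go through and exhibit the unit and inverses explicitly---but the underlying argument is the same.
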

\begin{proof}
See the first paragraph of Lemma~\ref{lem:5}.
\end{proof}

Let $(A^{\alpha, \tau},Q,\ast)$ be affine datum. If $\pi: A \rightarrow Q$ is an extension realizing the datum, let $\mathrm{Stab}(\pi:A \rightarrow Q)$ denote the set of \emph{stabilizing automorphisms}; that is, the automorphism $\gamma \in \Aut A$ which satisfy conditions (1) and (2) in Theorem~\ref{thm:13}. The following is our characterization of the derivations of datum by stabilizing automorphisms.

\begin{theorem}\label{thm:stabilize}
Let $(Q,A^{\alpha,\tau},\ast)$ be affine datum. Let $\pi: A \rightarrow Q$ be an extension realizing the datum. Then $Z^{1}(Q,A^{\alpha,\tau},\ast) \approx \mathrm{Stab}(\pi:A \rightarrow Q)$.
\end{theorem}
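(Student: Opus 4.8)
The plan is to fix an $\alpha$-trace $r:A\to A$ with associated lifting $l:Q\to A$ and invoke Theorem~\ref{thm:12} to identify $A$ with $A_{T}(Q,A^{\alpha,\tau},\ast)$ for the $2$-cocycle $T$ it produces via the isomorphism $\phi:A\to A_{T}(Q,A^{\alpha,\tau},\ast)$, $\phi(x)=\begin{bmatrix} r(x)\\ x\end{bmatrix}/\Delta_{\alpha\alpha}$. Under this identification $\pi$ becomes the map induced by $\rho$, every element has a unique representative $\begin{bmatrix} r(x)\\ x\end{bmatrix}/\Delta_{\alpha\alpha}$, and $l(q)$ corresponds to the diagonal element $\delta(l(q))$ since $r\circ l=l$. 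Writing $\bar\gamma=\phi\gamma\phi^{-1}$ for $\gamma\in\Aut A$, the key conceptual observation is that $\gamma\in\mathrm{Stab}(\pi:A\to Q)$ is exactly an automorphism witnessing that the extension $\pi$ is equivalent to itself in the sense of Definition~\ref{def:4}, so the correspondence we want is the restriction of the machinery in the proof of Theorem~\ref{thm:13} to the case $A'=A$, $\pi'=\pi$, $T'=T$.

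First I would associate to $\gamma\in\mathrm{Stab}(\pi:A\to Q)$ the function $d_{\gamma}:Q\to A(\alpha)/\Delta_{\alpha\alpha}$ defined by $d_{\gamma}(q):=\bar\gamma(\delta(l(q)))\ -_{l(q)}\ \delta(l(q))$, where the difference is taken in the abelian group that $m$ induces on the $\hat\alpha/\Delta_{\alpha\alpha}$-block through $\delta(l(q))$ (both arguments lie in that block because $\pi\circ\gamma=\pi$). Condition $(1)$ of Theorem~\ref{thm:13} places $\bar\gamma(z)$ and $z$ in the same block for every $z$ with $\pi(z)=q$, and condition $(2)$, namely $\gamma=m(\gamma\circ r,r,\id)$, says precisely that $\bar\gamma(z)\ -_{l(q)}\ z$ depends only on $r(z)=l(q)$, i.e. only on $q$; thus $d_{\gamma}$ is well defined and $d_{\gamma}(q)\mathrel{\hat\alpha/\Delta_{\alpha\alpha}}\delta(l(q))$. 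That $d_{\gamma}$ satisfies the $1$-cocycle identity is then read off the first half of the proof of Theorem~\ref{thm:13}: $\gamma$ arises there from a $2$-coboundary with witness $h=d_{\gamma}$ and $\bar\gamma(z)=z+_{l(\pi(z))}h(\pi(z))$, and because here both extensions carry the same $2$-cocycle $T$, the defining identity $(B2)$ of Definition~\ref{def:2cobound} produces the trivial $2$-cocycle on the left; since the trivial $2$-cocycle is the identity of $Z^{2}$, adding $h(f^{Q}(\bar x))$ to both sides over $+_{l(f^{Q}(\bar x))}$ and cancelling turns $(B2)$ into exactly the $1$-cocycle identity.

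Conversely, to $d\in Z^{1}(Q,A^{\alpha,\tau},\ast)$ I would associate $\gamma_{d}$ determined by $\bar\gamma_{d}(z):=z+_{l(\pi(z))}d(\pi(z))$ on $A_{T}(Q,A^{\alpha,\tau},\ast)$. Running the computation of Theorem~\ref{thm:13} in reverse, the $1$-cocycle identity for $d$ makes the $2$-coboundary that $d$ would generate from $T$ return $T$, so $\bar\gamma_{d}$ commutes with every $F_{f}$, i.e. is an endomorphism; it is bijective because distributing the translations through $+_{l(q)}$ (using that $\pi$ is unaffected and $m$ is affine on blocks) gives $\bar\gamma_{d}\circ\bar\gamma_{d'}=\bar\gamma_{d+d'}$ and $\bar\gamma_{0}=\id$, so $\bar\gamma_{d}\circ\bar\gamma_{-d}=\id$. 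Condition $(1)$ holds since $\pi(z+_{l(q)}d(q))=m(q,q,q)=q$ by idempotence of $m$ on $Q$, and condition $(2)$ holds since $\bar\gamma_{d}$ moves $z$ by the class $d(\pi(z))$, which is constant on $\alpha$-blocks, so $\bar\gamma_{d}=m(\bar\gamma_{d}\circ\delta\circ l\circ\pi,\ \delta\circ l\circ\pi,\ \id)$. Finally, $d_{\gamma_{d}}=d$ and $\gamma_{d_{\gamma}}=\gamma$ are immediate from the formulas, and $\bar\gamma_{d}\circ\bar\gamma_{d'}=\bar\gamma_{d+d'}$ shows $\gamma\mapsto d_{\gamma}$ is an isomorphism of the group $\mathrm{Stab}(\pi:A\to Q)$ (under composition) onto $Z^{1}(Q,A^{\alpha,\tau},\ast)$; in particular $\mathrm{Stab}(\pi:A\to Q)$ is abelian.

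The main obstacle is bookkeeping rather than conceptual: one has to certify that the homomorphism condition on $\gamma$ (respectively the endomorphism condition on $\gamma_{d}$) is logically equivalent to the $1$-cocycle identity, and this equivalence is essentially already performed inside the proof of Theorem~\ref{thm:13} once one notes that a stabilizing automorphism is the same datum as a self-equivalence whose associated $2$-coboundary witness happens to be a $1$-cocycle. A minor point to dispatch en route is that every $1$-cocycle $d$ automatically satisfies $d(q)\mathrel{\hat\alpha/\Delta_{\alpha\alpha}}\delta(l(q))$ — so that the occurrences of $+_{l(q)}$ and the identity $\pi\circ d=\id_{Q}$ used above are legitimate — which follows by applying $\pi$ to the $1$-cocycle identity and using idempotence of $m$ on $Q$ together with the compatibility of $\pi$ with the action terms and the operations $f^{\Delta}$ recorded in the construction of $A_{T}(Q,A^{\alpha,\tau},\ast)$.
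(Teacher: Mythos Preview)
Your proposal is correct and follows essentially the same route as the paper: both identify $A$ with $A_{T}(Q,A^{\alpha,\tau},\ast)$ via $\phi$, define $d_{\gamma}(q)=\begin{bmatrix} l(q)\\ \gamma(l(q))\end{bmatrix}/\Delta_{\alpha\alpha}$ and $\bar\gamma_{d}(z)=z+_{l(\pi(z))}d(\pi(z))$, and verify the derivation and homomorphism conditions by specializing the coboundary computation of Theorem~\ref{thm:13}/Proposition~\ref{prop:10} to the case $T'=T$. Your explicit framing of a stabilizing automorphism as a self-equivalence and your remark that $d(q)\mathrel{\hat\alpha/\Delta_{\alpha\alpha}}\delta(l(q))$ must be checked are nice touches the paper leaves implicit, but the argument is otherwise the same.
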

\begin{proof}
Let $\phi: A \rightarrow A(\alpha,\ast, T)$ witness the isomorphism from Theorem~\ref{thm:12}. Take $\gamma \in \mathrm{Stab}(\pi:A \rightarrow Q)$ and $\alpha$-trace $r$. Since $\pi \circ \gamma = \pi$, we have $(x,\gamma(x)) \in \alpha = \ker \pi$ which implies $r(\gamma(x))=r(x)$. Then by condition (2) in Theorem~\ref{thm:13} we can write
\begin{align*}
\phi \circ \gamma(x) = \begin{bmatrix}  r(\gamma(x)) \\ \gamma(x) \end{bmatrix}/\Delta_{\alpha \alpha} = \begin{bmatrix}  r(x) \\ \gamma(x) \end{bmatrix}/\Delta_{\alpha \alpha} &= \begin{bmatrix}  r(x) \\ x \end{bmatrix}/\Delta_{\alpha \alpha} +_{r(x)} \begin{bmatrix}  x \\ \gamma(x) \end{bmatrix}/\Delta_{\alpha \alpha} \\
&= \begin{bmatrix}  r(x) \\ x \end{bmatrix}/\Delta_{\alpha \alpha} +_{r(x)} \begin{bmatrix}  r(x) \\ \gamma(r(x)) \end{bmatrix}/\Delta_{\alpha \alpha} \\
&= \phi(x) +_{r(x)} d_{\gamma}(\pi(x))
\end{align*}
where we have defined $d_{\gamma}(x) := \begin{bmatrix}  l(x) \\ \gamma(l(x)) \end{bmatrix}/\Delta_{\alpha \alpha} : Q \rightarrow A(\alpha)/\Delta_{\alpha \alpha}$ for the lifting associated to $r$. Note $r$ and $\gamma \circ r$ are both $\alpha$-traces. Then the argument in Proposition~\ref{prop:10} shows for all $f \in \tau$ and $\bar{x} \in Q^{\ar f}$,
\begin{align*}
\delta(u) = T_{f}(\bar{x}) -_{u} T_{f}(\bar{x}) &= f(d_{\gamma}(x_1),\delta(l(x_2)),\ldots,\delta(l(x_n))) -_{u} d_{\gamma}(f(x_1,\ldots,x_n)) \\
&+_{u} \ \sum_{i=2}^{n} a(f,i)\left(x_1,\ldots,x_{i-1},d_{\gamma}(x_{i}),x_{i+1},\ldots,x_n)\right)
\end{align*}
where $u=l(f(\bar{x}))$; therefore, $d_{\gamma}$ is a derivation. We should note $\phi \circ \gamma(r(x)) = d_{\gamma}(\pi(x))$.

We show $\mathrm{Stab}(\pi:A \rightarrow Q)$ is closed under composition. Take $\gamma,\gamma' \in \mathrm{Stab}(\pi:A \rightarrow Q)$; clearly, condition (1) in Theorem~\ref{thm:13} holds. We first calculate
\begin{align}\label{eq:28}
\phi \circ (\gamma' \circ \gamma)(x) = \phi(\gamma(x)) +_{r(\gamma(x))} d_{\gamma'}(\pi(\gamma(x))) = \phi(x) +_{r(x)} d_{\gamma}(\pi(x)) +_{r(x)} d_{\gamma'}(\pi(x)).
\end{align}
Since $d_{\gamma}$ is a derivation, we have $\pi \circ d_{\gamma} = \id$ which implies we can write $d_{\gamma}(x) = \begin{bmatrix} l(x) \\ \beta(x) \end{bmatrix}/\Delta_{\alpha \alpha}$ for some function $\beta: Q \rightarrow A$; similarly, $d_{\gamma'}(x) = \begin{bmatrix} l(x) \\ \beta'(x) \end{bmatrix}/\Delta_{\alpha \alpha}$ for some function $\beta': Q \rightarrow A$. Then using Eq.~\eqref{eq:28}, we have
\begin{align*}
\begin{bmatrix} x \\ (\gamma' \circ \gamma)(x) \end{bmatrix}/\Delta_{\alpha \alpha} &= \begin{bmatrix} x \\ \phi^{-1} \left( \phi(x) +_{r(x)} d_{\gamma}(\pi(x)) +_{r(x)} d_{\gamma'}(\pi(x)) \right) \end{bmatrix}/\Delta_{\alpha \alpha} \\
&= \begin{bmatrix} x \\  \phi^{-1} \left( \begin{bmatrix} m(r(x),r(x),m(r(x),r(x),r(x))) \\ m(x,r(x),m(\beta'(\pi(x)),r(x),\beta(\pi(x)))) \end{bmatrix}/\Delta_{\alpha \alpha} \right) \end{bmatrix}/\Delta_{\alpha \alpha} \\
&= \begin{bmatrix} x \\ m(x,r(x),m(\beta'(x),r(x),\beta(\pi(x)))) \end{bmatrix}/\Delta_{\alpha \alpha} \\
&= m \left( \begin{bmatrix} x \\ x \end{bmatrix}, \begin{bmatrix} r(x) \\ r(x) \end{bmatrix}, m \left( \begin{bmatrix} r(x) \\ \beta'(\pi(x)) \end{bmatrix},\begin{bmatrix} r(x) \\ r(x) \end{bmatrix}, \begin{bmatrix} r(x) \\ \beta(\pi(x)) \end{bmatrix} \right) \right)/\Delta_{\alpha \alpha} \\
&= \begin{bmatrix} r(x) \\ \beta'(\pi(x)) \end{bmatrix}/\Delta_{\alpha \alpha} +_{r(x)} \begin{bmatrix} r(x) \\ \beta(\pi(x)) \end{bmatrix}/\Delta_{\alpha \alpha} \\
&= d_{\gamma'}(\pi(x)) +_{r(x)} d_{\gamma}(\pi(x)) \\
&= \phi \circ (\gamma' \circ \gamma)(r(x)) = \begin{bmatrix} r(x) \\ (\gamma' \circ \gamma)(r(x)) \end{bmatrix}/\Delta_{\alpha \alpha}.
\end{align*}
since $r(\gamma'(\gamma(r(x)))) = r(\gamma(r(x))) = r(r(x))=r(x)$. This shows $\gamma' \circ \gamma$ satisfies condition (2) and so $\gamma' \circ \gamma \in \mathrm{Stab}(\pi:A \rightarrow Q)$.

Define $\Psi : \mathrm{Stab}(\pi:A \rightarrow Q) \rightarrow Z^{1}(Q,A^{\alpha,\tau},\ast)$ by $\Psi(\gamma) := d_{\gamma}$ where $\phi \circ \gamma(r(x)) = d_{\gamma}(\pi(x))$. It is easy to see that $\Psi$ is injective. To show surjectivity, take a derivation $d$ and define $\gamma_{d}$ by $\phi \circ \gamma_{d}(x) := \phi(x) +_{r(x)} d(\pi(x))$. The proof of Theorem~\ref{thm:13} shows $\gamma_{d}$ is a stabilizing automorphism. We show it satisfies condition (2) above. Since $d$ is a derivation, we can write $d(x) = \begin{bmatrix} l(x) \\ \beta(x) \end{bmatrix}/\Delta_{\alpha \alpha}$ for some function $\beta: Q \rightarrow A$. We calculate
\begin{align*}
\begin{bmatrix} x \\ \gamma_{d}(x) \end{bmatrix}/\Delta_{\alpha \alpha} = \begin{bmatrix} x \\ \phi^{-1} \left( \phi(x) +_{r(x)} d(\pi(x)) \right) \end{bmatrix}/\Delta_{\alpha \alpha} &= \begin{bmatrix} x \\ \phi^{-1} \left( \begin{bmatrix} m \big( r(x),r(x),r(x) \big) \\ m \big(x,r(x),\beta(\pi(x))\big) \end{bmatrix}/\Delta_{\alpha \alpha} \right) \end{bmatrix}/\Delta_{\alpha \alpha} \\
&= \begin{bmatrix} x \\ m(x,r(x),\beta(\pi(x))) \end{bmatrix}/\Delta_{\alpha \alpha} \\
&= m \left( \begin{bmatrix} x \\ x \end{bmatrix}, \begin{bmatrix} r(x) \\ r(x) \end{bmatrix},\begin{bmatrix} r(x) \\ \beta(\pi(x)) \end{bmatrix} \right)/\Delta_{\alpha \alpha} \\
&= \begin{bmatrix} r(x) \\ \beta(\pi(x)) \end{bmatrix}/\Delta_{\alpha \alpha} = \begin{bmatrix} r(x) \\ \gamma_{d}(r(x)) \end{bmatrix}/\Delta_{\alpha \alpha}
\end{align*}
This implies $m(\gamma_{d}(r(x)),r(x),x)=\gamma_{d}(x)$ and so $\gamma_{d} \in \mathrm{Stab}(\pi:A \rightarrow Q)$. We see that $\Psi(\gamma_{d})=d$ since $\phi \circ \gamma(r(x)) = \phi(r(x)) +_{r(x)} d(\pi(r(x))) = d(\pi(x))$.

To finish the theorem, we show $\Psi$ is a homomorphism. This follows from Eq.~\eqref{eq:28} since
\[
\phi \circ (\gamma' \circ \gamma)(r(x)) = d_{\gamma'}(\pi(x)) +_{r(x)} d_{\gamma}(\pi(x)) = (d_{\gamma'} + d_{\gamma})(\pi(x))
\]
implies $\Psi(\gamma' \circ \gamma) = d_{\gamma'} + d_{\gamma}$.
\end{proof}

Suppose $\pi: A \rightarrow Q$ with $\alpha = \ker \pi$ is an extension realizing affine datum. By Theorem~\ref{thm:10} we have the isomorphism $\phi: A \rightarrow A_{T}(Q,A^{\alpha,\tau},\ast)$ given by $a \longmapsto \begin{bmatrix} r(a) \\ a \end{bmatrix}/\Delta_{\alpha \alpha}$ for any $\alpha$-trace $r: A \rightarrow A$. For the semidirect product we have the homomorphisms
\begin{align*}
\rho: A(\alpha)/\Delta_{\alpha \alpha} \rightarrow Q &, \quad \quad \begin{bmatrix} a \\ b \end{bmatrix}/\Delta_{\alpha \alpha} \longmapsto \pi(a) \\
\kappa: A(\alpha)/\Delta_{\alpha \alpha} \rightarrow A(\alpha)/\Delta_{\alpha 1} &, \quad \quad \begin{bmatrix} a \\ b \end{bmatrix}/\Delta_{\alpha \alpha} \longmapsto \begin{bmatrix} a \\ b \end{bmatrix}/\Delta_{\alpha 1}
\end{align*}
where $\kappa$ is the canonical homomorphism. We then take the solution in the product diagram
\[
\sigma: A(\alpha)/\Delta_{\alpha \alpha} \rightarrow A(\alpha)/\Delta_{\alpha 1} \times Q, \quad \quad \begin{bmatrix} a \\ b \end{bmatrix}/\Delta_{\alpha \alpha} \longmapsto \left\langle \begin{bmatrix} a \\ b \end{bmatrix}/\Delta_{\alpha 1}  , \pi(a) \right\rangle.
\]
It is easy to see that the same definition also induces a homomorphism
\[
\sigma: A_{T}(Q,A^{\alpha,\tau},\ast) \rightarrow A(\alpha)/\Delta_{\alpha 1} \otimes^{\kappa \circ T} Q
\]
with the following properties:
\begin{enumerate}

	\item $\ker \kappa \circ \ker \rho = 1$ and $\ker \sigma = \ker \kappa \wedge \ker \rho$ where $\ker \kappa = \Delta_{\alpha 1}/\Delta_{\alpha \alpha}$, $\ker \rho = \hat{\alpha}/\Delta_{\alpha \alpha}$;
	
	\item if $\mathcal V(A)$ has a weak-difference term and $\alpha$ is left-central, then $0 = \phi([\alpha,1]) = \ker \sigma$ and $\sigma$ is a subdirect embedding;
	
	\item if $\mathcal V(A)$ has a difference term and $\alpha$ is central, then $\sigma$ is surjective.

\end{enumerate}
In this way Proposition~\ref{thm:extension} follows from Theorem~\ref{thm:10} if we assume $\mathcal V$ has a difference term. In the following, we recover what appears to be a folklore result.

\begin{lemma}\label{lem:centrality}
Let $\mathcal V$ be a variety with a difference term and $A \in \mathcal V$. If $\alpha \in \Con A$ is central, then
\[
A(\alpha)/\Delta_{\alpha \alpha} \approx A(\alpha)/\Delta_{\alpha 1} \times A/\alpha.
\]
\end{lemma}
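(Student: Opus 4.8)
The plan is to produce an explicit isomorphism $\Theta : A(\alpha)/\Delta_{\alpha 1} \times A/\alpha \to A(\alpha)/\Delta_{\alpha\alpha}$ inverse to the natural subdirect map $\sigma$ already defined above, and then invoke the structure results for $\Delta_{\alpha\beta}$ in varieties with a difference term to show $\sigma$ is both injective and surjective. By the three bulleted properties preceding the lemma, $\sigma$ is always a subdirect embedding when $\alpha$ is (left-)central, so the only remaining point is surjectivity; property (3) already asserts this, so one route is simply to cite it. However, a self-contained argument is cleaner, and the natural one is to first establish the cardinality/fiber-counting identity and then exhibit a section.

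First I would recall the ingredients. Since $\mathcal V$ has a difference term and $\alpha$ is central, $[\alpha,1]=0$, so Lemma~\ref{lem:20}(4) applies with $\beta = \alpha$ and also Lemma~\ref{lem:20}(3) gives $\Delta_{\alpha\alpha} = \Delta_{\alpha 1}\wedge\hat\alpha$ (noting $\alpha \leq 1 \leq (0:\alpha)$ since $\alpha$ is central). This last equation is essentially the whole content: it says $\ker\kappa \wedge \ker\rho = 0$ in $A(\alpha)/\Delta_{\alpha\alpha}$, i.e. $\ker\sigma = 0$, which is injectivity of $\sigma$. For surjectivity I would use the description from Lemma~\ref{lem:20}(2) (with $[\beta,\alpha]=[1,\alpha]=0$): a pair $\begin{bmatrix} a \\ b\end{bmatrix}\,\Delta_{\alpha 1}\,\begin{bmatrix} c \\ d\end{bmatrix}$ with all coordinates related appropriately is controlled by the Mal'cev-on-$\alpha$-blocks term $m$, and the diagonal homomorphism $\delta$ together with the action of $m$ lets one lift any prescribed pair $\bigl(\begin{bmatrix} a\\ b\end{bmatrix}/\Delta_{\alpha 1},\ x/\alpha\bigr)$ with $\pi(a) = x$... — but note the constraint: in the product $A(\alpha)/\Delta_{\alpha 1}\times A/\alpha$ there is no compatibility demanded between the two coordinates, whereas $\sigma$ sends $\begin{bmatrix} a\\ b\end{bmatrix}/\Delta_{\alpha\alpha}$ to a pair whose second component is $\pi(a)$, which is determined by the first component via the composite $A(\alpha)/\Delta_{\alpha 1}\to A(\alpha)/\Delta_{\alpha 0}\cong\alpha\to\ldots$. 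So the correct reading is that $\sigma$ realizes the left-hand side as a subalgebra of the product; surjectivity must come from a genuine structural coincidence. The honest argument: given $p \in A(\alpha)/\Delta_{\alpha 1}$ and $q\in A/\alpha$, choose a representative $\begin{bmatrix} a\\ b\end{bmatrix}$ of $p$ and set $a' = m\text{-adjust } a$ to have $\pi(a')=q$ using centrality so that $\begin{bmatrix} a'\\ b'\end{bmatrix}/\Delta_{\alpha 1} = p$ still holds while $\pi(a')=q$ — this works precisely because $\Delta_{\alpha 1}$ collapses the first coordinate's $\alpha$-class (that is what $\Delta_{\alpha 1}\vee\eta_0 = p_0^{-1}(1)$ and the commutator-zero hypotheses encode), so $a$ may be replaced by any $\alpha$-related element without changing the $\Delta_{\alpha 1}$-class. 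Then $\begin{bmatrix} a'\\ b'\end{bmatrix}/\Delta_{\alpha\alpha}$ maps to $(p,q)$.

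The main obstacle I anticipate is verifying that $\Theta$ is a homomorphism — that is, that the operations on $A(\alpha)/\Delta_{\alpha\alpha}$ split as a direct product of the operations on the two factors. This is not automatic from bijectivity; one needs that $\Delta_{\alpha 1}$ and $\hat\alpha$ are a pair of factor congruences on $A(\alpha)/\Delta_{\alpha\alpha}$, i.e. that they permute. Permutability should follow from the difference term: on each $\alpha$-block the difference term is Mal'cev, and the two congruences $\ker\kappa, \ker\rho$ each restrict to a congruence whose blocks are unions of $\alpha$-blocks, so a standard argument with the Mal'cev-on-blocks term $m$ gives $\ker\kappa \circ \ker\rho = \ker\rho\circ\ker\kappa$. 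Combined with $\ker\kappa\wedge\ker\rho = 0$ (Lemma~\ref{lem:20}(3)) and $\ker\kappa\vee\ker\rho = 1$ (which follows from $\ker\kappa\circ\ker\rho = 1$, property (1) in the list above), we get that they are complementary factor congruences, hence $A(\alpha)/\Delta_{\alpha\alpha}\approx (A(\alpha)/\Delta_{\alpha\alpha})/\ker\kappa \times (A(\alpha)/\Delta_{\alpha\alpha})/\ker\rho = A(\alpha)/\Delta_{\alpha 1}\times A/\alpha$. So the cleanest writeup is: (i) cite Lemma~\ref{lem:20}(3) for $\ker\kappa\wedge\ker\rho=0$; (ii) use property (1) before the lemma for $\ker\kappa\circ\ker\rho=1$; (iii) deduce permutability from the difference term acting as Mal'cev on $\alpha$-blocks; (iv) conclude via the factor-congruence characterization of direct products, identifying the two quotients.
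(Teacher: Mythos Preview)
Your final strategy (i)--(iv) is correct and is a genuinely different route from the paper's. The paper argues via its cohomological machinery: by Corollary~\ref{cor:1} and Theorem~\ref{thm:10} one has $A \approx A_{T}(Q,A^{\alpha,\tau},\ast) \approx A(\alpha)/\Delta_{\alpha 1} \otimes^{\kappa\circ T} A/\alpha$, and then setting the 2-cocycle $T=0$ gives the semidirect product $A(\alpha)/\Delta_{\alpha\alpha}$ on one side and the honest direct product on the other. Your argument instead stays entirely within the congruence-theoretic setup: Lemma~\ref{lem:20}(3) gives $\ker\kappa \wedge \ker\rho = 0$, property (1) preceding the lemma gives $\ker\kappa \circ \ker\rho = 1$, and these two facts alone are the standard characterization of a pair of complementary factor congruences. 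The paper's route shows how the lemma sits inside the extension theory; yours is more elementary and self-contained, requiring none of the reconstruction theorems.

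Two comments on the writeup. First, your step (iii) is redundant: once $\ker\kappa \circ \ker\rho = 1$, taking converses (congruences are symmetric) gives $\ker\rho \circ \ker\kappa = 1$ immediately, so no separate permutability argument via the Mal'cev-on-blocks term is needed. Second, the long detour through surjectivity of $\sigma$ and the attempted explicit inverse $\Theta$ is a false start you should excise; the factor-congruence argument in your last paragraph handles bijectivity and the homomorphism property simultaneously, so there is nothing to salvage from the earlier material.
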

\begin{proof}
By Corollary~\ref{cor:1} and Theorem~\ref{thm:10}, there is a 2-cocycle $T$ and action $\ast$ such that $A_{T}(Q,A^{\alpha,\tau},\ast) \approx A \approx A(\alpha)/\Delta_{\alpha 1} \otimes^{\kappa \circ T} A/\alpha$. If we take the trivial 2-cocycle $T=0$, then $A(\alpha)/\Delta_{\alpha \alpha} \approx A_{0}(Q,A^{\alpha,\tau},\ast)  \approx A(\alpha)/\Delta_{\alpha 1} \times A/\alpha$.
\end{proof}

\begin{definition}\label{def:trivialaction}
Let $(Q,A^{\alpha,\tau},\ast)$ be affine datum. The action is \emph{trivial} if the following holds: for any $f \in \tau$ with $\ar f > 1$ and $I \subseteq \{ 1,\ldots,\ar f \}$, for any $\bar{c},\bar{c}' \in A^{\ar f}$ such that $c_{i} = c_{i}'$ for $i \in I$ and $\rho(\delta(c_{i})) = q_{i}, \rho(\delta(c_{i}')) = q_{i}$, and for any $\alpha$-trace $r:A \rightarrow A$ with associated lifting $l$, then  
\begin{align*}
\sum_{i \in I} a(f,i) &\left(q_{1},\ldots, q_{i-1}, \begin{bmatrix} r(c_{i}) \\ c_{i} \end{bmatrix}/\Delta_{\alpha \alpha}, q_{i+1},\ldots,q_{n} \right) \\
&= \delta(u) +_{u'} \sum_{i \in I}a(f,i) \left(q_{1}',\ldots, q_{i-1}', \begin{bmatrix} r(c_{i}') \\ c_{i}' \end{bmatrix}/\Delta_{\alpha \alpha}, q_{i+1}',\ldots,q_{n}' \right)
\end{align*}
where $u=l \left( f^{Q}(q_{1},\ldots,q_{n}) \right)$ and $u'=l \left( f^{Q}(q_{1},\ldots,q_{n}) \right)$.
\end{definition}

For an algebra $A$, a congruence $\alpha \in \Con A$ is \emph{right-central} if $[1,\alpha]=0$ and \emph{left-central} if $[\alpha,1]=0$.

\begin{proposition}\label{prop:centralaction}
Let $(Q,A^{\alpha,\tau},\ast)$ be affine datum contained in some variety with a difference term. The action is trivial if and only if every extension in a variety with a difference term realizing the datum is a central extension.
\end{proposition}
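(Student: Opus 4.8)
The plan is to connect triviality of the action directly to the centrality condition $[1_B, \beta] = 0$ for an extension $\pi\colon B \to Q$ with $\beta = \ker \pi$ realizing the datum. By Theorem~\ref{thm:12} (and its companion Theorem~\ref{thm:10}), any extension $B$ in a variety $\mathcal U$ with a difference term realizing the datum is isomorphic to some $A_T(Q,A^{\alpha,\tau},\ast)$ on the universe $A(\alpha)/\Delta_{\alpha\alpha}$ with $\ker\pi = \hat\alpha/\Delta_{\alpha\alpha}$, and the isomorphism carries $\alpha$-traces to $\alpha$-traces. So it suffices to verify the centrality condition $[1, \hat\alpha/\Delta_{\alpha\alpha}] = 0$ in $A_T(Q,A^{\alpha,\tau},\ast)$. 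Here we should use that in a variety with a difference term, $[1,\hat\alpha/\Delta_{\alpha\alpha}]=0$ iff the term condition $C(1,\hat\alpha/\Delta_{\alpha\alpha};0)$ holds; and since $\hat\alpha/\Delta_{\alpha\alpha}$ is an abelian congruence (the datum is affine, so by (AD1) the operation $m$ is an affine operation on each $\hat\alpha/\Delta_{\alpha\alpha}$-block, giving us the abelian-group structure $+_u$ to compute with), checking the term condition reduces to computing with $F_t$.

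First I would fix a term $F_t$ interpreted in $A_T(Q,A^{\alpha,\tau},\ast)$ and, using Lemma~\ref{lem:repterm}, write
\[
F_t\Big(\begin{bmatrix} a_1 \\ b_1 \end{bmatrix}/\Delta_{\alpha\alpha},\ldots\Big) = \sum_{\mu \in L(t,\epsilon(\vec x))}(t^\sigma)^\ast(\mu(\mathrm{var}\,t^\sigma)) \ +_u\ t^{\partial,T}(\rho\circ\epsilon(\vec x)),
\]
and observe that the second summand $t^{\partial,T}(\rho\circ\epsilon(\vec x))$ depends only on the $Q$-components, hence is fixed along any $\hat\alpha/\Delta_{\alpha\alpha}$-step. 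So the term condition for $[1,\hat\alpha/\Delta_{\alpha\alpha}]$ becomes a statement purely about the sum $\sum_{\mu\in L} (t^\sigma)^\ast(\mu(\cdots))$. Next I would decompose that sum according to which variable of $t^\sigma$ the $\mu$-value lands in $A(\alpha)/\Delta_{\alpha\alpha}$ (rather than in $Q$ or $\delta(A)$): those terms are exactly the $a(f,i)$ action terms (for fundamental $f$ appearing in the composition tree), plus the $f^\Delta$ terms which by (AD2) agree with $a(f,1)$. Using the homomorphic property of the action and the affineness of $m$ on $\hat\alpha/\Delta_{\alpha\alpha}$-blocks, one distributes so that changing the $A(\alpha)/\Delta_{\alpha\alpha}$-coordinate of the term in a $\hat\alpha/\Delta_{\alpha\alpha}$-related way only affects these action-term summands. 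The term condition $C(1,\hat\alpha/\Delta_{\alpha\alpha};0)$ then holds precisely when, for every such configuration, the change in $\sum_i a(f,i)(\cdots)$ over the coordinates where the two tuples differ equals $\delta(u) +_{u'} (\text{the corresponding sum for the other tuple})$ — which is exactly the equation in Definition~\ref{def:trivialaction}. So triviality of the action $\Longleftrightarrow$ the term condition holds for all terms $\Longleftrightarrow$ $[1,\hat\alpha/\Delta_{\alpha\alpha}]=0$ $\Longleftrightarrow$ the extension is central.

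For the converse direction, I would argue contrapositively: if the action is not trivial, pick a fundamental $f$, a subset $I$, and tuples $\bar c, \bar c'$ witnessing the failure of the Definition~\ref{def:trivialaction} identity. These yield, in $A_0(Q,A^{\alpha,\tau},\ast)$ (the semidirect product, which realizes the datum and lies in any variety containing the datum), a violation of $C(1,\hat\alpha/\Delta_{\alpha\alpha};0)$ via the single operation $F_f$ — so that particular extension is not central. The main obstacle I anticipate is bookkeeping: carefully matching the $+_u$-summands produced by the expansion in Lemma~\ref{lem:repterm} against the indexing in Definition~\ref{def:trivialaction}, in particular tracking that the ``base points'' $u$ versus $u'$ of the various $+_u$ operations line up (they all reduce to $\delta(l(f^Q(\vec q)))$ after applying $\delta$, by Remark~\ref{remark: liftings}, but one must be careful that the shifts $\delta(u)$ appearing in Definition~\ref{def:trivialaction} are exactly the ``empty'' action-term contributions $a(f,j)(\ldots,\delta(x_j),\ldots) = \delta(u)$ noted in the proof of Theorem~\ref{thm:12}). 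Once that correspondence is pinned down, both implications follow from the term-condition characterization of centrality in difference-term varieties.
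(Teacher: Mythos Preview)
Your forward direction (trivial action $\Rightarrow$ central) is essentially the paper's: expand $F_f$ into its action-term summands, note that the $T_f$-part and the $f^{\Delta}$-part depend only on the $Q$-images of the $\hat\alpha/\Delta_{\alpha\alpha}$-coordinates and hence cancel, then use the triviality identity to pass from $\bar a$ to $\bar b$ in the remaining action sum. (The paper checks only fundamental $f$; your use of Lemma~\ref{lem:repterm} for general terms is unnecessary extra bookkeeping but harmless.)

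The backward direction is where you diverge from the paper, and where there is a gap. You assert that the term condition $C(1,\hat\alpha/\Delta_{\alpha\alpha};0)$ holds ``precisely when'' the triviality identity holds, and then argue the contrapositive by exhibiting a failure in the semidirect product $A_{0}$. But $C(1,\beta;0)$ is a \emph{conditional}: to violate it you must produce tuples with $F_f(\bar a,\bar c)=F_f(\bar a,\bar d)$ yet $F_f(\bar b,\bar c)\neq F_f(\bar b,\bar d)$. A bare failure of the triviality identity is an inequality of two action sums and does not by itself furnish the required premise; you never say what $\bar c,\bar d$ are or why the premise is met. This is fixable --- in a difference-term variety centrality of an abelian $\beta$ is equivalent to the unconditional identity $m\big(F_f(\bar a,\bar c),F_f(\bar a,\bar d),F_f(\bar b,\bar d)\big)=F_f(\bar b,\bar c)$, and with $\bar d$ chosen to be the diagonal $\delta\circ r(\bar c)$ that identity unwinds to the triviality equation --- but you do not invoke this, and without it the contrapositive does not go through.

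The paper instead argues this direction directly and by a different mechanism: it takes \emph{any} central extension realizing the datum, passes to the quotient $A(\alpha)/\Delta_{\alpha 1}$ where the entire diagonal $\{\delta(x):x\in A\}$ collapses to a single class $\hat\delta$, so that the two action sums become visibly $\Delta_{\alpha 1}$-equal; then it invokes Lemma~\ref{lem:20}(1) (which uses $[\alpha,1]=0$) to lift that $\Delta_{\alpha 1}$-equality back to the required $\Delta_{\alpha\alpha}$-level identity $\sum^{u}_{i\in I}a_i = \delta(u)+_{u'}\sum^{u'}_{i\in I}b_i$. The quotient-to-$\Delta_{\alpha 1}$ step is the key idea you are missing.
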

\begin{proof}
Assume every extension in a variety with a difference term realizing the datum is a central extension. By assumption, there is at least one such extension. Taking any such extension $\pi: A \rightarrow Q$ with $A \in \mathcal V$ a variety with a difference term. By Theorem~\ref{thm:12}, we have $A \approx A_{T}(Q,A^{\alpha,\tau},\ast)$ for some compatible 2-cocycle. Fix an $\alpha$-trace $r: A \rightarrow A$ with associated lifting $l$. Take $f \in \tau$ with $n= \ar f > 1$ and choose $I \subseteq \{1,\ldots,n \}$. Choose $\bar{c},\bar{c}' \in A^{n}$ such that $c_{i} = c_{i}'$ for $i \in I$. Let $\pi(c_{i})=q_{i}$ and $\pi(c_{i}')=q_{i}'$ for $i=1,\ldots,n$. Then by realization we have for $i \in I$
\begin{align*}
a(f,i)\left( q_1,\ldots,q_{i-1},\begin{bmatrix} r(c_{i}) \\ c_{i} \end{bmatrix}/\Delta_{\alpha \alpha},q_{i+1},\ldots,q_{n}\right) &= \begin{bmatrix} u \\ a_i \end{bmatrix}/\Delta_{\alpha \alpha} \\
a(f,i)\left(q'_1,\ldots,q'_{i-1},\begin{bmatrix} r(c_{i}') \\ c_{i}' \end{bmatrix}/\Delta_{\alpha \alpha},q_{i+1}',\ldots,q_{n}' \right) &= \begin{bmatrix} u' \\ b_i \end{bmatrix}/\Delta_{\alpha \alpha}.
\end{align*}
where $u=f(r(c_1),\ldots,r(c_{n}))$, $u'=f(r(c_{1}'),\ldots,r(c_{n}'))$ and $a_i=f(r(c_{1}),\ldots,r(c_{i}),c_{i},r(c_{i+1}),\ldots,r(c_{n}))$, $b_i=f(r(c_{1}'),\ldots,r(c_{i}'),c_{i}',r(c_{i+1}'),\ldots,r(c_{n}'))$. Note the diagonal is a single $\Delta_{\alpha 1}$-class; that is, $\begin{bmatrix} x \\ x \end{bmatrix} \Delta_{\alpha 1} \begin{bmatrix} y \\ y \end{bmatrix}$ for all $x,y \in A$. Denote the $\Delta_{\alpha 1}$-class of the diagonal by $\hat{\delta}$. Then by passing to the quotient to $A(\alpha)/\Delta_{\alpha 1}$ we have
\begin{align*}
\begin{bmatrix} u \\ \sum_{i \in I}^{u} a_{i}  \end{bmatrix}/\Delta_{\alpha 1} &= \sum_{i \in I}^{u} \begin{bmatrix} u \\ a_{i} \end{bmatrix}/\Delta_{\alpha \alpha} \\
&= \sum_{i \in I}^{u} f \left( \delta(c_{1}),\ldots,\delta(c_{i-1}),\begin{bmatrix} r(c_{i}) \\ c_{i} \end{bmatrix}/\Delta_{\alpha \alpha}, \delta(c_{i+1}),\ldots,\delta(c_{n}) \right)/(\Delta_{\alpha 1}/\Delta_{\alpha \alpha}) \\
&= \sum_{i \in I}^{u} f\left( \hat{\delta},\ldots,\hat{\delta},\begin{bmatrix} r(c_{i}) \\ c_{i} \end{bmatrix},\hat{\delta},\ldots,\hat{\delta} \right)/\Delta_{\alpha 1} \\
&=  \sum_{i \in I}^{u} f \left( \delta(c_{1}'),\ldots,\delta(c_{i-1}'),\begin{bmatrix} r(c_{i}') \\ c_{i}' \end{bmatrix}/\Delta_{\alpha \alpha}, \delta(c_{i+1}'),\ldots,\delta(c_{n}') \right)/(\Delta_{\alpha 1}/\Delta_{\alpha \alpha})     \\
&= \sum_{i \in I}^{u'} \begin{bmatrix} u' \\ b_{i} \end{bmatrix}/\Delta_{\alpha 1} \\
&= \begin{bmatrix} u' \\ \sum_{i \in I}^{u'} b_{i}  \end{bmatrix}/\Delta_{\alpha 1}
\end{align*}
Since $\alpha$ is central, by Lemma~\ref{lem:20}(1) we conclude $\sum_{i \in I}^{u} a_{i} = \left( \sum_{i \in I}^{u'} b_{i} \right) +_{u'} u$. Write $I = \{ i_{1},\ldots,i_{k} \}$. Using the difference term identity $m(u',u',u)=u$ we see that
\begin{align*}
\sum_{i \in I}^{u} \begin{bmatrix} u \\ a_{i} \end{bmatrix}/\Delta_{\alpha \alpha} = \begin{bmatrix} u \\ a_{i_1} +_{u} \cdots +_{u} a_{i_k} \end{bmatrix}/\Delta_{\alpha \alpha} &=  \begin{bmatrix} u' +_{u'} u   \\ \left( b_{i_1} +_{u'} \cdots +_{u'} b_{i_k} \right) +_{u'} u \end{bmatrix}/\Delta_{\alpha \alpha}  \\
&= \begin{bmatrix} u' \\ \left( b_{i_1} +_{u'} \cdots +_{u'} b_{i_k} \right)  \end{bmatrix}/\Delta_{\alpha \alpha} +_{u'} \delta(u) \\
&= \sum_{i \in I}^{u'} \begin{bmatrix} u' \\ b_{i} \end{bmatrix}/\Delta_{\alpha \alpha} +_{u'} \delta(u)
\end{align*}

Now assume the action is trivial. Consider an extension $A \approx A_{T}(Q,A^{\alpha,\tau},\ast)$ realizing the datum in a variety with a difference term. Fix an $\alpha$-trace $r$ and associated lifting $l$. We directly verify the term condition for centrality in any extension realizing the datum. Take $f \in \tau$ with $1 \leq k < n=\ar f$ and $\bar{a},\bar{b} \in A^{k}$, $\bar{c},\bar{d} \in A^{n-k}$ such that $(c_i,d_i) \in \alpha$ for $i=1,\ldots,n-k$. Assume
\begin{align}\label{eq:26}
F_{f}\left(\begin{bmatrix} r(\bar{a}) \\ \bar{a} \end{bmatrix}/\Delta_{\alpha \alpha},\begin{bmatrix} r(\bar{c}) \\ \bar{c} \end{bmatrix}/\Delta_{\alpha \alpha}\right) = F_{f}\left(\begin{bmatrix} r(\bar{a}) \\ \bar{a} \end{bmatrix}/\Delta_{\alpha \alpha},\begin{bmatrix} r(\bar{d}) \\ \bar{d} \end{bmatrix}/\Delta_{\alpha \alpha}\right).
\end{align}
By realization, we can combine the terms in the definition of the operation $F_{f}$ to rewrite Eq.(\ref{eq:26}) as
\begin{align*}
f\left( \begin{bmatrix} r(\bar{a}) \\ \bar{a} \end{bmatrix}/\Delta_{\alpha \alpha},\delta(\bar{c}) \right) &+_{u} f\left(\delta(r(\bar{a})), \begin{bmatrix} r(\bar{c}) \\ \bar{c} \end{bmatrix}/\Delta_{\alpha \alpha} \right) +_{u} T_{f}(\pi(\bar{a}),\pi(\bar{c}))  \\
&= f\left( \begin{bmatrix} r(\bar{a}) \\ \bar{a} \end{bmatrix}/\Delta_{\alpha \alpha},\delta(\bar{d}) \right) +_{u} f\left(\delta(r(\bar{a})), \begin{bmatrix} r(\bar{d}) \\ \bar{d} \end{bmatrix}/\Delta_{\alpha \alpha} \right) +_{u} T_{f}(\pi(\bar{a}),\pi(\bar{d}))
\end{align*}
Then $(c_i,d_i) \in \alpha \Rightarrow \delta(c_i) = \delta(d_i) \text{ and } \pi(c_i) = \pi(d_i)$. This implies that in the above expression the first terms on the left-side and right-side and the 2-cocycle terms are equal. After canceling we conclude
\begin{align}\label{eq:27}
f\left(\delta(r(\bar{a})), \begin{bmatrix} r(\bar{c}) \\ \bar{c} \end{bmatrix}/\Delta_{\alpha \alpha} \right) = f\left(\delta(r(\bar{a})), \begin{bmatrix} r(\bar{d}) \\ \bar{d} \end{bmatrix}/\Delta_{\alpha \alpha} \right).
\end{align}
We can write $u = l(f(\pi(\bar{a}),\pi(\bar{c}))) = l(f(\pi(\bar{a}),\pi(\bar{d})))$ and $u' = l(f(\pi(\bar{b}),\pi(\bar{c}))) = l(f(\pi(\bar{b}),\pi(\bar{d})))$. Then we can use realization to re-expand Eq.(\ref{eq:27}) into action terms and apply triviality to yield 
\begin{align*}
f &\left(\delta(r(\bar{b})) , \begin{bmatrix} r(\bar{c}) \\ \bar{c} \end{bmatrix}/\Delta_{\alpha \alpha} \right) \displaybreak[0]\\
&= \sum_{i=1}^{n-k} a(f,i)\left(\pi(\bar{b}),\pi(c_{1}),\ldots,\pi(c_{i-1}),\begin{bmatrix} r(c_i) \\ c_i \end{bmatrix}/\Delta_{\alpha \alpha},\pi(c_{i+1}),\ldots,\pi(c_{n-k}) \right) \displaybreak[0]\\
&= \sum_{i=1}^{n-k} a(f,i)\left(\pi(\bar{a}),\pi(c_{1}),\ldots,\pi(c_{i-1}),\begin{bmatrix} r(c_i) \\ c_i \end{bmatrix}/\Delta_{\alpha \alpha},\pi(c_{i+1}),\ldots,\pi(c_{n-k}) \right) +_{u} \delta(u') \displaybreak[0]\\
&= f\left(\delta(r(\bar{a})), \begin{bmatrix} r(\bar{c}) \\ \bar{c} \end{bmatrix}/\Delta_{\alpha \alpha} \right) +_{u} \delta(u') \displaybreak[0]\\
&= f\left(\delta(r(\bar{a})), \begin{bmatrix} r(\bar{d}) \\ \bar{d} \end{bmatrix}/\Delta_{\alpha \alpha} \right) +_{u} \delta(u') \displaybreak[0]\\
&= \sum_{i=1}^{n-k} a(f,i)\left(\pi(\bar{a}),\pi(d_{1}),\ldots,\pi(d_{i-1}),\begin{bmatrix} r(d_i) \\ d_i \end{bmatrix}/\Delta_{\alpha \alpha},\pi(d_{i+1}),\ldots,\pi(d_{n-k}) \right) +_{u} \delta(u') \displaybreak[0]\\
&= \sum_{i=1}^{n-k} a(f,i)\left(\pi(\bar{b}),\pi(d_{1}),\ldots,\pi(d_{i-1}),\begin{bmatrix} r(d_i) \\ d_i \end{bmatrix}/\Delta_{\alpha \alpha},\pi(d_{i+1}),\ldots,\pi(d_{n-k}) \right)  \displaybreak[0]\\
&= f\left(\delta(r(\bar{b})), \begin{bmatrix} r(\bar{d}) \\ \bar{d} \end{bmatrix}/\Delta_{\alpha \alpha} \right).
\end{align*}
Again, $\delta(c_i) = \delta(d_i) \text{ and } \pi(c_i) = \pi(d_i)$ imply
\begin{align*}
f\left( \begin{bmatrix} r(\bar{b}) \\ \bar{b} \end{bmatrix}/\Delta_{\alpha \alpha},\delta(\bar{c}) \right) = f\left( \begin{bmatrix} r(\bar{b}) \\ \bar{b} \end{bmatrix}/\Delta_{\alpha \alpha},\delta(\bar{d}) \right) \quad \quad \text{and} \quad \quad T_{f}(\pi(\bar{b}),\pi(\bar{c})) = T_{f}(\pi(\bar{b}),\pi(\bar{d})).
\end{align*}
Putting the last three systems of equations together we conclude that
\begin{align*}
F_{f}\left(\begin{bmatrix} r(\bar{b}) \\ \bar{b} \end{bmatrix}/\Delta_{\alpha \alpha},\begin{bmatrix} r(\bar{c}) \\ \bar{c} \end{bmatrix}/\Delta_{\alpha \alpha}\right) = F_{f}\left(\begin{bmatrix} r(\bar{b}) \\ \bar{b} \end{bmatrix}/\Delta_{\alpha \alpha},\begin{bmatrix} r(\bar{d}) \\ \bar{d} \end{bmatrix}/\Delta_{\alpha \alpha}\right).
\end{align*}
which shows $[1,\alpha]=0$ in $A \approx A_{T}(Q,A^{\alpha,\tau},\ast)$.
\end{proof}

\begin{remark}\label{rmk:1}
The proof of necessity in Proposition~\ref{prop:centralaction} does not require the hypothesis of a Mal'cev condition in the variety generated by any extension of the datum; that is, if the action is trivial, then the kernel in every extension realizing the datum is right-central.
\end{remark}

\begin{remark}
Suppose $(Q,A^{\alpha,\tau},\ast)$ be affine datum with trivial action realized by $A$. Let $\kappa: A(\alpha)/\Delta_{\alpha \alpha} \rightarrow A(\alpha)/\Delta_{\alpha 1}$ be the canonical homomorphism. Then 
\[
\kappa \circ a(f,i)(y_1,\ldots,y_{i-1},x,y_{i+1},\ldots,y_{n})
\] 
depends only on the i-th coordinate. 
\end{remark}

In the case of affine datum with trivial action, the second-cohomology $H^{2}_{\mathcal U}(Q,A^{\alpha,\tau},\ast)$ is an abelian group of equivalence classes of right-central extensions which realize the datum; however, to guarantee that we recover every central extension realizing the datum we must rely on Proposition~\ref{prop:centralaction} together with Theorem~\ref{thm:cohomology} in the case of varieties with a difference term.

\begin{theorem}\label{thm:centralcohom}
Let $(Q,A^{\alpha,\tau},\ast)$ be affine datum with trivial action and $\mathcal U$ a variety with a difference term containing the datum. The abelian group $H^{2}_{\mathcal U}(Q,A^{\alpha,\tau},\ast)$ is in bijective correspondence with the set of equivalence classes of central extensions in $\mathcal U$ realizing the datum. 
\end{theorem}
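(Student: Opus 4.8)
The plan is to reduce the statement to the two results it builds on, namely Theorem~\ref{thm:cohomology} and Proposition~\ref{prop:centralaction}, together with the realization theorem Theorem~\ref{thm:12}. Write $\mathcal{E}$ for the set of equivalence classes of extensions in $\mathcal U$ realizing the datum and $\mathcal{E}_{c}\subseteq\mathcal{E}$ for the subset of classes of central extensions. Theorem~\ref{thm:cohomology} already furnishes a bijection $H^{2}_{\mathcal U}(Q,A^{\alpha,\tau},\ast)\to\mathcal{E}$, $[T]\mapsto[A_{T}(Q,A^{\alpha,\tau},\ast)]$, and transports the group structure across it; so it suffices to prove $\mathcal{E}=\mathcal{E}_{c}$, and since a central extension in $\mathcal U$ realizing the datum is in particular an extension in $\mathcal U$ realizing the datum, only the inclusion $\mathcal{E}\subseteq\mathcal{E}_{c}$ requires argument. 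Concretely, I must show that every extension in $\mathcal U$ realizing the datum already has central kernel.

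For this I would invoke Proposition~\ref{prop:centralaction} directly. By hypothesis $\mathcal U$ is a variety with a difference term which contains the datum, and the action $\ast$ is trivial; hence Proposition~\ref{prop:centralaction} applies and tells us that every extension realizing the datum which lies in some variety with a difference term is a central extension. Taking that variety to be $\mathcal U$ itself: for any $2$-cocycle $T$ compatible with $\mathrm{Id}\,\mathcal U$, the algebra $A_{T}(Q,A^{\alpha,\tau},\ast)$ lies in $\mathcal U$ by Theorem~\ref{thm:12} (the action being weakly compatible with $\mathrm{Id}\,\mathcal U$ because $\mathcal U$ contains the datum), hence is a central extension of $Q$; and more generally any $B\in\mathcal U$ realizing the datum — equivalently, any representative of any class in $\mathcal{E}$ — has central kernel. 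Therefore every class in $\mathcal{E}$ lies in $\mathcal{E}_{c}$, so $\mathcal{E}=\mathcal{E}_{c}$, and the bijection of Theorem~\ref{thm:cohomology} is already the asserted bijective correspondence.

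The substance of the theorem thus lives entirely in its antecedents: Proposition~\ref{prop:centralaction} (triviality of the action forces centrality of every extension of the datum inside a variety with a difference term) and Theorem~\ref{thm:cohomology} (the cohomology group parametrizes \emph{all} equivalence classes of extensions in $\mathcal U$ realizing the datum). The only point that needs care — and it is bookkeeping rather than a genuine obstacle — is consistency of the word ``central'': here it means $[1_B,\ker\pi]=0$, i.e.\ right-centrality, exactly as delivered by Proposition~\ref{prop:centralaction} and Remark~\ref{rmk:1}; and one must note that no central extension realizing the datum can escape the enumeration, which it cannot, since under these hypotheses \emph{every} extension in $\mathcal U$ realizing the datum is central and every such extension is already accounted for by Theorem~\ref{thm:cohomology}. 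I anticipate no further difficulty.
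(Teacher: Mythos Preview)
Your proposal is correct and matches the paper's approach exactly: the paper states this theorem without proof, merely remarking in the preceding paragraph that it follows from Proposition~\ref{prop:centralaction} together with Theorem~\ref{thm:cohomology}. Your write-up simply makes that derivation explicit, including the observation (via Remark~\ref{rmk:1} and symmetry of the commutator in varieties with a difference term) that right-centrality suffices.
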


We offer a tentative definition for a $1^{\mathrm{st}}$-cohomology group associated to affine datum. The abelian group of derivations is isomoprhic to the group of  stabilizing automorphisms of an extension. In the case of groups, the principal derivations correspond to the stabilizing automorphisms of the semidirect product which act by conjugation. The approach to principal derivation for affine datum follows in this line and finds application in Wires \cite{wiresVI} for a low-dimensional Hochschild-Serre sequence associated to a general extension with an additional affine action.

For any algebra $A$ and $\alpha \in \Con A$, unary functions $f,h: A \rightarrow A$ are $\alpha$-twins if there is a term $t(x,\bar{y})$ and $\bar{c} \mathrel{\alpha} \bar{d}$ such that $f(x)=t(x,\bar{c})$ and $h(x)=t(x,\bar{d})$. The set of $\alpha$-twins of the identity is denoted by $\mathrm{Tw}_{\alpha} A$; that is, $\gamma \in \mathrm{Tw}_{\alpha} A$ if there there is a term $t(x,\bar{y})$ and $\bar{c} \mathrel{\alpha} \bar{d}$ such that $\gamma(x)=t(x,\bar{c})$ and $x=t(x,\bar{d})$. We can restrict to a subset  
\[
\mathrm{Tw}_{\alpha,F} A = \{\gamma \in \mathrm{Tw}_{\alpha} A: ( \exists x \in A), \gamma(x)=x \}
\]
of those $\alpha$-twins of the identity which have a fixed-point. In general, $\mathrm{Tw}_{\alpha} A$ is closed under composition and $\mathrm{Tw} A$ is closed under conjugation by automorphisms of $A$, but $\mathrm{Tw}_{\alpha,F} A$ is neither. Given affine datum $(Q,A^{\alpha,\tau},\ast)$ we consider the set of \emph{principal stabilizing automorphisms}
\[
\mathrm{PStab}(Q,A^{\alpha,\tau},\ast) = \mathrm{Tw}_{\hat{\alpha}/\Delta_{\alpha \alpha},F} A(\alpha)/\Delta_{\alpha \alpha} \cap \mathrm{Stab}(\pi: A(\alpha)/\Delta_{\alpha \alpha} \rightarrow Q ).
\]

\begin{definition}
Let $(Q,A^{\alpha,\tau},\ast)$ be affine datum. A map $d: Q \rightarrow A(\alpha)/\Delta_{\alpha \alpha}$ is a \emph{principal derivation} if $d(\pi(x)) +_{l(x)} x \in \mathrm{PStab}(Q,A^{\alpha,\tau},\ast)$ for any lifting $l$ of $\pi: A(\alpha)/\Delta_{\alpha \alpha} \rightarrow Q$.
\end{definition}

If we simplify notation and write the extension of the datum $A \approx A(\alpha)/\Delta_{\alpha \alpha} \stackrel{\pi}{\rightarrow} Q$ isomorphic to the semidirect product witnessed by $\phi$ from Theorem~\ref{thm:12}, then a principal derivation $d$ takes the form
\[
\phi(\gamma(x)) = d(\pi(x)) +_{r(x)} \begin{bmatrix} r(x) \\ x \end{bmatrix}/\Delta_{\alpha \alpha}
\]
for some $\gamma \in \mathrm{PStab}(Q,A^{\alpha,\tau},\ast)$ and any $\alpha$-trace $r: A \rightarrow A$, and are just those derivations which correspond under the isomorphism of Theorem~\ref{thm:stabilize} to principal stabilizing automorphisms. Denote by $\mathrm{PDer}(Q,A^{\alpha,\tau},\ast)$ the subgroup of $Z^{1}(Q,A^{\alpha,\tau},\ast)$ generated by the principal derivations.

\begin{definition}
Let $(Q,A^{\alpha,\tau},\ast)$ be affine datum. The $1^{\mathrm{st}}$-cohomology of the datum is defined as the quotient group
\[
H^{1}(Q,A^{\alpha,\tau},\ast):= Z^{1}(Q,A^{\alpha,\tau},\ast)/\mathrm{PDer}(Q,A^{\alpha,\tau},\ast).
\]
\end{definition}

\begin{lemma}
Let $(Q,A^{\alpha,\tau},\ast)$ be affine datum with trivial action. Then $H^{1}(Q,A^{\alpha,\tau},\ast) \approx \mathrm{Stab}(\pi: A(\alpha)/\Delta_{\alpha \alpha} \rightarrow Q)$.
\end{lemma}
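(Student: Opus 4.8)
The plan is to reduce the statement, via Theorem~\ref{thm:stabilize}, to the fact that a trivial action forces the subgroup $\mathrm{PDer}(Q,A^{\alpha,\tau},\ast)$ to be trivial. Recall $H^{1}(Q,A^{\alpha,\tau},\ast) = Z^{1}(Q,A^{\alpha,\tau},\ast)/\mathrm{PDer}(Q,A^{\alpha,\tau},\ast)$, and that Theorem~\ref{thm:stabilize} applied to the extension $\pi : A(\alpha)/\Delta_{\alpha\alpha} \to Q$ --- which realizes the datum, being $A_{0}(Q,A^{\alpha,\tau},\ast)$ --- supplies an isomorphism $\Psi : \mathrm{Stab}(\pi : A(\alpha)/\Delta_{\alpha\alpha} \to Q) \to Z^{1}(Q,A^{\alpha,\tau},\ast)$, $\gamma \mapsto d_{\gamma}$, under which the members of $\mathrm{PStab}(Q,A^{\alpha,\tau},\ast)$ correspond to exactly the principal derivations, and these generate $\mathrm{PDer}(Q,A^{\alpha,\tau},\ast)$. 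So it suffices to show $\mathrm{PStab}(Q,A^{\alpha,\tau},\ast) = \{\id\}$: then the principal derivations form the one-element set $\{d_{\id}\}$, and the defining relation $\phi \circ \gamma(x) = \phi(x) +_{r(x)} d_{\gamma}(\pi(x))$ read with $\gamma = \id$ forces $d_{\id}(q) = \delta(l(q))$, which is the identity of $+_{l(q)}$ and hence the neutral element of the abelian group $Z^{1}(Q,A^{\alpha,\tau},\ast)$. Thus $\mathrm{PDer}(Q,A^{\alpha,\tau},\ast) = \{0\}$, and $H^{1}(Q,A^{\alpha,\tau},\ast) \approx Z^{1}(Q,A^{\alpha,\tau},\ast) \approx \mathrm{Stab}(\pi : A(\alpha)/\Delta_{\alpha\alpha} \to Q)$, as wanted.

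It remains to prove $\mathrm{PStab}(Q,A^{\alpha,\tau},\ast) = \{\id\}$. Set $\beta := \hat\alpha/\Delta_{\alpha\alpha} = \ker(\pi : A(\alpha)/\Delta_{\alpha\alpha} \to Q)$. Since the action is trivial, Remark~\ref{rmk:1} --- which requires no Mal'cev hypothesis --- shows that the kernel of every extension realizing the datum is right-central; in particular $[1,\beta] = 0$ in $A(\alpha)/\Delta_{\alpha\alpha}$. Now take $\gamma \in \mathrm{PStab}(Q,A^{\alpha,\tau},\ast) \subseteq \mathrm{Tw}_{\beta,F}\, A(\alpha)/\Delta_{\alpha\alpha}$. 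By definition of $\mathrm{Tw}_{\beta,F}$ there are a term $t(x,\vec{y})$ and tuples $\vec{c} \mathrel{\beta} \vec{d}$ with $\gamma(x) = t(x,\vec{c})$ and $x = t(x,\vec{d})$ for all $x$, together with a point $x_{0}$ at which $\gamma(x_{0}) = x_{0}$. Then $t(x_{0},\vec{c}) = \gamma(x_{0}) = x_{0} = t(x_{0},\vec{d})$, so the term condition $C(1,\beta;0)$ expressing $[1,\beta] = 0$ --- which permits the value in the first argument of $t$ to be changed from $x_{0}$ to an arbitrary element while the $\vec{y}$-arguments stay $\beta$-related --- gives $t(x,\vec{c}) = t(x,\vec{d})$, that is $\gamma(x) = x$, for all $x$. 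Hence $\gamma = \id$; since $\id$ plainly lies in $\mathrm{PStab}(Q,A^{\alpha,\tau},\ast)$, equality holds.

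Putting the two paragraphs together proves the lemma. I expect the only genuinely delicate point to be the term-condition step --- that a $\beta$-twin of the identity with a fixed point must be the identity once $[1,\beta] = 0$ --- while the rest is bookkeeping with identifications already provided by Theorem~\ref{thm:stabilize} and Remark~\ref{rmk:1}.
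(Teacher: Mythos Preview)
Your proof is correct and follows essentially the same route as the paper's: both use Remark~\ref{rmk:1} to obtain right-centrality $[1,\beta]=0$ of the kernel, then exploit the fixed point of a principal stabilizing automorphism together with the term condition to force $\gamma=\id$, and finally invoke Theorem~\ref{thm:stabilize}. The paper phrases the key step via an $M(\beta,1)$-matrix witnessing $(\gamma(x),x)\in[1,\beta]=0$, while you invoke $C(1,\beta;0)$ directly --- these are the same argument, and your version is in fact a bit more careful with the bookkeeping (notably in tracking that the relevant congruence is $\hat\alpha/\Delta_{\alpha\alpha}$ and in verifying that $d_{\id}$ is the zero derivation).
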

\begin{proof}
By Remark~\ref{rmk:1}, the kernel in the semidirect product realizing the datum is right-central. For $\gamma \in \mathrm{PStab}(Q,A^{\alpha,\tau},\ast)$, we have $\gamma(x)=t(x,\bar{c})$, $x=t(x,\bar{d})$ for some term with $\bar{c} \mathrel{\alpha} \bar{d}$ and there exists $a \in A$ such that $a=\gamma(a)$. The matrix
\[
\begin{bmatrix} a & \gamma(x) \\ a & x \end{bmatrix} = \begin{bmatrix} \gamma(a) & \gamma(x) \\ a & x \end{bmatrix} = \begin{bmatrix} t(a,\bar{c}) & t(x,\bar{c}) \\ t(a,\bar{d}) & t(x,\bar{d}) \end{bmatrix} \in M(\alpha,1)
\]
implies $(\gamma(x),x) \in [1,\alpha]=0$; thus, the subgroup generated by principal stabilizing automorphism is trivial. Then Theorem~\ref{thm:stabilize} yields $H^{1}(Q,A^{\alpha,\tau},\ast) \approx \mathrm{Stab}(\pi: A(\alpha)/\Delta_{\alpha \alpha} \rightarrow Q)$.
\end{proof}

According to Proposition~\ref{prop:centralaction}, the hypothesis of the previous lemma holds for central extensions in varieties with a difference term.

\begin{definition}
Let $(Q,A^{\alpha,\tau},\ast)$ be affine datum with trivial action and $Q$ an abelian algebra. For a variety $\mathcal V$ in the same signature as the datum, $\mathrm{Ext}_{\mathcal V}(Q,A^{\alpha,\tau},\ast)$ denote the set of equivalence classes of $\mathcal V$-compatible 2-cocycles which represent extensions realizing the datum which are abelian algebras.
\end{definition}

The definition is well-defined since equivalence of extensions of datum is finer than isomorphism, and isomorphism preserves the abelian property of an algebra. The following is a far-reaching generalization of the classical observation from group theory where abelian extensions are characterized by symmetric 2-cocycles; in the case of more general varieties with a weak-difference term, the abelian extensions are characterized by the 2-cocycle identities (see Definition~\ref{def:2cocyle}(C2)) corresponding to the axiomatization of the abelian subvariety.

\begin{corollary}\label{cor:21}
Let $(Q,A^{\alpha,\tau},\ast)$ be affine datum with trivial action and $Q$ an abelian algebra. Let $\mathcal V$ be a variety with a weak-difference in the same signature as the datum. Then either $\mathrm{Ext}_{\mathcal V}(Q,A^{\alpha,\tau},\ast)$ is empty or $\mathrm{Ext}_{\mathcal V}(Q,A^{\alpha,\tau},\ast) \leq H^{2}_{\mathcal V}(Q,A^{\alpha,\tau},\ast)$.
\end{corollary}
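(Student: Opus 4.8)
The plan is to show that $\mathrm{Ext}_{\mathcal V}(Q,A^{\alpha,\tau},\ast)$, when nonempty, is a subgroup of $H^2_{\mathcal V}(Q,A^{\alpha,\tau},\ast)$. Since Theorem~\ref{thm:cohomology} already identifies $H^2_{\mathcal V}(Q,A^{\alpha,\tau},\ast)$ with equivalence classes of extensions in $\mathcal V$ realizing the datum, under the group operation $[A_{T}]+[A_{T'}]=[A_{T+T'}]$, the task reduces to verifying that the subset consisting of those classes whose representative extension is an abelian algebra is closed under this operation and contains inverses (it is clearly nonempty by hypothesis). First I would note that the definition $\mathrm{Ext}_{\mathcal V}$ is well-posed: equivalence of extensions realizing datum is finer than isomorphism (Theorem~\ref{thm:13}), and being abelian is an isomorphism invariant, so whether the class $[A_T]$ lies in $\mathrm{Ext}_{\mathcal V}$ does not depend on the chosen representative 2-cocycle — this is already remarked in the text and I would just cite it.

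The heart of the argument is the closure of abelianness under the $\otimes$-type sum of 2-cocycles. Given $\mathcal V$-compatible 2-cocycles $T,T'$ with both $A_T(Q,A^{\alpha,\tau},\ast)$ and $A_{T'}(Q,A^{\alpha,\tau},\ast)$ abelian, I want $A_{T+T'}(Q,A^{\alpha,\tau},\ast)$ abelian. The key tool is Definition~\ref{def:2cocyle}(C2): an extension $A_T$ lies in a subvariety $\mathcal W \le \mathcal V$ exactly when $T$ is compatible with $\mathrm{Id}\,\mathcal W$, i.e. $t^{\partial,T}(\epsilon(\vec x))=g^{\partial,T}(\epsilon(\vec y))$ for every identity $t\approx g$ of $\mathcal W$. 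Taking $\mathcal W=\mathcal V_{\mathrm{ab}}$, the abelian subvariety of $\mathcal V$ (which, since $\mathcal V$ has a weak-difference term, is term-equivalent to a variety of affine modules and in particular is defined relative to $\mathrm{Id}\,\mathcal V$ by the commutator identities — I would invoke Kearnes \cite{vardiff} / \cite{shape}), abelianness of $A_T$ is equivalent to $T$ being compatible with $\mathrm{Id}\,\mathcal V_{\mathrm{ab}}$. Now I would show that the operation $t^{\partial,T}$ is \emph{additive} in the 2-cocycle argument: by induction on the composition tree of $t$, using that each of the three types of operations $\nu\in E_t$ in Definition~\ref{def:tranequ} is built by composing the action terms $a(f,k)$ and the datum operations $f^\Delta$ — which are homomorphic with respect to $m$ by (D1) and (D4) — around the $T_f$'s, one gets $t^{\partial,T+T'}(\vec q) = t^{\partial,T}(\vec q) +_u t^{\partial,T'}(\vec q)$, exactly as in the first paragraph of the proof of Lemma~\ref{lem:5} where the analogous distribution over witnesses was used. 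Since $m$ is affine on the blocks of $\hat\alpha/\Delta_{\alpha\alpha}$, addition of elements lying over the same point of $Q$ commutes with all these operations. Consequently, if both $T$ and $T'$ satisfy the identities of $\mathcal V_{\mathrm{ab}}$ coordinatewise, so does $T+T'$; hence $A_{T+T'}\in\mathcal V_{\mathrm{ab}}$, i.e. $[A_{T+T'}]\in\mathrm{Ext}_{\mathcal V}$. The same additivity applied to $-T$ (negation in the abelian group $Z^2_{\mathcal V}$, again available because $m$ is affine on $\hat\alpha/\Delta_{\alpha\alpha}$-blocks) gives that inverses stay in $\mathrm{Ext}_{\mathcal V}$, and the zero class — the semidirect product $[A_0]$ — is abelian iff $Q$ and the kernel datum are abelian, which holds here since $Q$ is assumed abelian and $A(\alpha)/\Delta_{\alpha\alpha}$ carries the affine $m$. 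Thus $\mathrm{Ext}_{\mathcal V}(Q,A^{\alpha,\tau},\ast)$ is a subgroup of $H^2_{\mathcal V}(Q,A^{\alpha,\tau},\ast)$.

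The main obstacle I anticipate is the identification of "$A_T$ is abelian" with "$T$ is compatible with $\mathrm{Id}\,\mathcal V_{\mathrm{ab}}$" together with the additivity of $t^{\partial,T}$ in $T$. The additivity is morally the same bookkeeping as in Lemma~\ref{lem:repterm} and Lemma~\ref{lem:5}, so I would present it as an induction on the composition tree citing those lemmas for the mechanics rather than re-deriving the expansion; the subtle point is keeping track of the basepoints $u=l(t^Q(\vec q))$, which are unchanged when passing from $T$ to $T+T'$ because they depend only on $Q$, so all the $+_u$'s line up and the self-commuting identity for $m$ lets one regroup freely. For the characterization of the abelian subvariety one needs that in a variety with a weak-difference term the abelian members form a subvariety whose identities, modulo $\mathrm{Id}\,\mathcal V$, are captured by the term-condition; this is standard (see \cite{vardiff},\cite{shape}) and for the purposes of the corollary one only needs that "$A_T$ abelian" is equivalent to a set of 2-cocycle identities of the form (C2), which is precisely what makes the subset $\mathrm{Ext}_{\mathcal V}$ a subgroup rather than merely a subset — mirroring the classical fact that abelian group extensions are exactly those whose 2-cocycle is symmetric.
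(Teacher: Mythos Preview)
Your proposal is correct and takes essentially the same approach as the paper: identify ``$A_T$ abelian'' with ``$T$ compatible with $\mathrm{Id}\,\mathcal A$'' where $\mathcal A\subseteq\mathcal V$ is the subvariety of abelian algebras (which exists because $\mathcal V$ has a weak-difference term), and then conclude that $\mathrm{Ext}_{\mathcal V}(Q,A^{\alpha,\tau},\ast)=H^2_{\mathcal A}(Q,A^{\alpha,\tau},\ast)\leq H^2_{\mathcal V}(Q,A^{\alpha,\tau},\ast)$. The only difference is packaging: the paper dispatches the subgroup claim in one line by invoking Proposition~\ref{prop:14} (the Galois connection $\mathcal U\mapsto H^2_{\mathcal U}$ is monotone), whereas you re-derive closure under $+$ and inverses by hand via the additivity $t^{\partial,T+T'}=t^{\partial,T}+_u t^{\partial,T'}$ --- which is exactly the content of Lemma~\ref{lem:5} applied with $\mathcal U=\mathcal A$. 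Your justification that the zero class lies in $\mathrm{Ext}_{\mathcal V}$ (``since $Q$ is abelian and $A(\alpha)/\Delta_{\alpha\alpha}$ carries the affine $m$'') is the one soft spot; the cleaner argument is that once $\mathcal A$ contains the datum, $Z^2_{\mathcal A}$ is already a group by Lemma~\ref{lem:5} and hence contains the trivial cocycle automatically.
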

\begin{proof}
Since $\mathcal V$ has a weak-difference term, the abelian algebras of $\mathcal V$ form a subvariety $\mathcal A$. If $\mathcal A$ does not contain the datum, then
$\mathrm{Ext}_{\mathcal V}(Q,A^{\alpha,\tau},\ast)$ is empty; otherwise, $\mathcal A \in \mathcal L(Q,A^{\alpha,\tau},\ast)$ and the embedding follows from Proposition~\ref{prop:14} after noting $\mathrm{Ext}_{\mathcal V}(Q,A^{\alpha,\tau},\ast) = H^{2}_{\mathcal A}(Q,A^{\alpha,\tau},\ast)$.
\end{proof}

\vspace{0.3cm}

\section{Varieties with a difference term redux}\label{sec:4}

In the case of varieties with a difference term and algebras with an idempotent element, the representations of extensions realizing affine datum in Theorem~\ref{thm:10} can sometimes take a form closely analogous to the group case with abelian normal subgroups in Lemma~\ref{lem:grp} and general central extensions in Corollary~\ref{cor:1}. In the variety of groups $\mathcal G$, the term $m(x,y,z) = x  y^{-1} z$ satisfies several identities; in particular, 
\begin{align}\label{eqn:malcev}
\mathcal G \vDash x=m(x,y,y) \mathrel{\wedge} x=m(y,y,x).
\end{align}
The existence of a ternary term $m$ satisfying the identities in Eq~\eqref{eqn:malcev} (a Mal'cev term) is an equational characterization for a variety having permuting congruences \cite{malcev}; however, the Mal'cev term derived from the group operations satisfies an additional identity
\begin{align}\label{eqn:coolterm}
\mathcal G \vDash x = m(m(x,y,z),z,y).
\end{align}
not satisfied by general congruence permutable varieties. This failure can be rather crudely measured by the congruence $\epsilon = \Cg^{A} \left( \left\{ \left\langle \ x \ , \ m(m(x,y,z),z,y) \ \right\rangle  : x,y,z \in A \right\} \right)$ in an algebra $A$ since $A/\epsilon$ satisfies Eq~\eqref{eqn:coolterm}. The representation in Theorem~\ref{thm:idemabelian} is in terms of the congruence $\epsilon$ relativized to a fixed class of an abelian congruence and choice of section.

Fix a variety $\mathcal V$ with a difference term $m$ and take an algebra $A \in \mathcal V$ and congruence $\alpha \in \Con A$; in addition, suppose $u \in A$ is an idempotent element and let $I_{\alpha} := u/\alpha$ denote the congruence class of $\alpha$ containing $u$ which is a subalgebra of $A$. Given a section $l: A/\alpha \rightarrow A$ of the canonical epimorphism $\pi : A \rightarrow A/\alpha$, let $\epsilon(l)$ be the congruence generated by the set of pairs 
\begin{align}\label{eq:2ndgener}
&\left\{ \left\langle \ x \ , \ m \big( m(x, u,l \circ \pi(y)),l \circ \pi(y), u \big) \ \right\rangle  : x \in I_{\alpha}, y \in A \right\} \\
&\left\{ \left\langle \ x \ , \ m \big( m(x, l \circ \pi(x), u),u,l \circ \pi(x) \big) \ \right\rangle  : x \in A \right\}.
\end{align}
Since $m$ is a difference term, $m(m(x, u,l \circ \pi(y)),l \circ \pi(y), u ) \mathrel{\alpha} m(m(u,u,l \circ \pi(x)),l\circ \pi(x),u) = u $ and $m(m(y,l \circ \pi(y),u),u,l \circ \pi(y) ) \mathrel{\alpha} m(m(y,y,u),u,l \circ \pi(y)) = l \circ \pi(y)$ for $x \in I_{y}, y \in A$; thus, the generating set of $\epsilon(l)$ is contained in $\alpha$. We also see from the centralizer condition that 
\begin{align*}
\begin{bmatrix} u & u \\ m(m(x,u,l(y)),l(y),u) & x \end{bmatrix} = \begin{bmatrix} m(m(u,u,l(y)),l(y),u) & m(m(u,u,u),u,u) \\ m(m(x,u,l(y)),l(y),u) & m(m(x,u,u),u,u) \end{bmatrix} \in M(\alpha,1)
\end{align*}
and similarly for the other set of generators in Eq~\eqref{eq:2ndgener}; thus, $\epsilon(l) \leq [\alpha,1]$.

The next step is to modify the construction of the algebra $B \otimes^{T} Q$ from Section~\ref{sec:2} to include an ``action''. Let $B$ and $Q$ be algebras in the same signature $\tau$ and suppose $x + y$ is a binary operation on $B$. A 2-cocycle appropriate to the pair $(Q,B)$ is a sequence $T = \{ T_{f} : f \in \tau \}$ with $T_{f} : Q^{\ar f} \rightarrow B$. An \emph{action} $Q \ast B$ is a sequence $\ast = \{ a(f,s) : f \in \tau, s \in \sigma(f) \subseteq [\ar f]^{\ast} \}$ with each $a(f,s) : Q^{\ar f} \times B^{\ar f} \rightarrow B$. It is a singleton action if $\sigma(f) = \{ \{i\} : i \in [ \ar f] \}$ the singleton sets of the coordinates of $f$. Given an appropriate 2-cocycle $T$ and action $Q \ast B$, the algebra $B \rtimes_{\ast,T} Q$ is defined over the universe of the direct product $B \times Q$ where each operation symbol $f \in \tau$ is interpreted by the rule
\[
f^{I_{\alpha} \rtimes_{\ast,T} Q} ( \left\langle a_{1}, x_{1} \right\rangle, \ldots, \left\langle a_{n}, x_{n} \right\rangle ) := \left\langle \ \sum_{f \in \tau, s \in \sigma(f)} a(f,s)(\vec{x},\vec{a}) + T_{f}(\vec{x}) \ , \ f^{Q}(\vec{x}) \ \right\rangle .
\]
In what follows, we will be interested in the case of singleton actions and when $x+y$ is the operation of an abelian group.

\begin{theorem}\label{thm:idemabelian}
Let $A \in \mathcal V$ a variety with a difference term and $u \in A$ an idempotent element. If $\alpha \in \Con A$ is abelian, then for each section $l: A/\alpha \rightarrow A$ there is a singleton action $A/\alpha \ast I_{\alpha/\epsilon(l)}$ homomorphic with respect to $x +_{u} y$ and a 2-cocycle $T$ appropriate to $(A/\alpha , I_{\alpha/\epsilon(l)})$ such that $A/\epsilon(l) \approx I_{\alpha/\epsilon(l)} \rtimes_{\ast,T} A/\alpha$.
\end{theorem}
\begin{proof}
Write $Q = A/\alpha$ and $\pi : A \rightarrow Q$ the canonical epimorphism with $\alpha = \ker \pi$. Fix a section $l:Q \rightarrow A$ of $\pi$ such that $l \circ \pi (u)=u$. Let $m$ be a difference term for $\mathcal V$. Since $\epsilon(l) \leq \alpha$, then $\alpha' = \alpha/\epsilon(l)$ is an abelian congruence of $A' = A/\epsilon(l)$ and $Q \approx A'/\alpha'$. Then we take the section $l' = \pi' \circ l$ of the canonical epimorphism $\pi': A' \rightarrow Q$ and note $u' = u/\alpha'$ is idempotent. Since $\alpha'$ is an abelian congruence, $I_{\alpha'}$ is an abelian subalgebra; consequently, $m(x,u',y) := x +_{u'} y$ is the binary operation of an abelian group with identity $u'$ when restricted to $I_{\alpha'}$.

We define a set map $\psi : I_{\alpha'} \times Q \rightarrow A'(\alpha')/\Delta_{\alpha' \alpha'}$ by $\psi (a,x) := \begin{bmatrix} l(x) \\ m(a,u,l(x)) \end{bmatrix}/\Delta_{\alpha' \alpha'}$. Suppose $\psi(a,x) = \psi(b,y)$. Since $\Delta_{\alpha' \alpha'} \leq \hat{\alpha'}$, we have $(l(x) ,l(y)) \in \alpha$ which implies $x=y$. Then in $A'$ we have $a = m(m(a,u,l(x)),l(x),u) = m(m(b,u,l(x)),l(x),u)=b$ ; therefore, $\psi$ is injective. For surjectivity, observe that 
\begin{align*}
\begin{bmatrix} b \\ a \end{bmatrix}/\Delta_{\alpha' \alpha'} = \begin{bmatrix} l \circ \pi(b) \\ m(a,b,l \circ \pi(b) ) \end{bmatrix}/\Delta_{\alpha' \alpha'} &= \begin{bmatrix} l \circ \pi(b) \\ m(a,b,l \circ \pi(b) ) \end{bmatrix}/\Delta_{\alpha' \alpha'} \\
&= \begin{bmatrix} l \circ \pi(b) \\ m \Big( m \big( m(a,b,l \circ \pi(b)),l \circ \pi(b),u \big), u, l \circ \pi(b) \Big) \end{bmatrix}/\Delta_{\alpha' \alpha'} \\
&= \psi \Big( m \big( m(a,b,l \circ \pi(b)),l \circ \pi(b),u \big) , l \circ \pi(b) \Big) \\
&= \psi \Big( m(a,b,u) , l \circ \pi(b) \Big) 
\end{align*}
where we have used Lemma~\ref{lem:usefulbit1} in the last line. It follows that $\psi^{-1} \left( \begin{bmatrix} b \\ a \end{bmatrix}/\Delta_{\alpha \alpha} \right) = \left\langle  m(a,b,u) , \pi(b) \right\rangle$.

According to Theorem~\ref{thm:10}, $A' \approx A'_{T'}(Q,A'^{\alpha',\tau},\ast')$ for affine datum and compatible 2-cocycle $T'$ which are given by
\begin{align*}
T'_{f}(x_{1},\ldots,x_{n}):= \begin{bmatrix}  l(f^{Q}(x_{1},\ldots,x_{n})) \\ f^{A'}(l(x_{1}),\ldots,l(x_{n}))  \end{bmatrix}/\Delta_{\alpha' \alpha'} & &(f \in \tau , n=\ar f)
\end{align*}
and action terms
\begin{align*}
a'(f,i)(x_{1},\ldots, x_{i-1},\begin{bmatrix} b \\ a \end{bmatrix}/\Delta_{\alpha' \alpha'} ,x_{i+1},\ldots, x_{n}):=  \begin{bmatrix} f^{A'}( l(x_{1}),\ldots, l(x_{i-1}),b,l(x_{i+1}),\ldots,l(x_{n}) )  \\ f^{A'}( l(x_{1}),\ldots,l(x_{i-1}),a,l(x_{i+1}),\ldots,l(x_{n}) )  \end{bmatrix}/\Delta_{\alpha' \alpha'}
\end{align*}
for $f \in \tau , n=\ar f$ where
\begin{align*}
f^{\Delta} \left( \begin{bmatrix} b \\ a \end{bmatrix}/\Delta_{\alpha' \alpha'},\delta \circ l(x_{2}),\ldots,\delta \circ l(x_{n}) \right) = a'(f,1) \left( \begin{bmatrix} b \\ a \end{bmatrix}/\Delta_{\alpha' \alpha'},x_{2},\ldots,x_{n} \right).
\end{align*}
The next step is to define action terms and a 2-cocycle $T$ appropriate to $(Q,I_{\alpha'})$ such that $\psi$ becomes an isomorphism $\psi : A'_{T'}(Q,A'^{\alpha',\tau},\ast') \rightarrow I_{\alpha'} \rtimes_{\ast,T} Q$; clearly, this is explicitly given by
\begin{align}\label{eq:408}
T_{f}(x_{1},\ldots,x_{n}):= m \big( f^{A'}(l(x_{1}),\ldots,l(x_{n})) , l(f^{Q}(x_{1},\ldots,x_{n})) , u \big) 
\end{align}
with action terms
\begin{align}\label{eq:409}
a(f,i)(\vec{x},\vec{a}) &= m \Big( f^{A'}(l(x_{1}),\ldots,m(a_{i},u,l(x_{i})),\ldots,l(x_{n})) ,f^{A'}(l(x_{1}),\ldots,l(x_{n})) , u \Big) 
\end{align}
so that 
\begin{align}
\psi \big( a(f,i)(\vec{x},\vec{a}),f^{Q}(\vec{x}) \big) = a'(f,i)(x_{1},\ldots,\psi(a_{i},x_{i}) ,\ldots,x_{n})
\end{align} 
and
\begin{align}
\psi \big( T_{f}(\vec{x}),f^{Q}(\vec{x}) \big) = T'_{f}(\vec{x}).
\end{align}
If we write $p_{i} = f^{A'}(l(x_{1}),\ldots,m(a_{i},u,l(x_{i})),\ldots,l(x_{n}))$ and $w = f^{A'}(l(x_{1}),\ldots,l(x_{n}))$, then using the definitions of the action and 2-cocycle terms and Lemma~\ref{lem:usefulbit1} we have
\begin{align*}
F_{f} \left( \psi(a_{1},x_{1}),\ldots, \psi(a_{n},x_{n}) \right) &= \sum_{i=1}^{n} a'(f,i)(x_{1},\ldots,\psi(a,x_{i}) ,\ldots,x_{n}) +_{w} T'_{f}(\vec{x}) \displaybreak[0]\\
&= \sum_{i=1}^{n} \begin{bmatrix} w \\ p_{i} \end{bmatrix}/\Delta_{\alpha' \alpha'} +_{w} \begin{bmatrix} l(f^{Q}(\vec{x})) \\ w \end{bmatrix}/\Delta_{\alpha' \alpha'}  \displaybreak[0]\\
&= \begin{bmatrix} w \\ p_{1} +_{w} \cdots +_{w} p_{n} \end{bmatrix}/\Delta_{\alpha' \alpha'} +_{w} \begin{bmatrix} l(f^{Q}(\vec{x})) \\ w \end{bmatrix}/\Delta_{\alpha' \alpha'} \displaybreak[0]\\
&= \begin{bmatrix} l(f^{Q}(\vec{x})) \\ p_{1} +_{w} \cdots +_{w} p_{n} \end{bmatrix}/\Delta_{\alpha' \alpha'} \displaybreak[0]\\
&= \psi \Big( m \big( p_{1} +_{w} \cdots +_{w} p_{n} , l(f^{Q}(\vec{x})), u \big) , f^{Q}(\vec{x}) \Big) \displaybreak[0]\\
&= \psi \Big( m \big( m( p_{1} +_{w} \cdots +_{w} p_{n}, w, u ), u , m(w, l(f^{Q}(\vec{x})), u) \big), f^{Q}(\vec{x}) \Big) \displaybreak[0]\\
&= \psi \Big( m( p_{1} +_{w} \cdots +_{w} p_{n}, w, u )  +_{u} T_{f}(\vec{x}) , f^{Q}(\vec{x}) \Big) \displaybreak[0]\\
&= \psi \Big( m \big( m ( p_{1} +_{w} \cdots +_{w} p_{n-1},w,u ) , u, m(p_{n},w,u) \big)  +_{u} T_{f}(\vec{x}) , f^{Q}(\vec{x}) \Big) \displaybreak[0]\\
&= \psi \Big(  m ( p_{1} +_{w} \cdots +_{w} p_{n-1},w,u ) +_{u} m(p_{n},w,u)  +_{u} T_{f}(\vec{x}) , f^{Q}(\vec{x}) \Big) \displaybreak[0]\\ 
&\vdots \displaybreak[0]\\
&= \psi \Big(  m(p_{1},w,u) +_{u} \cdots +_{u} m(p_{n},w,u)  +_{u} T_{f}(\vec{x}) , f^{Q}(\vec{x}) \Big) \displaybreak[0]\\ 
&= \psi \Big(  \sum_{i=1}^{n} a(f,i)(\vec{x},\vec{a}) +_{u} T_{f}(\vec{x}) , f^{Q}(\vec{x}) \Big) \displaybreak[0]\\ 
&= \psi \circ f^{I_{\alpha'} \rtimes_{\ast,T} Q} \left( \left\langle a_{1},x_{1} \right\rangle,\ldots,\left\langle a_{1},x_{1} \right\rangle \right);
\end{align*}
thus, $\psi$ is an isomorphism. 
\end{proof}

We say a difference term $m$ for variety $\mathcal V$ is \emph{weakly-associative} if it satisfies the identity $x=m(m(x,y,z),z,y)$. One way to enforce triviality $\epsilon(l)=0$ is for the variety to have a weakly-associative difference term.

\begin{corollary}\label{cor:AAA}
Let $\mathcal V$ be a variety with a weakly-associative difference term $m$. Let $A \in \mathcal V$ and $u \in A$ an idempotent element. If $\alpha \in \Con A$ is abelian, then there is a singleton action $\ast$ and 2-cocycle $T$ appropriate to $(A/\alpha , I_{\alpha})$ such that 
\begin{align}\label{eqn:amazing}
A \approx I_{\alpha} \rtimes_{\ast,T} A/\alpha.
\end{align}
\end{corollary}

The following generalizes the group example from lemma~\ref{lem:grp}.

\begin{proposition}
Let $A \in \mathcal V$ a variety with a weakly-associative difference term. Suppose $u \in A$ is an idempotent element and $\alpha \in \Con A$ is abelian. 
\begin{enumerate}
	
	\item If $\alpha \leq \beta$ and $\epsilon(l)=0$ for a section of $\beta$, then there is a singleton action $\ast$ such that $A(\alpha)/\Delta_{\alpha \beta} \approx I_{\alpha/[\alpha,\beta]} \rtimes_{\ast} A/\beta$.
	
	\item If $\alpha \in \Con A$ is central, then $A(\alpha)/\Delta_{\alpha 1} \approx I_{\alpha}$ and there is a 2-cocycle $T$ such that $A \approx I_{\alpha} \otimes^{T} A/\alpha$.
	
	\item If $\alpha \in \Con A$ is central and $\beta \in \Con A$, then $\kappa_{\alpha 1} \left( A(\alpha \wedge \beta) \right) \approx I_{\alpha \wedge \beta}$.

\end{enumerate}
\end{proposition}
\begin{proof}
(1) Set $Q= A/\beta$ and define $\psi: I_{\alpha} \times Q \rightarrow A(\alpha)/\Delta_{\alpha \beta}$ by $\psi(a,x) := \begin{bmatrix} l(x) \\ m(a,u,l(x)) \end{bmatrix}/\Delta_{\alpha \beta}$. If $((a,x),(b,y)) \in \ker \psi$, then we must have $(l(x),l(y)) \in \beta$ which yields $x=y$. Then by Lemma~\ref{lem:20}(1) we have $m(a,u,l(x)) \mathrel{[\alpha,\beta]} m(b,u,l(x))$ and so $a=m(m(a,u,l(x)),l(x),u) \mathrel{[\alpha,\beta]} m(m(b,u,l(x)),l(x),u)=b$ since $\epsilon(l)=0$; thus, $\ker \psi \leq (I_{\alpha} \cap [\alpha,\beta]) \times 0_{Q}$. For the reverse inclusion, take $a,b \in I_{\alpha}, x \in Q$ with $(a,b) \in [\alpha,\beta]$. By Lemma~\ref{lem:20}(2), there is $v \in A$ such that $\begin{bmatrix} v \\ a \end{bmatrix} \Delta_{\alpha \beta} \begin{bmatrix} v \\ b \end{bmatrix}$. Then
\begin{align*}
\begin{bmatrix} v  \\ a \end{bmatrix} \Delta_{\alpha \beta} \begin{bmatrix} v  \\ b \end{bmatrix} \ , \ \begin{bmatrix} v  \\ u \end{bmatrix} \Delta_{\alpha \beta} \begin{bmatrix} v  \\ u \end{bmatrix} \ , \ \begin{bmatrix} l(x)  \\ l(x) \end{bmatrix} \Delta_{\alpha \beta} \begin{bmatrix} l(x)  \\ l(x) \end{bmatrix} \quad \Rightarrow  \begin{bmatrix} l(x) \\ m(a,u,l(x)) \end{bmatrix} \Delta_{\alpha \beta} \begin{bmatrix} l(x) \\ m(b,u,l(x)) \end{bmatrix} ;
\end{align*}
altogether, $\ker \psi = (I_{\alpha} \cap [\alpha,\beta]) \times 0_{Q}$. This induces a bijection between $I_{\alpha/[\alpha,\beta]} \times Q$ and $A(\alpha)/\Delta_{\alpha \beta}$. We then follow the same definitions Eq~\eqref{eq:408} and Eq~\eqref{eq:409} in Theorem~\ref{thm:idemabelian} to define the action terms $\ast$ and 2-cocycle $T$ on $I_{\alpha/[\alpha,\beta]}$ so that $\psi$ is an isomorphism. Recall $A(\alpha)/\Delta_{\alpha \beta} \rightarrow A/\beta$ is a semidirect product with homomorphic section given by the diagonal embedding. Then by the definition in Eq~\eqref{eq:408} we see that each $T_{f}(\vec{x})=u/[\alpha,\beta]$ which is the zero in the induced abelian group operation $x +_{u/[\alpha,\beta]} y$ on $I_{\alpha/[\alpha,\beta]}$; thus, $I_{\alpha/[\alpha,\beta]} \rtimes_{\ast} A/\beta \approx A(\alpha)/\Delta_{\alpha \beta}$.

(2) Since $\alpha$ is central, $\epsilon(l) \leq [\alpha,1]=0$ for all choices of section. Then by part (1) we have $A(\alpha)/\Delta_{\alpha 1} \approx I_{\alpha/[\alpha,1]} \rtimes_{\ast} A/1 \approx I_{\alpha}$. The representation follows from Corollary~\ref{cor:1}. 

(3) Write $\Psi: A(\alpha)/\Delta_{\alpha 1} \rightarrow I_{\alpha}$ for the isomorphism in (2). Passing to the subalgebra $A(\alpha \wedge \beta) \leq A(\alpha)$ through the homomorphisms we have $\Psi \circ \kappa_{\alpha 1}(A(\alpha \wedge \beta)) = \Psi\left( \left\{ \begin{bmatrix} b \\ a \end{bmatrix}/\Delta_{\alpha 1} : (a,b) \in \alpha \wedge \beta \right\} \right) = \{m(a,b,u) : (a,b) \in \alpha \wedge \beta \} = I_{\alpha \wedge \beta}$.
\end{proof}

If $\mathcal V$ is a variety with a weakly-associative difference term $m$ and has idempotent elements, then by Corollary~\ref{cor:AAA} extensions with abelian kernels can be reconstructed as
\begin{align}\label{eqn:reprep}
A \approx I_{\alpha} \rtimes_{\ast, T} Q
\end{align}
from the pair of algebras $(Q,I_{\alpha})$, singleton actions $\ast$ and choices of 2-cocycle $T$. The condition $\mathcal V \vDash x=m(m(x,y,z),z,y)$ is quite restrictive but is satisfied by varieties of group expansions such as groups with multiple operators in Higgins \cite{higgins} which encompass varieties of $R$-modules expanded by multilinear operators investigated in Wires \cite{wiresVI}; for example, the representation in Eq~\eqref{eqn:reprep} holds for loops, skew-braces, Liebniz algebras, Rota-Baxter groups or dendriform algebras to name just a few well-researched examples. For such algebras, the isomorphism in Corollary~\ref{cor:AAA} provides a translation of the previously developed cohomology machinery for affine datum to machinery adapted to the representation in Eq~\eqref{eqn:amazing}. Since we will sometimes consider the results in \cite{wiresV,wiresVI,wiresVII} applied to such varieties it will be useful to have this machinery explicitly stated for future reference.

Two extensions $\pi: A \rightarrow Q$, $\pi' : A' \rightarrow Q$ with idempotents $u \in A,u' \in A'$ are \emph{equivalent} if there is an isomorphism $\phi: A \rightarrow A'$ such that $\pi = \pi' \circ \phi$ and restricts to an isomorphism $\phi: I_{\ker \pi} \rightarrow I_{\ker \pi'}$ of the congruence classes. Extensions in $\mathcal V$ with an abelian kernel and idempotent element are equivalent to extensions of the form
\begin{align}\label{eqn:semidirectidemdiff} 
p_{2} : I \rtimes_{\ast,T} Q \rightarrow Q
\end{align}
for \emph{datum} $(Q,I,\ast)$ which consists of a pair of algebras $(Q,I)$ in $\mathcal V$ with idempotent elements $(v,u) \in Q \times I$ in which $I$ is abelian, a homomorphic singleton action $Q \ast I$ and a 2-cocycle $T$ appropriate to $(Q,I)$; in addition, the action and 2-cocycle terms satisfy
\begin{enumerate}

	\item[(T1)] $a(f,i)(\vec{x},(a_{1},\ldots,a_{i-1},u,a_{i+1},\ldots,a_{n})) = u$, 
	
	\item[(T2)] $a(f,i)((v,\ldots,v),\vec{a}) = f^{I}(u,\ldots,a_{i},\ldots,u)$,
	
	\item[(T3)] $T_{f}(v,\ldots,v)=u$

\end{enumerate}
for $f \in \tau$. Semidirect products are equivalent to extensions in the form in Eq~\eqref{eqn:semidirectidemdiff} for $T$ trivial; that is, $T_{f} \equiv u$ for all $f \in \tau$. Another consequence of (T1)-(T3) is that the map $I \ni a \longmapsto \left\langle a, v \right\rangle \in I \rtimes_{\ast, T} Q$ is an embedding. Given an extension $\pi: A \rightarrow Q$ with $\alpha=\ker \pi$ and idempotent $u \in A$, the representation in Eq~\eqref{eqn:semidirectidemdiff} is determined by a choice of section $l: Q \rightarrow A$ with $l \circ \pi(u) = u$ which defines 
\begin{align*}
A \ni a \longmapsto \left\langle m(a,l \circ \pi(a),u) , \pi(a) \right\rangle \in I_{\alpha} \rtimes_{\ast, T} Q
\end{align*}
and the 2-cocycle and action terms by 
\begin{enumerate}

	\item $T_{f}(x_{1},\ldots,x_{n}):= m^{A} \big( f^{A}(l(x_{1}),\ldots,l(x_{n})), l(f^{Q}(x_{1},\ldots,x_{n})), u \big)$,

	\item $a(f,i)(\vec{x},\vec{a}) := m^{A} \big( f^{A'}(l(x_{1}),\ldots,m(a_{i},u,l(x_{i})),\ldots,l(x_{n})) ,f^{A'}(l(x_{1}),\ldots,l(x_{n})), u \big)$.
 
\end{enumerate}
To capture all such extensions in $\mathcal V$, we must restrict to those actions and 2-cocycles which ensure $I \rtimes_{\ast, T} Q \in \mathcal V$. Given a term $t$, there is term $t^{\ast}$ in the multisorted language $\ast = \{a(f,i): f \in \tau, i \in [\ar f] \} \cup \{ x +_{u} y \}$ and a term $t^{\partial, T}$ in the multisorted language $\ast \cup T \cup \{x +_{u} y \}$ such that 
\begin{align}
t^{ I \rtimes_{\ast,T} Q}\left( \left\langle a_{1},x_{1} \right\rangle,\ldots,\left\langle a_{n},x_{n} \right\rangle \right) = \left\langle t^{\ast}(\vec{a},\vec{x}) +_{u} t^{\partial,T}(\vec{x}) , t^{Q}(\vec{x}) \right\rangle
\end{align}
The action $Q \ast I$ is $\mathcal V$-\emph{compatible} if $t^{\ast} = s^{\ast}$ for all $t=s \in \mathrm{Id} \, \mathcal V$ and the 2-cocycle $T$ is $\mathcal V$-\emph{compatible} if $t^{\partial,T} = s^{\partial,T}$ for all $t=s \in \mathrm{Id} \, \mathcal V$. For $Q,I \in \mathcal V$ with $I$ abelian, the semidirect product $I \rtimes_{\ast} Q \in \mathcal V$ if and only if $Q \ast I$ is $\mathcal V$-compatible. For datum $(Q,I,\ast)$ with $Q,I \in \mathcal V$, $I$ abelian and $\mathcal V$-compatible action satisfying (T1), all extensions in $\mathcal V$ realizing the datum with an idempotent element are equivalent to extensions of the form Eq~\eqref{eqn:semidirectidemdiff} for $\mathcal V$-compatible 2-cocycles which satisfy (T2).

A 2-\emph{coboundary} for datum $(Q,I,\ast)$ is a sequence of functions $G=\{ G_{f}: f \in \tau \}$ \emph{witnessed} by $h : Q \rightarrow I$ such that
\begin{enumerate}

	\item $h(v)=u$,
	
	\item $G_{f}(\vec{x}) = \sum_{i=1} a(f,i)(\vec{x},h(\vec{x})) - h(f^{Q}(\vec{x}))$

\end{enumerate}
where the sum is the iterated addition of $x +_{u} y$ in $I$. \emph{Derivations} (or 1-\emph{cocycles}) are the witnesses for the trivial 2-coboundary; that is, $G_{f} \equiv u$ for all $f \in \tau$. Two extensions $A$ and $A'$ realizing fixed datum $(Q,I,\ast)$ are equivalent if and only if they are equivalent to extensions $A \approx I \rtimes_{\ast, T} Q$ and $A' \approx I \rtimes_{\ast, T'} Q$ such that $T-T'$ is a 2-coboundary. The $2^{\mathrm{nd}}$-cohomology is the abelian group of equivalence classes of $\mathcal V$-compatible 2-cocycles under the addition $x +_{u} y$ in the codomain. Restricting equivalence to a fixed extension $\pi:A \rightarrow Q$ realizing datum $(Q,I,\ast)$, \emph{stabilizing automorphisms} are the automorphisms $\phi: A \rightarrow A$ such that $\pi = \pi \circ \phi$ and $\phi|_{I} = \id$. The derivations of the datum are isomorphic to the group of stabilizing automorphisms of the semidirect product. If $\psi$ is the map defined in Theorem~\ref{thm:idemabelian} which witnesses the isomorphism $I_{\alpha} \rtimes_{\ast, T} \approx A_{T'}(Q,A^{\alpha,\tau},\ast')$, then the relations
\begin{itemize}
	
	\item $\psi( a(f,i)(\vec{x},\vec{a}) , f^{Q}(\vec{x})) = a'(f,i)(x_{1},\ldots,\psi(a_{i},x_{n}) , \ldots,x_{n})$,
	
	\item $\psi(T_{f}(\vec{x}),f^{Q}(\vec{x})) = T'_{f}(\vec{x})$,
	
	\item $\psi(h(x),x) = h'(x)$ for $h \in \mathrm{Der}(Q,I_{\alpha},\ast)$

\end{itemize}
induce the isomorphisms of cohomology
\begin{align*}
H^{2}_{\mathcal V}(Q,I_{\alpha},\ast) &\approx H^{2}_{\mathcal V}(Q,A^{\alpha,\tau},\ast') & \mathrm{Der}(Q,I_{\alpha},\ast) &\approx \mathrm{Der}(Q,A^{\alpha,\tau},\ast').
\end{align*}

\begin{example}
Let $\mathcal V$ be a variety of $R$-modules expanded be multilinear operations named by $F$. For an algebra $M \in \mathcal V$, it is easy to see that congruences $\alpha \in \Con M$ are in bijective correspondence with \emph{ideals} $I \triangleleft M$ which are submodules of $M$ that are \emph{absorbing} for each of the multilinear operations; that is, for each $f \in F$ with $n=\ar f$, $f(a_{1},\ldots,a_{n}) \in I$ whenever some $a_{i} \in I$. For an ideal $I \triangleleft M$, the corresponding congruence is given by $\alpha_{I} = \{(a,b) \in M^{2}: a-b \in I \} \in \Con M$. For a congruence $\alpha \in \Con M$, the corresponding ideal $I_{\alpha}$ is the $\alpha$-class which contains $0 \in M$. Then for any abelian ideal $I \triangleleft M$ we have the representation $M \approx I \rtimes_{\ast,T} M/I$ with 2-cocycle operations
\begin{align}
\begin{split}
T_{+}(x,y) &:= l(x) + l(y) - l(x+y) \\
T_{r}(x) &:= r \cdot l(x) - l(r \cdot x) \\
T_{f}(\vec{x}) &:= f(l(\vec{x})) - l(f^{Q}(\vec{x}))
\end{split}
\end{align}
for $f \in F, r \in R$ and singleton action terms
\begin{align}\label{eqn:actionterms4}
\begin{split}
a(f,i)(x_{1},\ldots,a,\ldots,x_{n}) &:= f((x_{1}),\ldots,a,\ldots,l(x_{n})) - f((x_{1}),\ldots,0,\ldots,l(x_{n})) \\
&= f((x_{1}),\ldots,a,\ldots,l(x_{n}))
\end{split}
\end{align}
for a choice of lifting $l: M/I \rightarrow M$ with $l(0)=0$. Note multilinearity of the operations in this case simplifies the expression for the action terms in Eq~\eqref{eqn:actionterms4}.
\end{example}


\begin{acknowledgments}
The research in this manuscript was supported by NSF China Grant \#12071374. 
\end{acknowledgments}

\vspace{0.6cm}

\end{document}